\numberwithin{equation}{section}
\numberwithin{figure}{section}
\theoremstyle{plain}
\newtheorem{main theorem}{Main Theorem}
\newtheorem{theorem}{Theorem}[section]
\newtheorem{lemma}[theorem]{Lemma}
\newtheorem{conjecture}[theorem]{Conjecture}
\newtheorem{corollary}[theorem]{Corollary}
\newtheorem{proposition}[theorem]{Proposition}
\newtheorem{claim}[theorem]{Claim}
\newtheorem{property}[theorem]{Property}
\theoremstyle{definition}
\newtheorem{definition}[theorem]{Definition}
\newtheorem{remark}[theorem]{Remark}
\newtheorem{notation}[theorem]{Notation}
\newtheorem{condition}[theorem]{Condition}
\newcommand{\mdim}{\mathrm{mdim}}
\newcommand{\diam}{\mathrm{diam}}
\newcommand{\widim}{\mathrm{Widim}}
\newcommand{\supp}{\mathrm{supp}}
\newcommand{\norm}[1]{\left|\!\left|#1\right|\!\right|}
\newcommand{\sinc}{\mathrm{sinc}}
\newcommand{\id}{\mathrm{id}}
\begin{document}

\title[Embedding problem of $\mathbb{Z}^k$-actions]{Application of signal analysis to the embedding problem of $\mathbb{Z}^k$-actions}

\author{Yonatan Gutman, Yixiao Qiao,  Masaki Tsukamoto}

\subjclass[2010]{37B05, 54F45, 94A12}

\keywords{Band-limited signal, dynamical system, $\mathbb{Z}^k$-action, mean dimension, embedding, tiling of $\mathbb{R}^k$}

\date{\today}

\thanks{Y.G. was partially supported by the Marie Curie grant PCIG12-GA-2012-334564 and
the National Science Center (Poland) grant 2016/22/E/ST1/00448.
Y.Q. was partially supported by NNSF of China (11371339 and 11571335).
M.T. was supported by John Mung Program of Kyoto University. }

\maketitle

\begin{abstract}
We study the problem of embedding arbitrary $\mathbb{Z}^k$-actions into the shift action on 
the infinite dimensional cube $\left([0,1]^D\right)^{\mathbb{Z}^k}$.
We prove that if a $\mathbb{Z}^k$-action $X$ satisfies the marker property (in particular if $X$ is 
a minimal system without periodic points) and if its mean dimension is smaller than $D/2$ then we can 
embed it in the shift on $\left([0,1]^D\right)^{\mathbb{Z}^k}$.
The value $D/2$ here is optimal.
The proof goes through signal analysis.
We develop the theory of encoding $\mathbb{Z}^k$-actions into band-limited signals and apply it to proving the above statement.
Main technical difficulties come from higher dimensional phenomena in signal analysis.
We overcome them by exploring analytic techniques tailored to our dynamical settings.
The most important new idea is to encode the information of a tiling of 
$\mathbb{R}^k$ into a band-limited function which is constructed from 
another tiling.
\end{abstract}

\section{Introduction}

\subsection{Background}  \label{subsection: background}

The main purpose of this paper is to deepen interactions between signal analysis and 
dynamical systems. (This subsection is just a motivation. So readers can skip unfamiliar/undefined terminologies.)
If we think of signal analysis in a broad sense (like signal analysis $\approx$ Fourier analysis), then 
applications of signal analysis are ubiquitous in ergodic theory.
For example, Fourier analysis proof of the unique ergodicity of the irrational rotation (due to Weyl) is a milestone of 
such applications.
But here we would like to think of signal analysis in a narrower sense, say signal analysis is a part of communication theory.
From this viewpoint, a masterpiece is Shannon's work on \textit{communications over band-limited channels with Gaussian noise}
(\cite{Shannon}, \cite[Chapter 9]{Cover--Thomas}):
Suppose that we try to send information by using signals (say, of telephone line) whose frequencies are limited in $[-W/2,W/2]$.
If the averaged power of signal is $P$ and if
the noise is additive white Gaussian noise of spectral density $N$, then the capacity of the channel (i.e. the amount of information 
we can send by using this channel) is given by the formula
\[ \mathrm{Capacity} = \frac{W}{2} \log_2\left(1+ \frac{P}{NW}\right) \quad \text{bits per second}. \]
The details of this formula are not important here.
We just would like to emphasize that the \textbf{band-width} $W$ is a crucial parameter in the communications over band-limited channels.

Recently the first and third named authors \cite{Gutman--Tsukamoto minimal} found an analogous theory in the context of 
topological dynamics.
They started the theory of \textit{encoding arbitrary dynamical systems into band-limited signals}
and applied it to solving an open problem posed by Lindenstrauss \cite{Lindenstrauss} in 1999.
The purpose of the present paper is to expand this theory to \textit{multi-dimensional case}.

\subsection{Encoding into discrete signals} \label{subsection: embedding into discrete signals}

We start from a problem seemingly unrelated to signal analysis.
Let $k$ be a natural number.
A triple $(X,\mathbb{Z}^k,T)$ (often abbreviated to $X$) is called a dynamical system if 
$X$ is a compact metric space and 
\[ T:\mathbb{Z}^k\times X\to X, \quad (n, x) \mapsto T^n x \]
is a continuous action.
A fundamental example of dynamical systems is the \textbf{shift action on the infinite dimensional cube}.
Let $D$ be a natural number.
Consider the infinite product $\left([0,1]^D\right)^{\mathbb{Z}^k}$.
Let $\sigma: \mathbb{Z}^k\times \left([0,1]^D\right)^{\mathbb{Z}^k}\to \left([0,1]^D\right)^{\mathbb{Z}^k}$ be the shift:
\[ \sigma^m \left((x_n)_{n\in \mathbb{Z}^k}\right)  = (x_{n+m})_{n\in \mathbb{Z}^k}. \]
The triple $\left(\left([0,1]^D \right)^{\mathbb{Z}^k}, \mathbb{Z}^k, \sigma\right)$ is a dynamical system and called 
the shift on $\left([0,1]^D \right)^{\mathbb{Z}^k}$.

We study the old problem of \textit{embedding arbitrary dynamical systems in the shift on 
$\left([0,1]^D \right)^{\mathbb{Z}^k}$}.
Let $(X,\mathbb{Z}^k,T)$ be a dynamical system.
A map $f:X\to \left([0,1]^D \right)^{\mathbb{Z}^k}$ is called an embedding of a dynamical system if 
$f$ is a $\mathbb{Z}^k$-equivariant continuous injection.
We would like to understand when $X$ can be embedded in the shift on $\left([0,1]^D \right)^{\mathbb{Z}^k}$.
Notice that every compact metric space can be \textit{topologically} embedded in $\left([0,1]^D \right)^{\mathbb{Z}^k}$.
So the problem is a genuinely \textit{dynamical} question.

It will be convenient later to see the problem from a slightly different viewpoint:
A point $x= (x_n)_{n\in \mathbb{Z}^k}$ in $\left([0,1]^D \right)^{\mathbb{Z}^k}$ can be seen as a 
\textit{discrete signal}\footnote{When $k=2$, it might be better to call $x$ a \textit{discrete image}.}
valued in $[0,1]^D$.
Then the embedding problem asks \textit{when we can encode a given dynamical system $X$ into discrete signals}.

We review the history of the problem
before explaining our main result (Main Theorem \ref{main theorem: discrete signal} below).
The embedding problem was first studied by Jaworski \cite{Jaworski} in 1974. 
For understanding his result, notice that periodic points form an obvious obstruction to the embedding.
For example, the shift on $\left([0,1]^2\right)^{\mathbb{Z}^k}$ cannot be embedded in the shift on 
$[0,1]^{\mathbb{Z}^k}$ because the fixed points sets of $\left([0,1]^2\right)^{\mathbb{Z}^k}$ and $[0,1]^{\mathbb{Z}^k}$
are homeomorphic to the square $[0,1]^2$ and the line segment $[0,1]$ respectively and 
the square cannot be topologically embedded in the line segment.

A dynamical system $(X,\mathbb{Z}^k,T)$ is said to be \textbf{aperiodic} if 
$T^n x\neq x$ for all $x\in X$ and nonzero $n\in \mathbb{Z}^k$.
Jaworski \cite{Jaworski} proved that periodic points are the only obstruction if $X$ is a finite dimensional system:

\begin{theorem}[Jaworski, 1974]  \label{theorem: Jaworski}
Let $(X,\mathbb{Z}, T)$ be an aperiodic finite dimensional dynamical system.
Then we can embed it in the shift on $[0,1]^{\mathbb{Z}}$.
\end{theorem}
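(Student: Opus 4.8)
The plan is to run a Baire category argument in the space $C(X,[0,1])$ of continuous functions $X\to[0,1]$ with the uniform norm, which is a complete metric space. To each $f\in C(X,[0,1])$ attach the $\mathbb{Z}$-equivariant continuous map
\[ I_f\colon X\longrightarrow [0,1]^{\mathbb{Z}},\qquad I_f(x)=\bigl(f(T^nx)\bigr)_{n\in\mathbb{Z}} . \]
Since any such $I_f$ is automatically equivariant and continuous, it suffices to produce one $f$ with $I_f$ injective; I would in fact show that $\mathcal{E}:=\{f : I_f\text{ injective}\}$ is residual in $C(X,[0,1])$.

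First I would check that $\mathcal{E}$ is $G_\delta$. For $m\ge1$ set $K_m=\{(x,y)\in X\times X : d(x,y)\ge 1/m\}$, which is compact, and for $N\ge1$ put
\[ W_{m,N}=\Bigl\{\,f : \min_{(x,y)\in K_m}\ \max_{|n|\le N}\bigl|f(T^nx)-f(T^ny)\bigr|>0\,\Bigr\}. \]
A routine compactness argument shows that each $W_{m,N}$ is open, that $\mathcal{E}_m:=\bigcup_{N\ge1}W_{m,N}$ is exactly the set of $f$ for which $I_f$ separates every pair in $K_m$, and that $\mathcal{E}=\bigcap_{m\ge1}\mathcal{E}_m$. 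So everything reduces to showing that each open set $\mathcal{E}_m$ is dense.

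The density step is where both hypotheses are used. Fix $g\in C(X,[0,1])$, $\varepsilon>0$ and $m$; the goal is an $f$ with $\norm{f-g}<\varepsilon$ lying in $\mathcal{E}_m$. Aperiodicity supplies, for any prescribed $N$, a Rokhlin-type tower: an open set $U$ with $U,TU,\dots,T^{N-1}U$ pairwise disjoint and $\bigcup_{j\in\mathbb{Z}}T^jU=X$, which coordinatizes orbits locally into blocks of length comparable to $N$. Finite-dimensionality, say $\dim X=d<\infty$, enters through classical dimension theory: on any compact set $A\subseteq X$ whose first $\ell:=2d+1$ iterates $A,TA,\dots,T^{\ell-1}A$ are pairwise disjoint, an arbitrarily small perturbation of $g$ makes the window map $x\mapsto\bigl(g(x),g(Tx),\dots,g(T^{\ell-1}x)\bigr)$ injective on $A$ — a perturbation supported on the $\ell$ disjoint iterates of $A$ can be prescribed independently there, hence realizes a generic element of $C(A,[0,1]^{\ell})$, and generic maps $A\to[0,1]^{\ell}$ embed since $\ell\ge 2\dim A+1$. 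Taking $N$ large compared with $\ell$ and organizing such perturbations along the tower, one assembles an $f$ with $\norm{f-g}<\varepsilon$ whose values mark where in a block one currently sits and within each block recover the point from $2d+1$ consecutive samples; then any pair in $K_m$ is separated by some $T^n$ with $|n|$ of order $N$, so $f\in W_{m,N}\subseteq\mathcal{E}_m$. The Baire category theorem then gives $\mathcal{E}\ne\varnothing$ and finishes the proof.

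I expect the genuine obstacle to be this density step, and specifically the compatibility of the two inputs. The tower furnished by aperiodicity cannot be clopen in general (for instance when $X$ is connected), so the many local perturbations must be glued into a single globally continuous $f$ across the boundary of the tower without destroying injectivity of the window maps, and the separation of pairs in $K_m$ must be achieved by windows of length bounded uniformly over $K_m$. Arranging the dimension-theoretic perturbations so that they match continuously along this boundary — which is exactly where small-boundary/dimension estimates are needed — is the heart of the matter.
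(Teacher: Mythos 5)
The paper does not actually prove this theorem: it is quoted from Jaworski's thesis, and the paper's own route to it is indirect (aperiodic finite-dimensional systems satisfy the marker property and have mean dimension zero, so the statement follows from Main Theorem \ref{main theorem: discrete signal}). So your proposal has to stand on its own. Its skeleton --- the Baire category reduction to density of the open sets $\mathcal{E}_m$, Rokhlin-type towers from aperiodicity, and the classical fact that generic maps of a $d$-dimensional compactum into $[0,1]^{2d+1}$ are embeddings --- is the correct and standard one, and it mirrors the architecture the paper uses for its harder results (the Baire-type iteration deducing Main Theorem \ref{main theorem: continuous signal} from Proposition \ref{prop: main proposition}, together with Lemmas \ref{lemma: approximation lemma} and \ref{lemma: embedding lemma}).

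However, the density step contains a genuine gap that you flag but do not close, and it is not a single gap but two. First, the window-map argument separates a pair $(x,y)\in K_m$ only if there is a common time $n$ at which $T^nx$ and $T^ny$ both lie in the set where the window map was made injective; for a pair whose positions relative to the tower are out of phase this need never happen, so the claim that ``any pair in $K_m$ is separated by some $T^n$ with $|n|$ of order $N$'' is unjustified as stated. One must additionally arrange that $f$ encodes the tower itinerary (a marker function built from a continuous $h$ with $\supp h\subset U$, exactly as in the construction of Subsection \ref{subsection: dynamical Voronoi diagram}), show that $I_f(x)=I_f(y)$ forces equal itineraries, and only then invoke the window maps; and since the target is the one-dimensional cube $[0,1]$, packing both the marker information and the $(2d+1)$-window data into a single coordinate function without destroying either is itself a nontrivial step --- the paper's analogue of this difficulty is resolved by giving $g_1$ and $g_2$ disjoint Fourier supports, a device unavailable here. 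Second, the gluing of the independent dimension-theoretic perturbations across the non-clopen boundary of the tower, which you correctly call ``the heart of the matter,'' is precisely the content of Jaworski's argument and is not supplied. As written, the proposal is an accurate plan for a proof rather than a proof.
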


Although this theorem is stated only for $\mathbb{Z}$-actions, it can be easily generalized to $\mathbb{Z}^k$-actions.
(Indeed the $\mathbb{Z}^k$-case can be deduced from the $\mathbb{Z}$-case.)

After Jaworski, people were interested in whether the assumption of finite dimensionality is essential or not.
Auslander \cite[p. 193]{Auslander} asked whether we can embed every minimal system 
$(X,\mathbb{Z},T)$ in the shift on $[0,1]^{\mathbb{Z}}$.
A system $(X,\mathbb{Z}^k,T)$ is said to be \textbf{minimal} if the orbit $\{T^n x\}_{n\in \mathbb{Z}^k}$
is dense in $X$ for every $x\in X$.
When $k=1$, minimal systems $X$ have no periodic points unless $X$ is a finite set.
(If $X$ is finite, then it can be obviously embedded in the shift on $[0,1]^\mathbb{Z}$.)
Therefore the question essentially asks whether there is another obstruction different from periodic points.

Lindenstrauss--Weiss \cite{Lindenstrauss--Weiss} found that mean dimension provides a new obstruction to the embedding.
Mean dimension (first introduced by Gromov \cite{Gromov}) is a topological invariant of dynamical systems which 
counts the average number of parameters of a given system.
The mean dimension of $(X,\mathbb{Z}^k,T)$ is denoted by $\mdim(X)$.
We review its definition in Subsection \ref{subsection: review of mean dimension}.
Finite dimensional systems and finite topological entropy systems are known to have zero mean dimension.

The mean dimension of the shift on $\left([0,1]^D \right)^{\mathbb{Z}^k}$ is equal to $D$, which means that 
$\left([0,1]^D \right)^{\mathbb{Z}^k}$ has $D$ parameters in average.
If $(X,\mathbb{Z}^k,T)$ can be embedded in the shift on $\left([0,1]^D \right)^{\mathbb{Z}^k}$ then $\mdim(X)\leq D$.
Lindenstrauss--Weiss \cite[Proposition 3.5]{Lindenstrauss--Weiss} constructed a minimal system 
$(X,\mathbb{Z},T)$ of mean dimension strictly greater than one.
This system cannot be embedded in the shift on $[0,1]^\mathbb{Z}$ although it is minimal.
Thus it solved Auslander's question.

Lindenstrauss went further; he proved a partial converse \cite[Theorem 5.1]{Lindenstrauss}.

\begin{theorem}[Lindenstrauss, 1999]
If a minimal system $(X,\mathbb{Z},T)$ satisfies $\mdim(X)<D/36$ then we can embed it in the shift on 
$\left([0,1]^D\right)^{\mathbb{Z}}$.
\end{theorem}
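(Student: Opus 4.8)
The plan is to deduce the embedding from a Baire category argument. Let $C(X,[0,1]^D)$ carry the supremum distance; it is a complete metric space. To each $\phi$ in it associate the equivariant continuous map
\[
 I_\phi\colon X\longrightarrow\left([0,1]^D\right)^{\mathbb Z},\qquad I_\phi(x)=\bigl(\phi(T^nx)\bigr)_{n\in\mathbb Z}.
\]
Since $X$ is compact and $\left([0,1]^D\right)^{\mathbb Z}$ is Hausdorff, $I_\phi$ is an embedding of dynamical systems as soon as it is injective. For $\varepsilon>0$ let $\mathcal E_\varepsilon$ be the set of $\phi$ with $I_\phi(x)\neq I_\phi(y)$ whenever $\dist(x,y)\ge\varepsilon$. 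It is enough to show that each $\mathcal E_\varepsilon$ is open and dense in $C(X,[0,1]^D)$: then $\bigcap_{j\ge1}\mathcal E_{1/j}$ is residual, hence nonempty, and every $\phi$ in it makes $I_\phi$ injective. (We may assume $X$ is infinite; a finite minimal system embeds trivially.)

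Openness of $\mathcal E_\varepsilon$ is a compactness argument: if $\phi\in\mathcal E_\varepsilon$ then, $\{(x,y):\dist(x,y)\ge\varepsilon\}$ being compact and $I_\phi$ continuous, there are $M\in\mathbb N$ and $\eta>0$ with $\max_{|n|\le M}\norm{\phi(T^nx)-\phi(T^ny)}\ge\eta$ for all such $(x,y)$; then any $\phi'$ with $\norm{\phi'-\phi}<\eta/3$ also lies in $\mathcal E_\varepsilon$. The real content is the density of $\mathcal E_\varepsilon$: given $\phi_0\in C(X,[0,1]^D)$ and $\delta,\varepsilon>0$, one must produce $\phi$ with $\norm{\phi-\phi_0}<\delta$ and $\phi\in\mathcal E_\varepsilon$.

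For this we use $\mdim(X)<D/36$ together with minimality. From the definition of mean dimension — $\mdim(X)=\lim_{N\to\infty}\tfrac{1}{N}\widim_{\varepsilon'}(X,d_N)$ with $d_N(x,y)=\max_{0\le n<N}\dist(T^nx,T^ny)$ and $\varepsilon'\downarrow0$ — we may fix $N$ as large as needed and $\varepsilon'\in(0,\varepsilon)$ so that, with respect to $d_{2N}$, the space $X$ admits an $\varepsilon'$-embedding into a finite simplicial complex $P$ with $\dim P<2ND/36$ (here an $\varepsilon'$-embedding with respect to a metric $\rho$ means a continuous map all of whose fibres have $\rho$-diameter $<\varepsilon'$). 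On the dynamical side, minimality provides, for this $N$, a Rokhlin castle: finitely many pairwise disjoint columns $B_i,TB_i,\dots,T^{h_i-1}B_i$ with each $B_i$ open, $N\le h_i<2N$, and $\bigcup_i\bigcup_{j=0}^{h_i-1}T^jB_i=X$. We take $\phi$ to be a $\delta$-small perturbation of $\phi_0$, the correction term being built from a partition of unity subordinate to the castle, with a bounded number of buffer levels near the top of each column on which it is interpolated back to $0$ (so that $\phi$ is globally continuous), and chosen so that the restriction of $\phi$ to the levels of any one column determines the corresponding base point up to $d_{2N}$-distance $\varepsilon'$. That such a correction exists is a general-position statement: once a bounded number of buffer levels is excepted, each column still carries on the order of $ND$ free real coordinates, while the relevant obstruction dimension is controlled by $\dim P<2ND/36<ND/2$, and the gap between these leaves room for all the losses (the buffers, the passage from the abstract complex $P$ to a genuinely $\delta$-small correction, the fact that the fixed part $\phi_0$ cannot be subtracted off and so must be absorbed into the combinatorial model, the several columns). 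Finally, if $\phi(T^nx)=\phi(T^ny)$ for all $n\in\mathbb Z$, then tracking the times at which the orbits of $x$ and of $y$ meet the castle bases shows that corresponding base points agree up to $d_{2N}$-distance $\varepsilon'$; since $h_i<2N$ this forces $\dist(x,y)<\varepsilon'<\varepsilon$. Hence $\phi\in\mathcal E_\varepsilon$, which completes the density step and the proof.

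The hard part is the density step, and specifically carrying out the perturbation \emph{coherently over all of $X$} rather than on a single column. One must simultaneously (i) keep the correction inside the $\delta$-ball and continuous across the boundaries of the sets $T^jB_i$, which are only locally closed, not clopen — this is what forces the buffer levels and a delicate partition of unity; (ii) feed enough information into the complex $P$ that the base point of a column is recoverable from the values of $\phi$ along that column \emph{even though the fixed map $\phi_0$ cannot be inverted away}; and (iii) keep the dimension bookkeeping — relating $\mdim(X)$, $\dim P$, the number of usable and of buffer levels, and the $\sim ND$ available coordinates — under control, which is precisely where the explicit constant $D/36$, rather than the optimal $D/2$, emerges. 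A secondary technical point is the dimension-theoretic input itself: producing the finite complex $P$ with $\dim P<2ND/36$ together with the $d_{2N}$-$\varepsilon'$-embedding into it directly from the definition of mean dimension.
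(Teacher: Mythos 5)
First, a point of orientation: the paper does not prove this statement at all --- it is quoted as Theorem 5.1 of Lindenstrauss's 1999 paper, and the present paper's own machinery (Main Theorem 1 with $k=1$, via band-limited signal analysis, interpolation and tilings) reproves a strictly stronger result with the optimal constant $D/2$. Your proposal instead follows the classical Lindenstrauss route: Baire category in $C(X,[0,1]^D)$, openness of $\mathcal{E}_\varepsilon$ by compactness, and density via Rokhlin castles plus a general-position perturbation controlled by $\widim_{\varepsilon'}(X,d_{2N})$. That is the right strategy for the $D/36$ statement, and your framing of where the constant $36$ comes from (buffer levels, absorbing $\phi_0$, several columns) is accurate. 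The Baire category and openness steps are correct as written.

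The density step, however, is not a proof but a description of one, and it contains one concrete hole that cannot be waved away. Your final decoding argument reads: if $\phi(T^nx)=\phi(T^ny)$ for all $n$, then ``tracking the times at which the orbits of $x$ and of $y$ meet the castle bases'' shows the base points agree up to $d_{2N}$-distance $\varepsilon'$. But nothing in the construction you describe makes those times visible from the values of $\phi$: the points $x$ and $y$ may sit at different heights in different columns, in which case comparing $\phi$ along ``corresponding'' levels compares values that were built from $\pi$ of two \emph{unrelated} base points, and the injectivity of the simplicial embedding of $P$ gives you nothing. To close this one must additionally encode the tower structure itself into $\phi$ --- a marker component occupying some of the $D$ coordinates (the exact analogue of the function $g_2(x)$ encoding the tiling in the paper, or of the marker coordinates in Lindenstrauss's actual proof) --- and budget this into the dimension count. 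Relatedly, the castle you invoke (pairwise disjoint open columns of heights in $[N,2N)$ whose levels cover $X$) does not exist in this clean form for a general minimal homeomorphism; the first-return-time decomposition produces sets that are only locally closed and whose boundaries overlap, and handling this is not merely a ``delicate partition of unity'' but part of why the general-position count must be carried out explicitly (via statements like the paper's Lemmas \ref{lemma: approximation lemma} and \ref{lemma: embedding lemma}) rather than asserted. As it stands, the sentence ``the gap between these leaves room for all the losses'' is the theorem, not a proof of it.
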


Lindenstrauss \cite[p. 229]{Lindenstrauss} asked the problem of improving the condition $\mdim(X)<D/36$.
This problem was solved by the first and third named authors \cite[Theorem 1.4]{Gutman--Tsukamoto minimal}:

\begin{theorem}[Gutman--Tsukamoto]  \label{theorem: Gutman--Tsukamoto}
If a minimal system $(X,\mathbb{Z},T)$ satisfies $\mdim(X)<D/2$ then we can embed it in the shift on 
$\left([0,1]^D \right)^{\mathbb{Z}}$.
\end{theorem}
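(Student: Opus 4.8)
The plan is to turn the statement into a Baire-category (genericity) assertion and then settle the remaining local problem by a band-limited interpolation argument, the sharp constant $D/2$ emerging from the signal-analytic picture rather than from the cruder covering-dimension estimates behind Lindenstrauss's $D/36$. A $\mathbb{Z}$-equivariant continuous map $f\colon X\to\left([0,1]^D\right)^{\mathbb{Z}}$ is nothing but its zeroth coordinate $\phi=\pi_0\circ f\in C(X,[0,1]^D)$, via $f(x)=I_\phi(x):=\left(\phi(T^nx)\right)_{n\in\mathbb{Z}}$. For each $m\in\mathbb{N}$ the set
\[
  \mathcal{G}_m=\bigl\{\phi\in C(X,[0,1]^D)\ :\ d(x,y)\ge 1/m\ \Longrightarrow\ I_\phi(x)\neq I_\phi(y)\bigr\}
\]
is open, and $\bigcap_m\mathcal{G}_m$ is precisely the set of $\phi$ for which $I_\phi$ is injective. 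As $C(X,[0,1]^D)$ is a Baire space, it suffices to prove that each $\mathcal{G}_m$ is dense, i.e.\ that every $\phi_0$ can be approximated within $\varepsilon$ by some $\phi$ separating all $1/m$-apart pairs. Fixing a topological embedding $X\hookrightarrow[0,1]^{\mathbb{N}}$, compactness gives finitely many of its coordinates $\psi_1,\dots,\psi_N\colon X\to[0,1]$ that already separate $1/m$-apart points with a uniform gap, so the task reduces to recording, equivariantly, enough of the finite-dimensional probe $\Psi=(\psi_1,\dots,\psi_N)$ into a small perturbation of $\phi_0$.

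\emph{Tilings and the dimension budget.} We may assume $X$ is infinite (otherwise the claim is trivial), so minimality forces $X$ to be aperiodic, hence $X$ has the marker property: each orbit can be cut, continuously and equivariantly, into intervals (``tiles'') whose lengths lie in a window $[L,2L]$; equivalently there is an equivariant map from $X$ to a space of tilings of $\mathbb{R}$ by such intervals. By the Lindenstrauss--Weiss $\varepsilon$-width characterization of mean dimension, for $L$ large the restriction of the orbit of $x$ to any window of length $\ell\in[L,2L]$ is determined, up to any preassigned precision $\eta$, by a point of a compact set of covering dimension at most $d_\ell\le(\mdim(X)+\varepsilon)\ell$, and these reduced data over a long window about the origin pin down $x$ itself to within $\eta$. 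Since $\mdim(X)<D/2$ we can arrange $2(\mdim(X)+\varepsilon)<D$, hence $2d_\ell<D\ell$ throughout.

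\emph{The band-limited interpolation --- the heart of the matter.} The idea is to build $\phi$ so that for every $x$ the sequence $I_\phi(x)\in\left([0,1]^D\right)^{\mathbb{Z}}$ consists of the integer samples of a band-limited function $F_x\colon\mathbb{R}\to[0,1]^D$ with spectrum in $[-1/2,1/2]$. Such an $F_x$ is entire of exponential type (Paley--Wiener), is recovered from $I_\phi(x)$ by $\sinc$-interpolation (Shannon's sampling theorem), and supports about $D\ell$ real degrees of freedom over any window of length $\ell$; equivariance of $x\mapsto F_x$ makes $I_\phi$ equivariant, and because one prescribes the integer samples directly and $\sinc$-interpolates, the choices made tile-by-tile assemble into one global function with no interface terms, so $I_\phi$ is automatically continuous. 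On each tile of length $\ell$ one loads the $\le d_\ell$-dimensional reduced datum of $x$ into the $\approx D\ell$ samples lying in that tile; since $2d_\ell<D\ell$, a general-position (Menger--N\"obeling-type) argument shows that a generic such loading makes $I_\phi$ injective on $1/m$-separated pairs, and keeping $\norm{\phi-\phi_0}<\varepsilon$ costs only a final affine rescaling into $[0,1]^D$. The inequality $2d_\ell<D\ell$ --- that is, $\mdim(X)<D/2$ --- is exactly the room needed to place a $d_\ell$-dimensional compactum in general position inside $D\ell$ coordinates, which is why $D/2$ is the natural threshold here (and, by the Lindenstrauss--Weiss-type examples, an optimal one).

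\emph{The main obstacle.} The serious work is hidden in the previous step. A band-limited function of band-width $1$ that interpolates prescribed integer samples need not stay in $[0,1]$ --- indeed $\sum_n|\sinc(t-n)|=\infty$ --- so data cannot be loaded freely: the amplitude constraint and the band constraint interact, and part of the Nyquist band (or of the cube) has to be reserved as slack used to pull $F_x$ back inside $[0,1]^D$. Executing this while keeping $x\mapsto F_x$ continuous, obtaining \emph{honest} injectivity rather than injectivity over merely a full-measure set of orbits, and making the tile positions readable off the signal without spending a positive density of samples, are the analytic maneuvers at the core of the argument; once the perturbation $\phi\in\mathcal{G}_m$ is produced for each $m$, the Baire argument concludes.
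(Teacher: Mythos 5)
Your overall strategy --- reduce to density of the open sets $\mathcal{G}_m$, cut orbits into tiles via the marker property, realize the perturbed map as integer samples of band-limited functions, and use a $2\dim<D$ general-position count on each tile --- is exactly the route the paper takes (this theorem is the $k=1$ case of Main Theorem \ref{main theorem: discrete signal}, proved via Main Theorem \ref{main theorem: continuous signal} and Proposition \ref{prop: main proposition}). But the proposal stops precisely where the proof lives, and the two items you defer are not routine ``maneuvers'' but the load-bearing constructions. First, interpolation at the critical rate does not exist: with spectrum exactly $[-1/2,1/2]$ and integer samples, sampling is not even injective ($\sin(\pi t)$ is band-limited in $[-1/2,1/2]$ and vanishes on $\mathbb{Z}$) and the interpolation operator is unbounded, so there is no well-defined continuous $x\mapsto F_x$ realizing prescribed samples, let alone one staying in $[0,1]^D$. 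The actual mechanism is to shrink the band strictly below the sampling rate, interpolate only on sparse ``admissible'' sets using kernels $\chi_0(t-\lambda)\,\sinc(\rho(t-\lambda))$ and a Neumann-series inverse (Section \ref{section: Interpolating functions}); this is where the constant $D/2$ is earned, and it forces the band to grow slightly at each perturbation step (Proposition \ref{prop: main proposition} lands in $\mathcal{B}_1(a_1+\delta,\dots)$, not in the original band), a feature your fixed-target $\mathcal{G}_m$ formulation cannot accommodate --- one must run the Baire iteration in the band-limited model and only sample at the very end.

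Second, the injectivity argument collapses unless the tiling of $x$ is recoverable from the signal itself: if $g(x)=g(y)$ but $x$ and $y$ carry different tilings, the tile-by-tile general-position comparison never starts, since the data is loaded relative to tile positions. The paper's resolution is to encode the tiling into a separate band-limited summand $g_2(x)$ whose spectrum is disjoint from that of the perturbed data $g_1(x)$, so that $g(x)=g(y)$ forces equality of tilings; Subsection \ref{subsection: strategy of the proof} identifies this as the main new idea even for $k=1$. Relatedly, samples near tile boundaries are not controlled by the interpolation (only those in $\mathrm{Int}_{r_0+N\sqrt{k}}W(x,n)$ are), so boundary points must be ``rescued'' by having some large tile carry a full copy of $\pi(T^px)$ --- the weight-function mechanism of Proposition \ref{prop: construction of weight functions}. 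You name both obstacles accurately, but naming them is not overcoming them; as written the proposal is a faithful outline of the proof architecture with its two essential constructions absent.
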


The condition $\mdim(X)<D/2$ is optimal because there exists a minimal system $(X,\mathbb{Z},T)$ of mean dimension 
$D/2$ which cannot be embedded in the shift on $\left([0,1]^D \right)^{\mathbb{Z}}$ (\cite{Lindenstrauss--Tsukamoto}).
Theorem \ref{theorem: Gutman--Tsukamoto} can be seen as a dynamical analogue of the classical theorem in dimension theory
\cite[Thm. V2]{Hurewicz--Wallman}:
A compact metric space $X$ can be topologically embedded in $[0,1]^D$ if $\dim X < D/2$.
We review the proof of this classical result in Subsection \ref{subsection: technical results on simplicial complexes}.

A motivation of the present paper is to generalize Theorem \ref{theorem: Gutman--Tsukamoto} to 
$\mathbb{Z}^k$-actions.
It is convenient to introduce the following notion.
A dynamical system $(X,\mathbb{Z}^k,T)$ is said to satisfy the \textbf{marker property} if for every natural number $N$ there exists 
an open set $U\subset X$ satisfying 
\[   U\cap T^{-n}U = \emptyset \quad (0<|n|<N), \quad 
     X = \bigcup_{n\in \mathbb{Z}^k} T^{-n} U. \]
This property obviously implies the aperiodicity.     
The following dynamical systems are known to satisfy the marker property:
\begin{itemize}
     \item  Aperiodic minimal systems. (This is an immediate fact from the definition.)
     \item  Aperiodic finite dimensional systems 
              (\cite[Theorem 6.1]{Gutman 2})\footnote{More generally, we can prove that if an aperiodic dynamical system satisfies the 
              \textit{small boundary property} then it satisfies the marker property. The small boundary property is a notion introduced by 
              Lindenstrauss--Weiss \cite[Definition 5.2]{Lindenstrauss--Weiss}, which is satisfied by every aperiodic finite dimensional system. 
              We do not use these facts. So we omit the detailed explanation.}.
     \item  If $X$ satisfies the marker property and if $\pi:Y\to X$ is an extension (i.e. $\mathbb{Z}^k$-equivariant continuous 
               surjection) then $Y$ also satisfies the marker property.
\end{itemize}
The authors do not know an example of aperiodic actions which do not satisfy the marker property.

Lindenstrauss and the first and third named authors \cite[Theorem 1.5]{Gutman--Lindenstrauss--Tsukamoto} proved:

\begin{theorem}[Gutman--Lindenstrauss--Tsukamoto, 2016]  \label{theorem: Gutman--Lindenstrauss--Tsukamoto}
If a dynamical system $(X,\mathbb{Z}^k,T)$ satisfies the marker property and 
\[ \mdim(X) < \frac{D}{2^{k+1}} \]
then we can embed it in the shift on $\left([0,1]^{2D}\right)^{\mathbb{Z}^k}$.
\end{theorem}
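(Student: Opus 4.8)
The plan is to run the Baire-category scheme common to mean dimension embedding results. An equivariant continuous map $f\colon X\to\left([0,1]^{2D}\right)^{\mathbb{Z}^k}$ is the same thing as a continuous map $\phi\colon X\to[0,1]^{2D}$, via $f=I_\phi$ with $I_\phi(x)=\left(\phi(T^n x)\right)_{n\in\mathbb{Z}^k}$, and $C(X,[0,1]^{2D})$ with the uniform metric is complete. Call $I_\phi$ an $\varepsilon$-embedding if $\diam I_\phi^{-1}(z)\le\varepsilon$ for every $z$. A routine compactness argument on the compact set $\{(x,y)\in X\times X : d(x,y)\ge\varepsilon\}$, where the condition $\exists\,n\in\mathbb{Z}^k:\phi(T^n x)\ne\phi(T^n y)$ is open and, by compactness, uniformly witnessed, shows that $\{\phi : I_\phi \text{ is an } \varepsilon\text{-embedding}\}$ is open in $C(X,[0,1]^{2D})$; and since $X$ is compact, any $\phi$ in $\bigcap_{m\ge 1}\{\phi : I_\phi \text{ is a } (1/m)\text{-embedding}\}$ gives a continuous injection of a compact space, hence an embedding. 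So it is enough to show each of these sets is dense: given $\phi_0$ and $\varepsilon,\eta>0$, produce $\phi$ with $\norm{\phi-\phi_0}<\eta$ that is an $\varepsilon$-embedding.

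To do this I would first discretize using the marker property. Fix $N$ large (in terms of $\varepsilon$, $\mdim(X)$ and the modulus of continuity of $T$) and take an open $U\subset X$ with $U\cap T^{-n}U=\emptyset$ for $0<|n|<N$ and $X=\bigcup_{n\in\mathbb{Z}^k}T^{-n}U$. From $U$ and a subordinate partition of unity one builds an equivariant tiling of each orbit by tiles $\tau(x)\subset\mathbb{Z}^k$ with $0\in\tau(x)$, $|\tau(x)|=\Theta(N^k)$ and $\diam\tau(x)=O(N)$, ranging over finitely many shapes, and one reserves some of the $2D$ coordinates of $\phi$ to carry this tiling. In parallel, the mean dimension hypothesis supplies, for $N$ large, a continuous map $P\colon X\to K$ into a finite simplicial complex of dimension $<N^k\cdot D/2^{k+1}$ with the property that $P(x)=P(y)$ forces $d(T^n x,T^n y)<\varepsilon$ for all $n\in[0,N)^k$; in particular $d(x,y)\ge\varepsilon$ implies $P(x)\ne P(y)$, so the ``content'' of $X$ visible through one tile factors through a complex of dimension $\lesssim\mdim(X)\cdot N^k$. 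Two points whose orbits have the same tiling pattern and the same $\phi$-values on a bounded neighbourhood of every tile are forced to be $\varepsilon$-close, so it suffices to perturb $\phi$ within $\eta$ so that, over each tile shape, the resulting signature map --- encoded tiling pattern together with the $\phi$-values on a fixed box around the tile --- is injective.

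Here one invokes the general-position principle behind the Hurewicz--Wallman theorem recalled in Subsection~\ref{subsection: technical results on simplicial complexes}: a generic map of a $d$-dimensional complex into $\mathbb{R}^m$ with $m>2d$ is injective. If a whole tile of $\Theta(N^k)$ lattice points could be treated as a single indivisible block, a generic perturbation would already do the job as soon as $2D\cdot N^k>2\,\mdim(X)\cdot N^k$, i.e.\ as soon as $\mdim(X)<D$. The main obstacle --- and the reason one must assume the far stronger bound $\mdim(X)<D/2^{k+1}$ --- is that the perturbation has to be one global function on $X$, while the signature boxes of neighbouring tiles overlap, so the general-position conditions for different tiles interact; worse, a $k$-dimensional box of side $\sim N$ cannot be put in general position in one step. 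I would organize this by induction on the coordinate directions: peel off one direction at a time, at each step paying a factor of $2$ to place the single remaining transverse coordinate in general position relative to the data already fixed, and using a compatible family of tilings to glue the partial constructions along the resulting slab decomposition; counting the $k$ directions together with a final base-case application of Hurewicz--Wallman, the bookkeeping requires $\mdim(X)<D/2^{k+1}$, and the obligation to carry the intermediate interface data alongside the content is plausibly why the target is $\left([0,1]^{2D}\right)^{\mathbb{Z}^k}$ rather than $\left([0,1]^{D}\right)^{\mathbb{Z}^k}$. This multi-scale matching is precisely the step the present paper replaces by encoding tilings into band-limited signals, which is what cuts the loss from $2^{k+1}$ down to $2$. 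Once density of each ``$(1/m)$-embedding'' set is established, Baire category concludes.
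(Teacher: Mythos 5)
The theorem you are addressing is quoted in this paper from Gutman--Lindenstrauss--Tsukamoto (2016) without proof; the only ingredient of that external proof reproduced here is the dynamical Voronoi tiling of Subsection \ref{subsection: dynamical Voronoi diagram}. Judged on its own terms, your proposal has a genuine gap. The framing is sound and standard: the correspondence $f=I_\phi$, openness of the $\varepsilon$-embedding condition via compactness of $\{(x,y):d(x,y)\ge\varepsilon\}$, the Baire-category reduction to density of the $(1/m)$-embeddings, the marker property producing equivariant tilings, and the bound $\widim_\varepsilon\left(X,d_{[N]}\right)\lesssim \mdim(X)\,N^k$ letting the ``content'' of one tile factor through a low-dimensional complex. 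All of this is correct, but it is also the easy part, common to every theorem of this type.

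The entire difficulty is concentrated in the step you dispatch in one sentence: ``peel off one direction at a time, at each step paying a factor of $2$ \dots the bookkeeping requires $\mdim(X)<D/2^{k+1}$.'' Nothing there is constructed or verified: what the slab/interface data actually is, how the perturbation over one slab is glued to the already-fixed perturbations over adjacent slabs while remaining a single continuous $\mathbb{Z}^k$-equivariant function of $x$ (the overlapping-signature problem you correctly identify is precisely this), why the loss per direction is a factor of $2$ rather than some other constant, and why the target needs $2D$ rather than $D$ coordinates --- you yourself write ``plausibly.'' Since the constant $2^{k+1}$ and the doubling of the target are exactly what the theorem asserts, they must emerge from an explicit construction with a dimension count, not from a heuristic tally of ``factors of $2$.'' As it stands, the proposal is an accurate description of the shape of the known argument together with an honest acknowledgment of where the work lies, but it does not contain that work, so it cannot be accepted as a proof.
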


After stating this theorem, the paper \cite{Gutman--Lindenstrauss--Tsukamoto} 
posed the problem of improving the condition $\mdim(X)< D/2^{k+1}$.
Quoting \cite[p. 782]{Gutman--Lindenstrauss--Tsukamoto}:
\begin{quote}
Note also that in our condition $\mdim(X)<D/2^{k+1}$, the constant involved is likely far from optimal. Presumably for an aperiodic 
$\mathbb{Z}^k$-system if $\mdim(X) < \frac{D}{2}$ then $X$ can be embedded in  $([0,1]^D)^{\mathbb{Z}^k}$.
\end{quote}

The first main theorem of the present paper confirms this conjecture under the assumption of the marker property:

\begin{main theorem} \label{main theorem: discrete signal}
If a dynamical system $(X,\mathbb{Z}^k,T)$ satisfies the marker property and 
\[  \mdim(X) < \frac{D}{2},   \]
then we can embed it in the shift on $\left([0,1]^D\right)^{\mathbb{Z}^k}$.
\end{main theorem}

As in the case of Theorem \ref{theorem: Gutman--Tsukamoto}, the condition $\mdim(X)< D/2$ is optimal because 
the paper \cite{Lindenstrauss--Tsukamoto}\footnote{Strictly speaking, the paper \cite{Lindenstrauss--Tsukamoto}
considered only the case of $k=1$. But their construction can be generalized to $\mathbb{Z}^k$-actions without any changes.} 
provides an example of aperiodic minimal system $(X,\mathbb{Z}^k,T)$ 
of mean dimension $D/2$ which cannot be embedded in the shift on $\left([0,1]^D\right)^{\mathbb{Z}^k}$.
Main Theorem \ref{main theorem: discrete signal} has some novelty even in the case of $k=1$:
Since both aperiodic finite dimensional systems and aperiodic minimal systems satisfy the marker property,
Main Theorem \ref{main theorem: discrete signal} unifies Jaworski's theorem (Theorem \ref{theorem: Jaworski})
and Theorem \ref{theorem: Gutman--Tsukamoto} into a single statement.

\subsection{Encoding into band-limited signals}  \label{subsection: embedding into band-limited signals}

A discovery of the paper \cite{Gutman--Tsukamoto minimal}
is that we can approach to the embedding problem in Subsection \ref{subsection: embedding into discrete signals}
via signal analysis.
We need to prepare some terminologies.
For a rapidly decreasing function $\varphi:\mathbb{R}^k\to \mathbb{C}$ we define its Fourier transforms by 
\begin{equation*}
  \begin{split}
    \mathcal{F}(\varphi)(\xi) = \hat{\varphi}(\xi) = \int_{\mathbb{R}^k} \varphi(t) \, e^{-2\pi \sqrt{-1} t\cdot \xi} \, dt_1\dots dt_k, \\
    \overline{\mathcal{F}}(\varphi)(t) = \check{\varphi}(t) = \int_{\mathbb{R}^k} \varphi(\xi)\,  e^{2\pi \sqrt{-1} t\cdot \xi} d\xi_1\dots d\xi_k.   
   \end{split}
\end{equation*}    
It follows that $\mathcal{F}\left(\overline{\mathcal{F}}(\varphi)\right) = \overline{\mathcal{F}}\left(\mathcal{F}(\varphi)\right) = \varphi$.
We extend $\mathcal{F}$ and $\overline{\mathcal{F}}$ to tempered distributions $\psi$ by the dualities
$\langle \mathcal{F}(\psi), \varphi \rangle = \langle \psi, \overline{\mathcal{F}}(\varphi)\rangle$
and $\langle \overline{\mathcal{F}}(\psi), \varphi \rangle = \langle \psi, \mathcal{F}(\varphi)\rangle$ 
where $\varphi$ are rapidly decreasing functions (Schwartz \cite[Chapter 7]{Schwartz}).
For example, if $\psi(t) = e^{2\pi\sqrt{-1}a\cdot t}$ $(a\in \mathbb{R}^k)$ then $\mathcal{F}(\psi)=\delta_a$ 
is the delta probability measure 
at $a$.

Let $a_1,\dots,a_k$ be positive numbers.
A bounded continuous function $\varphi:\mathbb{R}^k \to \mathbb{R}$ is said to be band-limited in 
$[-a_1/2,a_1/2]\times\dots \times [-a_k/2,a_k/2]$ if the Fourier transform $\hat{\varphi}$ satisfies 
\[  \supp\, \hat{\varphi} \subset  \left[-\frac{a_1}{2}, \frac{a_1}{2}\right]\times \dots \times \left[-\frac{a_k}{2}, \frac{a_k}{2}\right],   \]
which means that $\langle \hat{\varphi}, \phi \rangle = \langle \varphi, \check{\phi}\rangle = 0$ for all rapidly decreasing functions $\phi$
satisfying 
\[  \supp \, \phi \cap  \left[-\frac{a_1}{2}, \frac{a_1}{2}\right]\times \dots \times \left[-\frac{a_k}{2}, \frac{a_k}{2}\right] = \emptyset. \]
For example, when $k=1$, the functions 
\[ \sin(\pi t_1), \quad \cos(\pi t_1), \quad \frac{\sin(\pi t_1)}{\pi t_1} \]
are band-limited in $[-1/2,1/2]$.

We define $\mathcal{B}(a_1,\dots,a_k)$ as the Banach space of all bounded continuous functions $\varphi:\mathbb{R}^k\to \mathbb{R}$
band-limited in $[-a_1/2,a_1/2]\times \dots \times [-a_k/2,a_k/2]$.
Its norm is the $L^\infty$-norm.
We define $\mathcal{B}_1(a_1,\dots, a_k)$ as the set of $\varphi\in \mathcal{B}(a_1,\dots,a_k)$ satisfying 
$\norm{\varphi}_{L^\infty(\mathbb{R}^k)}\leq 1$.
In this paper we promise that the space $\mathcal{B}_1(a_1,\dots,a_k)$ is always endowed with the
\textbf{topology of uniform convergence over compact subsets}.
Namely $\mathcal{B}_1(a_1,\dots,a_k)$ is endowed with the topology given by the distance 
\[ d(\varphi,\psi) = \sum_{n=1}^\infty 2^{-n} \norm{\varphi-\psi}_{L^\infty(B_n)},  \quad 
   \left(B_n = \{t\in \mathbb{R}^k|\, |t|\leq n\}\right). \]
$\mathcal{B}_1(a_1,\dots,a_k)$ is compact (see Corollary \ref{cor: compactness of B_1} below) and endowed with a 
natural continuous action of $\mathbb{Z}^k$ defined by 
\[ \sigma:\mathbb{Z}^k\times \mathcal{B}_1(a_1,\dots,a_k)\to \mathcal{B}_1(a_1,\dots,a_k), \quad 
   \sigma^n(\varphi)(t) = \varphi(t+n). \] 
We call the dynamical system $(\mathcal{B}_1(a_1,\dots,a_k), \mathbb{Z}^k,\sigma)$ the \textbf{shift on $\mathcal{B}_1(a_1,\dots,a_k)$}.
Its mean dimension is equal to the product\footnote{This fact is not used in the paper. So we omit the proof.} 
\[  a_1\dots a_k, \]
which is a multi-dimensional version of band-width $W$
introduced in Subsection \ref{subsection: background}.
As band-width $W$ plays a crucial role in Shannon's theory, the quantity $a_1\dots a_k$ becomes a key parameter below.

We consider the problem of embedding arbitrary dynamical systems in the shift on $\mathcal{B}_1(a_1,\dots,a_k)$.
In other words we ask \textit{when we can encode a given dynamical system into band-limited signals}.
When $k=1$, the first and third named authors \cite[Theorem 1.7]{Gutman--Tsukamoto minimal} proved:

\begin{theorem}[Gutman--Tsukamoto] \label{theorem: Gutman--Tsukamoto continuous signal}
If an aperiodic minimal system $(X,\mathbb{Z},T)$ satisfies $\mdim(X)< a_1/2$ then we can embed it in the shift on 
$\mathcal{B}_1(a_1)$.
\end{theorem}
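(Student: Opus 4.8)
The plan is to establish the theorem as a dynamical, band-limited counterpart of the classical Hurewicz--Wallman embedding theorem ``$\dim Y<D/2\Rightarrow Y$ embeds in $[0,1]^D$'' (whose proof is recalled in Subsection~\ref{subsection: technical results on simplicial complexes}): one replaces $\dim Y$ by $\mdim(X)$, the ambient dimension $D$ by the band-width $a_1$, and ``a generic continuous map $Y\to\mathbb R^D$'' by ``a generic equivariant continuous map $X\to\mathcal B_1(a_1)$''. Let $\mathcal E$ be the set of $\mathbb Z$-equivariant continuous maps $F\colon X\to\mathcal B_1(a_1)$; it is a nonempty (it contains $x\mapsto 0$) complete metric space under $\rho(F,G)=\sup_{x\in X}d(F(x),G(x))$, and the compactness of $\mathcal B_1(a_1)$ (Corollary~\ref{cor: compactness of B_1}) is used repeatedly below. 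For $n\ge1$ put
\[
  U_n=\{\,F\in\mathcal E:\ F(x)\neq F(y)\ \text{whenever}\ x,y\in X,\ d(x,y)\ge 1/n\,\}.
\]
Since $X$ is compact, each $U_n$ is open, and $\bigcap_{n\ge1}U_n$ is exactly the set of equivariant embeddings $X\hookrightarrow\mathcal B_1(a_1)$ (an injective continuous equivariant map from a compact space is such an embedding). Hence, by Baire's theorem, it suffices to prove that every $U_n$ is dense in $\mathcal E$.

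Fix $F_0\in\mathcal E$ and $\eta>0$; I will produce $F\in U_n$ with $\rho(F,F_0)<\eta$ by perturbing $F_0$ along a marker structure. As $X$ is aperiodic and minimal it has the marker property, so for large $N$ there is an open $\mathcal U\subset X$ with $\mathcal U\cap T^{-m}\mathcal U=\emptyset$ for $0<|m|<N$ and $X=\bigcup_{m\in\mathbb Z}T^{-m}\mathcal U$; by compactness return times to $\mathcal U$ are bounded by some $R=R(N)$, so the orbit of each $x$ is partitioned into consecutive blocks of length in $[N,R]$, and --- by the usual device of replacing the sharp marker $\mathcal U$ with a continuous partition of unity supported near it --- this block partition and its labels can be made to depend continuously and equivariantly on $x$. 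Choose $\epsilon'\in(0,1/n)$ small enough that points of $X$ agreeing to within $\epsilon'$ along their first $N$ iterates have $F_0$-values within $\eta/2$ on the corresponding window (possible by uniform continuity and equivariance of $F_0$), and take $N$, hence each block length $\ell\in[N,R]$, so large that
\[
  \widim_{\epsilon'}(X,\bar d_\ell)\le(\mdim(X)+\varepsilon)\ell,\qquad \bar d_\ell(x,y)=\max_{0\le j<\ell}d(T^jx,T^jy),
\]
with $2(\mdim(X)+\varepsilon)<a_1$; fix $\epsilon'$-embeddings $\pi_\ell\colon(X,\bar d_\ell)\to P_\ell$ into polyhedra with $\dim P_\ell\le(\mdim(X)+\varepsilon)\ell$. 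Since $\pi_\ell$ identifies only pairs that are $\bar d_\ell$-closer than $\epsilon'$, we may replace $F_0$ by an equivariant $F_0'$ with $\rho(F_0',F_0)<\eta/2$ whose restriction to each length-$\ell$ block factors through the value of $\pi_\ell$ at that block's base point. Finally fix a smooth, rapidly decaying kernel $K$ band-limited in $[-a_1/2,a_1/2]$ that interpolates samples spaced slightly more than $1/a_1$ apart with bounded $L^\infty$-interpolation constant, so that over a block of length $\ell$, leaving short buffer zones at its ends, one can place about $a_1\ell$ essentially free translates of $K$. Then set $F=F_0'+H_v$, where over each length-$\ell$ block of $x$ the function $H_v(x)$ is a small combination of these translates whose heights are linear in a configuration vector $v$ in a Euclidean parameter space $V$, with coefficients built from $\pi_\ell$ at the block's base point. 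Band-limitedness and $\norm{F(x)}_{L^\infty(\mathbb R)}\le1$ follow from band-limitedness and decay of $K$ together with the buffer zones; equivariance and continuity in $x$ follow from those of the block partition and of $\pi_\ell$; and $\rho(F,F_0)<\eta$ once $|v|$ is small.

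It remains to choose an arbitrarily small $v$ making $F=F_0'+H_v$ lie in $U_n$; this general-position step is the crux. By construction both $F_0'$ and $H_v$ depend on $x$ only through the $P_\ell$-valued block data, so the argument is finite-dimensional. On a window $W$ made of a bounded number of consecutive blocks, the block data of $x$ lies in a product of the $P_\ell$'s of total dimension at most $(\mdim(X)+\varepsilon)|W|$, so the pair $(x,y)$ contributes a family of local data of dimension at most $2(\mdim(X)+\varepsilon)|W|$. For a fixed pair with $d(x,y)\ge1/n$ --- so $\bar d_\ell(x,y)\ge1/n>\epsilon'$ and $\pi_\ell$ separates $x$ and $y$ on the relevant block --- the equation $H_v(x)=H_v(y)+F_0'(y)-F_0'(x)$, read over $W$, is by the design of the heights a full-rank linear system in $v$, hence its solution set is an affine subspace of $V$ of codimension about $a_1|W|$. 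The union of these affine subspaces over the (at most $2(\mdim(X)+\varepsilon)|W|$-dimensional) family of local data, over the finitely many block-partition types, and --- by equivariance --- over the bounded portion of the orbit that must be inspected, is nowhere dense in $V$ precisely when $a_1|W|>2(\mdim(X)+\varepsilon)|W|$, that is when $\mdim(X)<a_1/2$. This is exactly where the strict inequality of the hypothesis is used, mirroring the use of $\dim Y<D/2$ in Hurewicz--Wallman. Choosing a small $v$ in the complement yields $F\in U_n$, and the proof is complete.

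The step I expect to be the main obstacle is the analytic construction and bookkeeping of the band-limited perturbation $H_v$: one must pack close to the information-theoretic maximum of about $a_1\ell$ real parameters into each length-$\ell$ block, yet keep $\norm{F(x)}_{L^\infty(\mathbb R)}\le1$ and prevent neighbouring blocks from interfering --- which forces the carefully tuned decaying kernel and the buffer zones --- while still retaining enough linear independence among the block translates for the codimension estimate ``$\approx a_1|W|$'' to hold. Arranging equivariance and continuity while the marker partition is only upper semicontinuous is a further, delicate technical point; and it is precisely the higher-dimensional form of this packing-and-tiling construction --- encoding a tiling of $\mathbb R^k$ into band-limited functions built from another tiling --- that, as the abstract announces, requires the new ideas of the present paper when $k\ge2$.
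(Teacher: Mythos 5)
Your skeleton---markers, simplicial approximation via $\widim_{\varepsilon}$, band-limited interpolation over each block, and a general-position count in the spirit of Lemmas \ref{lemma: approximation lemma} and \ref{lemma: embedding lemma}---is the right family of ideas, and it is broadly the strategy by which the paper obtains this statement (it is the $k=1$ case of Main Theorem \ref{main theorem: continuous signal}, proved via Proposition \ref{prop: main proposition}). But there are two genuine gaps, and they are really the same gap. First, your global Baire set-up is incompatible with the perturbation you then describe. The paper's one-step perturbation necessarily lands in the strictly larger space $\mathcal{B}_1(a_1+\delta)$: the extra frequencies in $[a_1/2,(a_1+\delta)/2]$ are where the marker/tiling data is stored. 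Accordingly the paper does not prove density of the $(1/n)$-embeddings inside a fixed $\mathcal{E}=\{\text{equivariant maps } X\to\mathcal{B}_1(a_1)\}$; it runs a successive-approximation scheme with band-widths $a_{1n}\nearrow a_1$ (see the proof of Main Theorem \ref{main theorem: continuous signal} from Proposition \ref{prop: main proposition}). Your argument instead must perturb an \emph{arbitrary} $F_0\in\mathcal{E}$ within $\mathcal{B}_1(a_1)$ itself, and one cannot first retreat to a smaller band: the values $F_0(x)$ may have spectrum reaching the endpoints $\pm a_1/2$, and such functions are at uniformly positive $L^\infty(\mathbb{R})$-distance from every space $\mathcal{B}(a_1-\epsilon)$. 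So either your $H_v$ must carve its ``free translates of $K$'' out of a band already saturated by $F_0$, or the fixed-band Baire scheme must be abandoned; as written, density of $U_n$ is not established.

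Second---and this is where the optimal constant is at stake---your codimension count tacitly assumes that $x$ and $y$ carry the same block decomposition. If the markers of $x$ and $y$ are offset, the translates of $K$ entering $H_v(x)$ and $H_v(y)$ sit on different grids: the rank of $v\mapsto (H_v(x)-H_v(y))|_W$ is then not ``about $a_1|W|$'' for free, and parametrizing the partially overlapping windows of the two orbits costs extra dimensions and extra buffer. Handling the mismatched case by pure dimension counting is exactly what produced the constant $D/36$ in Lindenstrauss's theorem; to reach $a_1/2$ one must instead arrange that $g(x)=g(y)$ \emph{forces} the two tilings to coincide, so that only the matched case survives the general-position step. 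The paper achieves this by encoding the tiling into an auxiliary function $g_2(x)$ supported in the extra frequency band (Subsection \ref{subsection: strategy of the proof} identifies this as the main new idea), which is precisely why gap one and gap two cannot be fixed independently. Relatedly, your buffer zones presuppose that every block is long: once the marker partition is made continuous and equivariant the tiles can degenerate, and your sketch has no substitute for the weight functions of Proposition \ref{prop: construction of weight functions} (the ``social welfare system'') that let large tiles absorb the encoding duties of small ones.
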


The second main theorem of the present paper generalizes this theorem to the higher dimensional case:

\begin{main theorem}  \label{main theorem: continuous signal}
If a dynamical system $(X,\mathbb{Z}^k,T)$ satisfies the marker property and  
\[  \mdim(X) < \frac{a_1 \dots a_k}{2},  \]
then we can embed it in the shift on $\mathcal{B}_1(a_1,\dots,a_k)$.
\end{main theorem}

As in the case of Main Theorem \ref{main theorem: discrete signal}, 
the condition $\mdim(X)<a_1\dots a_k/2$ is optimal because 
a modification of \cite{Lindenstrauss--Tsukamoto} provides an 
aperiodic minimal system $(X,\mathbb{Z}^k,T)$ of mean dimension $a_1\dots a_k/2$ which cannot be embedded in the shift on 
$\mathcal{B}_1(a_1,\dots,a_k)$.

We can prove Main Theorem \ref{main theorem: discrete signal} by using Main Theorem \ref{main theorem: continuous signal}.

\begin{proof}[Proof: Main Theorem \ref{main theorem: continuous signal} implies Main Theorem \ref{main theorem: discrete signal}]
From $\mdim(X)<D/2$ we can choose positive numbers $a_1,\dots, a_k$ satisfying 
\[ a_1<D,  \, a_2<1,\,  \dots, a_k<1, \quad \mdim(X) < \frac{a_1\dots a_k}{2}. \]
Main Theorem \ref{main theorem: continuous signal} implies that we can embed $X$ in the shift on $\mathcal{B}_1(a_1,\dots,a_k)$.
We define a lattice $\Lambda\subset \mathbb{R}^k$ by 
\[  \Lambda = \left\{\left(\frac{n_1}{D}, n_2, \dots, n_k\right) \middle| \, n_1,n_2\dots, n_k\in \mathbb{Z}\right\}. \]
It follows from a sampling theorem (see Lemma \ref{lemma: sampling theorem} below) that the map 
\[   \mathcal{B}_1(a_1,\dots,a_k) \to [-1, 1]^\Lambda, \quad \varphi \mapsto \varphi|_\Lambda \]
is injective.   
Set $e=(1/D, 0, \dots,0)\in \mathbb{R}^k$.
The above injectivity means that the $\mathbb{Z}^k$-equivariant map 
\[  \mathcal{B}_1(a_1,\dots,a_k) \to \left([-1,1]^D\right)^{\mathbb{Z}^k}, \quad 
    \varphi \mapsto \left(\varphi(n), \varphi(n+e), \dots, \varphi \left(n+(D-1)e\right) \right)_{n\in \mathbb{Z}^k} \]
is an embedding.    
Since the system $X$ can be embedded in the shift on $\mathcal{B}_1(a_1,\dots,a_k)$, it can be also embedded    
in the shift on $\left([-1,1]^D \right)^{\mathbb{Z}^k}$, which is
obviously isomorphic to the shift on $\left([0,1]^D \right)^{\mathbb{Z}^k}$.
\end{proof}

\subsection{One dimension versus multi-dimension; what are the main difficulties?}
\label{subsection: one dimension versus multi-dimension}

This subsection explains what are the main difficulties in the proof of Main Theorem \ref{main theorem: continuous signal}.
When the authors started the research of this paper, 
they optimistically thought that the proof of Main Theorem \ref{main theorem: continuous signal}
is probably more or less a direct generalization of the proof of Theorem \ref{theorem: Gutman--Tsukamoto continuous signal} in 
\cite{Gutman--Tsukamoto minimal}.
This expectation turned out to be wrong.
Completely new difficulties arose in the context of signal analysis.
The paper \cite{Gutman--Tsukamoto minimal} used the technique of \textit{one dimensional} signal analysis.
The proof of Main Theorem \ref{main theorem: continuous signal} uses \textit{mutli-dimensional} signal analysis.
The difference of one dimension/multi-dimension is fundamental and causes big changes of the approaches significantly.

We review some basic signal analysis in Subsection \ref{subsection: review of band-limited functions} below.
The main results there are Paley--Wiener's theorem (Lemma \ref{lemma: Paley--Wiener})
and a sampling theorem (Lemma \ref{lemma: sampling theorem}) for functions in $\mathcal{B}(a_1,\dots,a_k)$, whose
proofs are a simple generalization of (or a reduction to) the corresponding theorems in one-variable case.
They might give readers an impression that multi-dimensional signal analysis is just a simple generalization of 
one-dimensional signal analysis.
This is far from being true.
It is well-known in classical analysis community (cf. \cite{Olevskii--Ulanovskii}) that 
\textit{multi-dimensional signal analysis is inherently more difficult than one-dimensional signal analysis}.
Some important theorems in one-dimensional case \textit{cannot} be generalized to higher dimension.

For explaining further, we need to recall the statement of Paley--Wiener's theorem (Lemma \ref{lemma: Paley--Wiener}):
A bounded continuous function $f:\mathbb{R}^k \to \mathbb{R}$ belongs to $\mathcal{B}(a_1,\dots,a_k)$ if and only if 
it can be extended to a holomorphic function in $\mathbb{C}^k$ satisfying 
\[ \left|f\left(x_1+y_1\sqrt{-1},\dots,x_k+y_k\sqrt{-1}\right)\right| \leq \mathrm{const} \cdot 
    e^{\pi\left(a_1|y_1|+\dots+a_k|y_k|\right)}.  \]
Roughly speaking, band-limited functions are the same as \textit{holomorphic functions of exponential type}.
Therefore we can say that signal analysis is a part of complex analysis\footnote{Of course this is over-simplified.
Signal analysis is a mutli-faceted discipline and cannot be classified as a sub-area of complex analysis.
For example, we emphasized its communication theory aspect in Subsection \ref{subsection: background}.
But the complex analysis viewpoint is convenient here.}.
Then it is easy to see why multi-dimensional case is more difficult than one-dimensional case:
Zero points of holomorphic functions in $\mathbb{C}$ are isolated, whereas zero points of holomorphic functions in 
$\mathbb{C}^k$ $(k\geq 2)$ form positive dimensional complex varieties whose geometry are highly nontrivial in general.
This causes a fundamental difference in the nature of one-dimensional/multi-dimensional signal analysis.

More concretely speaking, we face difficulties of higher dimension in the following two issues.
(The second issue is more serious than the first one.)

\begin{description}
    \item[(1) Interpolation]
     \textit{Sampling and interpolation} are the most basic themes of signal analysis.
    We recall a sampling theorem stated in Lemma \ref{lemma: sampling theorem} with a bit simplification\footnote{Here we state 
    a (seemingly) simpler version of the theorem for clarifying the argument, but indeed this statement is equivalent to 
    Lemma \ref{lemma: sampling theorem}.}: Let $f\in \mathcal{B}(1,\dots,1)$ and $0<c<1$. If $f$ vanishes over
    $\Lambda \overset{\mathrm{def}}{=} c\mathbb{Z}^k$ then $f$ is identically zero.
    This theorem is valid for all dimensions.
    The crucial point of the statement is that we consider only \textbf{regular sampling}, which means that 
    the fundamental domain $[0,c]^k$ of the sampling set $\Lambda = c\mathbb{Z}^k$ is similar to the 
    \textit{frequency domain} $[-1/2,1/2]^k$. 
    It is also easy to prove the following \textit{regular} interpolation theorem:
    If $c>1$ then the map 
    \[  \mathcal{B}(1,\dots,1)\to \ell^\infty(c\mathbb{Z}^k), \quad f\mapsto \left(f(\lambda)\right)_{\lambda\in c\mathbb{Z}^k} \]
    is surjective.

    When $k=1$, Beurling \cite{Beurling} proved that essentially the same results also hold for 
    \textit{irregular} sampling/interpolation:
    Let $\Lambda\subset \mathbb{R}$ be a uniformly 
    discrete set, i.e. the infimum of $|\lambda_1-\lambda_2|$ over distinct $\lambda_1,\lambda_2\in \Lambda$ is positive.
    Suppose the \textbf{lower density}
    \[   \liminf_{R\to \infty} \inf_{t\in \mathbb{R}}\frac{\#(\Lambda\cap [t,t+R])}{R} \]
    is larger than $1$. 
    Under this setting, Beurling proved that if $f\in \mathcal{B}(1)$ vanishes over $\Lambda$ then $f$ is identically zero.
    Similarly, he also proved that if $\Lambda\subset \mathbb{R}$ is a uniformly discrete set and its \textbf{upper density} 
    \[  \limsup_{R\to \infty} \sup_{t\in \mathbb{R}} \frac{\#(\Lambda\cap [t,t+R])}{R}  \]
    is smaller than $1$, then the map 
    \[  \mathcal{B}(1) \to \ell^\infty(\Lambda), \quad f\mapsto \left(f(\lambda)\right)_{\lambda\in \Lambda} \]
    is surjective.
    Therefore only the lower/upper density of $\Lambda$ determines its sampling/interpolation property.
    
    When $k\geq 2$, the situation is much more messy.
    We have no clear theorems valid for irregular sampling and interpolation 
    (see \cite{Olevskii--Ulanovskii} for the details).
    For example, consider 
    \[  \Lambda_1 = \left\{\left(\varepsilon^2\,  t_1, \frac{t_2}{\varepsilon}\right)\middle|\, t_1,t_2\in \mathbb{Z}\right\}, \quad 
        \Lambda_2 =  \left\{\left(\varepsilon\,  t_1, \frac{t_2}{\varepsilon^2}\right)\middle|\, t_1,t_2\in \mathbb{Z}\right\} \quad 
         (\varepsilon >0). \]
    Suppose $\varepsilon$ is very small. The density of $\Lambda_1$ is equal to $1/\varepsilon$ (very large) and 
    the density of $\Lambda_2$ is equal to $\varepsilon$ (very small).
    The function $f(z_1,z_2) = \sin (\pi \varepsilon z_2) \in \mathcal{B}(1,1)$ vanishes over $\Lambda_1$ but is not identically zero.
    Thus $\Lambda_1$ is not a \textit{sampling set} for functions in $\mathcal{B}(1,1)$ although its density is 
    very large. 
    Similarly, it is also easy to prove that $\Lambda_2$ is not an \textit{interpolation set} for $\mathcal{B}(1,1)$, namely the map
    \[  \mathcal{B}(1,1) \to \ell^\infty(\Lambda_2), \quad f\mapsto  \left(f(\lambda)\right)_{\lambda\in \Lambda_2} \]
    is not surjective, although its density is very small.
    
    In the proof of Main Theorem \ref{main theorem: continuous signal} we need to construct interpolating functions for 
    some \textit{irregular} sets $\Lambda\subset \mathbb{R}^k$.
    The paper \cite{Gutman--Tsukamoto minimal} addressed the same problem (in dimension one) 
    by employing Beurling's interpolation theorem 
    mentioned above. But this theorem is not valid in higher dimension.

    \item[(2) Zero points of band-limited maps $f:\mathbb{C}^k \to \mathbb{C}^k$]
     One crucial ingredient of the proof of Main Theorem \ref{main theorem: continuous signal} is the study of (isolated) 
     zero points\footnote{A point $p\in \mathbb{C}^k$ is a zero point of $f:\mathbb{C}^k\to \mathbb{C}^k$ if $f(p)=0$.
     Notice that the domain and target of $f$ have the same dimension. Therefore \textit{generically} zero points of $f$ 
     are expected to be isolated.} 
     of some holomorphic maps $f = (f_1,\dots, f_k):\mathbb{C}^k\to \mathbb{C}^k$ all of whose entries $f_i$ are band-limited functions.
     We call such a map $f$ a \textbf{band-limited map}.
     It becomes important to know the growth of 
     \begin{equation}   \label{eq: growth of zero points}
        \{z\in \mathbb{C}^k|\, \text{$z$ is an isolated zero point of $f$ with $|z|<R$}\}  
     \end{equation}   
     as $R$ goes to infinity.
     If $k=1$, then this is a very easy problem. 
     It follows from \textit{Jensen's formula} or \textit{Nevanlinna's first main theorem} (\cite[Section 1.3]{Hayman},
     \cite[Section 1.1]{Noguchi--Winkelmann}) that if $f\in B(a)$ then 
    \[  \int_1^R \#\{z\in \mathbb{C}|\, f(x)=0, |z|<r\} \frac{dr}{r} \leq 2aR + \mathrm{const}_f. \]
     In particular (\ref{eq: growth of zero points}) grows at most linearly in $R$.
     Moreover if $f\in \mathbb{B}(a)$ has no zero points, then it must be of the form 
     \[ f(z) = e^{\alpha z + \beta} \]
     for some constants $\alpha$ and $\beta$.
     
     When $k\geq 2$, the situation is radically different.
     First, we have no simple description of $f:\mathbb{C}^k\to \mathbb{C}^k$ having no zero points. 
     For example, the map of the form 
     \[  f(z_1, z_2) = (e^{\alpha z_1+\beta}, f_2(z_1, z_2)), \quad (\text{$f_2(z_1,z_2)$: arbitrary band-limited function}),  \]
     have no zero points.
     Second (and more importantly), the set (\ref{eq: growth of zero points}) may have \textit{arbitrarily fast growth}.
     This inconvenient phenomena was first observed by Cornalba--Shiffman \cite{Cornalba--Shiffman} and known as 
     \textit{the failure of the transcendental Bezout theorem}.
     Here we briefly review their construction (with minor modification).
     Let $\{\alpha_n\}_{n=1}^\infty$ be an arbitrary sequence of positive integers (say, $\alpha_n = 2^{2^n}$).
     We choose a polynomial $p_n(w)$ (of one-variable) having $\alpha_n$ zeros in $|w|<1$.
     Let $\varphi:\mathbb{R}\to \mathbb{R}$ be a  (nonzero) rapidly decreasing function satisfying 
     $\supp \, \hat{\varphi} \subset [-1/2, 1/2]$.
     We choose $\beta_n>0$ so that the function 
     \[ g_n(w) \overset{\mathrm{def}}{=} \beta_n\,  \varphi(w) p_n(w) \]
     is bounded by $2^{-n}$ over $\mathbb{R}$.
     We define $f = (f_1,f_2):\mathbb{C}^2\to \mathbb{C}^2$ as follows:
     \begin{equation*}
          f_1(z_1,z_2) = \sin (\pi z_1), \quad
          f_2(z_1,z_2) = \sum_{n=1}^\infty g_n(z_2) \frac{\sin \pi(z_1-n)}{\pi(z_1-n)}.
     \end{equation*}
     Both $f_1$ and $f_2$ belong to $\mathcal{B}(1,1)$.
     The map $f$ has $\alpha_n$ zero points of the form $(n,w)$ $(|w|<1)$.
     Therefore 
     \[  \#\{z\in \mathbb{C}^2|\, \text{$z$ is an isolated zero point of $f$ and $|z|<n+1$}\} \geq \alpha_n. \]
     
     As a conclusion, it is hard (and sometimes impossible) to control the set (\ref{eq: growth of zero points})
     in higher dimension.
\end{description}

The above two issues show that general theory of multi-dimensional signal analysis is not strong enough for the proof of
Main Theorem \ref{main theorem: continuous signal}.
Thus we have to develop \textit{tailored methods} specific for the situation of the theorem.
This becomes the main technical task of the paper.
We address the issues (1) and (2) in Sections \ref{section: Interpolating functions} and 
\ref{section: dynamical construction of tiling-like band-limited functions} respectively.
The techniques developed there seem to have some independent interests, and possibly further applications 
will be found in a future.

The key ideas are as follows. (Here we ignore many details for simplicity. The real arguments are different.)

\begin{description}
      \item[(1)' Almost regular interpolation]
       As we mentioned above, we need to construct interpolation for some irregular sets $\Lambda\subset \mathbb{R}^k$.
       But our sets $\Lambda$ are not completely general uniformly discrete sets.
       They are ``almost'' regular sets in the sense that we have a decomposition 
       \[ \Lambda = \bigcup_{n=1}^\infty \Lambda_n \]
       such that each $\Lambda_n$ is a part of a regular interpolation set and that the distances between different pieces $\Lambda_m$
       and $\Lambda_n$ are sufficiently large.
       See Figure \ref{fig: interpolation}.
       We develop a technique of constructing interpolation for such almost regular sets.
       As a conclusion, we can prove that they have (essentially) all the nice properties which regular interpolation sets possess.

     \item[(2)' Good/bad decomposition of the space]
      It is very difficult in general to control the zero points of holomorphic maps 
      $f:\mathbb{C}^k\to \mathbb{C}^k$.
      The maps $f$ used in the proof of Main Theorem \ref{main theorem: continuous signal}
      are very special, but still we cannot control all their (isolated) zero points.
      Therefore we simply give up to control \textit{all} the zero points of $f$.
      Instead we decompose the space $\mathbb{C}^k$ into two disjoint regions:
      \[  \mathbb{C}^k = \mathbb{G} \cup \mathbb{B}. \]
      In the ``good region''$\mathbb{G}$, the zero points of $f$ are very sparsely distributed
      and we can control them as we like.
      In the ``bad region'' $\mathbb{B}$, the zero points of $f$ may have extremely high density,
      but the region $\mathbb{B}$ itself is very tiny.
     See Figure \ref{fig: good bad decomposition}.
     We consider a \textit{Voronoi diagram} with respect to the zero points of $f$.
     Then most of the space $\mathbb{C}^k$ is cover by very large (and hence well organized) tiles whose 
     Voronoi centers are zero points in $\mathbb{G}$.
     We don't have any control of tiles whose centers are located in $\mathbb{B}$.
     They may have very complicated structure.
     But every messy phenomena is confined in the tiny region $\mathbb{B}$ and does not affect much the whole picture.

\end{description}

\begin{figure}
    \centering
    \includegraphics[,bb= 0 0 500 160]{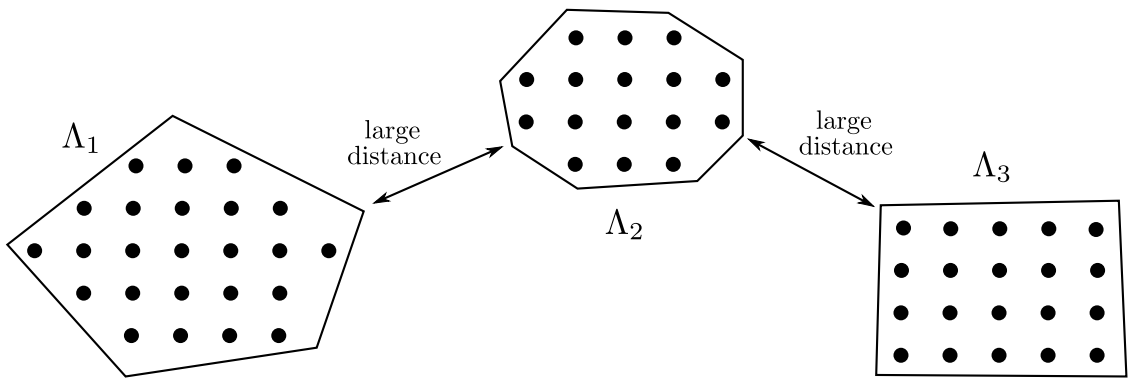}
    \caption{Almost regular interpolation set $\Lambda$. Different pieces $\Lambda_m$ and $\Lambda_n$ are sufficiently far from each other.} 
    \label{fig: interpolation}
\end{figure}

\begin{figure}
    \centering
    \includegraphics[,bb= 0 0 400 180]{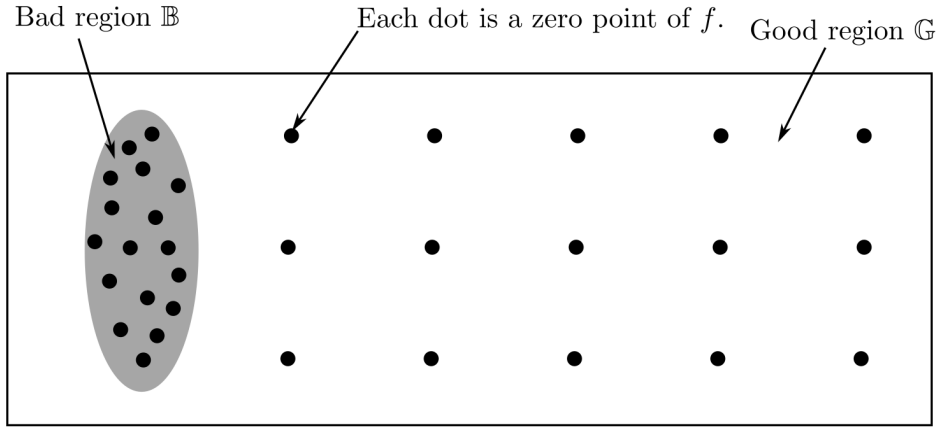}
    \caption{Schematic picture of the good/bad decomposition. 
    Zero points of $f$ are sparsely and almost regularly distributed inside the good region $\mathbb{G}$. The bad region $\mathbb{B}$
    may contain plenty of zero points but it is tiny.} 
    \label{fig: good bad decomposition}
\end{figure}

In Subsection \ref{subsection: main proposition} we state a key proposition (Proposition \ref{prop: main proposition}) 
and prove Main Theorem 
\ref{main theorem: continuous signal} assuming it.
The rest of the paper is devoted to the proof of Proposition \ref{prop: main proposition}.
The strategy of the proof is explained in Subsection \ref{subsection: strategy of the proof}.
It might be better for some readers to go to Subsections \ref{subsection: main proposition} and \ref{subsection: strategy of the proof}
before reading Section \ref{section: preliminaries}.

\subsection{Organization of the paper}  \label{subsection: organization of the paper}

In Section \ref{section: preliminaries} we review mean dimension, band-limited functions, 
simplicial complexes and convex sets.
In Section \ref{section: main proposition}
we state the main proposition and prove Main Theorem \ref{main theorem: continuous signal}
assuming it. In particular, we give an overview of the proof of the main theorem.
In Section \ref{section: Interpolating functions} we construct interpolating functions.
In Section \ref{section: dynamical construction of tiling-like band-limited functions} we construct 
certain ``tiling-like maps'' and study their properties.
In Section \ref{section: proof of Main Proposition} we prove the main proposition by using the results in 
Sections \ref{section: preliminaries}, \ref{section: Interpolating functions} and \ref{section: dynamical construction of 
tiling-like band-limited functions}.
In Section \ref{section: open problems} we explain some open problems.

\vspace{0.2cm}

\textbf{Acknowledgement.}
This paper was written when the third named author stayed in the Einstein Institute of Mathematics in the Hebrew University of Jerusalem.
He would like to thank the institute for its hospitality.

\section{Preliminaries}   \label{section: preliminaries}

\subsection{Review of mean dimension}  \label{subsection: review of mean dimension}

Here we review the definition of mean dimension \cite{Gromov, Lindenstrauss--Weiss}.
Throughout this paper we promise that every simplicial complex is finite (i.e. it has only finitely many simplicies) and that 
for a natural number $N$
\[ [N] = \{0,1,2,\dots, N-1\}^k.  \]

Let $(X,d)$ be a compact metric space and $Y$ a topological space.
Let $\varepsilon$ be a positive number and $f:X\to Y$ a continuous map.
$f$ is said to be an \textbf{$\varepsilon$-embedding} if $\diam f^{-1}(y)<\varepsilon$ for all $y\in Y$.
We define the \textbf{$\varepsilon$-width dimension} $\widim_\varepsilon(X,d)$ 
as the minimum integer $n$ such that there exist an $n$-dimensional simplicial complex $P$ and 
an $\varepsilon$-embedding $f:X\to P$.
This is a \textit{macroscopic dimension} of $X$ in the scale of $\varepsilon$.
The topological dimension $\dim X$ can be obtained by\footnote{If readers do not know the definition of topological dimension, then 
they may think that this is the \textit{definition} of topological dimension.
Only one nontrivial point is that $\dim X$ is a \textit{topological invariant} whereas $\widim_\varepsilon(X,d)$ depends on the distance $d$.} 
\[  \dim X = \lim_{\varepsilon\to 0} \widim_\varepsilon (X,d). \]

Let $(X,\mathbb{Z}^k, T)$ be a dynamical system, namely $X$ is a compact metric space (with a distance $d$) and 
$T:\mathbb{Z}^k\times X\to X$ is a continuous action.
For a finite subset $\Omega\subset \mathbb{Z}^k$ we define a distance $d_\Omega$ on $X$ by 
\[    d_\Omega(x,y) = \max_{n\in \Omega} d(T^n x, T^n y).  \]
It is easy to check the following \textit{subadditivity} and \textit{invariance}: 
\begin{equation*}  
   \begin{split}
     &\widim_\varepsilon(X,d_{\Omega_1\cup \Omega_2}) \leq \widim_\varepsilon(X,d_{\Omega_1}) + \widim_\varepsilon(X,d_{\Omega_2}),  \\
     &\widim_\varepsilon(X, d_{a+\Omega}) = \widim_\varepsilon(X,d_\Omega) \quad 
     (a\in \mathbb{Z}^k, a+\Omega = \{a+x|\, x\in \Omega\}). 
   \end{split}
\end{equation*}   
Then by the standard \textit{division argument} we can show the existence of the limit 
\[   \lim_{N\to \infty} \frac{\widim_\varepsilon \left(X,d_{[N]}\right)}{N^k}. \]
We define the \textbf{mean dimension} of $(X,\mathbb{Z}^k,T)$ by 
\[ \mdim(X,\mathbb{Z}^k,T) = \lim_{\varepsilon\to 0}  
    \left(\lim_{N\to \infty} \frac{\widim_\varepsilon \left(X,d_{[N]}\right)}{N^k}  \right). \]
This is a topological invariant, namely it is independent of the choice of the distance $d$.
We usually abbreviate this to $\mdim(X)$.

\subsection{Review of band-limited functions}  \label{subsection: review of band-limited functions}

Here we review some basic theorems on band-limited functions.
Throughout this subsection we assume that $a_1,\dots,a_k$ are positive numbers.
We denote by $z=x+y\sqrt{-1}$ the standard coordinate of $\mathbb{C}^k$ with 
$x=(x_1,\dots,x_k), y=(y_1,\dots,y_k)\in \mathbb{R}^k$.

\begin{lemma}  \label{lemma: exponential type}
Let $f:\mathbb{C}^k\to \mathbb{C}$ be a holomorphic function such that there exists $C>0$ satisfying 
$\left|f\left(x+y\sqrt{-1}\right)\right|  \leq C e^{\pi\left(a_1|y_1|+\dots+a_k|y_k|\right)}$.
Then it follows that 
\[  \left|f\left(x+y\sqrt{-1}\right)\right|   \leq \norm{f}_{L^\infty(\mathbb{R}^k)} e^{\pi(a_1|y_1|+\dots+a_k|y_k|)}. \]
\end{lemma}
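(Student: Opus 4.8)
The plan is to prove a Phragmén–Lindelöf type estimate, reducing the $k$-variable statement to the classical one-variable case by fixing all but one variable and deforming the growth factor. The content is that the a priori growth bound $Ce^{\pi(a_1|y_1|+\dots+a_k|y_k|)}$ can be upgraded to the bound with constant $\norm{f}_{L^\infty(\mathbb{R}^k)}$.

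First I would treat one variable at a time. Fix $x_2,\dots,x_k,y_2,\dots,y_k$ and consider $g(z_1) = f(z_1,x_2+y_2\sqrt{-1},\dots,x_k+y_k\sqrt{-1})$ as a holomorphic function of $z_1 = x_1 + y_1\sqrt{-1}$. It satisfies $|g(z_1)| \le C' e^{\pi a_1 |y_1|}$ where $C' = C e^{\pi(a_2|y_2|+\dots+a_k|y_k|)}$. The classical one-variable Phragmén–Lindelöf argument shows that such a $g$, which moreover is bounded on the real axis, satisfies $|g(z_1)| \le M_1 e^{\pi a_1|y_1|}$ where $M_1 = \sup_{x_1\in\mathbb{R}}|g(x_1)|$; one applies the maximum principle to $g(z_1)e^{\pm\pi\sqrt{-1}a_1 z_1}e^{-\epsilon\sqrt{-1}z_1}$ (resp. with $e^{+\epsilon\sqrt{-1}z_1}$) on the upper (resp. lower) half-plane to kill the exponential and then let $\epsilon\to 0$. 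This is exactly the case $k=1$ of the lemma and I would either cite it or reproduce the short argument.

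The key point is the iteration. Having established the bound in the first variable, I obtain
\[
\left|f\left(x+y\sqrt{-1}\right)\right| \le e^{\pi a_1|y_1|}\sup_{x_1\in\mathbb{R}}\left|f\left(x_1,x_2+y_2\sqrt{-1},\dots,x_k+y_k\sqrt{-1}\right)\right|.
\]
Now for each fixed real $x_1$ the function $(z_2,\dots,z_k)\mapsto f(x_1,z_2,\dots,z_k)$ is holomorphic in $\mathbb{C}^{k-1}$ with the analogous exponential-type bound, and by induction on $k$ the inner supremum is at most $e^{\pi(a_2|y_2|+\dots+a_k|y_k|)}$ times $\sup_{(x_1,\dots,x_k)\in\mathbb{R}^k}|f(x_1,\dots,x_k)| = \norm{f}_{L^\infty(\mathbb{R}^k)}$. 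Combining the two estimates gives the claim. Strictly one should be slightly careful that the one-variable estimate is applied uniformly, but since $M_1$ is replaced by a supremum over $\mathbb{R}$ at each stage the bookkeeping is routine.

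The main obstacle, such as it is, is purely a matter of care rather than depth: one must make sure the Phragmén–Lindelöf deformation is legitimate, i.e. that $f$ restricted to each complex line is of genuine exponential type (not merely bounded) so that the auxiliary functions $f(z_1,\ast)e^{\mp\pi\sqrt{-1}a_1z_1}$ are bounded on closed half-planes after the $\epsilon$-regularization, and that taking iterated suprema over the real directions does not lose the finiteness of $\norm{f}_{L^\infty(\mathbb{R}^k)}$ — which it does not, since that is literally the sup over all of $\mathbb{R}^k$. No genuinely multi-dimensional difficulty (of the kind flagged in the introduction regarding zero sets) enters here; this lemma is one of the "simple generalizations" of the one-variable theory.
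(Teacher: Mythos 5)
Your proof is correct, but it follows a different reduction to the one-variable case than the paper does. You freeze all variables but the first, apply the one-variable Phragm\'en--Lindel\"of bound to $z_1\mapsto f(z_1,x_2+y_2\sqrt{-1},\dots)$ to pull out the factor $e^{\pi a_1|y_1|}$ at the cost of a supremum over real $x_1$, and then induct on $k$ for the remaining variables; the bookkeeping works because at each stage the frozen variables are real, so the iterated suprema are all dominated by $\norm{f}_{L^\infty(\mathbb{R}^k)}$. The paper instead makes a \emph{single} application of the one-variable lemma: for fixed $x+y\sqrt{-1}\in\mathbb{C}^k$ it restricts $f$ to the complex line $w\mapsto x+yw$, obtaining a one-variable function $g$ of exponential type $\pi(a_1|y_1|+\dots+a_k|y_k|)$ whose restriction to $\mathbb{R}$ lands in $f(\mathbb{R}^k)$, and evaluates at $w=\sqrt{-1}$. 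That avoids any induction and any per-variable case analysis, at the price of being slightly less "coordinate-wise" in spirit; your variable-by-variable scheme is essentially the one the paper itself uses later in the proof of the Paley--Wiener lemma, so it fits naturally with the surrounding material. The only point to be careful about in your write-up is the $\epsilon$-regularization in the base case: as written, $e^{-\epsilon\sqrt{-1}z_1}$ has modulus $e^{\epsilon y_1}$, which \emph{grows} in the upper half-plane; the cleaner route is to note that $g(z_1)e^{\pm\pi\sqrt{-1}a_1z_1}$ is already bounded on the relevant closed half-plane by the hypothesis, and then invoke the bounded Phragm\'en--Lindel\"of principle for a half-plane (or work in quarter-planes). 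This is the standard one-variable argument the paper cites rather than reproves, so the issue is cosmetic.
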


\begin{proof}
The paper \cite[Lemma 2.1]{Gutman--Tsukamoto minimal} proves this for $f:\mathbb{C}\to \mathbb{C}$.
So we consider the case of $k>1$.
Fix $x+y\sqrt{-1}\in \mathbb{C}^k$. For $s+t\sqrt{-1}\in \mathbb{C}$ we set 
$g\left(s+t\sqrt{-1}\right) = f\left(x+y\left(s+t\sqrt{-1}\right)\right)$.
$g$ is a holomorphic function satisfying 
\[  \left|g\left(s+t\sqrt{-1}\right)\right|  \leq C e^{\pi |t|(a_1|y_1|+\dots+a_k|y_k|)}.   \]
By the statement of the one variable case, $\left|g\left(s+t\sqrt{-1}\right)\right|$ is bounded by 
\[ \norm{g}_{L^\infty (\mathbb{R})} e^{\pi |t|(a_1|y_1|+\dots+a_k|y_k|)}  \leq 
   \norm{f}_{L^\infty(\mathbb{R}^k)} e^{\pi |t|(a_1|y_1|+\dots+a_k|y_k|)}.   \]
Letting $s=0$ and $t=1$, we get the statement.    
\end{proof}

\begin{lemma}[Paley--Wiener's theorem]  \label{lemma: Paley--Wiener}
Let $f:\mathbb{R}^k\to \mathbb{C}$ be a bounded continuous function. Then the following two conditions are equivalent.
\begin{enumerate}
   \item The Fourier transform $\hat{f}$ is supported in $\prod_{i=1}^k [-a_i/2,a_i/2]$.
   \item $f$ can be extended to a holomorphic function in $\mathbb{C}^k$ such that there exists $C>0$ satisfying 
           $\left|f\left(x+y\sqrt{-1}\right)\right|  \leq C e^{\pi(a_1|y_1|+\dots+a_k|y_k|)}$.
\end{enumerate}
\end{lemma}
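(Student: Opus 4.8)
The plan is to reduce the $k$-variable statement to the classical one-variable Paley--Wiener theorem by slicing, rather than redoing the contour-integral estimates from scratch. For the implication $(2)\Rightarrow(1)$, suppose $f$ extends holomorphically to $\mathbb{C}^k$ with the stated exponential bound. I would show $\hat f$ is supported in $\prod_i[-a_i/2,a_i/2]$ by testing against a rapidly decreasing $\phi$ whose support misses this box: since the box is a product and the complement of a product is covered by the sets $\{|\xi_i|>a_i/2\}$, a partition-of-unity argument lets me assume $\supp\phi\subset\{\xi_i>a_i/2\}$ for a single fixed $i$ (the case $\xi_i<-a_i/2$ being symmetric). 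Then I freeze the other $k-1$ real variables, regard $f$ as a holomorphic function of the single variable $z_i$ of exponential type $\pi a_i$ — here Lemma \ref{lemma: exponential type} gives the clean bound $\norm{f}_{L^\infty}e^{\pi a_i|y_i|}$ uniformly in the frozen variables — and apply the one-dimensional Paley--Wiener theorem (or directly: shift the contour $\mathrm{Im}\,z_i\to -\infty$ in the integral $\int f(\dots,x_i,\dots)e^{-2\pi\sqrt{-1}x_i\xi_i}dx_i$, the exponential decay $e^{-2\pi|y_i|(\xi_i-a_i/2)}$ killing the integrand) to conclude the partial Fourier transform in $x_i$ vanishes on $\supp\phi$. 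Fubini then gives $\langle\hat f,\phi\rangle=0$.

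For the implication $(1)\Rightarrow(2)$, the cleanest route is to convolve $f$ with an approximate identity to make it Schwartz, apply the easy direction of one-variable Paley--Wiener variable-by-variable to the mollified function to get holomorphic extension with exponential type, pass to the limit, and use a Vitali/Montel normal-families argument together with the uniform bound from Lemma \ref{lemma: exponential type} to get a holomorphic extension of $f$ itself with the desired estimate. Concretely: pick $\psi_\delta$ with $\hat\psi_\delta$ smooth, compactly supported near $0$, and $\int\psi_\delta=1$; then $f*\psi_\delta$ has compactly supported smooth Fourier transform contained in a slightly enlarged box $\prod_i[-(a_i+\delta)/2,(a_i+\delta)/2]$, hence (by the elementary inverse-Fourier-integral formula, differentiating under the integral sign) extends to an entire function of exponential type $\pi(a_i+\delta)$ in each variable, and Lemma \ref{lemma: exponential type} upgrades this to $|f*\psi_\delta(x+y\sqrt{-1})|\le\norm{f*\psi_\delta}_{L^\infty}e^{\pi\sum(a_i+\delta)|y_i|}\le\norm{f}_{L^\infty}e^{\pi\sum(a_i+\delta)|y_i|}$. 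Letting $\delta\to0$, the family $\{f*\psi_\delta\}$ is locally uniformly bounded on $\mathbb{C}^k$, so by Montel a subsequence converges locally uniformly to a holomorphic function, which restricts to $f$ on $\mathbb{R}^k$ (as $f*\psi_\delta\to f$ uniformly on compacta, $f$ being bounded and uniformly continuous — the latter since $\hat f$ is compactly supported) and inherits the bound with $C=\norm{f}_{L^\infty}$.

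The main obstacle I anticipate is purely bookkeeping rather than conceptual: in $(2)\Rightarrow(1)$ one must be careful that the one-variable Paley--Wiener argument applies to each \emph{slice} with constants uniform in the frozen variables, and that is exactly what Lemma \ref{lemma: exponential type} is designed to supply, so the slicing goes through cleanly; in $(1)\Rightarrow(2)$ the only subtlety is justifying the limiting argument, which the uniform exponential bound handles. No higher-dimensional pathology enters here — this lemma is genuinely a routine reduction to one variable, which is why the paper flags it as such. I would therefore expect to present the proof compactly, citing \cite[Lemma 2.1]{Gutman--Tsukamoto minimal} for the one-variable input and invoking Lemma \ref{lemma: exponential type} at the two points above, with the mollification step for $(1)\Rightarrow(2)$ written out in a few lines.
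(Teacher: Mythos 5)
Your overall plan (reduce to one variable, mollify, pass to the limit) is in the right spirit and close to the paper's, but both directions as you have written them lean on Fourier transforms being honest functions or convergent integrals, which is exactly what fails for a merely bounded continuous $f$; the paper's proof is organized around fixing precisely this point, and your proposal skips the fix. In $(2)\Rightarrow(1)$, the slice $x_i\mapsto f(x_1,\dots,x_i,\dots,x_k)$ is bounded but in general not in $L^1$ or $L^2$, so ``the partial Fourier transform in $x_i$'' is not a convergent integral, the contour shift does not help (the decay $e^{-2\pi|y_i|(\xi_i-a_i/2)}$ is in $y_i$, not in $x_i$, so the shifted integral still diverges), and the concluding ``Fubini'' step from slice-wise support information to the support of the $k$-dimensional \emph{distributional} transform $\hat f$ is not justified. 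The paper makes this slicing argument only after reducing to the case $f\in L^2$, where the slices are $L^2$ for a.e.\ choice of the frozen variables and Fubini applies to genuine integrals.

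In $(1)\Rightarrow(2)$ the same issue recurs: convolving gives $\widehat{f*\psi_\delta}=\hat f\cdot\hat\psi_\delta$, which is a priori only a compactly supported \emph{distribution} (and its support shrinks rather than enlarges --- you have the convolution/multiplication roles reversed), so the ``elementary inverse-Fourier-integral formula'' is not available, and invoking the entire extension of a compactly supported distribution is the Paley--Wiener--Schwartz theorem, i.e.\ the very statement the paper is trying to prove from elementary inputs. The missing device is the one the paper uses in both directions: \emph{multiply} $f$ by a rapidly decreasing band-limited function $\varphi_\varepsilon(x)=\varphi(\varepsilon x)$ with $\supp\hat\varphi\subset(-1/2,1/2)^k$. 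The product $f_\varepsilon=\varphi_\varepsilon f$ is rapidly decreasing, hence $L^2$, at the cost of enlarging the frequency support to $\prod_i[-(a_i+\varepsilon)/2,(a_i+\varepsilon)/2]$; one then runs the whole argument in $L^2$ (where your slicing and your inversion-formula steps are all valid) and recovers $f$ either by dividing out $\varphi_\varepsilon$ on the complex domain (using unique continuation and the locally uniform convergence $\varphi_\varepsilon\to1$ to see the quotient is holomorphic, then letting $\varepsilon\to0$ in the bound) or by letting $\hat f_\varepsilon\to\hat f$ in the sense of distributions. Without this $L^2$ reduction, neither of your limiting arguments goes through.
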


\begin{proof}
This is a special case of a distribution version of Paley--Wiener's theorem (\cite[Chapter 7, Section 8]{Schwartz}).
Here we prove it directly only by using the standard version of Paley--Wiener's theorem (\cite[Section 3.3]{Dym--McKean}):
For a (one-variable) $L^2$-function $f:\mathbb{R}\to \mathbb{C}$, the above two Conditions (1) and (2) are equivalent.
Set $\Omega = \prod_{i=1}^k [-a_i/2,a_i/2]$.

First we assume that $f:\mathbb{R}^k \to \mathbb{C}$ is a $L^2$-function and prove (1) $\Leftrightarrow$ (2).
Suppose $f$ satisfies (1). Then 
\[ f(x) = \int_{\mathbb{R}^k} \hat{f}(\xi) e^{2\pi\sqrt{-1}x\cdot \xi} \, d\xi_1\dots d\xi_k
         = \int_{\Omega} \hat{f}(\xi) e^{2\pi\sqrt{-1}x\cdot \xi} \, d\xi_1\dots d\xi_k. \]
We extend $f$ to a holomorphic function in $\mathbb{C}^k$ by 
\[  f\left(x+y\sqrt{-1}\right) = \int_\Omega \hat{f}(\xi) e^{2\pi\sqrt{-1}(x+y\sqrt{-1})\cdot \xi} \, d\xi_1\dots d\xi_k. \]
Then we can check Condition (2):
\[  \left|f\left(x+y\sqrt{-1}\right)\right| \leq  \int_\Omega |\hat{f}(\xi)| e^{-2\pi y\cdot \xi} \, d\xi_1\dots d\xi_k  \leq 
    \mathrm{const}\cdot e^{\pi\left(a_1|y_1|+\dots+a_k |y_k|\right)}. \]

Next suppose $f$ satisfies (2).
Fix $\xi \in \mathbb{R}^k$ and suppose $|\xi_1|>a_1/2$.
We will show $\hat{f}(\xi)=0$.
We fix $x_2,\dots,x_k$ and consider $f(x_1,\dots,x_k)$ as a one-variable function of variable $x_1$.
This is a $L^2$-function (of variable $x_1$) for almost every choice of $(x_2,\dots,x_k)$.
It follows from $\left|f\left(x_1+ y_1\sqrt{-1},x_2,\dots,x_k\right)\right|  \leq \mathrm{const} \cdot e^{\pi a_1 |y_1|}$
and the standard Paley--Wiener's theorem that
\[  \int_{-\infty}^\infty f(x_1,x_2,\dots,x_k) e^{-2\pi\sqrt{-1} x_1 \xi_1} \, d x_1 = 0
     \quad \text{for a.e. $(x_2,\dots,x_k)$}.  \]
Therefore 
\[ \hat{f}(\xi) = \int_{\mathbb{R}^{k-1}} \left(\int_{-\infty}^\infty f(x_1,x_2,\dots,x_k) e^{-2\pi \sqrt{-1}x_1 \xi_1} \, d x_1 \right)
                     e^{-2\pi\sqrt{-1}(x_2 \xi_2 + \dots + x_k \xi_k)} \, d x_2 \dots d x_k = 0. \]
Thus the proof has been completed under the assumption $f\in L^2$.

Next we only assume that $f:\mathbb{R}^k\to \mathbb{C}$ is a bounded continuous function.
We choose a rapidly decreasing function $\varphi:\mathbb{R}^k\to \mathbb{R}$ such that 
$\max_{x\in \mathbb{R}^k} |\varphi(x)| = \varphi(0) = 1$ and $\supp\, \hat{\varphi} \subset (-1/2,1/2)^k$.
We set $\varphi_\varepsilon(x) = \varphi(\varepsilon x)$ for $\varepsilon>0$.
It satisfies $\hat{\varphi_\varepsilon}(\xi) = \varepsilon^{-k}\hat{\varphi}(\xi/\varepsilon)$ and hence 
$\supp \, \hat{\varphi_\varepsilon} \subset (-\varepsilon/2,\varepsilon/2)^k$.
$\varphi_\varepsilon$ converges to $1$ uniformly over every compact subset of $\mathbb{R}^k$ as $\varepsilon\to 0$.
Since $\varphi_\varepsilon$ is rapidly decreasing (and hence in $L^2$), we can extend it to a holomorphic function satisfying 
\begin{equation} \label{eq: bound on varphi, Paley--Wiener}
  \left|\varphi_\varepsilon\left(x+y\sqrt{-1}\right)\right| \leq e^{\pi \varepsilon (|y_1|+\dots+|y_k|)}
  \quad (\text{here we used Lemma \ref{lemma: exponential type}}).
\end{equation}
We set $f_\varepsilon(x) = \varphi_\varepsilon(x) f(x)$.
Then $f_\varepsilon$ is a $L^2$-function and satisfies $|f_\varepsilon(x)| \leq |f(x)|$.

Suppose $f$ satisfies (1). Then $\hat{f_\varepsilon}$ is supported in $\prod_{i=1}^k [-(a_i+\varepsilon)/2, (a_i+\varepsilon)/2]$.
Thus by the previous argument for $L^2$-functions and Lemma \ref{lemma: exponential type}, 
we can extend $f_\varepsilon$ to a holomorphic function satisfying 
\begin{equation}  \label{eq: bound on f_varepsilon, Paley--Wiener}
   \begin{split}
     \left|f_\varepsilon\left(x+y\sqrt{-1}\right)\right|  &\leq \norm{f_\varepsilon}_{L^\infty(\mathbb{R}^k)} 
          e^{\pi \left\{(a_1+\varepsilon)|y_1| +  \dots  + (a_k+\varepsilon)|y_k|\right\}} \\
         &\leq \norm{f}_{L^\infty(\mathbb{R}^k)}  e^{\pi \left\{(a_1+\varepsilon)|y_1| +  \dots  + (a_k+\varepsilon)|y_k|\right\}}.
   \end{split}
\end{equation}   
Then we can  extend $f$ to a \textit{meromorphic function} in $\mathbb{C}^k$ by 
\[  f\left(x+y\sqrt{-1}\right) = \frac{f_\varepsilon\left(x+y\sqrt{-1}\right)}{\varphi_\varepsilon\left(x+y\sqrt{-1}\right)}. \]
This is independent of $\varepsilon>0$ because of the unique continuation.
Since $\varphi_\varepsilon$ converges to $1$ uniformly over every compact subset, $f\left(x+y\sqrt{-1}\right)$ is actually 
a holomorphic function (namely it does not have neither poles nor indeterminacy points).
By taking the limit in (\ref{eq: bound on f_varepsilon, Paley--Wiener}) we get 
\[ \left| f\left(x+y\sqrt{-1}\right)\right| \leq \norm{f}_{L^\infty(\mathbb{R}^k)} e^{\pi(a_1 |y_1| +\dots+ a_k |y_k|)}. \]
 
 Next suppose $f$ satisfies (2).
 By (\ref{eq: bound on varphi, Paley--Wiener}) and Lemma \ref{lemma: exponential type}, 
 we get (\ref{eq:  bound on f_varepsilon, Paley--Wiener}) again.
Since $f_\varepsilon \in L^2$, the Fourier transform $\hat{f_\varepsilon}$ is supported in 
$\prod_{i=1}^k [-(a_i+\varepsilon)/2, (a_i+\varepsilon)/2]$. 
The functions $\hat{f_\varepsilon}$ converge to $\hat{f}$ in the topology of distribution.
Thus $\hat{f}$ is supported in $\prod_{i=1}^k [-a_i/2, a_i/2]$.
\end{proof}

\begin{corollary}  \label{cor: compactness of B_1}
The space $\mathcal{B}_1(a_1,\dots,a_k)$ of continuous functions $f:\mathbb{R}^k\to [-1,1]$ 
band-limited in $\prod_{i=1}^k [-a_i/2,a_i/2]$ is compact with respect to the topology of uniform convergence over compact subsets 
of $\mathbb{R}^k$.
\end{corollary}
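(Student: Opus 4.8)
The plan is to regard $\mathcal{B}_1(a_1,\dots,a_k)$ as a family of holomorphic functions on $\mathbb{C}^k$ and to run a normal-families argument. First, by the Paley--Wiener characterization (Lemma \ref{lemma: Paley--Wiener}) together with Lemma \ref{lemma: exponential type}, every $f\in\mathcal{B}_1(a_1,\dots,a_k)$ extends to a holomorphic function on $\mathbb{C}^k$ satisfying the uniform estimate
\[
\left|f\left(x+y\sqrt{-1}\right)\right| \le e^{\pi(a_1|y_1|+\dots+a_k|y_k|)}.
\]
In particular the family $\mathcal{B}_1(a_1,\dots,a_k)$ is uniformly bounded on every compact subset of $\mathbb{C}^k$, hence normal by Montel's theorem in several complex variables.

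Second, since the topology on $\mathcal{B}_1(a_1,\dots,a_k)$ is given by the explicit distance $d$ written out just before the statement, it is metrizable, so it is enough to prove sequential compactness. Given a sequence $(f_j)$ in $\mathcal{B}_1(a_1,\dots,a_k)$, normality produces a subsequence converging uniformly on compact subsets of $\mathbb{C}^k$ to a holomorphic function $f$; restricting to $\mathbb{R}^k$, this is exactly convergence in the topology of $\mathcal{B}_1(a_1,\dots,a_k)$. It remains to verify $f\in\mathcal{B}_1(a_1,\dots,a_k)$: the pointwise limit on $\mathbb{R}^k$ of functions valued in $[-1,1]$ is again valued in $[-1,1]$, so $\norm{f}_{L^\infty(\mathbb{R}^k)}\le 1$; and passing to the limit in the displayed exponential-type bound shows $f$ inherits the same bound, whence by Lemma \ref{lemma: Paley--Wiener} its Fourier transform is supported in $\prod_{i=1}^k[-a_i/2,a_i/2]$. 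Thus $f\in\mathcal{B}_1(a_1,\dots,a_k)$ and the space is sequentially compact, hence compact.

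This is essentially routine; the only two points needing a word of care are that sequential compactness suffices (metrizability of the prescribed topology) and that the band-limited property is closed under uniform-on-compacts limits, which is precisely where the Paley--Wiener characterization is used. One can also avoid complex analysis entirely and argue on $\mathbb{R}^k$ via Arzel\`a--Ascoli: Bernstein-type derivative estimates make $\mathcal{B}_1(a_1,\dots,a_k)$ uniformly Lipschitz, hence equicontinuous and precompact, and closedness follows again because the Fourier transforms of a uniformly convergent sequence converge in the sense of distributions with $\supp$ confined to the fixed box. I expect no genuine obstacle here; the main content is bookkeeping around the Paley--Wiener theorem already established above.
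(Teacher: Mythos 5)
Your argument is correct and follows essentially the same route as the paper: extend holomorphically via Lemmas \ref{lemma: exponential type} and \ref{lemma: Paley--Wiener} to get the uniform exponential-type bound, deduce local boundedness and hence normality, and extract a subsequence converging uniformly on compacta. You are in fact slightly more careful than the paper in explicitly checking that the limit function remains in $\mathcal{B}_1(a_1,\dots,a_k)$ (via the closedness of the exponential-type bound and Paley--Wiener), a step the paper leaves implicit.
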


\begin{proof}
Let $\{f_n\}_{n=1}^\infty$ be a sequence in $\mathcal{B}_1(a_1,\dots,a_k)$.
By Lemmas \ref{lemma: exponential type} and \ref{lemma: Paley--Wiener} the functions $f_n$ can be holomorphically extended over 
$\mathbb{C}^k$ satisfying $|f_n(z)|\leq e^{\pi(a_1|y_1|+\dots+a_k|y_k|)}$.
In particular $\{f_n\}$ is bounded over every compact subset of $\mathbb{C}^k$.
By the Cauchy integration formula, it becomes an equicontinuous family over every compact subset
(i.e. normal family).
So we can choose a converging subsequence by the Arzela--Ascoli theorem.
\end{proof}

\begin{lemma}[Sampling theorem]  \label{lemma: sampling theorem}
Let $f:\mathbb{R}^k\to \mathbb{R}$ be a bounded continuous function band-limited in $\prod_{i=1}^k [-a_i/2,a_i/2]$.
Suppose that there exist positive numbers $b_1,\dots,b_k$ such that $a_i b_i < 1$ for all $1\leq i\leq k$ and that $f$ vanishes over 
\[ \{(b_1 n_1,\dots,b_k n_k)|\,n_1,\dots,n_k\in \mathbb{Z}\}. \] 
Then $f$ is identically zero.
\end{lemma}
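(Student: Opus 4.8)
The plan is to reduce to the $L^2$-case by the same mollification used in the proof of Lemma \ref{lemma: Paley--Wiener}, and then to annihilate the mollified function by expanding its Fourier transform in the Fourier basis attached to the dual lattice.

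First I would fix a rapidly decreasing $\varphi:\mathbb{R}^k\to\mathbb{R}$ with $\varphi(0)=\max_{x}|\varphi(x)|=1$ and $\supp\hat\varphi\subset(-1/2,1/2)^k$, put $\varphi_\varepsilon(x)=\varphi(\varepsilon x)$ and $f_\varepsilon=\varphi_\varepsilon f$, exactly as there. Then $f_\varepsilon$ is a bounded continuous $L^2$-function, $\hat f_\varepsilon$ is supported in $\prod_{i=1}^k[-(a_i+\varepsilon)/2,(a_i+\varepsilon)/2]$, and $f_\varepsilon$ still vanishes on the sampling lattice, since $f_\varepsilon(b_1n_1,\dots,b_kn_k)=\varphi_\varepsilon(b_1n_1,\dots,b_kn_k)\,f(b_1n_1,\dots,b_kn_k)=0$. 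As $a_ib_i<1$ for every $i$, from now on I fix $\varepsilon>0$ small enough that $(a_i+\varepsilon)b_i<1$ for all $i$.

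The key step is then an orthogonality computation. Since $\supp\hat f_\varepsilon$ is contained in $\prod_{i=1}^k[-1/(2b_i),1/(2b_i)]$, we may view $\hat f_\varepsilon$ as an element of $L^2\left(\prod_{i=1}^k[-1/(2b_i),1/(2b_i)]\right)$. The tensor product of the classical one-dimensional Fourier bases shows that
\[
\left\{\,\sqrt{b_1\cdots b_k}\;e^{2\pi\sqrt{-1}(b_1n_1\xi_1+\dots+b_kn_k\xi_k)}\;\middle|\;n_1,\dots,n_k\in\mathbb{Z}\,\right\}
\]
is an orthonormal basis of this space, and the coefficient of $\hat f_\varepsilon$ along the basis vector indexed by $(n_1,\dots,n_k)$ equals, up to the factor $\sqrt{b_1\cdots b_k}$,
\[
\int_{\mathbb{R}^k}\hat f_\varepsilon(\xi)\,e^{-2\pi\sqrt{-1}(b_1n_1\xi_1+\dots+b_kn_k\xi_k)}\,d\xi_1\dots d\xi_k=f_\varepsilon(-b_1n_1,\dots,-b_kn_k)=0,
\]
where the middle equality is Fourier inversion (legitimate because $\hat f_\varepsilon\in L^1\cap L^2$ and $f_\varepsilon$ is continuous). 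Hence $\hat f_\varepsilon=0$, so $f_\varepsilon\equiv 0$.

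Finally I would let $\varepsilon\to 0$: since $\varphi_\varepsilon\to 1$ uniformly on each compact $K\subset\mathbb{R}^k$, we have $\varphi_\varepsilon>1/2$ on $K$ for all sufficiently small $\varepsilon$, and then $f=f_\varepsilon/\varphi_\varepsilon=0$ on $K$; as $K$ is arbitrary, $f\equiv 0$. I do not expect a serious obstacle here: the only points needing a little care are the support computation for $\hat f_\varepsilon$ --- already carried out in the proof of Lemma \ref{lemma: Paley--Wiener} --- and the verification that the exponentials above form an orthonormal basis of $L^2$ over the product of the dual intervals, which is merely a tensor product of the one-dimensional statement. An alternative, slightly more pedestrian route avoids mollification entirely: freeze $x_2,\dots,x_k$ at lattice points and apply the one-dimensional sampling theorem in $x_1$ to conclude $f(\,\cdot\,,b_2n_2,\dots,b_kn_k)\equiv 0$, then repeat the argument successively in each remaining variable; but the computation above seems cleaner and self-contained.
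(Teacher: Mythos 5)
Your proposal is correct and follows essentially the same route as the paper: reduce to the $L^2$ case by the mollification from the Paley--Wiener lemma, then kill $\hat f_\varepsilon$ by expanding it in the exponential basis attached to the dual lattice, whose coefficients are the (vanishing) sample values. The paper merely phrases the orthogonality step after rescaling $g(x)=f(b_1x_1,\dots,b_kx_k)$ so that the Fourier series lives on $[-1/2,1/2]^k$, which is the same computation as your expansion over $\prod_{i=1}^k[-1/(2b_i),1/(2b_i)]$.
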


\begin{proof}
As in the proof of Lemma \ref{lemma: Paley--Wiener},
it is enough to prove the statement under the assumption that $f$ is a $L^2$-function.
Then the proof is standard as follows.
Set $g(x_1,\dots,x_k) = f(b_1 x_1,\dots, b_k x_k)$.
This vanishes on $\mathbb{Z}^k$.
\[ \hat{g}(\xi) = \frac{1}{b_1\dots b_k} \hat{f}\left(\frac{\xi_1}{b_1},\dots,\frac{\xi_k}{b_k}\right). \]
Then $\hat{g}$ is supported in 
\[  \prod_{i=1}^k \left[-\frac{a_i b_i}{2}, \frac{a_i b_i}{2}\right] \subset \left[-\frac{1}{2},\frac{1}{2}\right]^k. \]
Consider the Fourier series of $\hat{g}(\xi)$ for $\xi \in (-1/2,1/2)^k$:
\[ \hat{g}(\xi) = \sum_{n\in \mathbb{Z}^k} a_n e^{-2\pi\sqrt{-1} n\cdot \xi}. \]
The coefficients $a_n$ are given by 
\[ a_n = \int_{[-1/2,1/2]^k} e^{2\pi \sqrt{-1} n\cdot \xi} \hat{g}(\xi) d\xi_1\dots d\xi_k 
   = \overline{\mathcal{F}}\left(\mathcal{F}(g)\right)(n) = g(n)=0. \]
Thus $\hat{g}=0$. This implies $g=0$ and $f=0$.
\end{proof}

\subsection{Technical results on simplicial complexes}  \label{subsection: technical results on simplicial complexes}

The results of this subsection are used only in Section \ref{section: proof of Main Proposition}.
So readers can postpone reading this subsection until they come to Section \ref{section: proof of Main Proposition}.
Recall that we promised that every simplicial complex is finite.
(This is mainly for the simplicity of the exposition.)
Let $P$ be a simplicial complex.
A map $f:P\to \mathbb{R}^n$ is said to be \textbf{simplicial} if it has the form 
\[  f\left(\sum_{k=0}^m \lambda_k v_k \right) = \sum_{k=0}^m \lambda_k f(v_k),  \quad
    \left(\lambda_k\geq 0, \> \sum_{k=0}^m\lambda_k =1\right),   \]
on every simplex $\Delta\subset P$, where $v_0, \dots, v_m$ are the vertices of $\Delta$.
We define $\norm{x}_\infty = \max_{1\leq i\leq n} |x_i|$ for 
$x=(x_1,\dots,x_n)\in \mathbb{R}^n$.

\begin{lemma}[Approximation lemma]  \label{lemma: approximation lemma}
Let $\varepsilon$ and $\delta$ be positive numbers.
Let $(X,d)$ be a compact metric space and $P$ a simplicial complex.
Let $\pi:X\to P$ be an $\varepsilon$-embedding and
$f:X\to \mathbb{R}^n$ a continuous map satisfying 
\[  d(x,y) < \varepsilon \Longrightarrow \norm{f(x)-f(y)}_\infty < \delta. \]
Then, after taking a sufficiently fine subdivision of $P$, we can find a simplicial map 
$g:P\to \mathbb{R}^n$ satisfying 
\[  \norm{f(x)-g(\pi(x))}_\infty < \delta \quad (\forall x\in X). \]
\end{lemma}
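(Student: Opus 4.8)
The plan is to exploit compactness of $X$ to reduce the statement to a finite combinatorial problem and then resolve that problem by a barycentric-type subdivision of $P$ together with a vertex-by-vertex definition of $g$. First I would use the $\varepsilon$-embedding property together with a Lebesgue-number argument. Since $\pi\colon X\to P$ is continuous and $X$ is compact, the preimages $\pi^{-1}(\mathrm{St}(v))$ of the open stars of vertices $v$ of $P$ form an open cover of $X$; more usefully, for each point $p$ of the polyhedron $|P|$ the fiber $\pi^{-1}(B(p,\rho))$ has diameter tending to $0$ as $\rho\to 0$ uniformly, because if not one extracts (by compactness of $X\times X$) a pair of points at distance $\ge\varepsilon'$ mapping to the same point of $P$, contradicting $\mathrm{diam}\,\pi^{-1}(y)<\varepsilon$ after a small thickening. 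Concretely: there is $\rho>0$ so that for every $p\in|P|$ one has $\mathrm{diam}\,\pi^{-1}\bigl(B(p,\rho)\bigr)<\varepsilon$.

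Next I would pass to a subdivision $P'$ of $P$ so fine that every closed simplex of $P'$ has diameter less than $\rho$; barycentric subdivision iterated sufficiently many times does this since the mesh shrinks geometrically. Then for each vertex $w$ of $P'$, the closed star $\overline{\mathrm{St}}(w)$ in $P'$ is contained in some ball $B(p_w,\rho)$, hence $\pi^{-1}\bigl(\overline{\mathrm{St}}(w)\bigr)$ has diameter $<\varepsilon$. Choose, for each such $w$ with $\pi^{-1}(w)\ne\emptyset$ — or more safely with $\pi^{-1}(\overline{\mathrm{St}}(w))\ne\emptyset$ — a point $x_w\in\pi^{-1}\bigl(\overline{\mathrm{St}}(w)\bigr)$ and set $g(w)=f(x_w)$; extend $g$ affinely over each simplex of $P'$, which is exactly the simplicial condition. (For vertices $w$ whose star meets no fiber, the value of $g(w)$ is irrelevant and can be set to $0$.)

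It remains to estimate $\norm{f(x)-g(\pi(x))}_\infty$ for arbitrary $x\in X$. Write $\pi(x)$ in barycentric coordinates on the carrier simplex $\Delta=\langle w_0,\dots,w_m\rangle$ of $P'$, say $\pi(x)=\sum_{j}\lambda_j w_j$ with $\lambda_j\ge 0$, $\sum_j\lambda_j=1$. Then $g(\pi(x))=\sum_j\lambda_j f(x_{w_j})$, a convex combination, so $\norm{f(x)-g(\pi(x))}_\infty\le\max_j\norm{f(x)-f(x_{w_j})}_\infty$. For each $j$ with $\lambda_j>0$ we have $\pi(x)\in\Delta\subset\overline{\mathrm{St}}(w_j)$, and also $x_{w_j}\in\pi^{-1}\bigl(\overline{\mathrm{St}}(w_j)\bigr)$; since this set has diameter $<\varepsilon$ and contains both $x$ (because $\pi(x)\in\overline{\mathrm{St}}(w_j)$) and $x_{w_j}$, we get $d(x,x_{w_j})<\varepsilon$, hence $\norm{f(x)-f(x_{w_j})}_\infty<\delta$ by the hypothesis on $f$. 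Taking the maximum over the finitely many $j$ with $\lambda_j>0$ yields $\norm{f(x)-g(\pi(x))}_\infty<\delta$, as required.

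The main obstacle I anticipate is the first step: making precise and correct the claim that fibers of $\pi$ over small balls of $|P|$ — not just over single points — have small diameter. The hypothesis only directly bounds $\mathrm{diam}\,\pi^{-1}(y)$ for single points $y$, and one must upgrade this to a uniform statement over $\rho$-balls via a compactness/contradiction argument; choosing $\varepsilon$ with a little room to spare (replacing $\varepsilon$ by some $\varepsilon'<\varepsilon$ throughout, or observing that $\widim$-type $\varepsilon$-embeddings can be taken with strict inequality) makes this go through cleanly. Everything after that — controlling the subdivision mesh, the vertex assignment, and the convex-combination estimate — is routine.
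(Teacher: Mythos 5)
Your proof is correct and follows essentially the same route as the paper's: subdivide $P$ finely enough that the preimages of vertex stars have diameter less than $\varepsilon$, define $g$ on each vertex by sampling $f$ on the corresponding preimage, extend affinely, and conclude via the convex-combination estimate. The only differences are cosmetic — you spell out the compactness argument producing the uniform $\rho$ (which the paper asserts implicitly when it says ``by subdividing $P$ we can assume...''), and you work with closed stars rather than open stars.
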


\begin{proof}
We reproduce the proofs in \cite{Gutman--Lindenstrauss--Tsukamoto, Gutman--Tsukamoto minimal} for the completeness.
By subdividing $P$ we can assume that for every vertex $v\in P$ the diameter of $\pi^{-1}(O(v))$ is smaller than $\varepsilon$,
where $O(v)$ is the open star of $v$, i.e. the union of the relative interiors of all simplices containing $v$.
For each vertex $v\in P$ we choose $g(v)\in f(\pi^{-1}(O(v)))$. (If this is empty, then we choose arbitrary $g(v)$.)
We linearly extend $g$ all over $P$.
Take $x\in X$ and let $\Delta\subset P$ be the simplex containing $\pi(x)$ in its relative interior.
Every vertex $v\in \Delta$ satisfies $\pi(x) \in O(v)$ and hence $\norm{f(x)-g(v)}_\infty <\delta$
because the diameter of $f(\pi^{-1}(O(v)))$ (which contains both $f(x)$ and $g(v)$) is smaller than $\delta$.
$g(\pi(x))$ is a convex combination of such $g(v)$. Hence $\norm{f(x)-g(\pi(x))}_\infty <\delta$.
\end{proof}

Let $P$ be a simplicial complex.
We denote by $V$ the set of all vertices of $P$.
The set $\mathrm{Hom}(P,\mathbb{R}^n)$ of simplicial maps $f:P\to \mathbb{R}^n$ can be identified with $\left(\mathbb{R}^n\right)^V$, 
which is endowed with the natural topology.

\begin{lemma}[Embedding lemma]  \label{lemma: embedding lemma}
  \begin{enumerate}
  \item If $\dim P<n/2$ then the set of simplicial embeddings (i.e. injective simplicial maps) $f:P\to \mathbb{R}^n$ is an open dense subset of 
  $\mathrm{Hom}(P,\mathbb{R}^n)$.
  \item Let $U$ be the set of simplicial maps $f:P\to \mathbb{R}^n$ such that, for every subcomplex $Q\subset P$ and every subset 
  $A\subset \{1,2,\dots,n\}$ with $\#A > 2\dim Q$, the map 
  \[  Q\to \mathbb{R}^A, \quad x\mapsto f(x)|_A \]
  is an embedding. Then $U$ is open and dense in $\mathrm{Hom}(P,\mathbb{R}^n)$.
  \end{enumerate}
\end{lemma}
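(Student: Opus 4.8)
The plan is to prove (1) as a standard general-position statement, and then to deduce (2) formally from (1) by pulling back along coordinate projections.

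\emph{Proof of (1).} Write $V$ for the vertex set of $P$ and identify $\mathrm{Hom}(P,\mathbb{R}^n)$ with $(\mathbb{R}^n)^V$. The only numerical input is that $\dim P<n/2$ forces $2\dim P+2\le n+1$, so the union of the vertex sets of any two simplices of $P$ has at most $n+1$ elements. I would first establish \textbf{density}. Let $W\subset\mathrm{Hom}(P,\mathbb{R}^n)$ be the set of simplicial maps $f$ carrying every subset $S\subset V$ with $\#S\le n+1$ to an affinely independent family $(f(v))_{v\in S}$. For a fixed such $S$, affine dependence of $(f(v))_{v\in S}$ is the simultaneous vanishing of the maximal minors of the matrix of difference vectors, a finite set of polynomials in the coordinates of $(f(v))_{v\in S}$ not all identically zero (one can place $\#S\le n+1$ points of $\mathbb{R}^n$ in affinely general position); hence the locus where $(f(v))_{v\in S}$ is affinely dependent is a proper algebraic subset of $(\mathbb{R}^n)^V$, in particular closed with empty interior, and the finite union over all $S$ shows that $W$ is open and dense. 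Then I would check that $W$ consists of embeddings: if $f\in W$ and $f(x)=f(y)$, let $\sigma_0,\tau_0$ be the unique simplices whose relative interiors contain $x,y$; then $f(x)$ is an affine combination with strictly positive coefficients both of $(f(v))_{v\in V(\sigma_0)}$ and of $(f(w))_{w\in V(\tau_0)}$, and since $V(\sigma_0)\cup V(\tau_0)$ has at most $n+1$ elements and so maps to an affinely independent family, uniqueness of barycentric coordinates in that family forces $\sigma_0=\tau_0$ and then $x=y$. A continuous injection out of the compact space $P$ is an embedding, so $W$ is contained in the set $E$ of simplicial embeddings; in particular $E$ is dense.

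\emph{Openness of $E$.} This is the delicate point, since in general $E$ is strictly larger than $W$ (an embedding may have $n+1$ of its vertex images affinely dependent). I would use the characterization: $f\in E$ if and only if (a) for every simplex $\Delta$ the family $(f(v))_{v\in V(\Delta)}$ is affinely independent, and (b) $f(\overline{\sigma})\cap f(\overline{\tau})=f(\overline{\sigma\cap\tau})$ for every ordered pair of simplices $\sigma,\tau$ (the right side being $\emptyset$ when $\sigma,\tau$ share no vertex), using $\overline{\sigma}\cap\overline{\tau}=\overline{\sigma\cap\tau}$. Clause (a) cuts out an open set, being finitely many affine-independence requirements. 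On that open set each condition (b) is then open for a fixed pair: when $\sigma,\tau$ share no vertex, $\overline{\sigma}$ and $\overline{\tau}$ are disjoint compacta, $f\mapsto f(\overline{\sigma})$ is continuous for the Hausdorff metric (a simplicial map is Lipschitz in its vertex data, uniformly over $P$), and disjointness of a continuously varying pair of compacta is open; when $\sigma,\tau$ share a common face $\rho$, the images $f(\overline{\sigma}),f(\overline{\tau})$ are honest simplices meeting exactly in their common face $f(\overline{\rho})$, and one must verify that such a configuration is stable under small perturbations of the vertices — the governing data being the dimension of $\mathrm{aff}(f(V(\sigma)\cup V(\tau)))$, which is lower semicontinuous and hence locally constant, together with the half-spaces of $\mathrm{aff}(f(\overline{\rho}))$ containing the remaining vertices of $\sigma$ and of $\tau$. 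I expect this last affine-position stability statement for a pair of simplices meeting in a common face to be the main obstacle; everything around it is bookkeeping. Granting it, $E$ is a finite intersection of open sets, hence open, which completes (1).

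\emph{Proof of (2).} This follows formally from (1). Since $P$ is finite it has only finitely many subcomplexes $Q$, and $\{1,\dots,n\}$ has finitely many subsets $A$. Fix a pair $(Q,A)$ with $\#A>2\dim Q$, i.e. $\dim Q<\#A/2$. Restriction to $Q$ followed by the coordinate projection $\mathbb{R}^n\to\mathbb{R}^A$ is a map $\mathrm{Hom}(P,\mathbb{R}^n)=(\mathbb{R}^n)^{V(P)}\to\mathrm{Hom}(Q,\mathbb{R}^A)=(\mathbb{R}^A)^{V(Q)}$ which, being itself a coordinate projection, is continuous, open and surjective. By (1) applied to $Q$ and $\mathbb{R}^A$, the set of simplicial embeddings $Q\to\mathbb{R}^A$ is open and dense in $\mathrm{Hom}(Q,\mathbb{R}^A)$, and the preimage of an open dense set under a continuous open surjection is again open and dense. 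Intersecting these finitely many open dense subsets of $\mathrm{Hom}(P,\mathbb{R}^n)$ shows that $U$ is open and dense.
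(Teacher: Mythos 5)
Your density argument for (1) and your deduction of (2) from (1) are exactly the paper's: general position of every $\le n+1$ vertex images (using $2\dim P+1\le n$), and pulling back the open dense set of embeddings $Q\to\mathbb{R}^A$ along the open surjection $\mathrm{Hom}(P,\mathbb{R}^n)\to\mathrm{Hom}(Q,\mathbb{R}^A)$. The characterization of injectivity by your conditions (a) and (b), and the openness of (b) for disjoint simplices, are also fine and match the paper.

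The gap is precisely where you flag it: you never prove the stability of the configuration of two simplices $\Delta_1,\Delta_2$ meeting along a common face, you only ``grant'' it, and the route you sketch does not work as stated. The dimension of $\mathrm{aff}\bigl(f(V(\Delta_1)\cup V(\Delta_2))\bigr)$ is lower semicontinuous but \emph{not} locally constant: take two triangles sharing an edge and lying in a common $2$-plane of $\mathbb{R}^n$ on opposite sides of that edge; this is an embedded configuration whose affine hull has dimension $2$, yet an arbitrarily small perturbation raises it to $3$ (while remaining embedded), so any argument resting on local constancy of that dimension breaks down. The paper closes this case differently and more directly: fix a vertex $v$ of $\Delta_1\cap\Delta_2$, let $\Delta_1'$ and $\Delta_2'$ be the convex hulls of $V(\Delta_1)\setminus V(\Delta_1\cap\Delta_2)$ and $V(\Delta_2)\setminus V(\Delta_1\cap\Delta_2)$, and observe that injectivity of $f$ on $\Delta_1\cup\Delta_2$ amounts to the positivity of
\[
\min\left\{\text{angle between } f(\overrightarrow{vw_1}) \text{ and } f(\overrightarrow{vw_2}) \ \middle|\ w_1\in\Delta_1',\ w_2\in\Delta_2'\right\},
\]
a minimum over a compact set of a quantity depending continuously on the vertex images, hence an open condition. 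Supplying this (or an equivalent quantitative stability statement) is needed to make your openness argument complete.
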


\begin{proof}
The statement (2) follows from (1) because the natural map (restriction)
\[ \mathrm{Hom}(P,\mathbb{R}^n) \to \mathrm{Hom}(Q,\mathbb{R}^A) \]
is an open map and thus the preimage of an open and dense set under this map is also open and dense.
(Notice that it corresponds to the natural projection $(\mathbb{R}^n)^{V(P)}\to (\mathbb{R}^A)^{V(Q)}$
where $V(P)$ and $V(Q)$ are the sets of vertices of $P$ and $Q$.)
Thus it is enough to prove (1).
First we show that simplicial embeddings $f:P\to \mathbb{R}^n$ form an open set.
Let $f:P\to \mathbb{R}^n$ be a simplicial embedding.
We would like to show that ``injectiveness'' is preserved under a small perturbation of $f$.
\begin{itemize}
   \item Let $\Delta \subset P$ be a simplex with the vertices $v_0,\dots, v_m$.
           Then $f|_\Delta: \Delta \to \mathbb{R}^n$ is an injection, which means that $f(v_0),\dots, f(v_m)$ are affinely independent.
           If $g:P\to \mathbb{R}^n$ is sufficiently close to $f$ then $g(v_0), \dots, g(v_m)$ are also affinely independent.
   \item Let $\Delta_1,\Delta_2 \subset P$ be two disjoint simplices.   Then the distance between $f(\Delta_1)$ and $f(\Delta_2)$ is 
           positive. This condition is certainly preserved under a small perturbation.
   \item Let $\Delta_1,\Delta_2\subset P$ be two simplices which share a nonempty simplex $\Delta_1\cap \Delta_2$.
           We assume $\Delta_1\neq \Delta_1\cap \Delta_2$ and $\Delta_2\neq \Delta_1\cap \Delta_2$.
           Consider $V(\Delta_1)\setminus V(\Delta_1\cap \Delta_2)$ and $V(\Delta_2)\setminus V(\Delta_1\cap \Delta_2)$, 
           and denote their convex hulls by $\Delta'_1$ and $\Delta'_2$ respectively. Fix $v\in V(\Delta_1\cap \Delta_2)$.
           The map $f|_{\Delta_1\cup \Delta_2}:\Delta_1\cup \Delta_2\to \mathbb{R}^n$ is injective, which means the positivity of 
           \[  \min\left\{\text{Angle between the vectors $f(\overrightarrow{v w_1})$ and $f(\overrightarrow{v w_2})$}|\, w_1\in \Delta_1', 
                w_2\in \Delta'_2\right\}.   \]
           This condition is preserved under a small perturbation. 
\end{itemize}
The above proves that sufficiently small perturbations of $f$ are also injective.

Next we prove that simplicial embeddings $f:P\to \mathbb{R}^n$ form a dense set.
Let $f:P\to \mathbb{R}^n$ be an arbitrary simplicial map and $U\subset \mathrm{Hom}(P,\mathbb{R}^n)$ an open neighborhood of $f$.
By simple linear algebra we can choose $g\in U$ such that for any subset $\{v_1,\dots, v_m\}\subset V(P)$ with $m  \leq n+1$
the points $g(v_1),\dots, g(v_m)$ are affinely independent.
As $2\dim P +1 \leq n$, $g|_{\Delta_1\cup \Delta_2}$ is an embedding for any two simplices $\Delta_1,\Delta_2 \subset P$
and thus $g$ is an embedding.
\end{proof}

\underline{How to use Lemmas \ref{lemma: approximation lemma} and \ref{lemma: embedding lemma}}:  
We would like to illustrate the above two lemmas by a simple application.
(This is a prototype of the later argument but logically independent of the proof of the main theorems.
So experienced readers can skip it.)
Let $(X,d)$ be a compact metric space.
We prove a classical result in the dimension theory:
If $\dim X<n/2$ then $X$ can be topologically embedded into $\mathbb{R}^n$. 
The set of all embeddings $f:X\to \mathbb{R}^n$ is equal to 
\begin{equation} \label{eq: embedding set, warming up}
   \bigcap_{m=1}^\infty \{f:X\to \mathbb{R}^n|\, \text{$(1/m)$-embedding}\}. 
\end{equation}   
So it is a $G_\delta$-subset of the space of all continuous maps $f:X\to \mathbb{R}^n$ since
``$(1/m)$-embedding'' is an open condition.
Fix a natural number $m$ and a positive number $\delta$.
Let $f: X\to \mathbb{R}^n$ be an arbitrary continuous map.
Choose $0<\varepsilon<1/m$ so that 
\[  d(x,y)<\varepsilon \Longrightarrow  \norm{f(x)-f(y)}_\infty < \delta. \]
By $\dim X = \lim_{\varepsilon\to 0}\widim_\varepsilon(X,d)$, we can find a simplicial complex $P$ of $\dim P\leq \dim X <n/2$ 
and an $\varepsilon$-embedding $\pi:X\to P$.
By Lemma \ref{lemma: approximation lemma} there exists a simplicial map $g:P\to \mathbb{R}^n$
satisfying $\norm{f(x)-g(\pi(x))}_\infty < \delta$ for all $x\in X$.
By $\dim P<n/2$ and Lemma \ref{lemma: embedding lemma} (1) we can find a simplicial \textit{embedding} $h:P\to \mathbb{R}^n$
satisfying $\norm{g(p)-h(p)}_\infty <\delta$ for all $p\in P$.
Then the map $h\circ \pi :X\to \mathbb{R}^n$ is an $\varepsilon$-embedding (and hence $(1/m)$-embedding by $\varepsilon <1/m$)
and satisfies $\norm{f(x)-h\circ\pi(x)}_\infty <2\delta$ for all $x\in X$.
Since $\delta$ is arbitrary, this shows that 
\[ \{f:X\to \mathbb{R}^n|\,  \text{$(1/m)$-embedding}\} \]
is dense in the space of all continuous maps from $X$ to $\mathbb{R}^n$.
Thus, by the Baire Category Theorem, the set (\ref{eq: embedding set, warming up}) is dense $G_\delta$.
In particular there exists a topological embedding of $X$ into $\mathbb{R}^n$.

\subsection{A lemma on convex sets}   \label{subsection: a lemma on convex sets}

The result of this subsection is used only in Section \ref{section: proof of Main Proposition}.
 Let $r>0$ and $W\subset \mathbb{R}^k$.   We define $\partial_r W$ as the set of all $t\in \mathbb{R}^k$ such that 
 the closed $r$-ball $B_r(t)$ around $t$ have non-empty intersections both with $W$ and $\mathbb{R}^k\setminus W$. 
 We set $\mathrm{Int}_r W= W\setminus \partial _r W$.

\begin{lemma}  \label{lemma: comparing boundary and interior of convex set}
 For any $c>1$ and $r>0$ there exists $R>0$ such that if a bounded closed convex subset 
$W\subset \mathbb{R}^k$ satisfies $\mathrm{Int}_R W \neq \emptyset$ then 
\[   \left| W\cup \partial_r W \right| < c \left|\mathrm{Int}_r W \right|.  \] 
Here $|\cdot |$ denotes the $k$-dimensional volume (Lebesgue measure).
\end{lemma}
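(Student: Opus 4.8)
The plan is to exploit the affine structure of a bounded closed convex set to compare $\partial_r W$ with $W$ volumetrically, reducing everything to a uniform estimate that holds once $W$ contains a sufficiently large ball. First I would record the geometric meaning of the sets involved: $t \in \partial_r W$ iff $\dist(t, \partial W) \le r$ (where $\partial W$ is the topological boundary), so $W \cup \partial_r W$ is contained in the closed $r$-neighborhood of $W$, and $\mathrm{Int}_r W = \{t \in W : B_r(t) \subset W\}$ is $W$ eroded by $r$. Thus the inequality to prove is, roughly, that the $r$-thickening of $W$ has volume at most $c$ times the $r$-erosion of $W$. The hypothesis $\mathrm{Int}_R W \ne \emptyset$ says $W$ contains a ball of radius $R$; the claim is that a single $R = R(c,r,k)$ works for \emph{all} such convex bodies.

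The key step is a scaling/John-position argument. Fix a point $p \in \mathrm{Int}_R W$, so $B_R(p) \subset W$. For convex $W$ one has the inclusions obtained by dilating about $p$: for $0 < \lambda < 1$, the $\lambda$-contraction of $W$ about $p$, namely $p + \lambda(W - p)$, is contained in $\mathrm{Int}_{(1-\lambda)R} W$, because each such point is at distance at least $(1-\lambda)R$ from $\partial W$ (this uses that $B_R(p) \subset W$ together with convexity — a standard fact). Dually, $W \cup \partial_r W \subset p + \mu (W - p)$ for a suitable dilation factor $\mu > 1$ depending on $r$ and on a lower bound for the inradius, since the $r$-neighborhood of a convex body containing $B_R(p)$ is swallowed by scaling up by $\mu = 1 + r/R$. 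Concretely I would show
\[
  p + \left(1 - \tfrac{r}{R}\right)(W-p) \subset \mathrm{Int}_r W, \qquad
  W \cup \partial_r W \subset p + \left(1 + \tfrac{r}{R}\right)(W - p),
\]
valid whenever $R > r$. Taking $k$-dimensional volumes and using that dilation by factor $t$ multiplies volume by $t^k$ gives
\[
  |W \cup \partial_r W| \le \left(1 + \tfrac{r}{R}\right)^k |W|, \qquad
  |\mathrm{Int}_r W| \ge \left(1 - \tfrac{r}{R}\right)^k |W|,
\]
hence
\[
  \frac{|W \cup \partial_r W|}{|\mathrm{Int}_r W|} \le \left(\frac{1 + r/R}{1 - r/R}\right)^k,
\]
which tends to $1$ as $R \to \infty$. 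So choosing $R$ large enough that $\left(\tfrac{1+r/R}{1-r/R}\right)^k < c$ finishes the proof, and this $R$ depends only on $c$, $r$, $k$, as required.

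The main obstacle is establishing the two inclusions cleanly, in particular the first one: that contracting a convex body toward an interior point that carries a ball of radius $R$ actually lands inside the $r$-erosion, uniformly. The argument is: if $q = p + \lambda(w - p)$ with $w \in W$ and $\lambda = 1 - r/R$, then for any unit vector $u$, the point $q + ru$ can be written as a convex combination of $w \in W$ and a point of $B_R(p) \subset W$ — specifically $q + r u = \lambda w + (1-\lambda)(p + \tfrac{r}{1-\lambda} u)$ and $\tfrac{r}{1-\lambda} = R$, so $p + \tfrac{r}{1-\lambda}u \in B_R(p) \subset W$ — whence $q + ru \in W$ by convexity; as $u$ was arbitrary, $B_r(q) \subset W$, i.e. $q \in \mathrm{Int}_r W$. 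The second inclusion is easier: any $t$ with $\dist(t,W) \le r$ lies on a segment from some $w \in W$ through $p$ extended by a controlled amount, and the factor $1 + r/R$ suffices because moving distance $r$ outward from $\partial W$ costs at most a dilation of $1 + r/\text{inradius} \le 1 + r/R$ about $p$. A minor point to handle at the start is the degenerate case, but $\mathrm{Int}_R W \ne \emptyset$ already forces $W$ to be full-dimensional with $|\mathrm{Int}_r W| > 0$, so no division-by-zero issue arises.
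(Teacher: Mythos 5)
Your proposal is correct and follows essentially the same route as the paper: normalize so that the interior point carrying the $R$-ball is the center of dilation, sandwich $W\cup\partial_r W$ and $\mathrm{Int}_r W$ between the $(1+r/R)$- and $(1-r/R)$-dilates of $W$, and conclude from the volume ratio $\left(\frac{1+r/R}{1-r/R}\right)^k\to 1$. The only (immaterial) difference is that you verify the inclusions by an explicit convex-combination computation, whereas the paper uses separating hyperplanes.
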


\begin{proof}
We can assume $0\in \mathrm{Int}_R W$.
For $a>0$ we set $a W= \{at|\, t\in W\}$.

\begin{claim}
  $\partial_r W\subset \left(1+ \frac{r}{R}\right)W$.
\end{claim}

\begin{proof}
Take $u\in \mathbb{R}^k$ outside of $\left(1+\frac{r}{R}\right)W$.
There exists a hyperplane $H = \{a_1x_1+\dots+a_k x_k=1\} \subset \mathbb{R}^k$ \textit{separating} $u$ and $\left(1+\frac{r}{R}\right)W$.
Namely 
\[ u\in \{a_1x_1+\dots+a_k x_k>1\}, \quad \left(1+\frac{r}{R}\right)W \subset \{a_1 x_1+\dots+a_k x_k<1\}. \]
Then 
\[  W \subset \left\{a_1 x_1 + \dots + a_k x_k < \left(1+\frac{r}{R}\right)^{-1}\right\}. \]
It follows that the distance between $u$ and $W$ is greater than the distance between $H$ and $\left(1+\frac{r}{R}\right)^{-1} H$.
Let $h$ be the distance between $\left(1+\frac{r}{R}\right)^{-1} H$ and the origin. 
$h>R$ since $0\in \mathrm{Int}_R W$.
Then the distance between $H$ and $\left(1+\frac{r}{R}\right)^{-1} H$ is 
\[  \left(1+\frac{r}{R}\right) h - h = \frac{rh}{R} > r. \]
This implies that $u$ does not belong to $\partial_r W$.
\end{proof}

\begin{claim}
   $\partial_r W \cap \left(1-\frac{r}{R}\right) W = \emptyset$ 
   and hence $\mathrm{Int}_r W \supset \left(1-\frac{r}{R}\right) W$.
\end{claim}

\begin{proof}
Almost the same argument shows that if we take a point $u$ outside of $W$ then the distance between $u$ and $\left(1-\frac{r}{R}\right)W$
is greater than $r$. This implies that every point $v\in \left(1-\frac{r}{R}\right)W$ satisfies $B_r(v)\subset W$ and hence does not 
belong to $\partial_r W$.
\end{proof}

It follows from these two claims that 
\begin{equation*}
    \begin{split}
     \left|W\cup \partial_r W\right|  &\leq \left| \left(1+ \frac{r}{R}\right)W\right| 
       = \left(1+\frac{r}{R}\right)^k |W|, \\
     \left|\mathrm{Int}_r W\right|  &\geq \left|\left(1-\frac{r}{R}\right) W\right|
      = \left(1-\frac{r}{R}\right)^k |W|.
    \end{split}
\end{equation*}
Thus 
\[ \left| W\cup \partial_r W\right| \leq \left(\frac{1+\frac{r}{R}}{1-\frac{r}{R}}\right)^k \left|\mathrm{Int}_r W \right|. \]
Since $c>1$, if $r/R$ is sufficiently small then the right-hand side is smaller than $c \left|\mathrm{Int}_r W\right|$.
\end{proof}

\section{Main proposition} \label{section: main proposition}

\subsection{Statement of the main proposition}  \label{subsection: main proposition}

Main Theorem \ref{main theorem: continuous signal} in Subsection \ref{subsection: embedding into band-limited signals} 
follows from the next proposition whose proof occupies 
the rest of the paper.

\begin{proposition}[Main Proposition] \label{prop: main proposition}
Let $a_1,\dots,a_k$ and $\delta$ be positive numbers.
Let $(X,\mathbb{Z}^k,T)$ be a dynamical system and $d$ a distance on $X$.
Let $f:X\to \mathcal{B}_1(a_1,\dots,a_k)$ be a $\mathbb{Z}^k$-equivariant continuous map.
If $X$ has the marker property and satisfies 
\[ \mdim(X) < \frac{a_1\dots a_k}{2} \]
then there exists a $\mathbb{Z}^k$-equivariant continuous map $g:X\to \mathcal{B}_1(a_1+\delta,\dots,a_k+\delta)$ such that 
\begin{itemize}
  \item $\norm{f(x)-g(x)}_{L^\infty(\mathbb{R}^k)} < \delta$ for all $x\in X$.
  \item $g$ is a $\delta$-embedding with respect to the distance $d$.
\end{itemize}
\end{proposition}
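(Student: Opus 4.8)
The plan is to follow the now-standard Baire category scheme used in the warming-up argument after Lemma \ref{lemma: embedding lemma}, but carried out inside the infinite-dimensional function space $\mathcal{B}_1(a_1+\delta,\dots,a_k+\delta)$ with the extra requirement of $\mathbb{Z}^k$-equivariance and $L^\infty$-closeness to $f$. Concretely, I would fix the target slack $\delta>0$, pick $\varepsilon>0$ small enough that $d(x,y)<\varepsilon$ forces $\norm{f(x)-f(y)}_{L^\infty}<\delta/10$, and aim to produce a $\mathbb{Z}^k$-equivariant $g$ that is a $(\delta/2)$-embedding while staying $\delta$-close to $f$. The single $\delta$-embedding requirement (rather than a dense $G_\delta$ of exact embeddings) means I only need \emph{one} good map, so the construction can be direct rather than via Baire category, but the mechanics of producing it mirror the category step.

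The core mechanism is: use the marker property to get an open cover $\{T^{-n}U\}_{n\in\mathbb{Z}^k}$ with $U\cap T^{-n}U=\emptyset$ for $0<|n|<N$, with $N$ large depending on $\varepsilon$ and on the constants from the mean-dimension estimate. From the marker set $U$ one builds (this is where the ``tiling of $\mathbb{R}^k$'' of the abstract and Section \ref{section: dynamical construction of tiling-like band-limited functions} enters) a $\mathbb{Z}^k$-equivariant assignment $x\mapsto$ (a tiling of $\mathbb{R}^k$ by large tiles), together with a partition-of-unity-type family of band-limited bump functions adapted to that tiling; the interpolation technology of Section \ref{section: Interpolating functions} (the ``almost regular interpolation'' for the union $\Lambda=\bigcup\Lambda_n$) is what lets us prescribe values of a band-limited function on an irregular sampling set coming from the tile boundaries while controlling the $L^\infty$ norm. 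On each large tile, $\mdim(X)<a_1\cdots a_k/2$ together with $\widim_\varepsilon(X,d_{[N]})\lesssim N^k\cdot\mdim(X)$ gives an $\varepsilon$-embedding of the relevant piece of $X$ into a simplicial complex $P$ of dimension $<\tfrac12\cdot(\text{number of available sample points per tile})$; Lemma \ref{lemma: approximation lemma} replaces $f$ (restricted suitably) by a simplicial map into $\mathbb{R}^{(\text{samples})}$ within $\delta$, and Lemma \ref{lemma: embedding lemma}(2) perturbs it to a simplicial \emph{embedding} whose coordinate projections onto any large-enough subset of coordinates are still embeddings — this last point is what survives the overlaps between adjacent tiles. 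Assembling these local simplicial embeddings through the band-limited partition of unity, and invoking the sampling theorem (Lemma \ref{lemma: sampling theorem}) to see that agreement on a dense-enough regular sublattice forces the resulting band-limited function to be determined, yields $g\in\mathcal{B}_1(a_1+\delta,\dots,a_k+\delta)$; the $\delta$-slack in the bandwidth is spent on the bump functions (each bump is band-limited in a box slightly larger than $\prod[-a_i/2,a_i/2]$).

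The main obstacle — and the reason the paper is long — is precisely issue (2) from Subsection \ref{subsection: one dimension versus multi-dimension}: when one forms the band-limited ``tiling-like map'' whose (near-)zero set should cut $\mathbb{R}^k$ into the desired tiles, one cannot control all its zeros in $\mathbb{C}^k$, so one must pass to the good/bad decomposition $\mathbb{C}^k=\mathbb{G}\cup\mathbb{B}$, do the honest interpolation/embedding only over the good region where the Voronoi tiles are large and regular, and then argue that the bad region $\mathbb{B}$ is so small (small density) that failing to embed there costs at most $\delta$ in the overall $\varepsilon$-embedding estimate — i.e. the fibers of $g$ that are not pinned down by the good tiles still have diameter $<\delta$ because the bad tiles occupy a negligible fraction of each $[N]$-block. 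Making ``negligible fraction'' quantitatively compatible with the strict inequality $\mdim(X)<a_1\cdots a_k/2$ (so that there is genuine room to lose a little) is the delicate bookkeeping; a secondary difficulty is ensuring all the local constructions glue $\mathbb{Z}^k$-equivariantly, which is handled by making every choice depend only on the marker structure and the tiling, both of which are equivariant by construction.
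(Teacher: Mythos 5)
Your outline follows the paper's general architecture (marker property $\to$ equivariant tiling $\to$ interpolation on almost regular sets $\to$ simplicial approximation and embedding lemmas), but it has two genuine gaps, and the second is fatal as written. First, you never explain how, given only the equation $g(x)=g(y)$, one recovers the fact that the tilings attached to $x$ and to $y$ coincide. Your decoding argument compares the "local simplicial embeddings" tile by tile, but a priori two distinct points with $g(x)=g(y)$ could carry completely different tilings, and then there is nothing to compare. The paper's solution --- which it flags as the main new idea --- is to write $g=g_1+g_2$, where $g_2(x)$ is built from a tiling-like band-limited map $\Phi(x)$ whose nondegenerate zeros determine the tiling and the weight functions, and where the Fourier support of $g_2(x)$ sits in boxes $\Delta_i\cup(-\Delta_i)$ inside $\prod_i[-(a_i+\delta)/2,(a_i+\delta)/2]$ that are \emph{disjoint} from $\prod_i[-a_i/2,a_i/2]$. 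Disjointness of spectra gives $g(x)=g(y)\Rightarrow g_2(x)=g_2(y)\Rightarrow \Phi(x)=\Phi(y)\Rightarrow W(x,n)=W(y,n)$ and $w(x,n)=w(y,n)$ for all $n$, and only then can one exploit $g_1(x)=g_1(y)$. In your sketch the $\delta$-slack in bandwidth is spent entirely on bump functions; in the paper the essential use of that slack is to carve out room for the spectrum of $g_2$.

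Second, your treatment of the bad region does not work. You propose to argue that ``the fibers of $g$ that are not pinned down by the good tiles still have diameter $<\delta$ because the bad tiles occupy a negligible fraction of each $[N]$-block.'' But the required conclusion is $d(x,y)<\delta$ for the \emph{fixed} metric $d$, i.e. a pointwise statement about the single coordinate $0\in\mathbb{Z}^k$ of the orbit, not an averaged statement over an $[N]$-block. If $0$ happens to lie within distance $L_0$ of $\partial(x)$, then no density estimate on the bad set yields any bound on $d(x,y)$: small measure of the bad region controls averages, not the value at a specific (and, for that particular $x$, badly placed) lattice point. The paper closes exactly this hole with the weight functions of Proposition \ref{prop: construction of weight functions}: property (4) guarantees that every $p$ with $d(p,\partial(x))\le L_0$ is ``rescued'' by some large tile $W(x,n)$ with $w_{p-n}(x,n)=1$, so that $\pi(T^p x)$ is fully encoded into $g_1(x)$ on that tile (Case 2 of the final proof), while property (3) together with the strict inequality $\mdim(X)<\rho_1\cdots\rho_k/2$ (via the constants $c_0,c_1,A$ and Lemma \ref{lemma: how many nonzero w_m}) shows the extra encoding still fits below the interpolation capacity $\tfrac{\rho_1\cdots\rho_k}{2}|\mathrm{Int}_{r_2+N\sqrt{k}}W(x,n)|$ of each large tile. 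The density estimate of Lemma \ref{lemma: basic properties of W_0} is used to prove that this welfare system is solvent (Lemma \ref{lemma: basic properties of W(x,n)} (2)), not to bound fiber diameters directly. Without some such mechanism your construction fails to be a $\delta$-embedding at every $x$ whose origin sits near a tile boundary.
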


It is important to note that the map $g$ takes values in $\mathcal{B}_1(a_1+\delta,\dots,a_k+\delta)$
which is slightly larger than the target $\mathcal{B}_1(a_1,\dots,a_k)$ of the original map $f$.
Here we prove that Proposition \ref{prop: main proposition} implies Main Theorem \ref{main theorem: continuous signal}.

\begin{proof}[Proof of Main Theorem \ref{main theorem: continuous signal}, assuming Proposition \ref{prop: main proposition}]
The proof is very close to the standard proof of the Baire Category Theorem.
We assume $\diam(X,d) <1$ by rescaling the distance (just for simplicity of the notation).
For $1\leq i\leq k$ we choose sequences $\{a_{in}\}_{n=1}^\infty$ such that 
\[ 0<a_{i1}<a_{i2}<a_{i3}<\dots<a_i, \quad \mdim(X) < \frac{a_{1n}a_{2n}\dots a_{kn}}{2} \quad  (\forall n\geq 1). \]
For $n\geq 1$ we inductively define a positive number $\delta_n$ and a $\mathbb{Z}^k$-equivariant 
$(1/n)$-embedding (with respect to $d$)
$f_n:X\to \mathcal{B}_1(a_{1n},\dots,a_{kn})$. 
First we set 
\[  \delta_1 = 1, \quad f_1(x)=0 \quad (\forall x\in X). \]
Notice that $f_1$ is a $1$-embedding because $\diam(X,d)<1$.
Suppose we have defined $\delta_n$ and $f_n$. 
Since $f_n$ is a $(1/n)$-embedding, we can choose $0<\delta_{n+1}<\delta_n/2$ such that 
if a $\mathbb{Z}^k$-equivariant continuous map $g:X\to \mathcal{B}_1(a_1,\dots,a_k)$ satisfies 
\[  \sup_{x\in X} \norm{f_n(x)-g(x)}_{L^\infty(\mathbb{R}^k)} \leq \delta_{n+1} \]
then $g$ is also a $(1/n)$-embedding.
By applying Proposition \ref{prop: main proposition} to $f_n$, we can find a 
$\mathbb{Z}^k$-equivariant continuous map $f_{n+1}:X\to \mathcal{B}_1(a_{1,n+1},\dots, a_{k,n+1})$ such that 
\begin{itemize}
   \item $\sup_{x\in X} \norm{f_n(x)-f_{n+1}(x)}_{L^\infty(\mathbb{R}^k)} < \delta_{n+1}/2$.
   \item $f_{n+1}$ is a $\frac{1}{n+1}$-embedding with respect to $d$.
\end{itemize}

For $n>m\geq 1$ 
\begin{equation*}
   \begin{split}
    \sup_{x\in X} \norm{f_n(x)-f_m(x)}_{L^\infty(\mathbb{R}^k)}  &\leq 
   \sum_{l=m}^{n-1}\sup_{x\in X}\norm{f_l(x)-f_{l+1}(x)}_{L^\infty(\mathbb{R}^k)}   \\
   &< \sum_{l=m}^{n-1}\frac{\delta_{l+1}}{2}      \\
   &< \sum_{l=1}^\infty  2^{-l} \delta_{m+1} = \delta_{m+1} \to 0 \quad (m\to \infty).
   \end{split}
\end{equation*}   
Here we used $\delta_{l+1} < \delta_{l}/2$.
By taking a limit, we find a $\mathbb{Z}^k$-equivariant continuous map $f:X\to \mathcal{B}_1(a_1,\dots,a_k)$ satisfying 
\[  \sup_{x\in X} \norm{f(x)-f_n(x)}_{L^\infty(\mathbb{R}^k)}   \leq \delta_{n+1}  \]
for all $n\geq 1$.
It follows from the definition of $\delta_{n+1}$  that $f$ is a $(1/n)$-embedding.
Since $n$ is arbitrary, $f$ is an embedding.
\end{proof}

The above proof used a flexibility exhibited by band-limited signals and not by discrete signals.
At each step of the induction, the target $\mathcal{B}_1(a_{1,n+1},\dots, a_{k,n+1})$ of the map $f_{n+1}$ is slightly bigger than 
the previous target $\mathcal{B}_1(a_{1n},\dots,a_{kn})$.
The differences $a_{i, n+1}-a_{i n}$ converge to zero as $n$ goes to infinity.
Such an argument is possible because $a_1, \dots, a_k$ are \textit{continuous} parameters.
We cannot apply the same argument to $\left([0,1]^D\right)^{\mathbb{Z}^k}$ because it depends on a \textit{discrete}
parameter $D$.

\subsection{Strategy of the proof}  \label{subsection: strategy of the proof}

Most known theorems about embedding (e.g. the Whitney embedding theorem for manifolds) are proved by 
\textit{perturbation}.
The proof of Proposition \ref{prop: main proposition} also uses this idea  \textit{but with a twist}.
The map $g$ in the statement of Proposition \ref{prop: main proposition} will have the form 
\[  g(x) = g_1(x) + g_2(x), \quad  (x\in X). \]
The first term $g_1(x)$ is a band-limited function in $\mathcal{B}_1(a_1,\dots,a_k)$, which is constructed by 
perturbing the function $f(x)\in \mathcal{B}_1(a_1,\dots,a_k)$.
The second term $g_2(x)$ is also a band-limited function whose frequencies are restricted in a compact subset of 
\[  \prod_{i=1}^k \left[-\frac{a_i+\delta}{2},\frac{a_i+\delta}{2}\right]  \setminus \prod_{i=1}^k \left[-\frac{a_i}{2},\frac{a_i}{2}\right]. \]
The construction of $g_2(x)$ is (essentially) independent of $f(x)$.
The function $g_2(x)$ \textit{encodes how to perturb $f(x)$}.
In other words, the function $g_1(x)$ is constructed by perturbing the function $f(x)$, and the 
perturbation \textit{method} is determined by the function $g_2(x)$.
Probably this explanation is not very clear. So let us try a more naive explanation.
Consider \textit{cooking}.
Each point $x\in X$ is a cook.
The function $f(x)$ is an ingredient of cooking (e.g. a raw salmon) that the cook $x$ chooses.
The function $g_2(x)$ is a cookware (e.g. kitchen knife and oven) that the cook $x$ possesses.
The function $g_1(x)$ is the result of cooking (e.g. grilled salmon).
The grilled salmon $g_1(x)$ is made from the raw salmon $f(x)$ by using the knife and oven $g_2(x)$.
The knife and oven $g_2(x)$ are independent of the raw salmon $f(x)$.

The Fourier transforms $\mathcal{F}\left(g_1(x)\right)$ and $\mathcal{F}\left(g_2(x)\right)$ have disjoint supports.
Hence if we have the equation $g(x)=g(y)$ for some $x$ and $y$ in $X$, then it follows that 
\[   g_1(x) = g_1(y), \quad g_2(x)=g_2(y). \]
We would like to deduce $d(x,y)<\delta$ from these equations.
In other words we try to determine who is the cook (up to some small error) by 
knowing the result of cooking ($g_1(x)=g_1(y)$) and what cookware was used ($g_2(x)=g_2(y)$).

More precisely, the proof goes as follows.
Take $x\in X$.
The function $f(x)$ is defined over $\mathbb{R}^k$.
It is difficult to control functions over unbounded domains.
So we introduce a \textit{tiling} (indexed by $\mathbb{Z}^k$)
\[  \mathbb{R}^k = \bigcup_{n\in \mathbb{Z}^k} W(x,n),   \]
such that 
\begin{enumerate}
  \item Each tile $W(x,n)$ is a bounded convex set, and different tiles $W(x,n)$ and $W(x,m)$ are disjoint except for their boundaries.
  \item $W(x,n)$ depends continuously on $x\in X$ in the Hausdorff topology. 
  \item The tiles are equivariant in the sense that 
  \[  W(T^n x, m) = -n + W(x, n+m)   = \{-n+t|\, t\in W(x,n+m)\}.  \]
  \item ``Most'' part of the space $\mathbb{R}^k$ is covered by ``sufficiently large'' tiles.
\end{enumerate}  

We try to perturb the function $f(x)$ over each tile $W(x,n)$.
We can construct a good perturbation over sufficiently large tiles.
So if every tile is sufficiently large, then we can construct a good perturbation of $f(x)$ over the whole space $\mathbb{R}^k$.
Unfortunately some tiles may be small in general.
We cannot construct a good perturbation over such tiny tiles.
Condition (4) helps us here.
We will construct a ``social welfare system'' of the tiles $W(x,n)$.
(This idea was first introduced in \cite{Gutman--Tsukamoto minimal}.)
Large tiles help small tiles and 
bear ``additional'' perturbations which are originally ``duties'' of small tiles.
By using this social welfare system we construct the perturbation $g_1(x)$ of the function $f(x)$.

The map $g_1$ will have the following property:
If $x$ and $y$ in $X$ satisfy $g_1(x)=g_1(y)$ and 
\begin{equation}  \label{eq: tiling equation}
   \forall n\in \mathbb{Z}^k: \quad W(x,n) = W(y,n)
\end{equation}
then it follows that $d(x,y)<\delta$.
Namely if we know $g_1(x)$ and the tiling $\{W(x,n)\}$ then we can recover the point $x$ up to error $\delta$.
So the next problem is how to encode the information of the tiles.
The function $g_2(x)$ is introduced for solving this problem.
We encode all the information of the tiles $W(x,n)$ to a band-limited function $g_2(x)$.
(Indeed the real argument below goes in a reverse way. First we construct the function $g_2(x)$, and then 
the tiles $W(x,n)$ are constructed from $g_2(x)$. So $g_2(x)$ knows everything about the tiles $W(x,n)$.)
In particular the equation $g_2(x)=g_2(y)$ implies (\ref{eq: tiling equation}).
As a conclusion, the function $g(x)=g_1(x)+g_2(x)$ satisfies the requirements of Proposition \ref{prop: main proposition}.
This is the outline of the proof.

We would like to emphasize that the most important
new idea introduced in this paper is the technique to encode the information of the tiling $\{W(x,n)\}$ into the 
band-limited function $g_2(x)$.
This idea is new even in the one dimensional case.
This is the main reason why Main Theorems \ref{main theorem: discrete signal} and \ref{main theorem: continuous signal} have
 some novelty even in the case of $k=1$.

Section \ref{section: Interpolating functions} introduces interpolating functions which will be used in the perturbation process.
In Section \ref{section: dynamical construction of tiling-like band-limited functions} we construct the tiling $W(x,n)$ and their 
social welfare system and explain how to encode them into a band-limited function.
The proof of Proposition \ref{prop: main proposition} is finished in Section \ref{section: proof of Main Proposition}.

\subsection{Fixing some notations} \label{subsection: fixing some notations}

The proof of Proposition \ref{prop: main proposition} is notationally messy.
So here we gather some of the notations for the convenience of readers.
Throughout the rest of the paper (except for Section \ref{section: open problems} where we discuss open problems)
we fix the following.

\begin{itemize}
  \item $a_1,\dots, a_k$ and $\delta$ are positive numbers. We additionally assume 
          \begin{equation}  \label{eq: choice of a_i and delta}
             \delta < \min(1, a_1,\dots, a_k).
          \end{equation}
  \item $(X,\mathbb{Z}^k,T)$ is a dynamical system satisfying the marker property and  
  \[ \mdim(X) < \frac{a_1\dots a_k}{2}. \]
  \item We fix positive \textit{rational numbers} $\rho_1,\dots,\rho_k$ satisfying 
        \begin{equation} \label{eq: choice of rho_i}
             \rho_i < a_i \quad (\forall 1\leq i\leq k), \quad \mdim(X) < \frac{\rho_1\dots \rho_k}{2}. 
        \end{equation}
  \item We define a lattice $\Gamma\subset \mathbb{R}^k$ by 
        \begin{equation}  \label{eq: definition of Gamma}
               \Gamma = \left\{\left(\frac{t_1}{\rho_1},\dots,\frac{t_k}{\rho_k}\right) \middle|\, t_1,\dots, t_k\in \mathbb{Z}\right\}.
        \end{equation}
       Moreover we define $\Gamma_1\subset \mathbb{R}^k$ by 
       \begin{equation} \label{eq: definition of Gamma hat}
              \Gamma_1 = \bigcup_{n\in \mathbb{Z}^k} (n+\Gamma), \quad 
              (n+\Gamma = \{n+\gamma|\, \gamma\in \Gamma\}).   
       \end{equation}
       Since $\rho_i$ are rational numbers, $\Gamma_1$ is also a lattice.
\end{itemize}

\section{Interpolating functions}  \label{section: Interpolating functions}

Recall that we fixed positive numbers $a_1,\dots, a_k$, positive rational numbers $\rho_1,\dots,\rho_k$ with 
$\rho_i<a_i$ and 
lattices $\Gamma \subset \Gamma_1\subset \mathbb{R}^k$
in Subsection \ref{subsection: fixing some notations}.
We fix a positive number $\tau$ satisfying 
\begin{equation}  \label{eq: fixing tau}
    \forall 1\leq i\leq k: \quad \rho_i+ \tau < a_i. 
\end{equation}    

For $r>0$ and $a\in \mathbb{R}^k$ we denote by $B_r(a)$ the closed $r$-ball around $a$, and we set 
$B_r = B_r(0)$.
We choose a rapidly decreasing function $\chi_0: \mathbb{R}^k\to \mathbb{R}$ satisfying 
$\chi_0(0)=1$ and $\supp \, \mathcal{F}(\chi_0) \subset B_{\tau/2}$.

\begin{notation} \label{notation: choice of r_0}
  \begin{enumerate}
     \item We set 
       \begin{equation}  \label{eq: definition of K_0}
          K_0 = \sup_{t\in \mathbb{R}^k} \sum_{\lambda\in \Gamma_1} |\chi_0(t-\lambda)|. 
       \end{equation}
              This is finite because $\chi_0$ is rapidly decreasing and $\Gamma_1$ is a lattice.
     \item We fix $r_0>0$ satisfying 
       \begin{equation}  \label{eq: definition of r_0}
           \sum_{\lambda\in \Gamma_1 \setminus B_{r_0}} |\chi_0(\lambda)| < \frac{1}{2}. 
       \end{equation}    
   \end{enumerate}    
\end{notation}

\begin{definition}  \label{def: admissible set and function}
     \begin{enumerate}
         \item  A subset $\Lambda \subset \Gamma_1$ is called an \textbf{admissible set} if any two points 
         $\lambda_1, \lambda_2\in \Lambda$ satisfy at least one of the following two conditions:
         \begin{itemize}
           \item $\lambda_1-\lambda_2\in \Gamma$.
           \item $|\lambda_1-\lambda_2| > r_0$.
         \end{itemize}
         This notion is an elaborated formulation of ``almost regular interpolation set'' introduced in Subsection
          \ref{subsection: one dimension versus multi-dimension}.
           We denote by $\mathrm{Ads}$ the set of all admissible sets $\Lambda\subset \Gamma_1$.
        \item  A function $p:\Gamma_1 \to [0,1]$ is called an \textbf{admissible function} if the set 
        \[  \{\lambda \in \Gamma_1|\, p(\lambda)>0\}  \]
        is an admissible set.
        We denote by $\mathrm{Adf}$ the set of all admissible functions.
     \end{enumerate}
\end{definition}

We define the function $\sinc: \mathbb{R}\to \mathbb{R}$ by 
\[  \sinc(t) = \frac{\sin (\pi t)}{\pi t}, \quad \sinc(0) = 1. \]
This is one of the most famous functions in signal analysis.
The function $\sinc(t)$ vanishes on $\mathbb{Z}\setminus \{0\}$ and 
its Fourier transform is supported in $[-1/2,1/2]$.
It satisfies $|\sinc (t)| \leq 1$ for all $t\in \mathbb{R}$.

Let $\Lambda\subset \Gamma_1$ be an admissible set.
We denote by $\ell^\infty(\Lambda)$ the Banach space of all bounded functions 
$u:\Lambda\to \mathbb{R}$ endowed with the supremum norm $\norm{\cdot}_\infty$.
For $u\in \ell^\infty(\Lambda)$ we define a band-limited function $\varphi_\Lambda(u)\in \mathcal{B}(\rho_1+\tau, \dots,\rho_k+\tau)$
by 
\[ \varphi_\Lambda(u)(t) = \sum_{\lambda = (\lambda_1,\dots,\lambda_k)\in \Lambda} 
   u(\lambda) \chi_0(t-\lambda) \prod_{i=1}^k \sinc\left(\rho_i(t_i-\lambda_i)\right), 
    \quad   (t=(t_1,\dots, t_k)  \in \mathbb{R}^k). \]
Here $\rho_i$ are rational numbers introduced in Subsection \ref{subsection: fixing some notations}.
It follows from the definition of $K_0$ that 
\[  \norm{\varphi_\Lambda(u)}_{L^\infty(\mathbb{R}^k)} \leq K_0 \norm{u}_\infty. \]
Since we promised $\rho_i+\tau<a_i$ in (\ref{eq: fixing tau}), the function $\varphi_\Lambda(u)$ belongs to 
$\mathcal{B}(a_1,\dots,a_k)$.
We define a bounded linear operator $S_\Lambda:\ell^\infty(\Lambda)\to \ell^\infty(\Lambda)$ by 
\[ S_\Lambda(u) = \varphi_\Lambda(u)|_{\Lambda} \, \left(= \left(\varphi_\Lambda(u)(\lambda)\right)_{\lambda\in \Lambda}\right). \]

\begin{remark}  \label{remark: empty case}
When $\Lambda$ is the empty set, we define $\ell^\infty(\Lambda)$ to be the trivial vector space consisting 
only of zero and $\varphi_\Lambda(0)=0$,
the operator $S_\Lambda$ the identity operator of the trivial vector space.
\end{remark}

\begin{lemma} \label{lemma: operator norm of S-1}
  $\norm{S_\Lambda-\id} \leq 1/2$, where the left-hand side is the operator norm of $S_\Lambda-\id$.
\end{lemma}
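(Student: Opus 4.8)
The plan is to estimate the operator norm of $S_\Lambda - \id$ directly by bounding, for a unit vector $u \in \ell^\infty(\Lambda)$, the quantity $\sup_{\lambda \in \Lambda} \left| \varphi_\Lambda(u)(\lambda) - u(\lambda) \right|$. Fix $\lambda = (\lambda_1,\dots,\lambda_k) \in \Lambda$. By definition,
\[
  \varphi_\Lambda(u)(\lambda) = \sum_{\mu = (\mu_1,\dots,\mu_k) \in \Lambda} u(\mu)\, \chi_0(\lambda - \mu) \prod_{i=1}^k \sinc\!\left(\rho_i(\lambda_i - \mu_i)\right).
\]
The term with $\mu = \lambda$ equals $u(\lambda) \cdot \chi_0(0) \cdot 1 = u(\lambda)$, so it suffices to show that the sum over $\mu \neq \lambda$ has absolute value at most $1/2$. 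The key observation is the dichotomy built into the definition of an admissible set: for $\mu \neq \lambda$ either $\lambda - \mu \in \Gamma$, in which case each coordinate $\rho_i(\lambda_i - \mu_i)$ is a nonzero integer (since $\Gamma = \prod_i \rho_i^{-1}\mathbb{Z}$ and $\lambda - \mu \neq 0$), so that $\prod_i \sinc(\rho_i(\lambda_i - \mu_i)) = 0$ and the term vanishes; or else $|\lambda - \mu| > r_0$.

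Therefore the surviving terms are exactly those with $\mu \in \Lambda$ and $|\lambda - \mu| > r_0$. For these we use the crude bound $|u(\mu)| \leq \norm{u}_\infty \leq 1$ and $\left| \prod_i \sinc(\rho_i(\lambda_i - \mu_i)) \right| \leq 1$, leaving
\[
  \left| \varphi_\Lambda(u)(\lambda) - u(\lambda) \right| \;\leq\; \sum_{\substack{\mu \in \Lambda \\ |\lambda - \mu| > r_0}} \left| \chi_0(\lambda - \mu) \right| \;\leq\; \sum_{\nu \in \Gamma_1 \setminus B_{r_0}(0)} \left| \chi_0(\nu) \right|,
\]
where in the last step I translate by $\lambda$ and use $\Lambda - \lambda \subset \Gamma_1 - \lambda = \Gamma_1$ (note $\lambda \in \Gamma_1$ and $\Gamma_1$ is a lattice), together with the fact that $|\lambda - \mu| > r_0$ means $\lambda - \mu$ lies outside $B_{r_0}$. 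By the choice of $r_0$ in (\ref{eq: definition of r_0}), this last sum is strictly less than $1/2$. Taking the supremum over $\lambda \in \Lambda$ gives $\norm{S_\Lambda(u) - u}_\infty \leq 1/2$, and then the supremum over unit vectors $u$ yields $\norm{S_\Lambda - \id} \leq 1/2$. The case $\Lambda = \emptyset$ is trivial by Remark \ref{remark: empty case}.

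The only genuinely delicate point is the claim that $\lambda - \mu \in \Gamma$ with $\lambda \neq \mu$ forces the $\sinc$-product to vanish; this needs the observation that a nonzero element of $\Gamma$ has at least one nonzero coordinate, and that $\rho_i \cdot (\text{integer}/\rho_i)$ is a nonzero integer precisely when that coordinate is nonzero, so $\sinc$ of it is $0$. Everything else is a routine application of the defining inequality for $r_0$ and the admissibility dichotomy; no estimate on $K_0$ or on the decay rate of $\chi_0$ beyond (\ref{eq: definition of r_0}) is needed.
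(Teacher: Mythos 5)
Your proof is correct and follows essentially the same argument as the paper: isolate the $\mu=\lambda$ term, use admissibility to kill the terms with $\lambda-\mu\in\Gamma\setminus\{0\}$ via the vanishing of the $\sinc$ product, and bound the rest by the defining inequality for $r_0$. (Your closing remark correctly repairs the earlier imprecise phrase ``each coordinate is a nonzero integer'' --- only one coordinate need be nonzero, which suffices to annihilate the product.)
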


\begin{proof}
For $\lambda = (\lambda_1,\dots,\lambda_k)\in \Lambda$ the function 
\[   \chi_0(t-\lambda) \prod_{i=1}^k \sinc\left(\rho_i(t_i-\lambda_i)\right) \]
vanishes on $\left(\lambda+\Gamma\right) \setminus \{\lambda\}$ and its value at $t=\lambda$ is $1$.
Then by the admissibility of $\Lambda$ 
\[ \varphi_\Lambda(u)(\lambda)-u(\lambda) = \sum_{\lambda'\in \Lambda\setminus B_{r_0}(\lambda)} 
   u(\lambda') \chi_0(\lambda'-\lambda) \prod_{i=1}^k \sinc\left(\rho_i(\lambda'_i-\lambda_i)\right). \]
By the choice of $r_0$ in (\ref{eq: definition of r_0}),
\[ |\varphi_\Lambda(u)(\lambda)-u(\lambda)| \leq 
   \left(\sum_{\lambda' \in \Lambda\setminus B_{r_0}(\lambda)} |\chi_0(\lambda'-\lambda)|\right) \norm{u}_\infty 
    \leq \frac{1}{2} \norm{u}_\infty. \]
\end{proof}

Therefore the operator $S_\Lambda$ has the inverse 
\[  S_\Lambda^{-1} = \sum_{n=0}^\infty (\id-S_\Lambda)^n  \text{ with $\norm{S_\Lambda^{-1}}\leq 2$}. \]
For $u\in \ell^\infty(\Lambda)$ we define a band-limited function $\psi_\Lambda(u)\in \mathcal{B}(a_1,\dots,a_k)$ by
\[ \psi_\Lambda(u) =  \varphi_\Lambda(S_\Lambda^{-1}(u)). \]
This function satisfies 
\[ \psi_\Lambda(u)(\lambda)=u(\lambda) \quad (\forall \lambda\in \Lambda), \quad 
    \norm{\psi_\Lambda(u)}_{L^\infty(\mathbb{R}^k)} \leq 2K_0 \norm{u}_\infty. \]
    The former property means that $\psi_\Lambda(u)$ interpolates the sequence $u$.
The construction of $\psi_\Lambda(u)$ is ``$\mathbb{Z}^k$-equivariant'': For $n\in \mathbb{Z}^k$ and $u\in \ell^\infty(\Lambda)$ we 
define $v\in \ell^\infty(n+\Lambda)$ by $v(n+\lambda) = u(\lambda)$. 
Then
\[ \psi_{n+\Lambda}(v)(t+n) = \psi_\Lambda(u)(t). \]
If $\Lambda$ is the empty set, then $\psi_\Lambda(0)=0$.

We denote by $\mathbb{B}_1\left(\ell^\infty(\Lambda)\right)$ the closed unit ball of $\ell^\infty(\Lambda)$ around the origin with 
respect to the supremum norm.
The function $\psi_\Lambda(u)$ depends continuously on $\Lambda$ and $u$ as follows:

\begin{lemma}[Continuity of the construction of $\psi_\Lambda(u)$]  \label{lemma: continuity of psi}
For any $r>0$ and $\varepsilon>0$ there exist $r'>0$ and $\varepsilon'>0$ so that if $\Lambda_1,\Lambda_2\in \mathrm{Ads}$,   
$u_1\in \mathbb{B}_1\left(\ell^\infty(\Lambda_1)\right)$ and $u_2\in \mathbb{B}_1\left(\ell^\infty(\Lambda_2)\right)$ satisfy 
\begin{equation}  \label{eq: topology of Lambda and u}
    \Lambda_1\cap B_{r'} = \Lambda_2\cap B_{r'}, \quad 
    |u_1(\lambda)-u_2(\lambda)| < \varepsilon' \quad (\forall \lambda\in  \Lambda_1\cap B_{r'}) 
\end{equation}    
then for all $t\in B_r$ 
\[ |\psi_{\Lambda_1}(u_1)(t)-\psi_{\Lambda_2}(u_2)(t)| < \varepsilon. \]
\end{lemma}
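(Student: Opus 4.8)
The plan is to reduce the continuity statement to two separate estimates: one showing that the ``tail'' of the interpolation series (coming from lattice points far from $B_r$) is uniformly small, and one showing that the ``head'' (finitely many terms near $B_r$) depends continuously on the data. Concretely, I would first observe that $\psi_{\Lambda_j}(u_j) = \varphi_{\Lambda_j}(S_{\Lambda_j}^{-1}(u_j))$, so it suffices to control two things: (a) the coefficient sequences $S_{\Lambda_j}^{-1}(u_j) \in \ell^\infty(\Lambda_j)$ agree approximately on a large ball, and (b) for band-limited sums of the form $\varphi_\Lambda(v)(t) = \sum_{\lambda\in\Lambda} v(\lambda)\chi_0(t-\lambda)\prod_i\sinc(\rho_i(t_i-\lambda_i))$ with $\norm{v}_\infty \le 2$, agreement of $v$ and $\Lambda$ on a sufficiently large ball $B_{r'}$ forces agreement of the output on $B_r$ up to $\varepsilon$. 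Step (b) is the easier half: since $\chi_0$ is rapidly decreasing, for any $t\in B_r$ the contribution $\sum_{\lambda\in\Lambda_1\setminus B_{r'}}$ (resp.\ over $\Lambda_2\setminus B_{r'}$) is bounded by $2\sum_{\lambda\in\Gamma_1\setminus B_{r'-r}}|\chi_0(\lambda)|$ times a constant, which tends to $0$ as $r'\to\infty$ uniformly in $\Lambda_j$ because $\Gamma_1$ is a fixed lattice; and the remaining finitely many terms (over $\Lambda_1\cap B_{r'} = \Lambda_2\cap B_{r'}$) differ by at most $\#(\Gamma_1\cap B_{r'})\cdot K_0 \cdot \sup|v_1-v_2|$ on those points.

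The main work is establishing (a): that $S_{\Lambda_1}^{-1}(u_1)$ and $S_{\Lambda_2}^{-1}(u_2)$ are close on a large ball, given only that $\Lambda_1,\Lambda_2$ and $u_1,u_2$ agree on a (larger) ball. The obstacle is that $S_\Lambda^{-1} = \sum_{n\ge 0}(\id - S_\Lambda)^n$ is a priori a genuinely nonlocal operator, and the lattices $\Lambda_1,\Lambda_2$ may differ wildly outside $B_{r'}$. I would exploit the structure of $\id - S_\Lambda$: by the computation in the proof of Lemma~\ref{lemma: operator norm of S-1}, $(\id - S_\Lambda)(v)(\lambda)$ only involves $v(\lambda')$ for $\lambda' \in \Lambda\setminus B_{r_0}(\lambda)$ with weight $|\chi_0(\lambda'-\lambda)|$, and the total weight is $\le 1/2$; moreover the weights decay rapidly in $|\lambda'-\lambda|$. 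This gives $\id - S_\Lambda$ an ``exponentially localized'' character: the value at $\lambda$ depends only very weakly on far-away coordinates. Iterating, $(\id - S_\Lambda)^n(v)(\lambda)$ is, up to an error of size $\le C\theta^{R}$ (for some $\theta<1$ depending on the tail decay of $\chi_0$), determined by the restriction of $v$ and of $\Lambda$ to the ball $B_{nr_0 + R}(\lambda)$ — and summing over $n$ with the geometric factor $2^{-n}$ from $\norm{\id - S_\Lambda}\le 1/2$ keeps the relevant radius effectively finite. Thus, choosing $r'$ large enough (depending on $r$, $\varepsilon$, the decay rate of $\chi_0$, and $K_0, r_0$) and $\varepsilon'$ small enough, one forces $|S_{\Lambda_1}^{-1}(u_1)(\lambda) - S_{\Lambda_2}^{-1}(u_2)(\lambda)| < \varepsilon''$ for all $\lambda \in \Gamma_1 \cap B_{r''}$ with $r''$ as large as needed.

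I would organize the localization estimate as an auxiliary sublemma: for each $n$, and each $\lambda \in \Lambda_1\cap B_{r''}$, show $|(\id-S_{\Lambda_1})^n(u_1)(\lambda) - (\id-S_{\Lambda_2})^n(u_2)(\lambda)|$ is bounded by a sum of a ``locality error'' (from truncating the convolution powers to a ball, using rapid decay of $\chi_0$) and a ``data error'' ($\le C^n\varepsilon'$, from the agreement of $u_1,u_2$ on $B_{r'}$), valid as long as $r''$ plus a margin growing linearly in $n$ stays below $r'$. Summing the geometric series $\sum_n 2^{-n}(\cdots)$ — legitimate since $\norm{\id - S_{\Lambda_j}}\le 1/2$ caps the $\ell^\infty$ size of each term by $2^{-n}$, so the tail in $n$ is automatically small — yields the desired bound on $S_{\Lambda_j}^{-1}$. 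Then feeding this into step (b) completes the proof: pick $r'$ to simultaneously handle the $\chi_0$-tail in (b) and the localization radius in (a), and pick $\varepsilon'$ to make the finitely-many-terms contribution and the data error small. The one genuinely delicate point is bookkeeping the interplay between the ball radius $r''$ on which coefficients agree, the radius $r'$ on which the raw data agree, and the number of iterates $n$ we can afford before the localization argument breaks down; this is where I expect most of the care to go, but it is routine geometric-series-and-rapid-decay estimation rather than anything conceptually hard.
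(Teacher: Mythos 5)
Your proposal is correct and follows essentially the same route as the paper: expand $S_\Lambda^{-1}=\sum_{n\ge 0}(\id-S_\Lambda)^n$, use $\norm{\id-S_\Lambda}\le 1/2$ to truncate the series in $n$, and then prove (by induction on $n$, using the rapid decay of $\chi_0$) that $\varphi_\Lambda$ and each iterate $(\id-S_\Lambda)^n$ depend continuously on the restriction of $(\Lambda,u)$ to a large ball. The paper packages these two continuity statements as a two-part Claim inside the proof, but the content and the bookkeeping are the same as yours.
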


\begin{proof}
By the linearity of $\varphi_{\Lambda}$,
\[ \psi_{\Lambda}(u) = \varphi_\Lambda(S_\Lambda^{-1}(u)) = \sum_{n=0}^\infty \varphi_\Lambda\left((\id-S_\Lambda)^n u\right). \]
By Lemma \ref{lemma: operator norm of S-1}
\[  \norm{\varphi_\Lambda\left((\id-S_\Lambda)^n u\right)}_{L^\infty(\mathbb{R}^k)} \leq K_0 \norm{(\id-S_\Lambda)^n u}_\infty
    \leq \frac{K_0}{2^n} \norm{u}_\infty. \]
Therefore the statement follows from the next claim.

\begin{claim}
 \begin{enumerate}
     \item  For any $r>0$ and $\varepsilon>0$ there exist $r'>0$ and $\varepsilon'>0$ so that if $\Lambda_1,\Lambda_2\in \mathrm{Ads}$,   
              $u_1\in \mathbb{B}_1\left(\ell^\infty(\Lambda_1)\right)$ and $u_2\in \mathbb{B}_1\left(\ell^\infty(\Lambda_2)\right)$ satisfy 
              (\ref{eq: topology of Lambda and u}) then for all $t \in B_r$ 
             \[  |\varphi_{\Lambda_1}(u_1)(t)-\varphi_{\Lambda_2}(u_2)(t)| < \varepsilon. \]
    \item  For any $r>0$, $\varepsilon>0$ and a natural number $n$ there exist $r'>r$ and $\varepsilon'>0$ so that 
              if $\Lambda_1,\Lambda_2\in \mathrm{Ads}$,   
              $u_1\in \mathbb{B}_1\left(\ell^\infty(\Lambda_1)\right)$ and $u_2\in \mathbb{B}_1\left(\ell^\infty(\Lambda_2)\right)$ satisfy 
              (\ref{eq: topology of Lambda and u}) then for all $\lambda \in \Lambda_1 \cap B_r$
             \[  \left|(\id-S_{\Lambda_1})^n(u_1)(\lambda)-(\id-S_{\Lambda_2})^n(u_2)(\lambda)\right| < \varepsilon. \]  
  \end{enumerate}
\end{claim}

\begin{proof}
(1)  We choose $r'>r$ so large that for all $t \in B_r$ 
\[ \sum_{\lambda\in \Gamma_1 \setminus B_{r'}} |\chi_0(t-\lambda)| < \frac{\varepsilon}{4}. \] 
Then Condition (\ref{eq: topology of Lambda and u}) implies that for $t \in B_r$
\begin{equation*}
   \begin{split}
    |\varphi_{\Lambda_1}(u_1)(t)-\varphi_{\Lambda_2}(u_2)(t)|  &< \frac{\varepsilon}{2} + 
      \sum_{\lambda\in \Lambda_1\cap B_{r'}} |u_1(\lambda)-u_2(\lambda)| \cdot |\chi_0(t-\lambda)| \\
    &\leq \frac{\varepsilon}{2} + \varepsilon' \sum_{\lambda\in \Gamma_1 \cap B_{r'}} |\chi_0(t-\lambda)|. 
   \end{split}
\end{equation*}    
We can choose $\varepsilon' >0$ so small that the right-hand side is smaller than $\varepsilon$.

(2) The statement can be reduced to the case of $n=1$ by induction.
The case of $n=1$ immediately follows from the statement (1).
\end{proof} 
\end{proof}

$\psi_\Lambda(u)$ is a nice interpolating function.
But we need one more twist.
We naturally identify the set $\{0,1\}^{\Gamma_1}$ with the set of all subsets of $\Gamma_1$.
Namely, $\Lambda\subset \Gamma_1$ is identified with the characteristic function $1_\Lambda \in \{0,1\}^{\Gamma_1}$.
Then in particular $\mathrm{Ads}$ (the set of admissible sets) is a Borel subset of $\{0,1\}^{\Gamma_1}$.
Let $p:\Gamma_1 \to [0,1]$ be an admissible \textit{function}, namely $\{\lambda\in \Gamma_1|\, p(\lambda)>0\}$
belongs to $\mathrm{Ads}$.
We define a probability measure $\mu_p$ on $\{0,1\}^{\Gamma_1}$ by 
\[  \mu_p = 
     \prod_{\lambda\in \Gamma_1} \left(\left(1-p(\lambda)\right)\boldsymbol{\delta}_0 + p(\lambda) \boldsymbol{\delta}_1\right), \]
where $\boldsymbol{\delta}_0$ and $\boldsymbol{\delta}_1$ are the delta probability measures at $0$ and $1$ on $\{0,1\}$ respectively.
Since $p$ is admissible, the set $\mathrm{Ads}$ has full measure: $\mu_p(\mathrm{Ads}) =1$.

Let $\ell^\infty(\Gamma_1)$ be the Banach space of bound functions $u:\Gamma_1 \to \mathbb{R}$.
For $u\in \ell^\infty(\Gamma_1)$ we define a band-limited function $\Psi(p,u) \in \mathcal{B}(a_1,\dots,a_k)$ by
\begin{equation} \label{eq: interpolation function Psi}
     \Psi(p,u) = \int_{\Lambda\in \mathrm{Ads}} \psi_\Lambda(u|_\Lambda)\, d\mu_p(\Lambda). 
\end{equation}     
This function is the main product of this section.

\begin{lemma}[Basic properties of $\Psi$] \label{lemma: basic properties of Psi}
    $\Psi$ satisfies:
    \begin{description}
       \item[(1) Interpolation]   If $\lambda\in \Gamma_1$ satisfies $p(\lambda)=1$ then $\Psi(p,u)(\lambda)=u(\lambda)$. 
       \item[(2) Boundedness]  $\norm{\Psi(p,u)}_{L^\infty(\mathbb{R}^k)} \leq 2K_0 \norm{u}_\infty$. 
       \item[(3) Equivariance]   For $n\in \mathbb{Z}^k$ we define $q\in \mathrm{Adf}$ and $v\in \ell^\infty(\Gamma_1)$ by 
                              $q(\lambda+n) = p(\lambda)$ and $v(\lambda+n) = u(\lambda)$ for $\lambda\in \Gamma_1$.
                              Then $\Psi(q,v)(t+n) = \Psi(p,u)(t)$. 
    \end{description}
\end{lemma}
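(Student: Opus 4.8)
The plan is to obtain the three assertions by integrating, against $\mu_p$, the corresponding properties of the individual interpolating functions $\psi_\Lambda$ established earlier in this section. First I would make sense of the integral (\ref{eq: interpolation function Psi}) pointwise. For a fixed $t\in\mathbb{R}^k$ the function $\mathrm{Ads}\ni\Lambda\mapsto\psi_\Lambda(u|_\Lambda)(t)$ is bounded in absolute value by $2K_0\norm{u}_\infty$ (this is $\norm{\psi_\Lambda(v)}_{L^\infty(\mathbb{R}^k)}\le 2K_0\norm{v}_\infty$ combined with $\norm{u|_\Lambda}_\infty\le\norm{u}_\infty$), and it is Borel measurable: Lemma~\ref{lemma: continuity of psi}, applied with $u_1=u|_{\Lambda_1}$ and $u_2=u|_{\Lambda_2}$ (for which the condition on the values in (\ref{eq: topology of Lambda and u}) is automatic once $\Lambda_1\cap B_{r'}=\Lambda_2\cap B_{r'}$), says precisely that $\Lambda\mapsto\psi_\Lambda(u|_\Lambda)(t)$ is continuous on $\mathrm{Ads}$ in the product topology of $\{0,1\}^{\Gamma_1}$ (after the obvious rescaling by $\norm{u}_\infty$). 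Hence $\Psi(p,u)(t)=\int_{\mathrm{Ads}}\psi_\Lambda(u|_\Lambda)(t)\,d\mu_p(\Lambda)$ is well defined and $|\Psi(p,u)(t)|\le 2K_0\norm{u}_\infty$, which is property (2).

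To see that $\Psi(p,u)$ actually belongs to $\mathcal{B}(a_1,\dots,a_k)$ rather than merely to the space of bounded continuous functions, I would extend each $\psi_\Lambda(u|_\Lambda)$ holomorphically to $\mathbb{C}^k$ with $\bigl|\psi_\Lambda(u|_\Lambda)(x+y\sqrt{-1})\bigr|\le 2K_0\norm{u}_\infty e^{\pi(a_1|y_1|+\dots+a_k|y_k|)}$ by Lemmas~\ref{lemma: exponential type} and~\ref{lemma: Paley--Wiener}, observe (via Morera's theorem and Fubini, the uniform bound on compact subsets of $\mathbb{C}^k$ justifying the interchange) that $z\mapsto\int_{\mathrm{Ads}}\psi_\Lambda(u|_\Lambda)(z)\,d\mu_p(\Lambda)$ is holomorphic on $\mathbb{C}^k$ with the same exponential bound, and conclude by Paley--Wiener that its restriction to $\mathbb{R}^k$, which equals $\Psi(p,u)$, is band-limited in $\prod_{i=1}^k[-a_i/2,a_i/2]$.

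For property (1), if $p(\lambda)=1$ then the $\lambda$-th factor of $\mu_p=\prod_{\lambda'\in\Gamma_1}\bigl((1-p(\lambda'))\boldsymbol{\delta}_0+p(\lambda')\boldsymbol{\delta}_1\bigr)$ is $\boldsymbol{\delta}_1$, so $\mu_p(\{\Lambda\in\mathrm{Ads}\mid\lambda\in\Lambda\})=1$; for each such $\Lambda$ the interpolation property of $\psi_\Lambda$ gives $\psi_\Lambda(u|_\Lambda)(\lambda)=(u|_\Lambda)(\lambda)=u(\lambda)$, and integrating the constant $u(\lambda)$ yields $\Psi(p,u)(\lambda)=u(\lambda)$. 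For property (3), the key observation is that $\mu_q$ is the push-forward of $\mu_p$ under the translation homeomorphism $\theta_n\colon\{0,1\}^{\Gamma_1}\to\{0,1\}^{\Gamma_1}$, $\Lambda\mapsto n+\Lambda$ (well defined because $n+\Gamma_1=\Gamma_1$, and mapping $\mathrm{Ads}$ onto $\mathrm{Ads}$ since admissibility depends only on the differences of the points of the set): reindexing the product defining $\mu_q$ by $\lambda'=\lambda+n$ and using $q(\lambda+n)=p(\lambda)$ identifies $\mu_q$ with $(\theta_n)_\ast\mu_p$. Moreover, for admissible $\Lambda$ the sequence $v|_{n+\Lambda}$ is exactly the translate of $u|_\Lambda$ (because $v(\lambda+n)=u(\lambda)$), so the $\mathbb{Z}^k$-equivariance of the construction of $\psi_\Lambda$ gives $\psi_{n+\Lambda}(v|_{n+\Lambda})(t+n)=\psi_\Lambda(u|_\Lambda)(t)$. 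Using the definition of $\Psi$ and changing variables via $\mu_q=(\theta_n)_\ast\mu_p$,
\[
\Psi(q,v)(t+n)=\int_{\mathrm{Ads}}\psi_{n+\Lambda}(v|_{n+\Lambda})(t+n)\,d\mu_p(\Lambda)=\int_{\mathrm{Ads}}\psi_\Lambda(u|_\Lambda)(t)\,d\mu_p(\Lambda)=\Psi(p,u)(t),
\]
which is property (3).

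The only points that require genuine care are the measurability of the integrand and the verification that $\Psi(p,u)$ is band-limited rather than just bounded and continuous; once these are in place the rest is a direct transcription of the properties of the $\psi_\Lambda$, with the change of variables in (3) being pure bookkeeping after the identity $\mu_q=(\theta_n)_\ast\mu_p$ has been recorded.
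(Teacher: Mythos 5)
Your proposal is correct and follows the same route as the paper, which simply asserts that all three properties "immediately follow from the corresponding properties of $\psi_\Lambda(u)$"; you have supplied the verifications (measurability of the integrand via Lemma \ref{lemma: continuity of psi}, the a.e.\ containment $\lambda\in\Lambda$ when $p(\lambda)=1$, and the identity $\mu_q=(\theta_n)_\ast\mu_p$) that the paper leaves implicit. All of these details check out.
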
    

\begin{proof}
These three properties immediately follow from the corresponding properties of $\psi_\Lambda(u)$.
\end{proof}

We would like to show that the map $\Psi$ depends continuously on $p$ and $u$ (Proposition \ref{prop: continuity of Psi} below).
But we need a preparation before that.
For $r>0$ and $p\in \mathrm{Adf}$ we define probability measures $\mu_{p,r}$ and $\nu_{p,r}$ on $\{0,1\}^{\Gamma_1 \cap B_r}$
and $\{0,1\}^{\Gamma_1\setminus B_r}$ respectively by 
\[ \mu_{p,r} = \prod_{\lambda\in \Gamma_1 \cap B_r}
   \left(\left(1-p(\lambda)\right)\boldsymbol{\delta}_0 + p(\lambda) \boldsymbol{\delta}_1\right), \quad 
   \nu_{p,r} =  \prod_{\lambda\in \Gamma_1\setminus B_r} 
    \left(\left(1-p(\lambda)\right)\boldsymbol{\delta}_0 + p(\lambda) \boldsymbol{\delta}_1\right).  \]
It follows $\mu_p = \mu_{p,r}\otimes \nu_{p,r}$. 
 
\begin{lemma}[Truncation of the integral] \label{lemma: truncation of the integral}
For any $r>0$ and $\varepsilon>0$ if we choose $r'>0$ sufficiently large then for any $p\in \mathrm{Adf}$,
$u\in \mathbb{B}_1\left(\ell^\infty(\Gamma_1)\right)$ and $t\in B_r$ 
\begin{equation} \label{eq: truncation of the integral}
   \left|\Psi(p,u)(t) - \int_{\Lambda\in \mathrm{Ads}\cap \{0,1\}^{\Gamma_1\cap B_{r'}}} \psi_{\Lambda}(u|_{\Lambda})(t) \, 
   d\mu_{p,r'}(\Lambda)\right| < \varepsilon. 
\end{equation}   
\end{lemma}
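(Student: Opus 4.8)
The plan is to split the defining integral (\ref{eq: interpolation function Psi}) according to the decomposition $\mu_p = \mu_{p,r'} \otimes \nu_{p,r'}$ and show that the ``tail'' part $\nu_{p,r'}$ contributes negligibly to the value at a point $t \in B_r$. Concretely, writing an admissible set $\Lambda \in \mathrm{Ads}$ as a disjoint union $\Lambda = \Lambda' \sqcup \Lambda''$ with $\Lambda' = \Lambda \cap B_{r'}$ and $\Lambda'' = \Lambda \setminus B_{r'}$, Fubini's theorem gives
\[
  \Psi(p,u)(t) = \int_{\Lambda'} \left( \int_{\Lambda''} \psi_{\Lambda' \sqcup \Lambda''}(u|_{\Lambda' \sqcup \Lambda''})(t)\, d\nu_{p,r'}(\Lambda'') \right) d\mu_{p,r'}(\Lambda'),
\]
at least over the full-measure set where $\Lambda' \sqcup \Lambda''$ is admissible. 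Comparing with the truncated integral on the right-hand side of (\ref{eq: truncation of the integral}), which uses only $\psi_{\Lambda'}(u|_{\Lambda'})(t)$, the whole matter reduces to controlling
\[
  \left| \psi_{\Lambda' \sqcup \Lambda''}(u|_{\Lambda' \sqcup \Lambda''})(t) - \psi_{\Lambda'}(u|_{\Lambda'})(t) \right|
\]
uniformly for $t \in B_r$, for $u \in \mathbb{B}_1(\ell^\infty(\Gamma_1))$, and for all admissible $\Lambda', \Lambda''$ with $\Lambda' \subset B_{r'}$ and $\Lambda'' \cap B_{r'} = \emptyset$.

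The first key step is to bound this difference by a quantity that tends to $0$ as $r' \to \infty$. Here I would use Lemma \ref{lemma: continuity of psi} in the following way: the two admissible sets $\Lambda := \Lambda' \sqcup \Lambda''$ and $\Lambda'$ agree inside $B_{r'}$, and the corresponding sequences $u|_\Lambda$ and $u|_{\Lambda'}$ agree there as well (both equal $u$ restricted to $\Gamma_1 \cap B_{r'}$). So, given $r$ and $\varepsilon$, Lemma \ref{lemma: continuity of psi} supplies a radius $r'' = r''(r,\varepsilon/2)$ and a tolerance $\varepsilon''$; choosing $r' > r''$ we get $|\psi_\Lambda(u|_\Lambda)(t) - \psi_{\Lambda'}(u|_{\Lambda'})(t)| < \varepsilon/2$ for all $t \in B_r$. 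A subtle point: Lemma \ref{lemma: continuity of psi} as stated requires the agreement on a ball $B_{r''}$ for the specific $r''$ it produces, so the logical order is: fix $r$ and $\varepsilon$, invoke Lemma \ref{lemma: continuity of psi} to get $r''$, then take $r' > r''$ in the statement of the present lemma. Once this pointwise comparison is in hand, integrating over $\Lambda''$ against $\nu_{p,r'}$ (a probability measure) preserves the bound $\varepsilon/2$, and then integrating over $\Lambda'$ against $\mu_{p,r'}$ preserves it again, yielding (\ref{eq: truncation of the integral}) with $\varepsilon/2 < \varepsilon$.

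The main obstacle I expect is purely bookkeeping: making the Fubini/disintegration step rigorous on the full-measure set where admissibility holds, and checking that the maps $\Lambda \mapsto \psi_\Lambda(u|_\Lambda)(t)$ are measurable with respect to the relevant product $\sigma$-algebras so that the iterated integral is legitimate. Measurability follows from the explicit series representation $\psi_\Lambda(u) = \sum_{n \geq 0} \varphi_\Lambda((\mathrm{id} - S_\Lambda)^n u)$ together with the fact that each $\varphi_\Lambda(u)(t)$ is, for fixed $t$ and $u$, a manifestly measurable (indeed, locally a finite-sum-dominated series) function of the indicator $1_\Lambda \in \{0,1\}^{\Gamma_1}$; the operator $S_\Lambda$ and its Neumann series depend measurably on $1_\Lambda$ for the same reason. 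For the Fubini step, one uses that $\mu_p(\mathrm{Ads}) = 1$ and that $\mathrm{Ads}$, viewed inside $\{0,1\}^{\Gamma_1 \cap B_{r'}} \times \{0,1\}^{\Gamma_1 \setminus B_{r'}}$, is a Borel set, so the iterated integral computes the same thing as the original integral (\ref{eq: interpolation function Psi}). None of this is deep, but it is the part that needs care; the analytic content is entirely absorbed into Lemma \ref{lemma: continuity of psi}.
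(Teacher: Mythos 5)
Your proposal is correct and follows essentially the same route as the paper: decompose $\mu_p = \mu_{p,r'}\otimes\nu_{p,r'}$, write $\Psi(p,u)$ as an iterated integral over $\Lambda'=\Lambda\cap B_{r'}$ and $\Lambda''=\Lambda\setminus B_{r'}$, and apply Lemma \ref{lemma: continuity of psi} to bound $|\psi_{\Lambda'\cup\Lambda''}(u|_{\Lambda'\cup\Lambda''})-\psi_{\Lambda'}(u|_{\Lambda'})|$ on $B_r$ before integrating against the probability measures. Your handling of the quantifier order (obtaining the radius from the continuity lemma first, then taking $r'$ larger) and the full-measure/Fubini bookkeeping matches what the paper does, just spelled out in more detail.
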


\begin{proof}
For an admissible set $\Lambda_1\subset \Gamma_1\cap B_{r'}$ we define $\mathrm{Ads}(\Lambda_1,r')\subset \mathrm{Ads}$ as the set of 
$\Lambda_2\subset \Gamma_1\setminus B_{r'}$ satisfying $\Lambda_1\cup \Lambda_2\in \mathrm{Ads}$.
If $\mu_{p,r'}\left(\{\Lambda_1\}\right) >0$ then $\mathrm{Ads}(\Lambda_1, r')$ has full measure with respect to $\nu_{p,r'}$
because $\mu_p\left(\mathrm{Ads}\right) = 1$.

By $\mu_p=\mu_{p,r'}\otimes \nu_{p,r'}$
\[ \Psi(p,u) = 
    \int_{\Lambda_1\in \mathrm{Ads}\cap \{0,1\}^{\Gamma_1\cap B_{r'}}} 
   \left(\int_{\Lambda_2\in \mathrm{Ads}(\Lambda_1,r')} \psi_{\Lambda_1\cup \Lambda_2}(u|_{\Lambda_1\cup \Lambda_2})\, 
   d\nu_{p,r'}(\Lambda_2)\right) \, d\mu_{p,r'}(\Lambda_1). \]
If $r'$ is sufficiently large, then it follows from Lemma \ref{lemma: continuity of psi} that 
\[ \left|\psi_{\Lambda_1\cup \Lambda_2}(u|_{\Lambda_1\cup \Lambda_2}) - \psi_{\Lambda_1}(u|_{\Lambda_1})\right| < \varepsilon \,
   \text{ on $B_r$}. \]
Then (\ref{eq: truncation of the integral}) follows.
\end{proof}

\begin{proposition}[Continuity of $\Psi$]  \label{prop: continuity of Psi}
For any $r>0$ and $\varepsilon>0$ there exist $r'>0$ and $\varepsilon'>0$ so that if 
$p, q\in \mathrm{Adf}$ and $u, v\in \mathbb{B}_1\left(\ell^\infty(\Gamma_1)\right)$
satisfy 
\begin{equation}  \label{eq: topology of p and u}
   \forall \lambda\in \Gamma_1  \cap B_{r'}:  \quad  |p(\lambda)-q(\lambda)|<\varepsilon', \quad 
    |u(\lambda)-v(\lambda)|<\varepsilon' 
\end{equation}    
then for all $t \in B_r$ 
\[  |\Psi(p,u)(t) - \Psi(q,v)(t)| < \varepsilon. \]
\end{proposition}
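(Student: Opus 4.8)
The plan is to reduce the assertion to a comparison of integrals over a \emph{finite} index set, and then to estimate the two sources of discrepancy --- the integrand and the measure --- separately. First I would fix $r>0$ and $\varepsilon>0$ and invoke Lemma \ref{lemma: truncation of the integral} with $\varepsilon/4$ in place of $\varepsilon$: this produces a radius $r''>r$ such that
\[
   \left|\Psi(p,u)(t)-\int_{\Lambda\in\mathrm{Ads}\cap\{0,1\}^{\Gamma_1\cap B_{r''}}}\psi_\Lambda(u|_\Lambda)(t)\,d\mu_{p,r''}(\Lambda)\right|<\frac{\varepsilon}{4}
\]
for every $p\in\mathrm{Adf}$, every $u\in\mathbb{B}_1\left(\ell^\infty(\Gamma_1)\right)$ and every $t\in B_r$. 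Since $\rho_1,\dots,\rho_k$ are rational the set $\Gamma_1\cap B_{r''}$ is finite; put $N=\#\left(\Gamma_1\cap B_{r''}\right)$.

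Next I would apply Lemma \ref{lemma: continuity of psi}, again with $\varepsilon/4$, to obtain $r_0>0$ and $\varepsilon_0>0$ with the property that $|\psi_\Lambda(u_1)(t)-\psi_\Lambda(u_2)(t)|<\varepsilon/4$ for all $t\in B_r$ whenever $\Lambda\in\mathrm{Ads}$ and $u_1,u_2\in\mathbb{B}_1\left(\ell^\infty(\Lambda)\right)$ satisfy $|u_1(\lambda)-u_2(\lambda)|<\varepsilon_0$ on $\Lambda\cap B_{r_0}$. I would then declare the two quantities promised by the proposition to be
\[
   r'=\max(r'',r_0),\qquad \varepsilon'=\min\left(\varepsilon_0,\ \frac{\varepsilon}{16\,K_0\,N}\right),
\]
where $K_0$ is the constant of (\ref{eq: definition of K_0}).

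Now I would take $p,q\in\mathrm{Adf}$ and $u,v\in\mathbb{B}_1\left(\ell^\infty(\Gamma_1)\right)$ satisfying (\ref{eq: topology of p and u}), fix $t\in B_r$, abbreviate $\mathcal{S}=\mathrm{Ads}\cap\{0,1\}^{\Gamma_1\cap B_{r''}}$, and set
\[
   A=\int_{\mathcal{S}}\psi_\Lambda(u|_\Lambda)(t)\,d\mu_{p,r''}(\Lambda),\quad
   B=\int_{\mathcal{S}}\psi_\Lambda(v|_\Lambda)(t)\,d\mu_{p,r''}(\Lambda),\quad
   C=\int_{\mathcal{S}}\psi_\Lambda(v|_\Lambda)(t)\,d\mu_{q,r''}(\Lambda).
\]
The truncation step gives $|\Psi(p,u)(t)-A|<\varepsilon/4$ and $|\Psi(q,v)(t)-C|<\varepsilon/4$. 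For $|A-B|$ I would note that every $\Lambda\in\mathcal{S}$ satisfies $\Lambda\cap B_{r_0}\subset\Gamma_1\cap B_{r'}$ and $u|_\Lambda,v|_\Lambda\in\mathbb{B}_1\left(\ell^\infty(\Lambda)\right)$, so $|u|_\Lambda(\lambda)-v|_\Lambda(\lambda)|<\varepsilon'\le\varepsilon_0$ on $\Lambda\cap B_{r_0}$, whence Lemma \ref{lemma: continuity of psi} yields $|\psi_\Lambda(u|_\Lambda)(t)-\psi_\Lambda(v|_\Lambda)(t)|<\varepsilon/4$; since $\mu_{p,r''}$ is a probability measure, $|A-B|<\varepsilon/4$. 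For $|B-C|$ I would use that $|\psi_\Lambda(v|_\Lambda)(t)|\le 2K_0$ for every $\Lambda$, so $|B-C|\le 4K_0\,\norm{\mu_{p,r''}-\mu_{q,r''}}_{\mathrm{TV}}$, together with the fact that $\mu_{p,r''}$ and $\mu_{q,r''}$ are products over $\Gamma_1\cap B_{r''}=\{\lambda_1,\dots,\lambda_N\}$ of the Bernoulli factors $\left(1-p(\lambda_j)\right)\boldsymbol{\delta}_0+p(\lambda_j)\boldsymbol{\delta}_1$ and $\left(1-q(\lambda_j)\right)\boldsymbol{\delta}_0+q(\lambda_j)\boldsymbol{\delta}_1$; the elementary hybrid bound $\norm{\mu_{p,r''}-\mu_{q,r''}}_{\mathrm{TV}}\le\sum_{j=1}^N|p(\lambda_j)-q(\lambda_j)|\le N\varepsilon'$ then forces $|B-C|<\varepsilon/4$. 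Adding the four estimates yields $|\Psi(p,u)(t)-\Psi(q,v)(t)|<\varepsilon$ for all $t\in B_r$, which is what is claimed.

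The steps I would still need to write out are routine: the total-variation inequality $\norm{\bigotimes_j m_j-\bigotimes_j m_j'}_{\mathrm{TV}}\le\sum_j\norm{m_j-m_j'}_{\mathrm{TV}}$ for probability measures, by a standard telescoping argument, and the observation that $\mu_{p,r''}$ and $\mu_{q,r''}$ are supported on $\mathrm{Ads}$ --- because admissibility of $p$, resp.\ $q$, passes to every subset of $\{p>0\}$, resp.\ $\{q>0\}$ --- which makes $\mathcal{S}$ a full-measure set for both and legitimizes the comparison in the $B\to C$ step. The only point I expect to demand care, rather than being a genuine obstacle, is the order of the quantifiers: the cutoff radius $r''$ has to be fixed before $\varepsilon'$, since the count $N$ of lattice points in $B_{r''}$ enters the admissible size of $\varepsilon'$; this causes no circularity precisely because the radius furnished by Lemma \ref{lemma: truncation of the integral} depends only on $r$ and $\varepsilon$.
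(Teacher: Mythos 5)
Your proposal is correct and follows essentially the same route as the paper: truncate the integral to a finite window via Lemma \ref{lemma: truncation of the integral}, then split the remaining difference into an integrand term (controlled by Lemma \ref{lemma: continuity of psi}) and a measure term (controlled by the total variation of $\mu_{p,r'}-\mu_{q,r'}$, a finite sum). Your only deviation is that you make the total-variation bound explicit via the telescoping product estimate, where the paper simply notes that the finite sum $\sum_\Lambda|\mu_{p,r'}(\{\Lambda\})-\mu_{q,r'}(\{\Lambda\})|$ can be made small by choosing $\varepsilon'$ small.
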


\begin{proof}
From Lemma \ref{lemma: truncation of the integral} it is enough to show that if Condition (\ref{eq: topology of p and u})
holds for sufficiently large $r'>0$ and sufficiently small $\varepsilon'>0$ then for all $t \in B_r$
\[ \left| \int_{\Lambda\in \mathrm{Ads}\cap \{0,1\}^{\Gamma_1\cap B_{r'}}} \psi_{\Lambda}(u|_{\Lambda})(t) \, 
   d\mu_{p,r'}(\Lambda)  -
    \int_{\Lambda\in \mathrm{Ads}\cap \{0,1\}^{\Gamma_1\cap B_{r'}}} \psi_{\Lambda}(v|_{\Lambda})(t) \, 
   d\mu_{q,r'}(\Lambda)\right| < \varepsilon. \]
The left-hand side is bounded by 
\begin{equation*}
    \begin{split}
     \int_{\Lambda\in \mathrm{Ads}\cap \{0,1\}^{\Gamma_1\cap B_{r'}}} 
    \left|\psi_{\Lambda}(u|_{\Lambda})(t)-\psi_{\Lambda}(v|_{\Lambda})(t)\right| \, d\mu_{p,r'}(\Lambda) \\
    + \int_{\Lambda\in \mathrm{Ads}\cap \{0,1\}^{\Gamma_1\cap B_{r'}}} |\psi_{\Lambda}(v|_{\Lambda})(t)|\, 
    d|\mu_{p,r'}-\mu_{q,r'}|(\Lambda). 
   \end{split}
\end{equation*}   
The first term can be made arbitrarily small by Lemma \ref{lemma: continuity of psi}.
The second term is bounded by 
\begin{equation*}
    2K_0 \int_{\{0,1\}^{\Gamma_1\cap B_{r'}}} d|\mu_{p,r'}-\mu_{q,r'}|.
\end{equation*}     
The integral here is the total variation of the signed measure $\mu_{p,r'}-\mu_{q,r'}$, which is equal to the sum 
\[  \sum_{\Lambda\in \{0,1\}^{\Gamma_1\cap B_{r'}}} \left|\mu_{p,r'}(\{\Lambda\}) - \mu_{q,r'}(\{\Lambda\})\right|. \]
This can be made arbitrarily small by choosing $\varepsilon' >0$ in (\ref{eq: topology of p and u}) sufficiently small.
\end{proof}

\section{Dynamical construction of tiling-like band-limited maps}  
\label{section: dynamical construction of tiling-like band-limited functions}

This section is the longest section of the paper.
As we described in Subsection \ref{subsection: strategy of the proof}, the proof of Proposition \ref{prop: main proposition}
uses a \textit{tiling} $\mathbb{R}^k = \bigcup_{n\in \mathbb{Z}^k} W(x,n)$ and a \textit{social welfare system among tiles},
which should be \textit{encoded into a certain band-limited function $g_2(x)$}.
The purpose of this section is to construct these ingredients.
The following diagram outlines the construction.

\begin{equation*}
  \begin{CD}
   \text{A point $x$ in a dynamical system $(X,\mathbb{Z}^k,T)$ satisfying the marker property.}\\
   @VVV\\
   \text{A tiling $\mathbb{R}^k = \bigcup_{n\in \mathbb{Z}^k} W_0(x,n)$.}\\
   @VVV\\
   \text{A tiling-like band-limited map $\Phi(x): \mathbb{C}^k\to \mathbb{C}^k$,} 
   \\ \text{which is equivalent to a band-limited function $g_2(x)$.} \\
   @VVV \\
   \text{A tiling $\mathbb{R}^k = \bigcup_{n\in \mathbb{Z}^k} W(x,n)$.} \\
   @VVV \\
   \text{Weight functions $w(x,n)\in [0,1]^{\mathbb{Z}^k}$ for $n\in \mathbb{Z}^k$,}
   \\ \text{which form a social welfare system among tiles $W(x,n)$}.
  \end{CD}
\end{equation*}

Take $x\in X$. First we construct a tiling $\mathbb{R}^k = \bigcup_{n\in \mathbb{Z}^k} W_0(x,n)$ from the point $x$.
We use the marker property assumption only here.
This tiling is \textit{not} used directly in the proof of Proposition \ref{prop: main proposition}.
Instead we construct a certain holomorphic map $\Phi(x): \mathbb{C}^k\to \mathbb{C}^k$ called a 
``tiling-like band-limited map'' from the tiles $W_0(x,n)$.
Write $\Phi(x)= (\Phi(x)_1,\dots,\Phi(x)_k)$. 
The Fourier transforms of $\Phi(x)_1|_{\mathbb{R}^k} ,\dots, \Phi(x)_k|_{\mathbb{R}^k}$
and their complex conjugates are adjusted to have disjoint supports.
Then the band-limited function 
\[  g_2(x) \overset{\mathrm{def}}{=} \sum_{i=1}^k \mathrm{Re}\left(\Phi(x)_i\right) \]
becomes ``equivalent'' to $\Phi(x)$.
Namely if $g_2(x)=g_2(y)$ for some $x,y\in X$ then $\Phi(x)=\Phi(y)$.
(Indeed we will define $g_2(x)$ in Section \ref{section: proof of Main Proposition} by multiplying a small constant to the above function
so that the norm of $g_2(x)$ becomes sufficiently small.)

Let us give some more heuristics concerning the construction of $\Phi(x)$. Consider the function 
\[\tilde{\Theta}_{L}=(\sin\frac{\pi z_{1}}{L},\cdots, \sin\frac{\pi z_{k}}{L}): \mathbb{R}^{k}\to\mathbb{R}^{k}\]
for some positive integer $L$ thought to be very big. 
Every entry $\sin (\pi z_i/L)$ of the map $\tilde{\Theta}_L$ 
vanishes on $L\mathbb{Z}^{k}\subset \mathbb{R}^{k}$ and belongs to $B(\frac{1}{L},\cdots,\frac{1}{L})$.  Suppose we are given a family of tilings 
$W_{0}(x)=\{W_{0}(x,n)\}_{n\in\mathbb{Z}^{k}}$ of $\mathbb{R}^{k}$ depending continuously\footnote{More precisely, for $\varepsilon>0$ and for each $x\in X$ and $n\in \mathbb{Z}^k$ with $\mathrm{Int}\, W_{0}(x,n)\neq \emptyset$, if $y\in X$ is sufficiently close to $x$, then the Hausdorff distance between $W_{0}(x,n)$ and $W_{0}(y,n)$ is smaller than $\varepsilon$.} and $\mathbb{Z}^{k}$-equivariantly (i.e. $W_{0}(T^{n}x,m)=-n+W_{0}(x,n+m)$) on $x\in X$.
We would like to replace this family of tilings by a family of Voronoi tilings\footnote{A Voronoi tiling generated by a discrete set $A\subset\mathbb{R}^{k}$ is a partitioning of $\mathbb{R}^{k}$ into regions based on the closest distance to points in $A$. For more details, see Subsection \ref{subsection: dynamical Voronoi diagram}.} generated by the zeros of a continuous function
$F: X\times \mathbb{R}^{k}\to\mathbb{R}^k$, where $F$ has the following properties:
\begin{description}
  \item[(1) Band-limited] Every entry of $F(x,\cdot)$ belongs to $B(\frac{1}{L},\cdots,\frac{1}{L})$.
  \item[(2) Equivariance] $F(T^{n}x,z)=F(x,z+n)$ for any $x\in X$, $z\in\mathbb{R}^{k}$ and $n\in\mathbb{Z}^{k}$.
\end{description}
A candidate for $F$ is the following function
\[X\times \mathbb{R}^{k}\to\mathbb{R}^k, \quad (x,t)\mapsto\sum_{n\in\mathbb{Z}^{k}} \widetilde{\Theta}_{L}(t-n)\>1_{W_{0}(x,n)}(t).
\]
This function is equivariant but unfortunately not continuous. However a small change (for details see Subsection \ref{subsection: tiling-like band-limited functions}) makes it band-limited (and continuous).

\begin{figure}[h]
    \centering
    \includegraphics[width=4.0in]{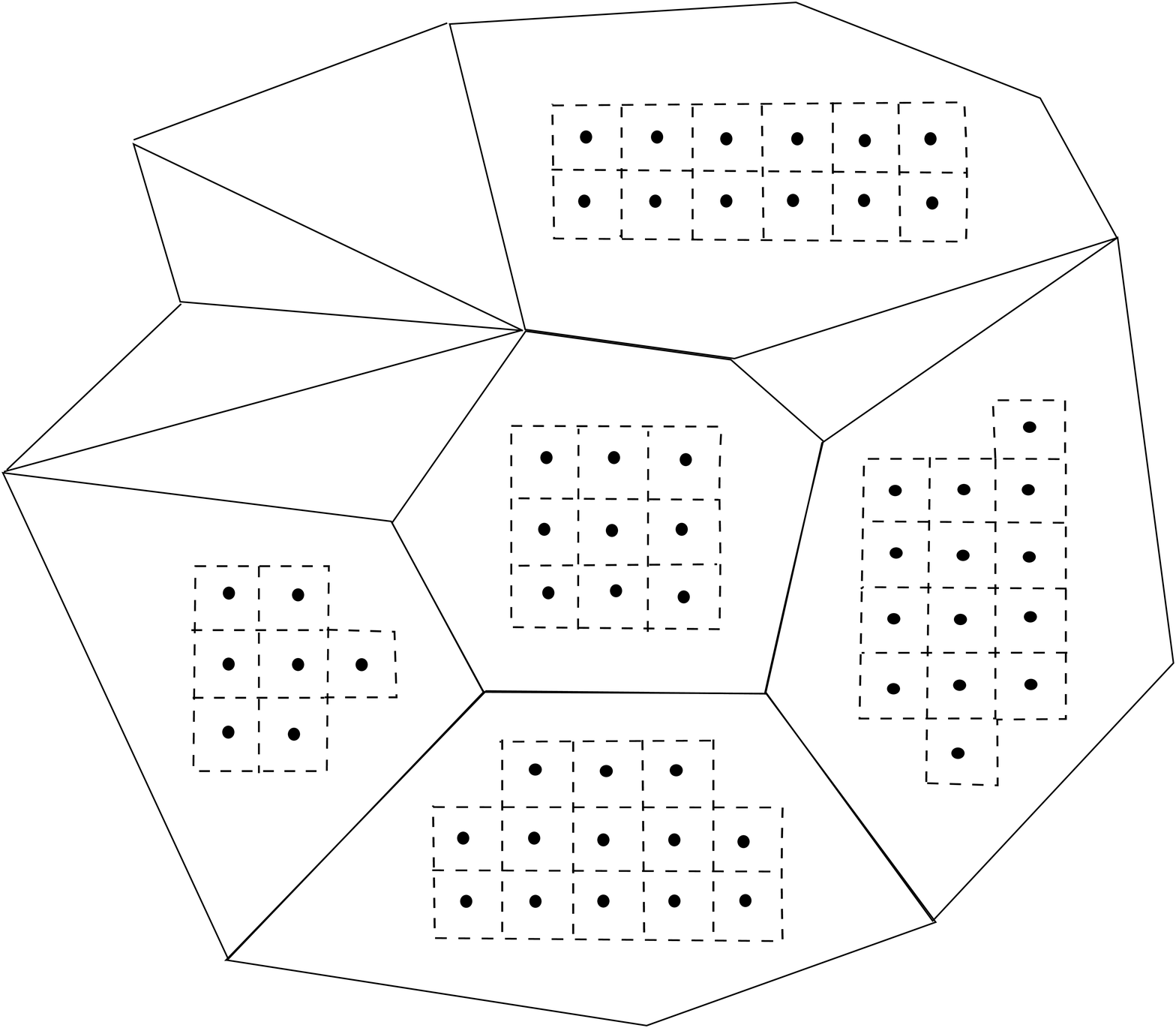}
    \caption{The original tiling $W_{0}(x,n)$: ------;
    The zeros of $F$ in the ``deep" interior of ``big" tiles: $\bullet$;
    ``Regular" part of the new tiling $W(x,n)$: - - - - -.} 
    \label{fig: newtiling}
\end{figure}

The most simple way of generating Voronoi tilings would be to use the zeros of $F(x,\cdot)$ as generating sets for the Voronoi tiling, i.e. $W(x)=\text{Voronoi}\big(\text{zeros of } F(x,\cdot)\big)$.
However, the zeros of band-limited maps behave rather wildly in general, e.g. they are not isolated and form a 
positive dimensional variety in general.
We will thus instead adopt a more elaborate scheme. 
We give a weight $v(x,n)\in[0,1]$ to every $x\in X$ and $n\in\mathbb{Z}^{k}$, which is constructed from
(some appropriate) zeros of $F(x,\cdot)$ in a neighborhood of $n$.
We use these weights to generate a family of indexed Voronoi tilings $\{W(x,n)\}_{n\in\mathbb{Z}^{k}}$. The exact procedure is described in Subsection \ref{subsection: weight functions from tiling-like band-limited functions}. For big tiles and big $L>0$ the picture schematically looks like Figure \ref{fig: newtiling}.

The new tiling is used in the proof of Proposition \ref{prop: main proposition} in Section \ref{section: proof of Main Proposition}.
We construct a certain ``weight function'' (not to be confused with $v(x,n)$ of the previous paragraph)
\[ w(x) = \left(w(x, n)\right)_{n\in \mathbb{Z}^k} \in \left([0,1]^{\mathbb{Z}^k}\right)^{\mathbb{Z}^k} \]
from the tiles $W(x,n)$.
This will play a role of a social welfare system among the tiles $W(x,n)$.
The tiles $W(x,n)$ and functions $w(x,n)$ are constructed from $\Phi(x)$.
So all their informations are encoded into $\Phi(x)$, which is equivalent to 
the band-limited function $g_2(x)$.

Subsection \ref{subsection: quantitative persistence of zero points}
is a function-theoretic preparation for the construction of tiling-like band-limited maps.
Subsection \ref{subsection: tiling-like band-limited functions} constructs a tiling-like band-limited map from a tiling of $\mathbb{R}^k$
(not necessarily coming from a dynamical system).
Subsection \ref{subsection: dynamical Voronoi diagram} describes a construction of the first tiling 
$\mathbb{R}^k = \bigcup_{n\in \mathbb{Z}^k} W_0(x,n)$.
Subsection \ref{subsection: weight functions from tiling-like band-limited functions}
uses the preparations in Subsections \ref{subsection: quantitative persistence of zero points}, 
\ref{subsection: tiling-like band-limited functions} and \ref{subsection: dynamical Voronoi diagram} and 
constructs $\Phi(x)$, $W(x,n)$ and $w(x,n)$.

\subsection{Quantitative persistence of zero points}  \label{subsection: quantitative persistence of zero points}

We start from the following elementary observation:
Let $f:\mathbb{C}^k\to \mathbb{C}^k$ be a holomorphic map such that $f(0)=0$ and the derivative $df_0$ at the origin is
an invertible matrix.
For any neighborhood $U$ of the origin in $\mathbb{C}^k$,
if $g:\mathbb{C}^k\to \mathbb{C}^k$ is a holomorphic map such that $|f(z)-g(z)|$ is sufficiently small over the unit ball $B_1$,
then there exists $z\in U$ such that $g(z)=0$ and the derivative $dg_z$ at $z$ is also invertible.
The purpose of this subsection is to develop a \textit{quantitative} version of this observation
for some very special $f$ (i.e. the function $\Theta_L$ introduced below).
Here ``quantitative'' means that we would like to specify \textit{how small $|g(x)-f(x)|$ should be}
and \textit{how much non-degenerate $dg_z$ is}.

Let $r>0$. For $u\in \mathbb{C}$ we define $D_r(u)$ as the closed disk of radius $r$ around $u$ in $\mathbb{C}$.
We set $D_r = D_r(0)$.
For $u=(u_1,\dots, u_k)\in \mathbb{C}^k$ we define the polydisc $D_r^k(u)$ in $\mathbb{C}^k$ by
$D_r^k(u) = D_r(u_1)\times \dots \times D_r(u_k)$. 
We set $D_r^k = D_r^k(0)$.

First we study the sine function $\sin z$ in the complex domain.
Consider the image $\sin(\partial D_{\pi/2})$ of the circle $\partial D_{\pi/2}$ of radius $\pi/2$ under the map 
$\sin:\mathbb{C}\to \mathbb{C}$.
This curve does not contain the origin and its rotation number around the origin is one:
\[   \frac{1}{2\pi\sqrt{-1}} \int_{\sin(\partial D_{\pi/2})} \frac{dz}{z} 
    = \frac{1}{2\pi \sqrt{-1}} \int_{\partial D_{\pi/2}} \frac{\cos z}{\sin z} \, dz = 1. \]
This implies that the map 
\[ \sin_* : H_1\left(D_{\pi/2}\setminus \{0\}\right) \to H_1\left(\mathbb{C}\setminus \{0\}\right) \]
is an isomorphism. Here $H_1(\cdot)$ is the standard homology group of $\mathbb{Z}$-coefficients.
(Both sides of the above are isomorphic to $\mathbb{Z}$.)
The map $\sin_*$ is the homomorphism induced by the map $\sin: D_{\pi/2}\setminus\{0\}\to \mathbb{C}\setminus \{0\}$.
The natural maps 
\[ H_2\left(D_{\pi/2}, D_{\pi/2} \setminus\{0\}\right) \to H_1\left(D_{\pi/2} \setminus\{0\}\right), \quad 
     H_2\left(\mathbb{C}, \mathbb{C}\setminus \{0\}\right) \to H_1\left(\mathbb{C}\setminus \{0\}\right) \]
are isomorphic by the canonical long exact sequence (\cite[Theorem 2.16]{Hatcher}). It follows by composition that 
\[ \sin_*: H_2\left(D_{\pi/2}, D_{\pi/2} \setminus \{0\}\right) \to H_2\left(\mathbb{C},\mathbb{C}\setminus \{0\}\right)  \]
is also an isomorphism.
In the same way, for any $n\in \mathbb{Z}$, the map 
\[  \sin_*: H_2\left(D_{\pi/2}(\pi n), D_{\pi/2}(\pi n) \setminus \{\pi n\}\right) \to H_2\left(\mathbb{C},\mathbb{C}\setminus \{0\}\right)  \]
is an isomorphism.

\begin{definition}  \label{definition: building block of tiling function}
 \begin{enumerate}
  \item Recall that we introduced positive numbers $a_1,\dots, a_k$ and $\delta$ in Subsection \ref{subsection: fixing some notations}.
          We set $b_i = a_i + \delta/2$.
          For $L>1$ we define $\Theta_L:\mathbb{C}^k\to \mathbb{C}^k$ by 
          \[ \Theta_L(z_1,\dots,z_k) = \left(e^{\pi \sqrt{-1}\, b_1 z_1} \sin (\pi z_1/L), \dots, e^{\pi\sqrt{-1}\, b_k z_k} \sin (\pi z_k/L)\right). \]
         This map vanishes on $L\mathbb{Z}^k$.
         The Fourier transform of the $i$-th entry 
         \[   e^{\pi\sqrt{-1}\, b_i t_i} \sin (\pi t_i/L), \quad (t = (t_1,\dots, t_k) \in \mathbb{R}^k)    \]
         of $\Theta_L|_{\mathbb{R}^k}$ (the restriction of $\Theta_L$ to $\mathbb{R}^k$)
         is supported in 
         \[  \{0\}^{i-1}\times  \left[ \frac{b_i}{2}-\frac{1}{2L}, \frac{b_i}{2} + \frac{1}{2L}\right] \times \{0\}^{k-i},  \quad (1\leq i\leq k). \]
         We will choose a very large $L$ later. So this line segment will be very small.
  \item  Let $A$ be a $k\times k$ matrix.      
           We set 
         \[  \nu(A) = \min_{z\in \mathbb{C}^k, |z|=1} |A z|. \]
         The matrix $A$ is invertible if and only if $\nu(A)>0$.
         So $\nu(A)$ quantifies the amount of non-degeneracy of $A$.         
         For another $k\times k$ matrix $B$ we have 
         \begin{equation}  \label{eq: continuity of nu(A)}
             |\nu(A)-\nu(B)| \leq |A-B| \quad (\text{the operator norm of $A-B$}). 
         \end{equation}    
         In particular $\nu(A)$ depends continuously on $A$.
 \end{enumerate} 
\end{definition}

\begin{lemma}  \label{lemma: homological property of Theta_L}
For every $n = (n_1,\dots,n_k) \in \mathbb{Z}^k$ the homomorphism 
\[ \left(\Theta_L\right)_*: H_{2k}\left(D_{L/2}^k(Ln), D_{L/2}^k(Ln)\setminus \{Ln\}\right) 
   \to H_{2k}\left(\mathbb{C}^k,\mathbb{C}^k\setminus\{0\}\right) \]
 is isomorphic. (Notice that the both sides are isomorphic to $\mathbb{Z}$.)
\end{lemma}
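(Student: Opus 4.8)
The plan is to reduce the statement to the one-variable computation performed just above in the excerpt, namely that for each $n\in\mathbb{Z}$ the map $\sin_*\colon H_2(D_{\pi/2}(\pi n),D_{\pi/2}(\pi n)\setminus\{\pi n\})\to H_2(\mathbb{C},\mathbb{C}\setminus\{0\})$ is an isomorphism, together with a K\"unneth / product argument. First I would observe that $\Theta_L$ has product form: its $i$-th entry $e^{\pi\sqrt{-1}\,b_iz_i}\sin(\pi z_i/L)$ depends only on $z_i$. Write $\theta_i(z_i)=e^{\pi\sqrt{-1}\,b_iz_i}\sin(\pi z_i/L)\colon\mathbb{C}\to\mathbb{C}$, so that $\Theta_L=\theta_1\times\cdots\times\theta_k$ as a map $\prod_i\mathbb{C}\to\prod_i\mathbb{C}$. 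The polydisc $D_{L/2}^k(Ln)=\prod_i D_{L/2}(Ln_i)$ is a product, and $D_{L/2}^k(Ln)\setminus\{Ln\}$ deformation retracts onto $\bigcup_i\bigl(\prod_{j<i}D_{L/2}(Ln_j)\bigr)\times\partial D_{L/2}(Ln_i)\times\bigl(\prod_{j>i}D_{L/2}(Ln_j)\bigr)$; in particular the pair $(D_{L/2}^k(Ln),D_{L/2}^k(Ln)\setminus\{Ln\})$ is the $k$-fold product of the pairs $(D_{L/2}(Ln_i),D_{L/2}(Ln_i)\setminus\{Ln_i\})$ in the sense appropriate for the relative K\"unneth theorem, and likewise $(\mathbb{C}^k,\mathbb{C}^k\setminus\{0\})$ is the $k$-fold product of $(\mathbb{C},\mathbb{C}\setminus\{0\})$.

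Next I would invoke the relative K\"unneth isomorphism (e.g. \cite[Theorem 3.21 and the accompanying relative version]{Hatcher}, valid here since all groups involved are free): for topological pairs,
\[
H_{2k}\Bigl(\textstyle\prod_i(A_i,B_i)\Bigr)\;\cong\;\bigotimes_{i=1}^k H_{2}(A_i,B_i),
\]
with $H_2(A_i,B_i)\cong\mathbb{Z}$ in both the source and target case, so both sides of the asserted map are indeed $\cong\mathbb{Z}$. The map $(\Theta_L)_*$ respects this product decomposition: it is the tensor product $\bigotimes_i(\theta_i)_*$ of the individual maps $(\theta_i)_*\colon H_2(D_{L/2}(Ln_i),D_{L/2}(Ln_i)\setminus\{Ln_i\})\to H_2(\mathbb{C},\mathbb{C}\setminus\{0\})$. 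A tensor product of isomorphisms of free abelian groups is an isomorphism, so it suffices to show each $(\theta_i)_*$ is an isomorphism.

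For the one-variable factor, I would first strip off the exponential: $e^{\pi\sqrt{-1}\,b_iz_i}$ is a nowhere-vanishing holomorphic function, hence homotopic through nowhere-zero maps on the relevant compact sets to the constant $1$, so multiplication by it induces the identity on $H_2(\mathbb{C},\mathbb{C}\setminus\{0\})$; thus $(\theta_i)_*=(\sin(\pi\,\cdot/L))_*$. Then I would rescale by the biholomorphism $z_i\mapsto \pi z_i/L$, which carries $D_{L/2}(Ln_i)$ to $D_{\pi/2}(\pi n_i)$ and is a homeomorphism of pairs, reducing the claim exactly to the fact $\sin_*\colon H_2(D_{\pi/2}(\pi n_i),D_{\pi/2}(\pi n_i)\setminus\{\pi n_i\})\xrightarrow{\ \sim\ } H_2(\mathbb{C},\mathbb{C}\setminus\{0\})$ already established in the excerpt (the rotation-number-one computation $\frac{1}{2\pi\sqrt{-1}}\int_{\partial D_{\pi/2}}\frac{\cos z}{\sin z}\,dz=1$, transported by translation to $\pi n_i$). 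The main obstacle I anticipate is purely bookkeeping: making the product/K\"unneth identification of the pairs clean enough, in particular verifying that $D_{L/2}^k(Ln)\setminus\{Ln\}$ has the homotopy type of the appropriate "fat wedge" so that the relative K\"unneth theorem applies with the signs and the generators matching up correctly. None of this is deep, but it is the step that requires care.
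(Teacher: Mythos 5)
Your proposal is correct and follows essentially the same route as the paper: a homotopy through maps of pairs that removes the nowhere-vanishing exponential factors, the (relative) K\"unneth theorem to split the product pair into one-variable factors, and the rotation-number computation for $\sin$ to handle each factor. The only (immaterial) difference is the order of operations: the paper first homotopes $\Theta_L$ to the pure product of sines via $f_t$ with the exponent scaled by $t$ and then applies K\"unneth, whereas you apply K\"unneth to $\Theta_L$ directly and strip the exponential factor coordinatewise.
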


\begin{proof}
We define a homotopy $f_t: \left(D_{L/2}^k(Ln), D_{L/2}^k(Ln)\setminus \{Ln\}\right) \to \left(\mathbb{C}^k,\mathbb{C}^k\setminus\{0\}\right)$
for $0\leq t\leq 1$ by 
\[ f_t(z_1,\dots,z_k) =  \left(e^{\pi \sqrt{-1}\, t\, b_1 z_1} \sin (\pi z_1/L), \dots, e^{\pi\sqrt{-1}\, t\, b_k z_k} \sin (\pi z_k/L)\right). \]
$f_1 = \Theta_L$. So it is enough to show that $f_0$ induces an isomorphism on the $2k$-th homology groups (see \cite[Theorem 2.10]{Hatcher}).
The map $f_0$ is given by 
\[ f_0(z_1,\dots,z_k) = \left(\sin (\pi z_1/L), \dots, \sin (\pi z_k/L)\right). \]
Set $\varphi(z) = \sin(\pi z/L)$ for $z\in \mathbb{C}$.
We have natural isomorphisms 
by the K\"{u}nneth theorem (Spanier \cite[p. 235, 10 Theorem]{Spanier}):
\begin{equation*}
   \begin{split}
     H_{2k}\left(D_{L/2}^k(Ln), D_{L/2}^k(Ln)\setminus \{Ln\}\right)  
     &\cong  \bigotimes_{i=1}^k H_2\left(D_{L/2}(Ln_i), D_{L/2}(Ln_i)\setminus \{L n_i\}  \right)  \\
      H_{2k}\left(\mathbb{C}^k,\mathbb{C}^k\setminus\{0\}\right) 
      &\cong  \bigotimes_{i=1}^k H_2\left(\mathbb{C},\mathbb{C}\setminus \{0\}\right).
    \end{split}
\end{equation*}      
Under these isomorphisms, the homomorphism $(f_0)_*$ is identified with 
\[ \otimes_{i=1}^k \varphi_* :     \bigotimes_{i=1}^k H_2\left(D_{L/2}(Ln_i), D_{L/2}(Ln_i)\setminus \{L n_i\}  \right) \to 
    \bigotimes_{i=1}^k H_2\left(\mathbb{C},\mathbb{C}\setminus \{0\}\right). \]
By the argument in the beginning of this subsection, each factor 
\[ \varphi_*: H_2\left(D_{L/2}(Ln_i), D_{L/2}(Ln_i)\setminus \{L n_i\}  \right) \to  H_2\left(\mathbb{C},\mathbb{C}\setminus \{0\}\right)  \]
is isomorphic. Thus $(f_0)_*$ is an isomorphism. 
\end{proof}

The derivative $(d\Theta_L)_z$ at $z=(z_1,\dots,z_k)$ is a diagonal matrix whose $(i,i)$-th entry is given by 
\[ \pi e^{\pi\sqrt{-1}\, b_i z_i}\left(\sqrt{-1}\, b_i \sin(\pi z_i/L) + \frac{\cos(\pi z_i/L)}{L}\right), \quad (1\leq i\leq k).  \]

\begin{notation}  \label{notation: choice of r_1}
We choose $0<r_1< 1/4$ so small that for all  $1\leq i\leq k$ and $z\in D_{r_1}$ 
\[  \pi \left| e^{\pi\sqrt{-1}\, b_i z}\left(\sqrt{-1}\, b_i \sin(\pi z/L) + \frac{\cos(\pi z/L)}{L}\right)\right| > \frac{3}{L}. \]
Since for small $z$
\[  \sin(\pi z/L) \approx  \frac{\pi z}{L},  \quad    \frac{\cos(\pi z/L)}{L} \approx  \frac{1}{L},   \]
we can choose $r_1$ independent of $L>1$.
(This independence is a plausible fact but it is not really needed in the argument below.)
\end{notation}

It immediately follows from the definition that for all $n\in \mathbb{Z}^k$ and $z\in D_{r_1}^k(L n)$ 
\begin{equation}  \label{eq: how non-degenerate dTheta is}
    \nu\left((d\Theta_L)_z\right)  > \frac{3}{L}. 
\end{equation}    

\begin{notation}[The threshold $\theta_L$] \label{notation: theta_L}
We define $\Omega\subset \mathbb{C}^k$ as the set of $z=(z_1,\dots,z_k)$ satisfying $|\mathrm{Im}(z_i)|\leq 1$ for all 
$1\leq i\leq k$.
We define $\theta_L>0$ as the minimum of the following two quantities:
\[        k^{-1/2} (3/4)^{k+1} \frac{1}{L},   \quad         \inf \left\{ |\Theta_L(z)| \, \middle|
          z \in \Omega \setminus \bigcup_{n\in \mathbb{Z}^k} D_{r_1}^k(L n)\right\}.  \]
\end{notation}

The next lemma is the main result of this subsection.

\begin{lemma}[Persistence of zero points of $\Theta_L$]  \label{lemma: persistence of zero points of Theta_L}
Let $f:\mathbb{C}^k\to \mathbb{C}^k$ be a holomorphic map.
Let $W \subset \mathbb{C}^k$ be a subset and suppose $|f(z)-\Theta_L(z)| < \theta_L$ for all $z\in W$.
Then the following (1) and (2) hold.

  \begin{enumerate}
    \item \[ \{z  \in W\cap \Omega |\, f(z)=0 \} 
             \subset \bigcup_{n\in \mathbb{Z}^k} D_{r_1}^k(L n). \]
    \item For every $n\in \mathbb{Z}^k$ with $D_{1}^k(L n) \subset W$ there exists $z\in D_{r_1}^k(L n)$ satisfying 
            \[  f(z)=0, \quad \nu(df_z) > \frac{2}{L}. \]
  \end{enumerate}
\end{lemma}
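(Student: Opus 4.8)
The plan is to prove both statements via degree theory (equivalently, the homological computation of Lemma \ref{lemma: homological property of Theta_L}) combined with the quantitative lower bound $\theta_L$ measuring how far $\Theta_L$ stays from zero away from its zero set.

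For part (1): suppose $z \in W\cap\Omega$ satisfies $f(z)=0$ but $z\notin \bigcup_{n\in\mathbb{Z}^k} D_{r_1}^k(Ln)$. Then by the very definition of $\theta_L$ (Notation \ref{notation: theta_L}) we have $|\Theta_L(z)| \geq \theta_L$, since $z$ lies in $\Omega\setminus\bigcup_n D_{r_1}^k(Ln)$. But $|f(z)-\Theta_L(z)|<\theta_L$ by hypothesis, so $|f(z)| > 0$, contradicting $f(z)=0$. This part is immediate.

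For part (2): fix $n\in\mathbb{Z}^k$ with $D_1^k(Ln)\subset W$. The idea is to show $f$ has a zero inside the \emph{smaller} polydisc $D_{r_1}^k(Ln)$ by a degree argument, then bound $\nu(df_z)$ at that zero. First I would establish that $\Theta_L$ does not vanish on the boundary torus $\partial D_{r_1}^k(Ln) \overset{\mathrm{def}}{=} \prod_i \partial D_{r_1}(Ln_i)$, and more: that $|\Theta_L| \geq \theta_L$ there. The lower bound should come from the factorized structure of $\Theta_L$ — each entry $e^{\pi\sqrt{-1}b_iz_i}\sin(\pi z_i/L)$ has absolute value comparable to $(\pi r_1/L)$-ish on $\partial D_{r_1}(Ln_i)$, and one checks this exceeds the first quantity $k^{-1/2}(3/4)^{k+1}/L$ in the definition of $\theta_L$; alternatively $\partial D_{r_1}^k(Ln)\subset \Omega\setminus\bigcup_m D_{r_1}^k(Lm)$ is \emph{not} quite true (the boundary is in the closure), so I would instead use the interior estimate \eqref{eq: how non-degenerate dTheta is} together with a direct computation on the polydisc $D_{r_1}^k(Ln)$ itself. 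On this polydisc, since $(d\Theta_L)_z$ is invertible everywhere (by \eqref{eq: how non-degenerate dTheta is}) and $\Theta_L(Ln)=0$ is the unique zero there (the next zero of $\sin(\pi z_i/L)$ is at distance $L \gg r_1$), the map $\Theta_L\colon (D_{r_1}^k(Ln), \partial) \to (\mathbb{C}^k, \mathbb{C}^k\setminus\{0\})$ represents a generator of $H_{2k}(\mathbb{C}^k,\mathbb{C}^k\setminus\{0\})$ — this follows from Lemma \ref{lemma: homological property of Theta_L} by excision/homotopy (shrinking $D_{L/2}^k(Ln)$ to $D_{r_1}^k(Ln)$, which is valid since no zero is lost). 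Now since $|f-\Theta_L|<\theta_L \leq \min_{\partial D_{r_1}^k(Ln)}|\Theta_L|$ on the boundary, the straight-line homotopy $(1-t)\Theta_L + tf$ never vanishes on $\partial D_{r_1}^k(Ln)$, so $f$ also represents the generator; in particular $f$ has a zero $z_0 \in D_{r_1}^k(Ln)$.

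Finally, to get $\nu(df_{z_0}) > 2/L$: I would use the continuity estimate \eqref{eq: continuity of nu(A)}, $|\nu(df_{z_0}) - \nu((d\Theta_L)_{z_0})| \leq |df_{z_0} - (d\Theta_L)_{z_0}|$, combined with \eqref{eq: how non-degenerate dTheta is} which gives $\nu((d\Theta_L)_{z_0}) > 3/L$ since $z_0 \in D_{r_1}^k(Ln)$. So it remains to bound $|df_{z_0}-(d\Theta_L)_{z_0}| < 1/L$. This is where the Cauchy integral formula for derivatives enters: $|f - \Theta_L| < \theta_L$ on $W \supset D_1^k(Ln)$, and $z_0$ is at distance $\geq 1 - r_1 > 3/4$ from $\partial D_1^k(Ln)$, so each partial derivative of $f-\Theta_L$ at $z_0$ is bounded by (a dimensional constant) $\times \theta_L / (3/4)$; collecting entries gives an operator-norm bound like $k^{1/2}\cdot\theta_L\cdot(4/3)^{\text{something}}$, and the choice of the first quantity $k^{-1/2}(3/4)^{k+1}/L$ in $\theta_L$ is engineered precisely so this is $< 1/L$. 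The main obstacle I anticipate is getting the Cauchy-estimate constants exactly right — tracking the polydisc radii, the factor $k^{1/2}$ from passing between the max-norm on matrix entries and the operator norm $\nu$, and verifying that $\theta_L$'s first term was chosen with the correct exponent. The homological/degree part is essentially bookkeeping once Lemma \ref{lemma: homological property of Theta_L} is in hand; the quantitative derivative bound is the delicate step.
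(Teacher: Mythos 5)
Your proposal is correct and follows essentially the same route as the paper's proof: part (1) is the same immediate consequence of the definition of $\theta_L$, and part (2) combines the same homotopy/degree argument based on Lemma \ref{lemma: homological property of Theta_L} with the same Cauchy-integral estimate over $\partial D_1^k(Ln)$ giving $\left|df_z-(d\Theta_L)_z\right|<1/L$ and hence $\nu(df_z)>3/L-1/L$. The boundary issue you flag is resolved simply by observing that $\partial D_{r_1}^k(Ln)$ lies in the closure of $\Omega\setminus\bigcup_m D_{r_1}^k(Lm)$, so $|\Theta_L|\geq\theta_L$ there by continuity of $\Theta_L$.
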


\begin{proof}
(1) is a direct consequence of the definition of $\theta_L$.
So we consider (2).
For the notational simplicity, assume $n=0$.
It follows from the definition of $\theta_L$ that 
\[ \max_{z\in \partial D_{r_1}^k} |f(z)-\Theta_L(z)| < \min_{z\in \partial D_{r_1}^k} |\Theta_L(z)|, \]
where $\partial D_{r_1}^k$ is the boundary of $D_{r_1}^k$.
(So in particular its $(2k-1)$-th homology group is isomorphic to $\mathbb{Z}$.)
Then we can define a homotopy $f_t:\partial D_{r_1}^k \to \mathbb{C}^k\setminus \{0\}$ between $\Theta_L$ and $f$ by 
\[  f_t(z) = (1-t) \Theta_L(z)  + t f(z), \quad (0\leq t\leq 1). \]
Here the point is that $f_t$ has no zero on $\partial D_{r_1}^k$.
It follows from Lemma \ref{lemma: homological property of Theta_L} that 
$f_* = (\Theta_L)_*: H_{2k-1}(\partial D_{r_1}^k) \to H_{2k-1}\left(\mathbb{C}^k\setminus \{0\}\right)$ is isomorphic.
Then $f$ must attain zero at some (interior) point of $D_{r_1}^k$.
Otherwise the map $f_*: H_{2k-1}\left(\partial D_{r_1}^k\right) \to H_{2k-1}\left(\mathbb{C}^k\setminus \{0\}\right)$
is equal to the composition of the maps
\[  H_{2k-1} \left(\partial D_{r_1}^k\right)  \to H_{2k-1}\left(D_{r_1}^k\right) \overset{f_*}{\to} H_{2k-1}\left(\mathbb{C}^k\setminus\{0\}\right).  \] 
But $H_{2k-1}\left(D_{r_1}^k\right) =0$ implies that the map 
$f_*: H_{2k-1}\left(\partial D_{r_1}^k\right) \to H_{2k-1}\left(\mathbb{C}^k\setminus \{0\}\right)$ is zero.
This is a contradiction.

The rest of the work is to prove the statement about the derivative of $f$ at zero points.
By (\ref{eq: continuity of nu(A)}) and (\ref{eq: how non-degenerate dTheta is}) 
it is enough to prove that for all $z\in D_{r_1}^k$ the operator norm 
of $df_z-(d\Theta_L)_z$ is smaller than $1/L$.
Let $z=(z_1,\dots,z_k)\in D_{r_1}^k$.
By the Cauchy integration formula,
\[ f(z) -  \Theta_L(z) = \frac{1}{(2\pi\sqrt{-1})^k} \int_{\partial D_1}\dots \int_{\partial D_1}
   \frac{f(w)-\Theta_L(w)}{(w_1-z_1)\dots (w_k-z_k)} \, dw_1\dots dw_k. \]
For $u=(u_1,\dots,u_k)\in \mathbb{C}^k$ with $|u|=1$, the vector $df_z(u)-(d\Theta_L)_z(u)$ is given by 
\[  \frac{1}{(2\pi\sqrt{-1})^k} \sum_{i=1}^k u_i \int_{\partial D_1}\dots \int_{\partial D_1}
   \frac{f(w)-\Theta_L(w)}{(w_1-z_1)\dots (w_i-z_i)^2\ \dots (w_k-z_k)} \, dw_1\dots dw_k. \]
For $z_i\in D_{r_1}$ and $w_i\in \partial D_1$ we have $|w_i-z_i| > 3/4$ because $r_1<1/4$.
Hence 
\begin{equation*}
   \begin{split}
    \left| df_z(u)- (d\Theta_L)_z(u)\right|  &<  \sum_{i=1}^k |u_i| (4/3)^{k+1} \theta_L \\
    &\leq \sqrt{k} (4/3)^{k+1} \theta_L \leq \frac{1}{L}, \quad \left(\text{by $\theta_L\leq k^{-1/2}(3/4)^{k+1} \frac{1}{L}$}\right).
   \end{split}  
\end{equation*}
This proves $\left|df_z-(d\Theta_L)\right| < 1/L$.
\end{proof}

\subsection{Tiling-like band-limited maps}  \label{subsection: tiling-like band-limited functions}

We choose a rapidly decreasing function $\chi_1:\mathbb{R}^k\to \mathbb{R}$ satisfying
$\supp \, \mathcal{F}(\chi_1) \subset B_{\delta/8}$ and 
\begin{equation}  \label{eq: integral of chi_1}
   \mathcal{F}(\chi_1)(0) = \int_{\mathbb{R}^k} \chi_1(t) \, dt_1\dots dt_k = 1.
\end{equation}

\begin{notation} \label{notation: K_1}
  We set 
  \[ K_1 = \int_{\mathbb{R}^k} |\chi_1(t)|\, dt_1\dots dt_k. \]
\end{notation}

The function $\chi_1$ can be extended holomorphically over $\mathbb{C}^k$ (see Lemma \ref{lemma: Paley--Wiener}) by 
\[  \chi_1(z) = \int_{B_{\delta/8}} \mathcal{F}(\chi_1)(\xi)\,  e^{2\pi\sqrt{-1} \xi\cdot z} \, d\xi_1\dots d\xi_k, \quad 
     (z=(z_1,\dots,z_k)\in \mathbb{C}^k). \]
For any polynomial $P(z_1,\dots,z_k)$, by the integration by parts,
\[ P(z_1,\dots,z_k) \chi_1(z) = \int_{B_{\delta/8}}
    \left(P\left(\frac{\sqrt{-1}}{2\pi}\frac{\partial}{\partial \xi_1},\dots, \frac{\sqrt{-1}}{2\pi} \frac{\partial}{\partial \xi_k}\right) 
    \mathcal{F}(\chi_1)(\xi)\right) e^{2\pi \sqrt{-1}\xi\cdot z}\, d\xi_1\dots d\xi_k. \]
Hence there exists a positive constant $\mathrm{const}_P$ depending on $P$ and satisfying 
\[ \left|P(z_1,\dots,z_k) \chi_1(z)\right| \leq \mathrm{const}_P \cdot e^{(\pi \delta/4) |\mathrm{Im}(z)|},   \quad 
     \left(|\mathrm{Im}(z)| = \sqrt{\sum_{i=1}^k |\mathrm{Im}(z_i)|^2}\right).  \]
 In particular for each $l\geq 0$ there exists a constant $\mathrm{const}_l$ satisfying
 \begin{equation}  \label{eq: rapidly decreasing in complex domain}
     |\chi_1(z)| \leq \mathrm{const}_l \cdot (1+|z|)^{-l} \cdot e^{(\pi\delta/4)|\mathrm{Im}(z)|}.  
 \end{equation}      
It follows that the integral 
\[  \int_{\mathbb{R}^k} \chi_1(z-t)\, dt_1\dots dt_k \]
converges and becomes a holomorphic function in $z\in \mathbb{C}^k$.
It is constantly equal to one for $z\in \mathbb{R}^k$ by (\ref{eq: integral of chi_1}). 
Therefore indeed it is identically equal to one:
\begin{equation} \label{eq: integral of chi_1, complex domain}
   \int_{\mathbb{R}^k} \chi_1(z-t) \, dt_1\dots dt_k = 1, \quad (z\in \mathbb{C}^k).
\end{equation}

Let $\mathcal{W} = \{W_n\}_{n\in \mathbb{Z}^k}$ be a set of bounded convex sets of $\mathbb{R}^k$ (indexed by $\mathbb{Z}^k$) such that 
it \textbf{tiles the whole space} $\mathbb{R}^k$, namely 
\begin{itemize}
  \item $\mathbb{R}^k = \bigcup_{n\in \mathbb{Z}^k} W_n$.
  \item For any two distinct $m$ and $n$ in $\mathbb{Z}^k$ the sets $W_m$ and $W_n$ intersect at most on their boundaries
          $\partial W_m$ and $\partial W_n$:
          $W_m\cap W_n = \partial W_m \cap \partial W_n$.
\end{itemize}
Notice that some $W_n$ may be empty.
For $L>1$
we define a \textbf{tiling-like band-limited map} $\Phi_L\left(\mathcal{W}\right): \mathbb{C}^k\to \mathbb{C}^k$
by 
\[ \Phi_L\left(\mathcal{W}\right)(z) = \sum_{n\in \mathbb{Z}^k}  \Theta_L(z-n)
    \int_{W_n} \chi_1(z-t)\, dt_1\dots dt_k. \]
As was already explained before, intuitively we would like to consider 
\[  \sum_{n\in \mathbb{Z}^k} \Theta_L(t-n) \, 1_{W_n}(t), \quad (t\in \mathbb{R}^k). \]
Namely we would like to ``paint'' the map $\Theta_L(t-n)$ over each tile $W_n$.
But this is not even continuous in $t$. So instead we defined $\Phi_L\left(\mathcal{W}\right)$ as in the above.

It follows from $\norm{\Theta_L}_{L^\infty(\mathbb{R}^k)} = \sqrt{k}$ and Notation \ref{notation: K_1} that 
the map $\Phi_L\left(\mathcal{W}\right)(t)$ is bounded over $t\in \mathbb{R}^k$:
\begin{equation*}
  \begin{split}
         |\Phi_L\left(\mathcal{W}\right)(t)|  &\leq \sum_{n\in \mathbb{Z}^k} \norm{\Theta_L}_{L^\infty(\mathbb{R}^k)} 
                                                    \int_{W_n} |\chi_1(t-x)|\, dx_1\dots dx_k \\
        &= \norm{\Theta_L}_{L^\infty(\mathbb{R}^k)} \int_{\mathbb{R}^k} |\chi_1(t-x)|\, dx_1\dots dx_k \\
        &= \sqrt{k} K_1.
  \end{split}      
\end{equation*}

The Fourier transform of the $i$-th entry of $\Phi_L(\mathcal{W})|_{\mathbb{R}^k}$ is supported in 
\[ \left[-\frac{\delta}{8}, \frac{\delta}{8}  \right]^{i-1}\times 
    \left[\frac{b_i}{2}-\frac{1}{2L} -\frac{\delta}{8},\, \frac{b_i}{2} + \frac{1}{2L} + \frac{\delta}{8}\right]
    \times \left[-\frac{\delta}{8}, \frac{\delta}{8}  \right]^{k-i},   \quad (1\leq i\leq k). \]
Since $b_i=a_i+\delta/2$, if $L>4/\delta$ then this is contained in 
\begin{equation*}  
   \left(-\frac{\delta}{4},\frac{\delta}{4}\right)^{i-1} \times 
    \left(\frac{a_i}{2}, \frac{a_i}{2}+\frac{\delta}{2}\right)\times \left(-\frac{\delta}{4},\frac{\delta}{4}\right)^{k-i}
\end{equation*}
Since we assumed $\delta < \min(a_1,\dots,a_k)$ in (\ref{eq: choice of a_i and delta})
in Subsection \ref{subsection: fixing some notations}, these $k$ sets are disjoint with each other.

\begin{notation}  \label{notation: introduction of E}
    Recall that $\Omega \subset \mathbb{C}^k$ is the set of $z=(z_1,\dots,z_k)$ satisfying $|\mathrm{Im}(z_i)|\leq 1$ for all $1\leq i\leq k$.
    By (\ref{eq: rapidly decreasing in complex domain}) we can choose $E=E(L)>0$ such that for all $z\in \Omega$
            \begin{equation*}
             \norm{\Theta_L}_{L^\infty(\Omega)} 
             \int_{\mathbb{R}^k\setminus B_{E}\left(\mathrm{Re}(z)\right)} |\chi_1(z-t)|\, dt_1\dots dt_k < \frac{\theta_L}{2},
            \end{equation*}
            where $\theta_L$ is the constant introduced in Notation \ref{notation: theta_L}. 
\end{notation}

We recall the notations introduced in Subsection \ref{subsection: a lemma on convex sets}:
For $r>0$ and $A\subset \mathbb{R}^k$, we define $\partial_r A$ as the set of all $t\in \mathbb{R}^k$ satisfying 
$B_r(t)\cap A\neq \emptyset$ and $B_r(t)\cap \left(\mathbb{R}^k\setminus A\right) \neq \emptyset$. 
We set $\mathrm{Int}_r A= A\setminus \partial _r A$, namely 
$\mathrm{Int}_r A$ is the set of $t\in A$ satisfying $B_r(t)\subset A$.

\begin{lemma}  \label{lemma: Phi and Theta are close}
Let $n\in \mathbb{Z}^k$ and $z\in \Omega$.
If $\mathrm{Re}(z)\in \mathrm{Int}_{E} W_n$ then 
\[ \left|\Phi_L\left(\mathcal{W}\right)(z) - \Theta_L(z-n)\right| < \theta_L. \]
\end{lemma}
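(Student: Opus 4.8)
The plan is to begin from identity (\ref{eq: integral of chi_1, complex domain}). Writing $\Theta_L(z-n) = \Theta_L(z-n)\int_{\mathbb{R}^k}\chi_1(z-t)\,dt_1\dots dt_k$ and using that $\{W_m\}_{m\in\mathbb{Z}^k}$ tiles $\mathbb{R}^k$ with pairwise overlaps contained in boundaries (hence Lebesgue-null), I would split this integral over the tiles and subtract it from the definition of $\Phi_L(\mathcal{W})(z)$, obtaining
\[
\Phi_L(\mathcal{W})(z) - \Theta_L(z-n) = \sum_{m\in\mathbb{Z}^k}\left(\Theta_L(z-m)-\Theta_L(z-n)\right)\int_{W_m}\chi_1(z-t)\,dt_1\dots dt_k,
\]
in which the $m=n$ summand vanishes.

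Next I would exploit the hypothesis $\mathrm{Re}(z)\in\mathrm{Int}_E W_n$, which means the closed ball $B_E(\mathrm{Re}(z))$ is contained in $W_n$. By monotonicity of the interior operator the open ball $\mathrm{Int}\, B_E(\mathrm{Re}(z))$ lies in $\mathrm{Int}\, W_n$; and since for each $m\neq n$ the intersection $W_m\cap W_n=\partial W_m\cap\partial W_n$ is disjoint from $\mathrm{Int}\, W_n$, it follows that $W_m\cap\mathrm{Int}\, B_E(\mathrm{Re}(z))=\emptyset$ for all $m\neq n$. Consequently $\bigcup_{m\neq n}W_m\subset\mathbb{R}^k\setminus\mathrm{Int}\, B_E(\mathrm{Re}(z))$, a set that differs from $\mathbb{R}^k\setminus B_E(\mathrm{Re}(z))$ only by the null sphere $\partial B_E(\mathrm{Re}(z))$. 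Hence
\[
\sum_{m\neq n}\int_{W_m}|\chi_1(z-t)|\,dt_1\dots dt_k = \int_{\bigcup_{m\neq n}W_m}|\chi_1(z-t)|\,dt_1\dots dt_k \le \int_{\mathbb{R}^k\setminus B_E(\mathrm{Re}(z))}|\chi_1(z-t)|\,dt_1\dots dt_k.
\]

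Finally I would note that $z\in\Omega$ and $m,n\in\mathbb{Z}^k$ are real, so $z-m,z-n\in\Omega$ and therefore $|\Theta_L(z-m)-\Theta_L(z-n)|\le 2\norm{\Theta_L}_{L^\infty(\Omega)}$. Combining this with the two displays above yields
\[
\left|\Phi_L(\mathcal{W})(z)-\Theta_L(z-n)\right| \le 2\,\norm{\Theta_L}_{L^\infty(\Omega)}\int_{\mathbb{R}^k\setminus B_E(\mathrm{Re}(z))}|\chi_1(z-t)|\,dt_1\dots dt_k < \theta_L,
\]
the last inequality being exactly the defining property of $E=E(L)$ in Notation \ref{notation: introduction of E}. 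The only step requiring care is the geometric/measure-theoretic claim that a convex tile $W_n$ swallowing the closed ball $B_E(\mathrm{Re}(z))$ forces every other tile off the corresponding open ball; everything else is bookkeeping with (\ref{eq: integral of chi_1, complex domain}) and the finiteness of $\norm{\Theta_L}_{L^\infty(\Omega)}$.
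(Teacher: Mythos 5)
Your proposal is correct and follows essentially the same route as the paper: both start from the identity $\int_{\mathbb{R}^k}\chi_1(z-t)\,dt=1$, split the integral over the tiles (using that overlaps are Lebesgue-null), bound the resulting terms by $2\norm{\Theta_L}_{L^\infty(\Omega)}\int_{\mathbb{R}^k\setminus W_n}|\chi_1(z-t)|\,dt$, and invoke $B_E(\mathrm{Re}(z))\subset W_n$ together with the defining property of $E$. The only cosmetic difference is that you group the terms as $\sum_m\bigl(\Theta_L(z-m)-\Theta_L(z-n)\bigr)\int_{W_m}\chi_1(z-t)\,dt$ rather than separating the $m\neq n$ sum from the complementary integral, which is an equivalent bookkeeping choice.
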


\begin{proof}
For simplicity of the notation we assume $n=0$. By (\ref{eq: integral of chi_1, complex domain})
\begin{equation*}
   \begin{split}
    \Phi_L\left(\mathcal{W}\right)(z) - \Theta_L(z)  = &\sum_{m \neq 0} \Theta_L(z-m) \int_{W_m} \chi_1(z-t) \, dt_1\dots dt_k \\
    & - \Theta_L(z) \int_{\mathbb{R}^k\setminus W_0}\chi_1(z-t) \, dt_1\dots dt_k. 
   \end{split}
\end{equation*}    
Hence 
\[  \left|\Phi_L\left(\mathcal{W}\right)(z) - \Theta_L(z)\right| 
     \leq 2\norm{\Theta_L}_{L^\infty(\Omega)} \int_{\mathbb{R}^k\setminus W_0} |\chi_1(z-t)|\, dt_1\dots dt_k. \]
This is smaller than $\theta_L$ by the definition of $E$.   
\end{proof}

The next lemma is the summary of this subsection.

\begin{lemma}[Main properties of $\Phi_L\left(\mathcal{W}\right)$]  \label{lemma: main properties of Phi_L}
  \begin{enumerate}
    \item The restriction of $\Phi_L\left(\mathcal{W}\right)$ to $\mathbb{R}^k$ is a bounded continuous map such that  
            $\norm{\Phi_L\left(\mathcal{W}\right)}_{L^\infty(\mathbb{R}^k)} \leq \sqrt{k}K_1$ and that
            if $L>4/\delta$ then the Fourier transform of the $i$-th entry of $\Phi_L\left(\mathcal{W}\right)|_{\mathbb{R}^k}$ is 
            supported in 
            \[    \left(-\frac{\delta}{4},\frac{\delta}{4}\right)^{i-1} \times 
            \left(\frac{a_i}{2}, \frac{a_i}{2}+\frac{\delta}{2}\right)\times \left(-\frac{\delta}{4},\frac{\delta}{4}\right)^{k-i}, \quad 
              (1\leq i\leq k), \]
            which are disjoint with each other.
    \item Let $n\in \mathbb{Z}^k$ and $z\in \Omega$. If $\mathrm{Re}(z)\in \mathrm{Int}_E W_n$ and $\Phi_L\left(\mathcal{W}\right)(z)=0$
            then there exists $m\in \mathbb{Z}^k$ satisfying $z\in D_{r_1}^k(n+Lm)$.
    \item Let $n, m\in \mathbb{Z}^k$ with $n+Lm \in \mathrm{Int}_{E+\sqrt{k}} W_n$.
            Then there exists $z\in D_{r_1}^k(n+Lm)$ satisfying  
           \[    \Phi_L\left(\mathcal{W}\right)(z)=0, \quad \nu\left(\left(d\Phi_L\left(\mathcal{W}\right)\right)_z\right) > \frac{2}{L}. \]
  \end{enumerate}
\end{lemma}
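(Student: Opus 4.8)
The plan is to obtain the three assertions by combining the estimates already assembled above, the only genuinely new content being some bookkeeping with translations and with the $\mathrm{Int}_r$-interiors.

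For (1), the bound $\norm{\Phi_L(\mathcal{W})}_{L^\infty(\mathbb{R}^k)}\le\sqrt k\,K_1$ and the computation of the Fourier support of the $i$-th entry of $\Phi_L(\mathcal{W})|_{\mathbb{R}^k}$ were carried out in the discussion preceding the lemma, and pairwise disjointness of these $k$ supports (for $L>4/\delta$) follows from the standing assumption $\delta<\min(a_1,\dots,a_k)$ of (\ref{eq: choice of a_i and delta}). What remains is continuity, and for this I would show that the defining series converges locally uniformly on all of $\mathbb{C}^k$, so that $\Phi_L(\mathcal{W})$ is holomorphic and in particular continuous on $\mathbb{R}^k$. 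On a fixed compact subset of $\mathbb{C}^k$ the factors $\Theta_L(z-n)$ are bounded uniformly in $n\in\mathbb{Z}^k$ (since $n$ is real, $\mathrm{Im}(z-n)$ stays in a fixed compact set), while $\sum_{n}\int_{W_n}|\chi_1(z-t)|\,dt=\int_{\mathbb{R}^k}|\chi_1(z-t)|\,dt<\infty$ by the tiling property; the tails are then controlled by combining the rapid decay (\ref{eq: rapidly decreasing in complex domain}) for the far region with the fact that the set of points lying in infinitely many of the $W_n$ is Lebesgue-null (such a point lies on the boundary of every tile containing it or in a tile with empty interior, hence in a countable union of hyperplane-slices and convex-body boundaries), so that the measure of $\bigl(\bigcup_{|n|>N}W_n\bigr)\cap(\text{bounded set})$ tends to $0$ as $N\to\infty$.

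For (2) and (3) I would fix $n\in\mathbb{Z}^k$ and pass to the translated map $g(w):=\Phi_L(\mathcal{W})(w+n)$, again holomorphic $\mathbb{C}^k\to\mathbb{C}^k$, and set $W:=\{\,w\in\Omega:\mathrm{Re}(w)\in\mathrm{Int}_E W_n-n\,\}$. Since $\Omega$ is invariant under real translations, Lemma \ref{lemma: Phi and Theta are close} gives $|g(w)-\Theta_L(w)|<\theta_L$ for all $w\in W$. Statement (2) is then immediate from Lemma \ref{lemma: persistence of zero points of Theta_L}(1): if $z\in\Omega$ satisfies $\mathrm{Re}(z)\in\mathrm{Int}_E W_n$ and $\Phi_L(\mathcal{W})(z)=0$, then $w:=z-n\in W$ and $g(w)=0$, whence $w\in D_{r_1}^k(Lm)$ for some $m\in\mathbb{Z}^k$, i.e. $z\in D_{r_1}^k(n+Lm)$. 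For (3), assume $n+Lm\in\mathrm{Int}_{E+\sqrt k}W_n$, that is $B_{E+\sqrt k}(n+Lm)\subset W_n$; then for each $t\in B_{\sqrt k}(n+Lm)$ one has $B_E(t)\subset B_{E+\sqrt k}(n+Lm)\subset W_n$, so $B_{\sqrt k}(n+Lm)\subset\mathrm{Int}_E W_n$. Because $\mathrm{Re}(D_1^k(Lm))$ is the cube $\prod_i[Lm_i-1,Lm_i+1]$, all of whose points lie within distance $\sqrt k$ of the centre $Lm$, and because $|\mathrm{Im}(w_i)|\le 1$ on $D_1^k(Lm)$, we get $D_1^k(Lm)\subset W$. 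Lemma \ref{lemma: persistence of zero points of Theta_L}(2) applied to $g$ and the index $m$ then produces $w\in D_{r_1}^k(Lm)$ with $g(w)=0$ and $\nu(dg_w)>2/L$; putting $z:=w+n\in D_{r_1}^k(n+Lm)$ yields $\Phi_L(\mathcal{W})(z)=g(w)=0$ and $\nu\bigl((d\Phi_L(\mathcal{W}))_z\bigr)=\nu(dg_w)>2/L$.

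The only step demanding real care is the local uniform convergence underlying continuity in (1) — specifically, using the convex-tile structure to rule out mass on the set of infinitely-overlapping points — since parts (2) and (3) are essentially a direct translation of Lemmas \ref{lemma: Phi and Theta are close} and \ref{lemma: persistence of zero points of Theta_L}, the only quantitative input there being the buffer $\sqrt k$ in the hypothesis of (3), which is exactly the circumradius of a unit cube in $\mathbb{R}^k$.
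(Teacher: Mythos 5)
Your proposal is correct and follows essentially the same route as the paper: parts (2) and (3) are exactly the paper's argument (translate by $n$, use Lemma \ref{lemma: Phi and Theta are close} together with Lemma \ref{lemma: persistence of zero points of Theta_L}, and for (3) the observation that $n+Lm\in\mathrm{Int}_{E+\sqrt{k}}W_n$ forces $\mathrm{Re}\left(D_1^k(n+Lm)\right)\subset\mathrm{Int}_E W_n$). The only addition is your explicit verification of local uniform convergence for the continuity claim in (1), which the paper leaves implicit in the discussion preceding the lemma; that verification is sound.
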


\begin{proof}
(1) was already proved. (2) follows from Lemmas \ref{lemma: persistence of zero points of Theta_L} (1) and 
\ref{lemma: Phi and Theta are close}.
For (3), notice that $n+Lm \in \mathrm{Int}_{E+\sqrt{k}} W_n$ implies 
 $B_{\sqrt{k}}(n+Lm)\subset \mathrm{Int}_E(W_n)$ and that 
every point $z\in D_1^k(n+Lm)$ satisfies $\mathrm{Re}(z)\in \mathrm{Int}_E(W_n)$.
Then (3) follows from  Lemmas \ref{lemma: persistence of zero points of Theta_L} (2) and 
\ref{lemma: Phi and Theta are close}.
\end{proof}

Conditions (2) and (3) in Lemma \ref{lemma: main properties of Phi_L}
are the rigorous formulation of ``good/bad decomposition'' explained in Subsection \ref{subsection: one dimension versus multi-dimension}.
The domain $\Omega$ is decomposed into two regions:
\begin{equation*}
   \begin{split}
   \text{Good region} &= \left\{z\in \Omega \middle| \, \mathrm{Re} z\in \bigcup_{n\in \mathbb{Z}^k} \mathrm{Int}_{E+\sqrt{k}} W_n \right\}, \\
   \text{Bad region}  &=  \left\{z\in \Omega \middle| \, \mathrm{Re} z\in \bigcup_{n\in \mathbb{Z}^k} \partial_{E+\sqrt{k}} W_n \right\}.
   \end{split}
\end{equation*}
The above (2) and (3) mean that we have a good control (for our purpose here) of zero points of $\Phi_L\left(\mathcal{W}\right)$
over the good region.
As we explained in Subsection \ref{subsection: one dimension versus multi-dimension}, the bad region should be ``tiny''.
This means that we need to introduce a tiling $\mathcal{W}$ such that the ``boundary region'' 
\[  \bigcup_{n\in \mathbb{Z}^k} \partial_{E+\sqrt{k}} W_n \]
is sufficiently small. (Indeed we will need a bit more involved condition later; see (\ref{eq: fixing M}) below).
The purpose of the next subsection is to construct such tilings.

\subsection{Dynamical Voronoi diagram}  \label{subsection: dynamical Voronoi diagram}

This subsection is largely a reproduction of \cite[Section 4]{Gutman--Lindenstrauss--Tsukamoto}.
Here we introduce a \textit{dynamically generated Voronoi diagram}.
Our use of Voronoi diagram is conceptually influenced by the works of 
Lightwood \cite{Lightwood 1, Lightwood 2}.

Let $(X,\mathbb{Z}^k,T)$ be a dynamical system having the marker property as we promised 
in Subsection \ref{subsection: fixing some notations}.
Let $M$ be a natural number.
It follows from the marker property that there exists an open set $U\subset X$ satisfying
\[  U\cap T^n U=\emptyset \quad (0<|n|< M), \quad  X = \bigcup_{n\in \mathbb{Z}^k} T^n U. \]
We can find a natural number $M_1\geq M$ and a compact set $F\subset U$ satisfying 
$X=\bigcup_{|n| <M_1} T^n F$.
Choose a continuous function $h:X\to [0,1]$ satisfying $\supp\, h\subset U$ and $h=1$ on $F$.
Then it satisfies 
\begin{equation}  \label{eq: conditions of M_1 and h}
  \left(\supp\, h\right) \cap T^n \left(\supp\, h\right) = \emptyset \quad (0<|n|<M), 
  \quad X= \bigcup_{|n|<M_1} T^n \left(\{x\in X|\, h(x)=1\}\right). 
\end{equation}

For $x\in X$ consider the following discrete set in $\mathbb{R}^{k+1}$:
\[ \left\{ \left(n, \frac{1}{h(T^n x)}\right) \middle| \, n \in \mathbb{Z}^k, h(T^n x) > 0 \right\}. \]
We consider the \textbf{Voronoi diagram} determined by this set.
Namely we define a convex set $V_0(x,n)$ for $n\in \mathbb{Z}^k$ as follows: 
If $h(T^n x)=0$ then we set $V_0(x,n)=\emptyset$.
If $h(T^n x) > 0$ then we define it as the set of all $u\in \mathbb{R}^{k+1}$ satisfying 
\[ \forall m\in \mathbb{Z}^k \text{ with $h(T^m x) > 0$}: \quad 
    \left|u-\left(n, \frac{1}{h(T^n x)}\right) \right| \leq \left|u-\left(m, \frac{1}{h(T^m x)}\right) \right|. \]
The sets $V_0(x,n)$ form a tiling of $\mathbb{R}^{k+1}$:
\[  \mathbb{R}^{k+1}  = \bigcup_{n\in \mathbb{Z}^k} V_0(x,n). \]
Let $\pi_{\mathbb{R}^k}:\mathbb{R}^{k+1}\to \mathbb{R}^k$ be the projection to the first $k$ coordinates 
(i.e. forgetting the last coordinate).
Set $H=\left(M_1+\sqrt{k}\right)^2$ and 
\[ W_0(x,n) = \pi_{\mathbb{R}^k}\left(V_0(x,n)\cap \left(\mathbb{R}^k\times \{-H\}\right)\right). \]
See Figure \ref{fig: Voronoi tiling}.  These form a tiling of $\mathbb{R}^k$:
\[  \mathbb{R}^k = \bigcup_{n\in \mathbb{Z}^k} W_0(x, n). \]
This tiling is $\mathbb{Z}^k$-equivariant in the sense that 
\[  W_0(T^n x, m) = -n + W_0(x, n+m)  = \{-n+u|\, u\in W_0(x,n+m)\}.  \]
The tiles $W_0(x,n)$ depend continuously on $x\in X$ in the Hausdorff topology.
More precisely, for $\varepsilon>0$ and for each $x\in X$ and $n\in \mathbb{Z}^k$ with $\mathrm{Int}\, W_0(x,n)\neq \emptyset$, if $y\in X$ is 
sufficiently close to $x$, then the Hausdorff distance between $W_0(x,n)$ and
$W_0(y,n)$ is smaller than $\varepsilon$.
Here $\mathrm{Int}\, W_0(x,n)$ denotes the interior of $W_0(x,n)$.
(When the tile $W_0(x,n)$ has no interior point, it may vanish after $x$ moves slightly.
But this does not cause any problem.)

\begin{figure}
    \centering
    \includegraphics[,bb= 0 0 300 170]{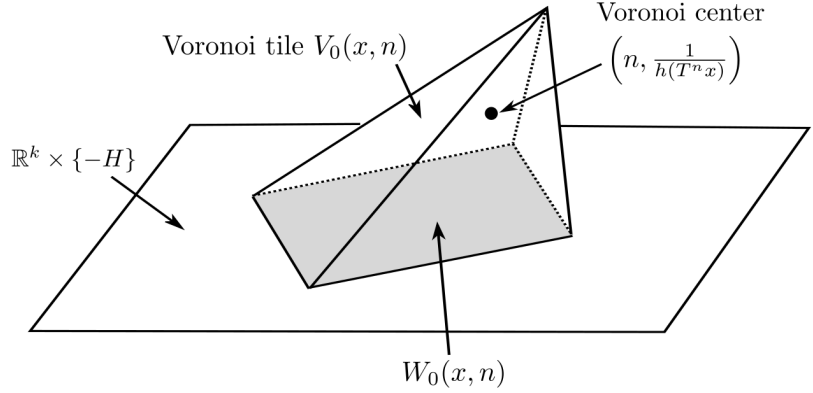}
    \caption{Voronoi tiling. The Voronoi center $\left(n, 1/h(T^n x)\right)$ is located inside $V_0(x,n)$.
    The shadowed region is $W_0(x,n)$.} 
    \label{fig: Voronoi tiling}
\end{figure}

The set $W_0(x,n)$ depends on the choice of the parameter $M$.
So we sometimes write $W_0^M(x,n) = W_0(x,n)$ for clarifying that $M$ is the parameter\footnote{It might look that $M_1$ and $h$
are also parameters.
But we can choose them to be \textit{functions of} $M$. 
Namely for each natural number $M$ we fix a natural number $M_1=M_1(M)$ and a continuous function 
$h= h_M:X\to [0,1]$ satisfying (\ref{eq: conditions of M_1 and h}).
Then only $M$ remains to be a parameter.}.
In the next subsection we consider the tiling-like band-limited map for this tiling.
The following property of $W^M_0(x,n)$ becomes crucial there, which means
that the ``bad region'' is negligible as we explained at the end of the last section.

\begin{lemma}  \label{lemma: basic properties of W_0}
       For any $r>0$
       \[   \lim_{M\to \infty} \left\{ \limsup_{R\to \infty} 
           \left(\sup_{x\in X} \frac{\left| B_{R}\cap \bigcup_{n\in \mathbb{Z}^k} \partial_r W_0^M(x,n)\right|}{\left|B_{R}\right|}\right) \right\}
              = 0. \]       
       Here $|\cdot|$ is the $k$-dimensional volume (Lebesgue measure).       
\end{lemma}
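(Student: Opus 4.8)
The plan is to show that the Voronoi tiles $W_0^M(x,n)$ become large as $M\to\infty$, and then use a ball-packing/isoperimetric-type estimate to conclude that the union of the $r$-boundaries $\partial_r W_0^M(x,n)$ has vanishing density. First I would establish the geometric fact that each nonempty tile $V_0(x,n)$ in $\mathbb{R}^{k+1}$ contains the ball of radius $M/2$ around its Voronoi center $(n,1/h(T^nx))$: indeed, if $h(T^nx)>0$ then (by the choice of $h$) $h(T^mx)=0$ for $0<|m-n|<M$, so the nearest competing Voronoi centers lie at horizontal distance $\ge M$, hence any point within distance $M/2$ of $(n,1/h(T^nx))$ is strictly closer to it than to any other center. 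Slicing with $\mathbb{R}^k\times\{-H\}$, where $H=(M_1+\sqrt k)^2$, and using $1/h(T^nx)\le$ (some bound independent of $M$... actually $1/h$ can be large) — here I need to be a little careful: the relevant point is that the Voronoi center's last coordinate is between $1$ and $\infty$, but the \emph{horizontal} spacing is what matters, and the height $H$ is chosen precisely so that the slice at level $-H$ of the ball of radius $M/2$ (after accounting for the vertical offset, which is at most... ) is still a $k$-ball of radius $\gtrsim M_1$, in particular of radius $\ge r+1$ once $M$ (hence $M_1\ge M$) is large. So every nonempty tile $W_0^M(x,n)$ contains a $k$-dimensional ball of radius $\rho_M$ with $\rho_M\to\infty$ as $M\to\infty$.

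Next I would translate ``each tile contains a large ball'' into ``the boundary layer is thin.'' Fix $r>0$ and $R$ large. For each $n$ with $W_0^M(x,n)\ne\emptyset$ pick a ball $B(c_n,\rho_M)\subset W_0^M(x,n)$; these balls are pairwise disjoint (the tiles overlap only on boundaries). A point $t\in\partial_r W_0^M(x,n)$ lies within distance $r$ of the complement of $W_0^M(x,n)$, hence within distance $r$ of $\partial W_0^M(x,n)$, hence at distance $\ge \rho_M - r$ from $c_n$... that only bounds points that are close to the center. Rather: the set $\bigcup_n \partial_r W_0^M(x,n)\cap B_R$ is covered by the ``shells'' $W_0^M(x,n)\setminus \mathrm{Int}_r W_0^M(x,n)$, and since $W_0^M(x,n)$ is \emph{convex} I can invoke Lemma 2.9: for any $c>1$ there is $R_0=R_0(c,r)$ such that a bounded closed convex $W$ with $\mathrm{Int}_{R_0}W\ne\emptyset$ satisfies $|W\cup\partial_r W|<c\,|\mathrm{Int}_r W|$, equivalently $|\partial_r W|\le (c-1)|\mathrm{Int}_r W|\le (c-1)|W|$. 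Take $M$ large enough that $\rho_M\ge R_0$; then every nonempty $W_0^M(x,n)$ satisfies this, so $|\partial_r W_0^M(x,n)|\le (c-1)|W_0^M(x,n)|$. Summing over the $n$ with $W_0^M(x,n)\cap B_R\ne\emptyset$ (all such tiles are contained in $B_{R+D_R}$ where $D_R$ bounds the tile diameters, but more simply $\bigcup_n$ over those $n$ covers $B_R$ with bounded overlap — in fact the tiles tile $\mathbb{R}^k$, so their union is $\mathbb{R}^k$) gives
\[
\Bigl|B_R\cap\bigcup_n\partial_r W_0^M(x,n)\Bigr|\ \le\ \sum_{n:\,W_0^M(x,n)\cap B_R\ne\emptyset}|\partial_r W_0^M(x,n)|\ \le\ (c-1)\sum_{n:\,W_0^M(x,n)\cap B_R\ne\emptyset}|W_0^M(x,n)|.
\]
The tiles meeting $B_R$ are contained in $B_{R+\Delta_M}$ for $\Delta_M$ an upper bound on tile diameters (uniform in $x$ and $n$, by the Voronoi construction with bounded-height centers), and they are essentially disjoint, so the last sum is $\le |B_{R+\Delta_M}|$. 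Dividing by $|B_R|$ and letting $R\to\infty$ gives $\limsup_R(\cdots)\le (c-1)$ uniformly in $x$. Since $c>1$ was arbitrary and $M\to\infty$ lets $c\downarrow 1$, the double limit is $0$.

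The main obstacle I anticipate is the bookkeeping around the parameter $H=(M_1+\sqrt k)^2$ and the last coordinates $1/h(T^nx)$ of the Voronoi centers: I need a clean, uniform-in-$x$ lower bound on the inradius of the slice $W_0^M(x,n)=\pi_{\mathbb{R}^k}(V_0(x,n)\cap(\mathbb{R}^k\times\{-H\}))$ and a uniform upper bound on its diameter, both growing/controlled appropriately with $M$. The growth of the inradius is where the specific quadratic choice of $H$ in terms of $M_1$ matters — one checks that the large ball $B((n,1/h(T^nx)),M/2)\subset V_0(x,n)$, intersected with the horizontal hyperplane at depth $H$ below (and the vertical gap from $1/h(T^nx)\in[1,\infty)$ to $-H$ is roughly $H$), still produces a $k$-ball whose radius is comparable to $M_1$, hence $\to\infty$. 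Once that uniform inradius bound is in hand, the rest is exactly Lemma 2.9 applied tile-by-tile plus a volume-packing sum, which is routine. (This is precisely the argument of \cite[Section 4]{Gutman--Lindenstrauss--Tsukamoto}, which I would follow.)
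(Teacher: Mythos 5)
Your plan hinges on the claim that every nonempty tile $W_0^M(x,n)$ contains a $k$-ball of radius $\rho_M\to\infty$, obtained by ``slicing'' the ball $B_{M/2}\bigl(n,1/h(T^nx)\bigr)\subset V_0(x,n)$ with the hyperplane $\mathbb{R}^k\times\{-H\}$. This step fails, and it is precisely the central difficulty of the lemma. The Voronoi center sits at height $1/h(T^nx)\in[1,2)$ (for nonempty tiles), while the slicing hyperplane sits at depth $-H=-(M_1+\sqrt k)^2\le -M^2$; since $M/2\ll M^2$, the ball of radius $M/2$ around the center does not even meet the hyperplane, so it contributes nothing to the slice. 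More importantly, the conclusion itself is false: a Voronoi cell $V_0(x,n)$ can taper to a near-point at level $-H$ (convexity only gives you the cone from $B_{M/2}(\mathrm{center})$ to whatever points of the cell lie at or below $-H$, and the slice of such a cone at the level of its tip is a point). Small tiles $W_0^M(x,n)$ genuinely occur --- the paper's entire ``social welfare'' machinery exists because of them --- so your subsequent tile-by-tile application of Lemma \ref{lemma: comparing boundary and interior of convex set} is unavailable for those tiles, and the argument does not close.

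The paper's proof avoids this by never asserting that individual tiles are fat. It introduces a second slice $W_1(x,n)=\pi_{\mathbb{R}^k}\bigl(V_0(x,n)\cap(\mathbb{R}^k\times\{-cH\})\bigr)$ at a deeper level $-cH$ with $c>1$, and radially projects each $(u,-cH)$ toward the Voronoi center onto the level $-H$. Because the cell contains the convex hull of $(u,-cH)$ and $B_{M/2}(\mathrm{center})$, the projected point is the center of a ball of radius about $\frac{(c-1)M}{2(c+1)}>r$ inside $W_0(x,n)$, hence lies in $\mathrm{Int}_r W_0(x,n)$; this yields $|\mathrm{Int}_r W_0(x,n)|\ge(1-\varepsilon)|W_1(x,n)|$ for \emph{every} $n$ (trivially when $W_1(x,n)=\emptyset$). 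Since the sets $W_1(x,n)$ also tile $\mathbb{R}^k$ and have uniformly bounded diameter, summing over $n$ shows that $\bigcup_n\mathrm{Int}_r W_0(x,n)$ fills all but an $\varepsilon$-fraction of $B_R$, which is the desired conclusion. If you want to repair your write-up, you must replace the ``uniform inradius'' claim by this comparison between the two slices (or some equivalent device that controls the \emph{total} volume of the thin parts rather than each tile separately).
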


\begin{proof}
We need some auxiliary claims.

\begin{claim}  \label{claim: preliminaries of W_0(x,n)}
Let $x\in X$ and $n\in \mathbb{Z}^k$ with $h(T^n x)>0$.
  \begin{enumerate}
    \item $V_0(x,n)$ contains the ball $B_{M/2}\left(n, 1/h(T^n x)\right)$.
    \item $W_0(x,n) \subset B_{M_1+\sqrt{k}}(n)$.
    \item If $W_0(x,n)\neq \emptyset$ then $h(T^n x)>1/2$.
  \end{enumerate}
\end{claim}
\begin{proof}
It follows from the first property of $h$ in (\ref{eq: conditions of M_1 and h}) that
if $m\in \mathbb{Z}^k$ also satisfies $h(T^m x)>0$ then $|n-m|\geq M$.
Property (1) easily follows from this.
For the proof of (2) and (3), take $u\in W_0(x,n)$.
Let $v\in \mathbb{Z}^k$ be the integer point closest to $u$. 
By the second property of $h$ in (\ref{eq: conditions of M_1 and h}) there exists 
$m\in \mathbb{Z}^k$ satisfying $|m|<M_1$ and $h(T^{v+m}x)=1$.
It follows from the definition of the Voronoi tiling that 
\begin{equation} \label{eq: inequality by the Voronoi condition}
    \left|(u, -H) - \left(n, \frac{1}{h(T^n x)}\right) \right| 
     \leq \left|(u,-H) - (v+m, 1)\right|.  
\end{equation}     
Since $\left|H+1/h(T^n x)\right| \geq H+1$, 
\[ |u-n|\leq |u-v-m| < |u-v|+M_1 < \sqrt{k} + M_1. \]
This proves (2). It follows from (\ref{eq: inequality by the Voronoi condition}) that 
\[ \left(H+\frac{1}{h\left(T^n x\right)}\right)^2 \leq |u-v-m|^2 + (H+1)^2 < \left(\sqrt{k}+M_1\right)^2 + (H+1)^2. \]
Then Property (3) follows:
\[ \frac{1}{h(T^n x)} < \frac{\left(\sqrt{k}+M_1\right)^2}{2H} + 1 + \frac{1}{2H} < \frac{1}{2} + 1 + \frac{1}{2} =2, \]
where we used $H=\left(M_1+\sqrt{k}\right)^2 > 1$.
\end{proof}

Let $\varepsilon>0$. We take $c>1$ satisfying 
\[  c^{-k} > 1-\varepsilon.    \]
We choose $M$ so large that 
\begin{equation} \label{eq: choice of M in Dynamical Voronoi diagram subsection}
   \frac{(c-1)M}{2(c+1)}  > r, \quad  \left(c+\frac{2}{M} \right)^{-k} > 1-\varepsilon.
\end{equation}    

For $x\in X$ and $n\in \mathbb{Z}^k$ we set $W_1^M(x,n) = W_1(x,n) = \pi_{\mathbb{R}^k}\left(V(x,n)\cap \mathbb{R}^k\times \{-cH\}\right)$, 
where $\pi_{\mathbb{R}^k}: \mathbb{R}^{k+1}\to \mathbb{R}^k$ is the projection to the first $k$ coordinates.
It follows from the same proof of Claim \ref{claim: preliminaries of W_0(x,n)} (2) that 
\[  W_1(x,n) \subset B_{M_1+\sqrt{k}}(n).  \]
For $u\in W_1(x,n)$ we denote by $(f(u), -H)$ the intersection point of the hyperplane $\mathbb{R}^k\times \{-H\}$
and the line segment between $(u, -cH)$ and the Voronoi center $\left(n, 1/h(T^n x)\right)$.
It follows from the convexity of $V_0(x,n)$ that $f(u)\in W_0(x,n)$.
See Figure \ref{fig: configuration}.

\begin{figure}
    \centering
    \includegraphics[,bb= 0 0 400 220]{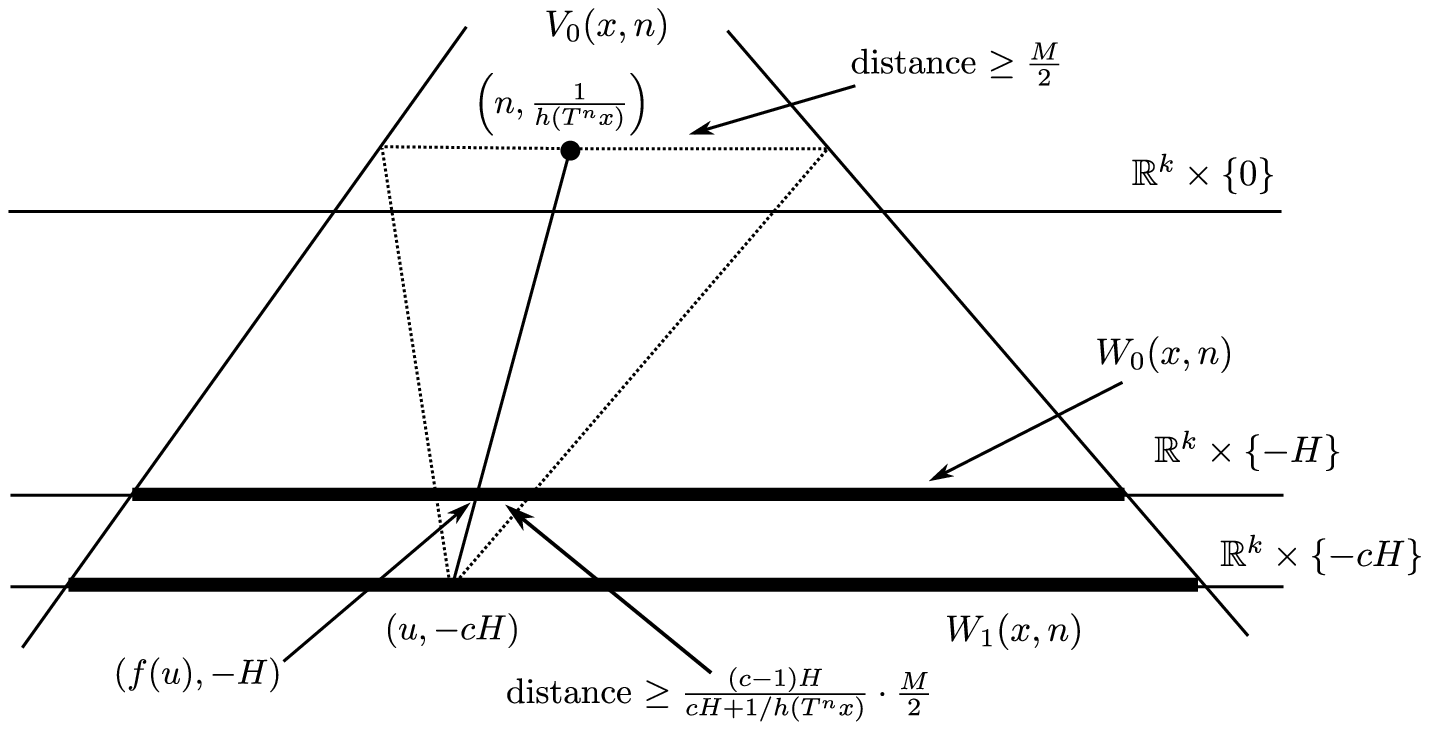}
    \caption{Configurations of $V_0(x,n)$, $W_0(x,n)$ and $W_1(x,n)$.} 
    \label{fig: configuration}
\end{figure}

\begin{claim}  \label{claim: W_0 and W_1}
   \begin{enumerate}
       \item   For any $u\in W_1(x,n)$ the point $f(u)$ belongs to $\mathrm{Int}_r W_0(x,n)$.
       \item   $\left|\mathrm{Int}_r W_0(x,n)\right|  \geq  (1-\varepsilon) \left|W_1(x,n)\right|$.
   \end{enumerate}    
\end{claim}

\begin{proof}
(1)  From $B_{M/2}\left(n, 1/h(T^n x)\right) \subset V_0(x,n)$, 
the set $W_0(x,n) \times \{-H\}$ contains the intersection point of the hyperplane 
$\mathbb{R}^k\times \{-H\}$ and the line segment between $(u, -cH)$ and $\left(v, 1/h(T^n x)\right)$ for $v\in B_{M/2}(n)$.
Then $W_0(x,n)$ contains the ball of radius 
\[   \frac{cH-H}{cH+\frac{1}{h(T^n x)}} \cdot   \frac{M}{2}  \]
around the point $f(u)$. See Figure \ref{fig: configuration} again.
From $h(T^n x) >1/2$ (Claim \ref{claim: preliminaries of W_0(x,n)} (3)), 
\[ \frac{cH-H}{cH+\frac{1}{h(T^n x)}} \cdot   \frac{M}{2}   > \frac{(c-1)H}{cH + 2} \cdot \frac{M}{2} = 
   \frac{(c-1)M}{2\left(c+\frac{2}{H}\right)}.  \]
By $H= \left(M_1+\sqrt{k}\right)^2 > 2$, this is larger than 
\[ \frac{(c-1)M}{2(c+1)} > r \quad (\text{by the first condition of (\ref{eq: choice of M in Dynamical Voronoi diagram subsection})}). \]
This shows $f(u)\in \mathrm{Int}_r W_0(x,n)$.

(2) We can assume $W_1(x,n) \neq \emptyset$. (Otherwise the statement is trivial.)
From (1) 
\begin{equation*}
  \left|\mathrm{Int}_r W_0(x,n)\right| \geq \left|\{f(u)|\, u\in W_1(x,n)\}\right|   =
  \left(\frac{H+ \frac{1}{h(T^n x)}}{cH+\frac{1}{h(T^n x)}}\right)^k \left|W_1(x,n)\right|. 
\end{equation*}
From $h(T^n x) >1/2$ (Claim \ref{claim: preliminaries of W_0(x,n)} (3)), 
\[   \left|\mathrm{Int}_r W_0(x,n)\right|   \geq \left(\frac{H}{cH+2}\right)^k \left|W_1(x,n)\right|
      = \left(\frac{1}{c + \frac{2}{H}}\right)^k \left|W_1(x,n)\right|.  \]
Since $H=\left(M_1+\sqrt{k}\right)^2 > M$
\[     \left|\mathrm{Int}_r W_0(x,n)\right|   \geq  \left(\frac{1}{c+ \frac{2}{M}}\right)^k \left| W_1(x,n)\right|. \]
By the second condition of (\ref{eq: choice of M in Dynamical Voronoi diagram subsection}) this proves (2).
\end{proof}

Let $x\in X$ and $R>2M_1+2\sqrt{k}$.
\begin{equation*}
  \begin{split}
   \left| B_{R}\cap \bigcup_{n\in \mathbb{Z}^k} \partial_r W_0(x,n)\right|  &\leq 
   |B_R| - \left|\bigcup_{|n|\leq R-M_1-\sqrt{k}} \mathrm{Int}_r W_0(x,n)\right|  \quad 
   (\text{by $W_0(x,n)\subset B_{M_1+\sqrt{k}}(n)$}) \\
   &\leq |B_R| - (1-\varepsilon) \left|\bigcup_{|n| \leq R-M_1-\sqrt{k}} W_1(x,n)\right|
   \quad (\text{by Claim \ref{claim: W_0 and W_1} (2)}).
  \end{split}
\end{equation*}  
Since $W_1(x,n) \subset B_{M_1+\sqrt{k}}(n)$ and $\mathbb{R}^k = \bigcup_{n\in \mathbb{Z}^k} W_1(x,n)$,
\[  \bigcup_{|n|\leq R-M_1-\sqrt{k}} W_1(x,n)  \supset B_{R-2M_1-2\sqrt{k}}. \]
Hence 
\[      \left| B_{R}\cap \bigcup_{n\in \mathbb{Z}^k} \partial_r W_0(x,n)\right|  
    \leq |B_R| - (1-\varepsilon) \left| B_{R-2M_1-2\sqrt{k}}\right|. \]
Thus 
\[     \limsup_{R\to \infty} 
           \left(\sup_{x\in X} \frac{\left| B_{R}\cap \bigcup_{n\in \mathbb{Z}^k} \partial_r W_0(x,n)\right|}{\left|B_{R}\right|}\right)  
        \leq \varepsilon.  \]   
This proves the statement.
\end{proof}

\subsection{Tiling and weight functions from tiling-like band-limited maps} 
\label{subsection: weight functions from tiling-like band-limited functions}

Let $A,L_0$ and $L$ be positive numbers. 
These three parameters control the construction of this subsection.
We assume:
\begin{condition}[$L$ is much larger than $AL_0$] \label{condition: L is much larger than L_0}
      $L$ is an \textit{integer} satisfying 
    \[  L > 1000^k (A+1) \left(L_0+1+\sqrt{k}\right).  \]
\end{condition}
Let $E=E(L)$ be the positive constant introduced in Notation \ref{notation: introduction of E}.
By Lemma \ref{lemma: basic properties of W_0} we can choose $M=M(A,L_0,L)>0$ so that the sets $W_0(x,n) = W_0^M(x,n)$ 
in Subsection \ref{subsection: dynamical Voronoi diagram} satisfy 
\begin{equation}  \label{eq: fixing M}
     \limsup_{R\to \infty} 
                 \left(\sup_{x\in X} \frac{\left| B_{3R}\cap \bigcup_{n\in \mathbb{Z}^k} \partial_{E+2(L+1)\sqrt{k} +L_0+1} 
                 W_0(x,n)\right|}{\left|B_{R/2}\right|}\right) 
                  < \frac{1}{6A+2}.
\end{equation}

Let $x\in X$. 
We define $\Phi_{A,L_0,L}(x) = \Phi_L\left(\{W_0(x,n)\}_{n\in \mathbb{Z}^k}\right):\mathbb{C}^k \to \mathbb{C}^k$ as 
the tiling-like band-limited map 
with respect to the tiling $\{W_0(x,n)\}_{n\in \mathbb{Z}^k}$, namely for 
$z\in \mathbb{C}^k$
\[  \Phi_{A,L_0,L}(x)(z) = \sum_{n\in \mathbb{Z}^k}  \Theta_L(z-n)
    \int_{W_0(x,n)} \chi_1(z-t)\, dt_1\dots dt_k. \]
    We often abbreviate $\Phi_{A,L_0, L}(x)$ to $\Phi(x)$.
    
\begin{lemma}[Basic properties of $\Phi(x)$] \label{lemma: basic properties of Phi(x)}
  $\Phi$ satisfies 
    \begin{description}
      \item[(1) Boundedness]  $\norm{\Phi(x)}_{L^\infty(\mathbb{R}^k)} \leq  \sqrt{k} K_1$.
      \item[(2) Frequencies] If $L>4/\delta$ then the Fourier transform of the $i$-th entry of $\Phi(x)|_{\mathbb{R}^k}$ 
                                   $(1\leq i\leq k)$ is supported in 
                                \[      \left(-\frac{\delta}{4},\frac{\delta}{4}\right)^{i-1} \times 
                                   \left(\frac{a_i}{2}, \frac{a_i}{2}+\frac{\delta}{2}\right)\times \left(-\frac{\delta}{4},\frac{\delta}{4}\right)^{k-i},  \]
                               which are disjoint with each other.
      \item[(3) Equivariance] $\Phi(T^n x)(z) = \Phi(x)(z+n)$ 
                                     for all $x\in X$, $n\in \mathbb{Z}^k$ and $z\in \mathbb{C}^k$.
      \item[(4) Continuity] If a sequence $x_n$ in $X$ converges to $x$ then $\Phi(x_n)$ converges to $\Phi(x)$
                                 uniformly over every compact subset of $\mathbb{C}^k$.                               
    \end{description}        
\end{lemma}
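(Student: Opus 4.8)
The plan is to dispatch (1), (2) and (3) using results already established, and to reserve the real effort for (4). Properties (1) and (2) are immediate: by definition $\Phi(x)=\Phi_{A,L_0,L}(x)=\Phi_L(\mathcal{W})$ is the tiling-like band-limited map associated with the tiling $\mathcal{W}=\{W_0(x,n)\}_{n\in\mathbb{Z}^k}$ of $\mathbb{R}^k$ constructed in Subsection \ref{subsection: dynamical Voronoi diagram}, so the bound $\norm{\Phi(x)}_{L^\infty(\mathbb{R}^k)}\le\sqrt{k}K_1$ and the (conditional) description of the supports of the Fourier transforms of the entries of $\Phi(x)|_{\mathbb{R}^k}$ follow word for word from Lemma \ref{lemma: main properties of Phi_L}(1), the constant $K_1$ and the frequency supports there being independent of which tiling we feed in.

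For (3) I would invoke the $\mathbb{Z}^k$-equivariance of the Voronoi tiling proved in Subsection \ref{subsection: dynamical Voronoi diagram}, namely $W_0(T^n x,m)=-n+W_0(x,n+m)$. Substituting this into the defining series for $\Phi(T^n x)(z)$, carrying out the change of variables $t\mapsto t-n$ in each integral, reindexing the sum by $m\mapsto m-n$, and noting $\Theta_L\bigl(z-(m-n)\bigr)=\Theta_L\bigl((z+n)-m\bigr)$, one reaches $\Phi(T^n x)(z)=\Phi(x)(z+n)$. These manipulations are legitimate because the series converges absolutely and uniformly on compact subsets of $\mathbb{C}^k$: by Claim \ref{claim: preliminaries of W_0(x,n)}(2) one has $W_0(x,n)\subset B_{M_1+\sqrt{k}}(n)$, and since $\chi_1$ decays rapidly in the complex domain by \eqref{eq: rapidly decreasing in complex domain} while $\norm{\Theta_L}_{L^\infty(\Omega)}<\infty$, the tail $\sum_{|n|>R}\norm{\Theta_L}_{L^\infty(\Omega)}\int_{W_0(x,n)}|\chi_1(z-t)|\,dt_1\dots dt_k$ can be made arbitrarily small uniformly for $z$ in a fixed compact set.

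Property (4) is the substantive point, and I would base it on the integral representation
\[
   \Phi(x)(z)=\int_{\mathbb{R}^k}\Theta_L\bigl(z-n(x,t)\bigr)\,\chi_1(z-t)\,dt_1\dots dt_k ,
\]
where for Lebesgue-a.e.\ $t\in\mathbb{R}^k$ (namely every $t$ outside $\bigcup_n\partial W_0(x,n)$, a countable union of boundaries of bounded convex sets and hence Lebesgue-null) $n(x,t)\in\mathbb{Z}^k$ denotes the unique index with $t\in\mathrm{Int}\,W_0(x,n(x,t))$; this identity follows from Tonelli's theorem together with $\sum_n\int_{W_0(x,n)}|\Theta_L(z-n)||\chi_1(z-t)|\,dt\le\sqrt{k}K_1$. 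Given $x_j\to x$ in $X$: for a fixed $t\notin\bigcup_n\partial W_0(x,n)$ the point $t$ lies in the interior of some tile $W_0(x,n)$, and the Hausdorff continuity of the tiles recorded in Subsection \ref{subsection: dynamical Voronoi diagram} forces $t\in W_0(x_j,n)$, hence $n(x_j,t)=n$, for all large $j$; thus $n(x_j,\cdot)\to n(x,\cdot)$ almost everywhere, and since $|\Theta_L(z-n(x_j,t))\chi_1(z-t)|\le\sqrt{k}\,|\chi_1(z-t)|$ is integrable in $t$, dominated convergence gives $\Phi(x_j)(z)\to\Phi(x)(z)$ for every $z\in\mathbb{C}^k$. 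Finally, by (1), (2) and Lemmas \ref{lemma: exponential type} and \ref{lemma: Paley--Wiener} every entry of every $\Phi(x)$ extends to a holomorphic function on $\mathbb{C}^k$ obeying a bound that is uniform in $x$ on compact subsets of $\mathbb{C}^k$; hence $\{\Phi(x_j)\}_j$ is a normal family (Cauchy estimates and the Arzela--Ascoli theorem, exactly as in the proof of Corollary \ref{cor: compactness of B_1}), and a normal family converging pointwise on $\mathbb{C}^k$ converges uniformly on compact sets. This is (4).

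The step I expect to be the main obstacle is (4): one must check that the integral representation holds almost everywhere, that the tile index $n(x_j,t)$ converges to $n(x,t)$ for a.e.\ $t$ — which is precisely where the Hausdorff continuity of the Voronoi construction and the nullity of the union of tile boundaries come in, and which is what lets us ignore the delicate behaviour of tiles with empty interior — and that pointwise convergence can be promoted to locally uniform convergence, for which band-limitedness (hence normality) is essential.
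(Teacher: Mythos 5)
Your proposal is correct and follows the paper's own (very brief) proof of this lemma: (1) and (2) are read off from Lemma \ref{lemma: main properties of Phi_L}(1), and (3), (4) are deduced from the equivariance and Hausdorff continuity of the tiles $W_0(x,n)$, which is exactly what the paper does; your detailed implementation of (4) via the a.e.\ representation $\Phi(x)(z)=\int_{\mathbb{R}^k}\Theta_L\bigl(z-n(x,t)\bigr)\chi_1(z-t)\,dt$, dominated convergence, and normality is sound. One cosmetic point: the dominating bound $|\Theta_L(z-n(x_j,t))|\le\sqrt{k}$ holds only for real $z$; for general $z\in\mathbb{C}^k$ it should be $\norm{\Theta_L}_{L^\infty(\{w:\,\mathrm{Im}\,w=\mathrm{Im}\,z\})}$, which is finite because $n(x_j,t)$ is real, and the argument goes through unchanged.
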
       

\begin{proof}
(1) and (2) immediately follow from Lemma \ref{lemma: main properties of Phi_L} (1).
(3) and (4) follow from the equivariance and continuity of the tiles $W_0(x,n)$.
\end{proof}                                                                             
    
Let $0<r_1<1/4$ be the positive number introduced in Notation \ref{notation: choice of r_1}.    
Let $\alpha_1:\mathbb{C}^k \to [0,1]$ and $\alpha_2:\mathbb{R}\to [0,1]$ be continuous functions satisfying 
\begin{equation*}
   \begin{split}
       &\alpha_1(z) = 1 \quad \left(z\in D^k_{r_1}\right), \quad \alpha_1(z)=0 \quad \left(z\notin D^k_{2 r_1}\right), \\
       &\alpha_2(t) = 0 \quad \left(t\leq \frac{1}{L}\right), \quad \alpha_2(t) = 1 \quad \left(t\geq \frac{2}{L}\right).
   \end{split}  
\end{equation*}
For each $x\in X$ and $n\in \mathbb{Z}^k$ we define a non-negative number $\nu_{A,L_0,L}(x,n) = \nu(x,n)$ by 
\[  \nu(x,n) = \min\left\{1, \, \sum_{\Phi(x)(z)=0}  \alpha_1(z-n) \, \alpha_2\left(\nu\left(d\Phi(x)_z\right)\right)\right\}.  \]
Here the sum is taken over all $z\in \mathbb{C}^k$ satisfying $\Phi(x)(z)=0$, $z\in D_{2r_1}^k(n)$ and $\nu\left(d\Phi(x)_z\right)\geq 1/L$.
The set of such $z$ is a finite set because non-degenerate zero points are isolated. 
So this is a finite sum.
(Notice that the polydisk $D^k_{2r_1}(n)$ may contain infinitely many zeros of $\Phi(x)$.
The above definition of $\nu(x,n)$ addresses this problem.
The number $1/L$ is chosen so that the zero points found in Lemma \ref{lemma: main properties of Phi_L} (3)
contribute to the above sum.)
For each $n\in \mathbb{Z}^k$ the number $\nu(x,n)$ depends continuously on $x\in X$.

We again consider a Voronoi tiling.
Let $x\in X$. Consider the following discrete set in $\mathbb{R}^{k+1}$:
\[  \left\{\left(n, \frac{1}{\nu(x,n)}\right) \middle|\, n\in \mathbb{Z}^k, \> \nu(x,n) >0 \right\}. \]
This set is not empty. (See the proof of Lemma \ref{lemma: basic properties of W(x,n)} (1) below.)
So this determines a Voronoi diagram.
Namely for $n\in \mathbb{Z}^k$ we define $V(x,n)$ as the set of $u\in \mathbb{R}^{k+1}$ satisfying 
\[ \forall m\in \mathbb{Z}^k \text{ with $\nu(x,m)>0$}: \quad \left|u-\left(n,\frac{1}{\nu(x,n)}\right)\right| \leq 
    \left|u-\left(m, \frac{1}{\nu(x,m)}\right) \right|. \]
If $\nu(x,n)=0$ then $V(x,n)=\emptyset$.    
These form a tiling of $\mathbb{R}^{k+1}$: 
\[  \mathbb{R}^{k+1} = \bigcup_{n\in \mathbb{Z}^k} V(x,n). \]
We set $W(x,n) = \pi_{\mathbb{R}^k}\left(V(x,n)\cap \left(\mathbb{R}^k\times \{0\}\right)\right)$, where 
$\pi_{\mathbb{R}^k}:\mathbb{R}^{k+1}\to \mathbb{R}^k$ is the projection to the first $k$ coordinates as before.
The sets $W(x,n)$ form a tiling of $\mathbb{R}^k$:
\[  \mathbb{R}^k = \bigcup_{n\in \mathbb{Z}^k} W(x,n). \]
This is also $\mathbb{Z}^k$-equivariant, namely $W(T^n x, m) = -n + W(x,n+m)$.
Each $W(x,n)$ is a bounded convex set of $\mathbb{R}^k$ (see Lemma \ref{lemma: basic properties of W(x,n)} (1) below) and 
depends continuously on $x\in X$ in the Hausdorff topology as in the case of $W_0(x,n)$.
We sometimes denote $W(x,n)$ by $W^{A,L_0,L}(x,n)$ for clarifying the dependence on the parameters $A,L_0,L$.
For $x\in X$ we define $\partial(x) = \partial^{A,L_0,L}(x)$ as $\bigcup_{n\in \mathbb{Z}^k} \partial W(x,n)$.

\begin{lemma}[Basic properties of $W(x,n)$]  \label{lemma: basic properties of W(x,n)}
   \begin{enumerate} 
      \item   The diameter of $W(x,n)$ is bounded uniformly in $x\in X$ and $n\in \mathbb{Z}^k$.
               Moreover 
               \[  \sup_{x\in X} \sup_{n\in \mathbb{Z}^k} \sup_{u\in W(x,n)} |n-u|  < \infty. \]
      \item Let $\alpha_3:\mathbb{R}\to [0,3]$ be a continuous function satisfying 
              \[  \alpha_3(t)=3 \quad (t\leq L_0), \quad   \alpha_3(t) = 0 \quad (t\geq L_0+1). \]
              There exists $R_0 = R_0(A,L_0,L)>0$ such that for all $x\in X$ and 
              all $u\in \mathbb{Z}^k$
              \[  A \sum_{|n-u|\leq 2R_0} \alpha_3  \left(d\left(n, \partial(x)\right)\right) 
                   < \sum_{|n-u|\leq R_0}  \left|\mathrm{Int}_{L_0} W(x,n)\right|,   \]
             where $d\left(n, \partial(x)\right) = \inf_{t\in \partial(x)} |n-t|$ and 
             the sums are taken over all $n\in \mathbb{Z}^k$ satisfying $|n-u|\leq 2R_0$ and 
             $|n-u|\leq R_0$ in the left-hand side and right-hand side respectively.           
   \end{enumerate}
\end{lemma}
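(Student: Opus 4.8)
The plan is to prove the two assertions of Lemma \ref{lemma: basic properties of W(x,n)} essentially by the same Voronoi-geometry arguments used in Subsection \ref{subsection: dynamical Voronoi diagram}, transplanted to the new Voronoi diagram generated by the points $(n, 1/\nu(x,n))$.

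\textbf{Step 1: A uniform lower bound on the relevant $\nu(x,n)$, and hence that the generating set is nonempty.} The key input is Lemma \ref{lemma: main properties of Phi_L} (3): whenever $n + Lm \in \mathrm{Int}_{E+\sqrt{k}} W_0(x,n)$, the map $\Phi(x) = \Phi_L(\{W_0(x,\cdot)\})$ has a zero $z \in D_{r_1}^k(n+Lm)$ with $\nu(d\Phi(x)_z) > 2/L$. Such a zero contributes $\alpha_1(z-(n+Lm))\,\alpha_2(\nu(d\Phi(x)_z)) = 1 \cdot 1 = 1$ to the defining sum of $\nu(x, n+Lm)$, so $\nu(x, n+Lm) = 1$. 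By the estimate (\ref{eq: fixing M}) on $W_0^M(x,n)$, the ``bad region'' $\bigcup_n \partial_{E + 2(L+1)\sqrt{k} + L_0 + 1} W_0(x,n)$ has density bounded by $1/(6A+2)$; in particular every sufficiently large ball contains a lattice point $n+Lm$ (with $m \in \mathbb{Z}^k$, recalling $L$ is an integer by Condition \ref{condition: L is much larger than L_0}) lying deep inside some tile $W_0(x,n)$, so that $\nu(x, n+Lm) = 1$. Quantitatively: in any box of side $CL$ for a suitable dimensional constant $C$, there is a lattice point of $\mathbb{Z}^k + L\mathbb{Z}^k$-type at which $\nu$ equals $1$. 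This gives both nonemptiness of the generating set and a uniform upper bound $G$ (depending on $A, L_0, L$) on the gaps between points where $\nu(x,\cdot) = 1$.

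\textbf{Step 2: Boundedness of the tiles $W(x,n)$ (assertion (1)).} Since $\nu(x,n) \in [0,1]$, any point in the generating set has last coordinate $1/\nu(x,n) \geq 1$. Reproducing the proof of Claim \ref{claim: preliminaries of W_0(x,n)} (2): if $u \in W(x,n)$, let $v \in \mathbb{Z}^k$ be the nearest integer point to $u$; by Step 1 there is $m \in \mathbb{Z}^k$ with $|m| \leq G$ and $\nu(x, v+m) = 1$, so the Voronoi inequality $|(u,0) - (n, 1/\nu(x,n))| \leq |(u,0) - (v+m, 1)|$ gives $|u - n| \leq |u - v - m| \leq |u-v| + G < \sqrt{k} + G$, which is a uniform bound. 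This proves both displayed inequalities in (1), and convexity/boundedness of each $W(x,n)$ is immediate from the Voronoi construction.

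\textbf{Step 3: The averaged inequality (assertion (2)) --- the main obstacle.} This is the substantive part and mirrors Lemma \ref{lemma: basic properties of W_0}, now for $W(x,n)$. The plan is: first relate $\mathrm{Int}_{L_0} W(x,n)$ from below to the volume of a rescaled Voronoi cell (as in Claim \ref{claim: W_0 and W_1}, using a second height $-cH$ with $c$ close to $1$), exploiting that the Voronoi centers with $\nu = 1$ sit at height exactly $1$ and are $G$-dense; this shows $\bigcup_{|n - u| \leq R_0} \mathrm{Int}_{L_0} W(x,n)$ contains a ball of radius $\approx R_0$ minus a boundary layer. Second, control the left-hand sum: $\alpha_3(d(n, \partial(x))) \neq 0$ forces $d(n, \partial(x)) \leq L_0 + 1$, i.e. $n$ lies within $L_0+1$ of some tile boundary $\partial W(x,m)$; such boundary points must, by the Voronoi construction, lie ``near'' the bad region of the original tiling --- more precisely, a point on $\partial W(x,m)$ is equidistant from two centers $(m, 1/\nu(x,m))$ and $(m', 1/\nu(x,m'))$, and using Step 1 (which says $\nu = 1$ on a $G$-dense set) together with (\ref{eq: fixing M}) one shows such points are confined to $\bigcup_n \partial_{E + 2(L+1)\sqrt{k} + L_0 + 1} W_0(x,n)$ up to the relevant scale. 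Hence the number of $n$ with $|n-u| \leq 2R_0$ and $\alpha_3(d(n,\partial(x))) \neq 0$ is at most (a constant times) the volume of that bad region inside a ball of radius $\approx 3R_0$, which by (\ref{eq: fixing M}) is less than $|B_{R_0/2}| / (6A+2)$ for $R_0$ large. Putting the two bounds together, for $R_0 = R_0(A,L_0,L)$ large enough, $A \cdot 3 \cdot (\text{that count}) < A \cdot 3 \cdot |B_{R_0/2}|/(6A+2) = \tfrac{3A}{6A+2}|B_{R_0/2}| < \tfrac12 |B_{R_0/2}|$, which is dominated by $\sum_{|n-u| \leq R_0} |\mathrm{Int}_{L_0} W(x,n)| \geq |B_{R_0/2}| - (\text{small boundary correction})$. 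The delicate point, and where I expect the real work to lie, is the geometric claim that tile-boundaries $\partial W(x,n)$ of the \emph{new} Voronoi diagram are trapped in a neighborhood of the bad region of the \emph{old} tiling $W_0(x,\cdot)$ --- this is exactly why the inflated radius $E + 2(L+1)\sqrt{k} + L_0 + 1$ appears in (\ref{eq: fixing M}), and getting the bookkeeping of these radii consistent (tracking how far a zero of $\Phi(x)$ near $n + Lm$ can be from the center of $W_0(x,n)$, and how that propagates through the Voronoi construction to bound $\partial W(x,\cdot)$) is the crux.
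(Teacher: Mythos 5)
Your Steps 1 and 2 reproduce the paper's argument for assertion (1) correctly: Lemma \ref{lemma: main properties of Phi_L} (3) forces $\nu(x,m)=1$ for every $m\in(n+L\mathbb{Z}^k)\cap\mathrm{Int}_{E+\sqrt{k}}W_0(x,n)$, the density bound (\ref{eq: fixing M}) plus $\mathbb{Z}^k$-equivariance makes such points uniformly dense, and the Voronoi inequality against a nearby center at height $1$ bounds $|u-n|$. That part is fine.

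Step 3, however, rests on a false geometric claim. You assert that the boundaries of the \emph{new} tiling are trapped in a neighborhood of the bad region $\bigcup_n\partial_{E+2(L+1)\sqrt{k}+L_0+1}W_0(x,n)$ of the \emph{old} tiling. They are not: deep inside a large tile $W_0(x,n)$ the integer points with $\nu(x,\cdot)=1$ are exactly the lattice $n+L\mathbb{Z}^k$ and all other integer points there have $\nu(x,\cdot)=0$ (this is the first line of (\ref{eq: main properties of nu(x,n)})), so the new tiles there are precisely the cubes $m+[-L/2,L/2]^k$, whose boundaries form a grid of spacing $L$ filling the entire good region. Hence the left-hand sum in (2) receives, from the good region alone, a contribution of order $3A\cdot\frac{k(L_0+1)}{L}\,|B_{2R_0}|$ (integer points within $L_0+1$ of the cube grid), which is \emph{not} controlled by (\ref{eq: fixing M}); it is dominated by the right-hand side only because Condition \ref{condition: L is much larger than L_0} forces $L>1000^k(A+1)(L_0+1+\sqrt{k})$. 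Your outline never invokes this condition, and without it the claimed inequality is simply false (fix $L$ and let $A\to\infty$). The paper's proof is organized exactly around this point: it introduces the dichotomy $\mathbb{G}_x$ (indices whose new tile is an exact cube of side $L$) versus $\mathbb{B}_x$, shows $\bigcup_{m\in\mathbb{B}_x}W(x,m)$ is contained in the old bad region, and then bounds $\sum_n|\partial_rW(x,n)|$ by treating the two classes separately --- good tiles via $L\gg A(L_0+1)$, bad tiles via (\ref{eq: fixing M}). A secondary problem: your lower bound for $\sum|\mathrm{Int}_{L_0}W(x,n)|$ via a rescaled Voronoi cell as in Claim \ref{claim: W_0 and W_1} does not transplant, since that argument needs the centers to be $M$-separated so that each cell contains a large ball around its center; the new centers $(n,1/\nu(x,n))$ can be at mutual distance $1$, tiles in the bad region can have inradius smaller than $L_0$, and then $\mathrm{Int}_{L_0}W(x,n)=\emptyset$ while the rescaled cell is not. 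The correct lower bound again comes from the good/bad split: each good tile is a cube of side $L\gg L_0$, so $|\mathrm{Int}_{L_0}W(x,m)|$ is nearly $|W(x,m)|$, and the total volume of bad tiles is negligible by (\ref{eq: fixing M}).
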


\begin{proof}
Let $x\in X$ and $n\in \mathbb{Z}^k$.
It follows from Lemma \ref{lemma: main properties of Phi_L} (2) and (3) that 
\begin{equation}  \label{eq: main properties of nu(x,n)}
   \begin{split}
   &  \forall \text{ integer point } m\in \mathrm{Int}_{E+2r_1\sqrt{k}} W_0(x,n) \setminus \left(n+L\mathbb{Z}^k\right): \>
    \nu(x,m)=0, \\
    &  \forall \, m\in \left(n+L\mathbb{Z}^k\right) \cap \mathrm{Int}_{E+\sqrt{k}}W_0(x,n):  \> \nu(x,m)=1.
   \end{split}
\end{equation}

(1) By (\ref{eq: fixing M}) if we choose $R$ sufficiently large then for all $x\in X$
\[  B_{R} \cap \bigcup_{n\in \mathbb{Z}^k} \partial_{E+(L+1)\sqrt{k}} W_0(x,n) \neq B_R. \]
Then there exists $n\in \mathbb{Z}^k$ satisfying 
$B_R\cap \mathrm{Int}_{E+(L+1)\sqrt{k}} W_0(x,n) \neq \emptyset$.
Then we can find a point $m\in  \left(n+L\mathbb{Z}^k\right)  \cap  B_{R+L\sqrt{k}}  \cap \mathrm{Int}_{E+\sqrt{k}} W_0(x,n)$.
It follows from the second line of (\ref{eq: main properties of nu(x,n)}) that $\nu(x,m)=1$.
By the $\mathbb{Z}^k$-equivariance we conclude that for any $p\in \mathbb{Z}^k$ there exists an integer point $q\in B_{R+L\sqrt{k}}(p)$
satisfying $\nu(x,q)=1$.

Take $u\in W(x,n)$. Let $p$ be the integer point closest to $u$.
Then we can find an integer point $q\in B_{R+L\sqrt{k}}(p)$ satisfying $\nu(x,q)=1$.
It follows form the definition of the Voronoi tiling that 
\[ \left|(u, 0) - \left(n, \frac{1}{\nu(x,n)}\right) \right|  \leq 
   \left|(u, 0) - (q, 1)\right|. \]
From $\nu(x,n)\leq 1$, 
\[ |u-n|\leq |u-q| \leq |u-p| + |p-q| \leq \sqrt{k}+R+L\sqrt{k}. \]
This proves 
\[ \sup_{x\in X} \sup_{n\in \mathbb{Z}^k} \sup_{u\in W(x,n)} |n-u|   \leq R + (L+1)\sqrt{k}. \]

(2) 
The above (\ref{eq: main properties of nu(x,n)}) implies
\begin{equation*}
     \forall m\in \left(n+L\mathbb{Z}^k\right) \cap \mathrm{Int}_{E+(L+1)\sqrt{k}} W_0(x,n): \> 
     W(x,m) = m + \left[-\frac{L}{2}, \frac{L}{2}\right]^k,
\end{equation*}
because for such $m$ 
\begin{equation*}
   \begin{split}
   & \forall \text{ integer point } l\in m+L \, (-1,1)^k \text{ with $l\neq m$}: \quad \nu(x,l)=0,  \\
   &  \forall \,  l \in m + L \, \{-1,0,1\}^k: \quad \nu(x,l)=1. 
   \end{split}
\end{equation*}

We introduce a dichotomy: For $x\in X$ we define a ``good set'' $\mathbb{G}_x$ as the set of $m\in \mathbb{Z}^k$
satisfying $W(x,m) = m + [-L/2,L/2]^k$. We define a ``bad set'' $\mathbb{B}_x$ as the complement of $\mathbb{G}_x$ in 
$\mathbb{Z}^k$.
Then it follows that 
\begin{equation}
    \begin{split}
    \mathrm{Int}_{E+(2L+1)\sqrt{k}} W_0(x,n) \subset \bigcup_{m\in \mathbb{G}_x} W(x,m), \\
    \bigcup_{m\in \mathbb{B}_x} W(x,m) \subset \bigcup_{n\in \mathbb{Z}^k} \partial_{E+ (2L+1)\sqrt{k}} W_0(x,n).    
    \end{split}
\end{equation}
 For simplicity of the notation, we set $E' = E+ (2L+1)\sqrt{k}$.
 We define $D$ as the supremum of $|n-u|$ over all $x\in X$, $n\in \mathbb{Z}^k$ and $u\in W(x,n)$.
 This is finite by (1).
 
Let $r$ and $R$ be positive numbers and $u\in \mathbb{Z}^k$.
We estimate $\sum_{n\in B_R(u)} \left|\partial_r W(x,n)\right|$.
Noting 
\[  B_{R-D}(u) \subset \bigcup_{n\in B_R(u)} W(x,n) \subset B_{R+D}(u), \]
we can estimate 
\begin{equation}  \label{eq: estimate from good/bad dichotomy}
    \begin{split}
      \sum_{n\in B_R(u)} \left|\partial_r W(x,n)\right| & =  \sum_{n\in B_R(u)\cap \mathbb{G}_x} \left|\partial_r W(x,n)\right|
      +  \sum_{n\in B_R(u)\cap \mathbb{B}_x} \left|\partial_r W(x,n)\right| \\
      & \leq 2^{k+1}r(L+2r)^{k-1} \frac{\left|B_{R+D}\right|}{L^k}  + 
         \left|B_{R+D+r}(u) \cap \bigcup_{n\in \mathbb{Z}^k} \partial_{E'+r} W_0(x,n)\right| \\
      & =     \frac{2^{k+1}r(L+2r)^{k-1}}{L^k} \left|B_{R+D}\right|  + 
         \left|B_{R+D+r} \cap \bigcup_{n\in \mathbb{Z}^k} \partial_{E'+r} W_0(T^u x,n)\right|.
    \end{split}
\end{equation}

We would like to prove that for sufficiently large $R$ (uniformly in $x\in X$ and $u\in \mathbb{Z}^k$)
\begin{equation}  \label{eq: tax is larger than cost}
   A \sum_{n\in B_{2R}(u)} \alpha_3  \left(d\left(n, \partial(x)\right)\right) 
                   < \sum_{n\in B_R(u)}  \left|\mathrm{Int}_{L_0} W(x,n)\right|. 
\end{equation}
It follows from (\ref{eq: estimate from good/bad dichotomy}) that 
\begin{equation*}
   \begin{split}
    \sum_{n\in B_R(u)} \left|\mathrm{Int}_{L_0} W(x,n)\right|   \geq &
    \sum_{n\in B_R(u)}\left( \left|W(x,n)\right| - \left|\partial_{L_0}W(x,n)\right|\right)  \\
     \geq &   \left|B_{R-D}\right| - \frac{2^{k+1}L_0(L+2L_0)^{k-1}}{L^k} \left|B_{R+D}\right|    \\
          &  - \left|B_{R+D+L_0} \cap \bigcup_{n\in \mathbb{Z}^k} \partial_{E'+L_0} W_0(T^u x,n)\right|.  
   \end{split}
\end{equation*}   
If $R$ is  sufficiently larger than $D+L_0$ then (noting $L>2L_0$ by Condition \ref{condition: L is much larger than L_0})
\begin{equation}  \label{eq: lower bound of the RHS, tax is larger than cost}
   \begin{split}
    \sum_{n\in B_R(u)} \left|\mathrm{Int}_{L_0} W(x,n)\right|  & \geq 
     \left|B_{R/2}\right| - \frac{4^k L_0}{L} \left|B_{2R}\right|  -
         \left|B_{2R} \cap \bigcup_{n\in \mathbb{Z}^k} \partial_{E'+L_0} W_0(T^u x,n)\right|  \\
          & = \left(1-\frac{16^k L_0}{L}\right) \left|B_{R/2}\right| 
              -  \left|B_{2R} \cap \bigcup_{n\in \mathbb{Z}^k} \partial_{E'+L_0} W_0(T^u x,n)\right|.
   \end{split}           
\end{equation}         
The left-hand side of (\ref{eq: tax is larger than cost}) is bounded by 
\begin{equation*}
   \begin{split}
      & 3A \#\left(\mathbb{Z}^k \cap B_{2R}(u)\cap \bigcup_{n\in \mathbb{Z}^k} \partial_{L_0+1}W(x,n)\right)  \\
      & \leq 3A \left|B_{2R+\sqrt{k}}(u)\cap \bigcup_{n\in \mathbb{Z}^k}\partial_{L_0+1+\sqrt{k}} W(x,n)\right| \\
      & \leq 3A \sum_{n\in B_{2R+2\sqrt{k}+D+L_0+1} (u)} \left|\partial_{L_0+1+\sqrt{k}} W(x,n) \right|.
    \end{split} 
\end{equation*}    
As in the case of (\ref{eq: lower bound of the RHS, tax is larger than cost}) if $R$ is sufficiently large then this is bounded by 
(noting $L>2L_0+2+2\sqrt{k}$ by Condition \ref{condition: L is much larger than L_0})
\begin{equation}  \label{eq: upper bound on the LHS, tax is larger than cost}
   \begin{split}
    &  3A\left(\frac{4^k\left(L_0+1+\sqrt{k}\right)}{L} \left|B_{3R}\right| 
    +  \left|B_{3R} \cap \bigcup_{n\in \mathbb{Z}^k} \partial_{E'+L_0+1+\sqrt{k}} W_0(T^u x,n)\right|\right) \\
   & = 3A\left(\frac{24^k \left(L_0+1+\sqrt{k}\right)}{L} \left|B_{R/2}\right| + 
         \left|B_{3R} \cap \bigcup_{n\in \mathbb{Z}^k} \partial_{E'+L_0+1+\sqrt{k}} W_0(T^u x,n)\right|\right).
    \end{split}
\end{equation}    
By combining (\ref{eq: lower bound of the RHS, tax is larger than cost}), (\ref{eq: upper bound on the LHS, tax is larger than cost}) with 
Condition \ref{condition: L is much larger than L_0} and (\ref{eq: fixing M}), we can conclude that 
(\ref{eq: tax is larger than cost}) holds for sufficiently large $R$.
\end{proof}

As we explained in Subsection \ref{subsection: strategy of the proof} we need a social welfare system among 
the tiles $W(x,n)$. The next proposition provides it.

\begin{proposition}[Construction of weight functions] \label{prop: construction of weight functions}
We can construct a continuous map 
\begin{equation} \label{eq: weight functions map}
   X\to \left([0,1]^{\mathbb{Z}^k}\right)^{\mathbb{Z}^k}, \>
   x\mapsto \left(w(x,n)\right)_{n\in \mathbb{Z}^k}, \>
   \text{where $w(x,n) = \left(w_m(x,n)\right)_{m \in \mathbb{Z}^k} \in [0,1]^{\mathbb{Z}^k}$} 
\end{equation}   
   satisfying the following.
   
   \begin{enumerate}
     \item The map is equivariant in the sense that for all $m,n\in \mathbb{Z}^k$ and $x\in X$
             \[ w(T^m x, n) = w(x, n+m). \]
     \item If $x, y\in X$ satisfy $\Phi(x)=\Phi(y)$ then $w(x,n)=w(y,n)$ for all $n\in \mathbb{Z}^k$.
     \item For all $x\in X$ and $n\in \mathbb{Z}^k$
             \[  \#\{m \in \mathbb{Z}^k|\, w_m(x,n)>0\} < 1 + \frac{1}{A} \left|\mathrm{Int}_{L_0} W(x,n)\right|. \]
             In particular if $\mathrm{Int}_{L_0}W(x,n) =\emptyset$ then $w_m(x,n)=0$ for all $m\in \mathbb{Z}^k$.
     \item Let $x\in X$ and $p\in \mathbb{Z}^k$ with $d\left(p, \partial(x)\right) \leq L_0$. 
             There exists 
             $n\in \mathbb{Z}^k\cap B_{R_0}(p)$ satisfying $w_{p-n}(x,n) = 1$.        
             Here $R_0 = R_0(A,L_0,L)$ is the positive number introduced in Lemma \ref{lemma: basic properties of W(x,n)} (2).
  \end{enumerate}
We call $w(x,n)$ \textbf{weight functions} and sometimes write 
$w^{A,L_0,L}(x,n) = w(x,n)$ for clarifying the dependence on parameters.  
\end{proposition}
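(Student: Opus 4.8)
The plan is to realize the weight functions $w(x,n)$ as a continuous, $\mathbb{Z}^k$-equivariant solution of a transportation problem whose feasibility is precisely the content of Lemma \ref{lemma: basic properties of W(x,n)}(2). For $x\in X$ and $p\in\mathbb{Z}^k$ set the \emph{demand} $c(x,p)=\alpha_3\big(d(p,\partial(x))\big)\in[0,3]$, and for $n\in\mathbb{Z}^k$ the \emph{capacity} $\kappa(x,n)=\tfrac1A\,\big|\mathrm{Int}_{L_0}W(x,n)\big|$. Both are $\mathbb{Z}^k$-equivariant (because $\partial(T^mx)=-m+\partial(x)$ and $W(T^mx,n)=-m+W(x,n+m)$), continuous in $x$ (the tiles, hence $\partial(x)$, move continuously in the Hausdorff topology, by Lemma \ref{lemma: basic properties of Phi(x)}(4) and the Voronoi construction), and functions of $\Phi(x)$ alone. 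Note $c(x,p)=3$ if $d(p,\partial(x))\le L_0$, $c(x,p)=0$ if $d(p,\partial(x))\ge L_0+1$, and $\kappa(x,n)=0$ exactly when $\mathrm{Int}_{L_0}W(x,n)=\emptyset$, while a tile of the form $m+[-L/2,L/2]^k$ has enormous capacity by Condition \ref{condition: L is much larger than L_0}. In these terms Lemma \ref{lemma: basic properties of W(x,n)}(2) reads
\[
  \sum_{|n-u|\le 2R_0} c(x,n)\ <\ \sum_{|n-u|\le R_0}\kappa(x,n)\qquad(\forall\,x\in X,\ u\in\mathbb{Z}^k).
\]

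Next I would build a \emph{flow} $f(x;p,n)\ge 0$, supported on pairs with $|p-n|\le R_0$, with edge bound $1$ and sink bound $\sum_p f(x;p,n)\le\kappa(x,n)$, saturating every demand, $\sum_n f(x;p,n)=c(x,p)$. For fixed $x$, feasibility is the max-flow/min-cut theorem on the infinite bipartite graph, and the needed Hall condition $c(A)\le\kappa(A+B_{R_0})$ for finite $A\subset\mathbb{Z}^k$ follows from the ball inequality above by a covering argument: choose a maximal $2R_0$-separated subset $\{u_i\}\subset A$, so $A\subset\bigcup_i B_{2R_0}(u_i)$ while the $B_{R_0}(u_i)$ are pairwise disjoint and each contained in $A+B_{R_0}$, and sum. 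Since all data have range $\le R_0$ and the tiles have uniformly bounded diameter (Lemma \ref{lemma: basic properties of W(x,n)}(1)), the problem decouples over bounded windows; I would then \emph{select} the flow by a canonical, explicitly continuous rule --- a shell-by-shell ``water-filling'' that first serves the near-boundary demands ($c=3$) by routing a full unit into a nearby large tile, then pushes the remaining demand outward, filling individual edges up to value $1$ with priority --- so that $f$ is continuous in $x$, equivariant ($f(T^mx;p,n)=f(x;p+m,n+m)$) and a function of $\Phi(x)$, all inherited from $c$, $\kappa$ and $W(x,\cdot)$.

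Finally set $w_m(x,n)=f(x;n+m,n)\in[0,1]$. Properties (1) and (2) are then immediate, and continuity of the map $(\ref{eq: weight functions map})$ follows from continuity of $f$. For (3): $\sum_m w_m(x,n)\le\kappa(x,n)$, and the water-filling leaves at most one incoming edge of each sink with value in $(0,1)$; if $\kappa(x,n)=0$ the tile receives nothing, and otherwise $\#\{m:w_m(x,n)>0\}$ is at most the number of value-$1$ edges (which is $\le\kappa(x,n)$) plus one, hence $<1+\kappa(x,n)$. For (4): a point $p$ with $d(p,\partial(x))\le L_0$ has $c(x,p)=3$ and is served first, so the rule places a full unit into some tile $W(x,n)$ with $|p-n|\le R_0$, i.e. $w_{p-n}(x,n)=1$; that such a tile of capacity $\ge 1$ exists inside $B_{R_0}(p)$ is guaranteed by the inequality of Lemma \ref{lemma: basic properties of W(x,n)}(2), whose left side dominates $3$ times the number of near-boundary points, together with the large size of the tiles $m+[-L/2,L/2]^k$.

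The main obstacle is the flow selection in the middle step: the three requirements on $f$ pull against one another --- continuity in $x$ favours spreading the flow out and moving it smoothly, the capacity count behind (3) forces each sink's incoming edges to be almost all $0$ or $1$, and (4) demands that the flow be concentrated into unit-valued edges at the near-boundary sources. Reconciling all three requires organizing the recursion (the priority water-filling above, or an equivalent scheme, compare \cite{Gutman--Tsukamoto minimal}) with enough care that it is simultaneously manifestly continuous and ``as integral as the constraints allow''; the strict inequality of Lemma \ref{lemma: basic properties of W(x,n)}(2), with its built-in factor-$3$ margin, is exactly what leaves room to do so.
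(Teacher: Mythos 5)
Your high-level setup is the same as the paper's: a transportation problem in which near-boundary lattice points carry demand $\alpha_3(d(p,\partial(x)))$ and tiles carry capacity $\tfrac1A|\mathrm{Int}_{L_0}W(x,n)|$, with feasibility supplied by Lemma \ref{lemma: basic properties of W(x,n)} (2). (The paper does not need max-flow/min-cut: it just runs a greedy process $\omega_l(n)=\min\bigl(u_{l-1}(n),v_{l-1}(n+m_l)\bigr)$ in a fixed norm-ordering of the offsets $m_l$ and shows by contradiction with the ball inequality that it terminates within $l_0=\#(\mathbb{Z}^k\cap B_{R_0})$ steps.) But the step you yourself flag as ``the main obstacle'' is exactly where your argument has a genuine gap, and your proposed resolution does not work as stated. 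You impose edge capacity $1$ and claim the water-filling ``leaves at most one incoming edge of each sink with value in $(0,1)$,'' deducing (3) from that. This is false for any continuous selection: points $p$ with $L_0<d(p,\partial(x))<L_0+1$ have fractional demand $c(x,p)\in(0,3)$, and many such points can each route a small fractional amount into the same tile, producing arbitrarily many incoming edges with value in $(0,1)$ without exhausting the capacity. Then $\#\{m: w_m(x,n)>0\}$ is not controlled by $1+\kappa(x,n)$, and (3) fails. More structurally, ``at most one fractional edge per sink'' is incompatible with continuity in $x$ (which fractional edge it is must jump), which is why no continuous flow can serve directly as the weight function.

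The paper's actual device, which is absent from your proposal, is that the weights are \emph{not} the flow values. After the greedy allocation it counts only donations with $\omega_l(n)>1$ (there are fewer than $u_0(n)$ of them, since each exceeds $1$ and they sum to at most $u_0(n)$), and converts the vector $(\omega_1(n),\dots,\omega_{l_0}(n))$ into $[0,1]$-weights by composing two explicit continuous maps: $F$, which amplifies a coordinate by the factor $\alpha_4(\max \text{ of the later outputs})\in[1,2l_0]$ so that \emph{at most one} coordinate with $\omega_l\le1$ can be pushed above the threshold, and $G=\alpha_5(\cdot)$, which kills everything $\le1$ and sets everything $\ge2$ to $1$. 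This yields $\#\{l: z_l>0\}\le 1+\#\{l:\omega_l>1\}$, hence (3), and for (4) one checks that a point with total demand $3$ must receive some donation $>1/l_0$ from a tile whose later donations all vanish, so that donation is amplified by $2l_0$ above $2$ and becomes a weight equal to $1$. Reconciling continuity, the count bound, and the unit-weight guarantee is precisely what this amplification-and-threshold construction accomplishes; without it (or an equivalent substitute) your proof is incomplete.
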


Probably the intuitive meaning of the statement is not clear at all.
So we explain it before the proof. (The idea of the social welfare system was first introduced in \cite{Gutman--Tsukamoto minimal}.
The explanation below is more or less a reproduction of the argument around \cite[Lemma 6.4]{Gutman--Tsukamoto minimal}.)
In the proof of Proposition \ref{prop: main proposition} we construct a perturbation $g_1(x)$ 
of a given band-limited function $f(x)$ for each $x\in X$.
It is difficult to construct a perturbation over the whole space $\mathbb{R}^k$ at once.
So we will perturb $f(x)$ over each tile $W(x,n)$ separately.
The difficulty arises near the boundary $\partial W(x,n)$.
The parameter $L_0$ will be chosen so that we can construct a good perturbation in $\mathrm{Int}_{L_0} W(x,n)$.
But we cannot control the perturbation over $\partial_{L_0} W(x,n)$.
(Note that if the tile $W(x,n)$ is tiny, it may be contained in $\partial_{L_0} W(x,n)$.
So the problem is approximately equivalent to ``how to help small tiles''.)

The function $g_1(x)$ should encode the information of the orbit of $x\in X$.
The above argument means that it becomes difficult to encode the information of $T^p x$ for
$p\in \mathbb{Z}^k$ with $d\left(p,\partial (x)\right) \leq L_0$.
The weight functions $w_m(x,n)$ help with this situation.
Roughly (and very inaccurately) speaking, we will encode the $\left(100\times w_m(x,n)\right) \%$ of the information of $T^{n+m}x$
to $g_1(x)|_{W(x,n)}$.
So $w_m(x,n)$ is the ``amount of help'' that the tile $W(x,n)$ gives to the point $n+m$.
In particular, if $w_m(x,n)=1$ then the information of $T^{n+m} x$ is perfectly encoded into $g_1(x)|_{W(x,n)}$.

Now we can explain the intuitive meaning of the above (1)--(4).
Condition (1) is an obvious requirement. 
Condition (2)\footnote{The above explanation does not feature Condition (2).
But indeed this is a very important condition. It did not appear in the paper \cite{Gutman--Tsukamoto minimal}.
A substantial amount of the argument in this section has been developed for establishing this condition.}
 means that the weight function $w(x,n)$ is determined by 
the tiling-like band-limited map $\Phi(x)$.
Condition (3) means that the amount of ``additional encoding'' which $W(x,n)$ has to bear is controlled by the volume of 
$\mathrm{Int}_{L_0} W(x,n)$.
Condition (4) means that every point near the boundary $\partial (x)$ is ``successfully rescued'' by some large tile.
Namely we can solve all the difficulties coming from the boundary effect.

\begin{proof}[Proof of Proposition \ref{prop: construction of weight functions}]
We fix a bijection between $\mathbb{Z}^k$ and $\mathbb{N} = \{1,2,3,\dots\}$
\[  \mathbb{N}\ni l \mapsto m_l \in \mathbb{Z}^k \]
such that if $l_1<l_2$ then $|m_{l_1}|\leq |m_{l_2}|$.
(Then in particular $m_1=0$.)

Fix $x\in X$.
For $n\in \mathbb{Z}^k$ we set 
\[ u_0(n) = \frac{\left|\mathrm{Int}_{L_0} W(x,n)\right|}{A}, \quad v_0(n) = \alpha_3\left(d\left(n,\partial(x)\right)\right). \]
Here $\alpha_3:\mathbb{R}\to [0,3]$ is the continuous function introduced in Lemma \ref{lemma: basic properties of W(x,n)} (2).
We define $\omega_l(n), u_l(n), v_l(n)$ inductively with respect to $l\geq 1$ by 
\begin{equation*}
   \begin{split}
    &  \omega_l(n) = \min\left(u_{l-1}(n), v_{l-1}(n+m_l)\right), \\
    &  u_l(n) = u_{l-1}(n)- \omega_l(n), \\
    &  v_l(n) = v_{l-1}(n)-\omega_l(n-m_l). 
   \end{split}
\end{equation*}   
This process can be explained in ``social welfare'' terms:

\begin{itemize}
    \item  The tiles $W(x,n)$ should pay tax. Tax is used for helping integer points $p\in \mathbb{Z}^k$.
             The tile $W(x,n)$ can pay at most $u_0(n)$ and the point $p$ needs $v_0(p)$.
    \item  At the $l$-th step of the induction, the tile $W(x,n)$ gives the money $\omega_l(n)$ to the point $n+m_l$.
             After the step, $W(x,n)$ still can pay $u_l(n)$ and points $p$ still need $v_l(p)$.
    \item At each step, $W(x,n)$ pays as much as possible. Namely, after the $l$-th step, at least one of $u_l(n)$ and $v_l(n+m_l)$
            is zero. This is a key property of the process.
            Figure \ref{fig: tax} schematically explains the process.
\end{itemize}

\begin{figure}
    \centering
    \includegraphics[,bb= 0 0 350 110]{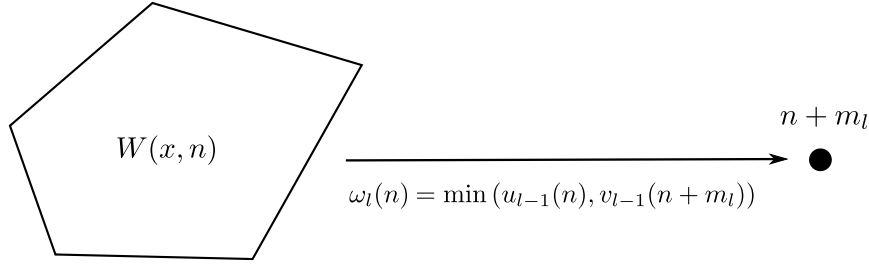}
    \caption{At the $l$-th step of the induction, the tile $W(x,n)$ gives the amount of money $\omega_l(n)$ to the point $n+m_l$.
    Before this step, $W(x,n)$ has the amount of money $u_{l-1}(n)$ and the point $n+m_l$ needs $v_{l-1}(n+m_l)$.
    After this step, either $W(x,n)$ loses all its money (i.e. $u_l(n)=0$) or $n+m_l$ becomes satisfied (i.e. $v_l(n+m_l)=0$).
    Namely, $\min\left(u_l(n), v_l(n+m_l)\right) = 0$.}
    \label{fig: tax}
\end{figure}

Set $l_0 = \#\left(\mathbb{Z}^k\cap B_{R_0}\right)$. 
Then $\mathbb{Z}^k\cap B_{R_0} = \{m_1,\dots, m_{l_0}\}$.
It follows from Lemma \ref{lemma: basic properties of W(x,n)} (2) that this process terminates: 
For all $l\geq l_0$ and $n\in \mathbb{Z}^k$
\[  v_l(n)=0, \quad \omega_{l+1}(n)=0,   \]
because if $v_{l_0}(n_0)>0$ for some $n_0$ then $u_{l_0}(n)=0$ for all $n\in B_{R_0}(n_0)$ and hence 
\[ \sum_{n\in B_{R_0}(n_0)} u_0(n)  = \sum_{n\in B_{R_0}(n_0)} \sum_{l=1}^{l_0} \omega_l(n)   
   <  \sum_{n\in B_{2R_0}(n_0)} v_0(n),   \]
which contradicts Lemma \ref{lemma: basic properties of W(x,n)} (2).

It follows from the construction that for $n, p \in \mathbb{Z}^k$
\begin{equation}  \label{eq: sum of tax is smaller than u_0}
    \sum_{l=1}^{l_0} \omega_l(n) \leq u_0(n) =  \frac{\left|\mathrm{Int}_{L_0} W(x,n)\right|}{A}, 
\end{equation}    
\begin{equation}  \label{eq: every point is satisfied}
   \sum_{l=1}^{l_0} \omega_l(p-m_l) = v_0(p) = \alpha_3\left(d\left(n,\partial(x)\right)\right).
\end{equation}

We define continuous functions $\alpha_4: \mathbb{R}\to [1, 2l_0]$ and $\alpha_5:\mathbb{R}\to [0,1]$ such that 
\begin{equation*}
   \begin{split}
    \alpha_4(0)=2l_0, \quad         \alpha_4(t) = 1 \quad (t\geq 1), \\
    \alpha_5(t)=0 \quad (t\leq 1),    \quad \alpha_5(t) = 1 \quad (t\geq 2).   
   \end{split}
\end{equation*}   
We define $F:\mathbb{R}^{l_0}\to \mathbb{R}^{l_0}$ by 
$F(x_1,\dots,x_{l_0}) = (y_1,\dots, y_{l_0})$ where 
\begin{equation*}
   \begin{split}
    & y_{l_0} = 2l_0 x_{l_0}, \quad  y_{l_0-1} = \alpha_4(y_{l_0}) x_{l_0-1}, \quad
    y_{l_0-2} = \alpha_4\left(\max(y_{l_0}, y_{l_0-1})\right) x_{l_0-2}, \quad \dots \\
    & y_1 = \alpha_4\left(\max(y_{l_0}, \dots, y_2)\right) x_1. 
   \end{split}
\end{equation*}   
This satisfies 
\begin{equation} \label{eq: property of F}
   \#\{1\leq l\leq l_0|\, y_l>1 \} \leq 1 + \#\{1\leq l\leq l_0|\, x_l >1\}.
\end{equation}
We define $G:\mathbb{R}^{l_0}\to [0,1]^{l_0}$ by $G(y_1,\dots,y_{l_0}) = \left(\alpha_5(y_1),\dots, \alpha_5(y_{l_0})\right)$.
Then it follows from the above (\ref{eq: property of F}) that if we set $GF(x_1,\dots, x_{l_0}) = (z_1,\dots,z_{l_0})$ then 
\begin{equation} \label{eq: property of GF}
   \# \{1\leq l\leq l_0|\, z_l>0\} \leq 1 + \#\{1\leq l\leq l_0|\, x_l>1\}.
\end{equation}

We define $w_m(x,n)$ for $|m|\leq R_0$ and $n\in \mathbb{Z}^k$ by 
\[ \left(w_{m_1}(x,n), w_{m_2}(x,n), \dots, w_{m_{l_0}}(x,n)\right) = GF\left(\omega_1(n), \dots, \omega_{l_0}(n)\right). \]
We set $w_m(x,n)=0$ for $|m|>R_0$.

Now we have defined the map (\ref{eq: weight functions map}) in the statement.
We need to check its properties.
It is continuous and equivariant (i.e. Property (1)) because the tiles $W(x,n)$ depend continuously on $x\in X$
and $W(T^m x, n) = -m + W(x,n+m)$.
Property (2) is obvious because the tiles $W(x,n)$ are constructed from the function $\Phi(x)$.

Fix $x\in X$ again.
From (\ref{eq: sum of tax is smaller than u_0})
\[ \#\{1\leq l\leq l_0|\, \omega_l(n)>1\} <   \frac{\left|\mathrm{Int}_{L_0} W(x,n)\right|}{A}.  \]
Then Property (3) follows from (\ref{eq: property of GF}):
\[ \#\{m|\, w_m(x,n) >0\} \leq 1 + \#\{1\leq l\leq l_0 |\, \omega_l(n)>1\}
   < 1 +  \frac{\left|\mathrm{Int}_{L_0} W(x,n)\right|}{A}.   \]

Finally we check Property (4).
Let $p\in \mathbb{Z}^k$ with $d\left(p,\partial(x)\right) \leq L_0$.
Then $v_0(p) = 3$.
From (\ref{eq: every point is satisfied}) 
\[  \sum_{l=1}^{l_0} \omega_l(p-m_l) = 3. \]
We define $l_1$ as the maximum of $1\leq l\leq l_0$ satisfying $\omega_l(p-m_l)>0$.
If $\omega_{l_1}(p-m_{l_1})\geq 2$ then $w_{m_{l_1}}(p-m_{l_1})=1$.
Otherwise 
\[  \sum_{l=1}^{l_1-1} \omega_l(p-m_l) > 1. \]
Then there exists $l_2 <l_1$ satisfying 
\[  \omega_{l_2}(p-m_{l_2}) > \frac{1}{l_1} \geq \frac{1}{l_0}. \]
The condition $l_2<l_1$ implies $u_{l_2}(p-m_{l_2})=0$ (otherwise $v_{l_2}(p)=0$ and $\omega_{l_1}(p-m_{l_1})=0$)
and hence
$\omega_l(p-m_{l_2})=0$ for all $l>l_2$.
It follows from the definition of the maps $F$ and $G$ that 
\[ w_{l_2}(p-m_{l_2}) =1. \]
This proves Property (4).
\end{proof}

\section{Proof of Main Proposition} \label{section: proof of Main Proposition}

We prove Proposition \ref{prop: main proposition} in this section.
We recommend readers to review the notations introduced in Subsection \ref{subsection: fixing some notations}.
We repeat the statement of the proposition (assuming the notations in Subsection \ref{subsection: fixing some notations}):

\begin{proposition}[$=$ Proposition \ref{prop: main proposition}]  
Let $d$ be a distance on $X$ and $f:X\to \mathcal{B}_1(a_1,\dots,a_k)$ a $\mathbb{Z}^k$-equivariant continuous map.
Then there exists a $\mathbb{Z}^k$-equivariant continuous map $g:X\to \mathcal{B}_1(a_1+\delta,\dots,a_k+\delta)$ such that 
\begin{itemize}
   \item $\norm{g(x)-f(x)}_{L^\infty(\mathbb{R}^k)} < \delta$ for all $x\in X$.
   \item $g$ is a $\delta$-embedding with respect to the distance $d$.
\end{itemize}
\end{proposition}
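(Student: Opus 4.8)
The plan is to build $g=g_1+g_2$ exactly as sketched in Subsection \ref{subsection: strategy of the proof}: $g_2$ will encode the tiling together with the welfare data, and $g_1$ will be a small perturbation of $f$ recording, tile by tile, the relevant part of the orbit. First I would freeze all parameters in the right order. Using $\mdim(X)<\rho_1\cdots\rho_k/2$ (see \ref{eq: choice of rho_i}) and $\#(\Gamma\cap Q)\approx\rho_1\cdots\rho_k|Q|$, for a large enough symmetric box $Q\subset\mathbb{Z}^k$ and small enough $\varepsilon_0$ one has $\widim_{\varepsilon_0}(X,d_Q)<\tfrac12\#(\Gamma\cap Q)$; choose an $\varepsilon_0$-embedding $\pi\colon X\to P$ into a simplicial complex with $\dim P<\tfrac12\#(\Gamma\cap Q)$ and, by Lemma \ref{lemma: embedding lemma}, a simplicial map $\iota\colon P\to[-1,1]^{\Gamma\cap Q}$ which is an embedding and whose restriction to any sufficiently large set of coordinates is still an embedding on every subcomplex. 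Then pick $L_0$ large (so the interpolation of Section \ref{section: Interpolating functions} on $\Gamma_1$-blocks of diameter $\lesssim L_0$ can hold a copy of $\iota\pi$), then $A\gg L_0^k$, then an integer $L$ obeying Condition \ref{condition: L is much larger than L_0} together with $L>4/\delta$, and finally $M=M(A,L_0,L)$ so that \ref{eq: fixing M} holds; $Q$ is taken large compared with the tile diameter bound of Lemma \ref{lemma: basic properties of W(x,n)}(1). This fixes $\Phi(x)=\Phi_{A,L_0,L}(x)$, the tiling $\{W(x,n)\}$, $\partial(x)$, and the weight functions $w(x,n)$ of Section \ref{section: dynamical construction of tiling-like band-limited functions}.

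Second I would set $g_2(x)=c\sum_{i=1}^k\mathrm{Re}\,\Phi(x)_i$ with $c>0$ chosen so small that $\norm{g_2(x)}_{L^\infty(\mathbb{R}^k)}<\delta/3$. By Lemma \ref{lemma: basic properties of Phi(x)}(2) (recall $b_i=a_i+\delta/2$) and $L>4/\delta$, the spectrum of $\mathrm{Re}\,\Phi(x)_i$ lies in $(-\delta/4,\delta/4)^{i-1}\times(a_i/2,a_i/2+\delta/2)\times(-\delta/4,\delta/4)^{k-i}$ and its reflection through the origin; these $2k$ boxes are pairwise disjoint, disjoint from $\prod_j[-a_j/2,a_j/2]$, and contained in $\prod_j[-(a_j+\delta)/2,(a_j+\delta)/2]$. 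Hence $g_2(x)\in\mathcal{B}(a_1+\delta,\dots,a_k+\delta)$, and if $g_2(x)=g_2(y)$ then $\mathrm{Re}\,\Phi(x)_i=\mathrm{Re}\,\Phi(y)_i$ for each $i$; since the spectrum of $\Phi(x)_i|_{\mathbb{R}^k}$ is disjoint from its own reflection the real part determines the function, so $\Phi(x)=\Phi(y)$ and therefore $W(x,\cdot)=W(y,\cdot)$ and $w(x,\cdot)=w(y,\cdot)$. Equivariance and continuity of $g_2$ are Lemma \ref{lemma: basic properties of Phi(x)}(3),(4).

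Third, the core step, I would construct $g_1(x)=f(x)+\Psi(p_x,u_x)$ with $\Psi$ the operator of Section \ref{section: Interpolating functions}. Here $p_x\colon\Gamma_1\to[0,1]$ is an admissible function (Definition \ref{def: admissible set and function}) whose positivity set is a union of $\Gamma$-regular blocks lying in $\mathrm{Int}_{L_0}W(x,n)$ over the large tiles — admissible because distinct large tiles are far apart (so the blocks are $r_0$-separated) while within one tile only $\Gamma$-translates occur — with $p_x\equiv1$ on the blocks carrying full weight $w_m(x,n)=1$; the number of blocks per tile is bounded by $1+|\mathrm{Int}_{L_0}W(x,n)|/A$ (Proposition \ref{prop: construction of weight functions}(3)), which with $A\gg L_0^k$, $L\gg L_0$ keeps them sparse. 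The correction $u_x\colon\Gamma_1\to\mathbb{R}$ is supported on these blocks, has $\norm{u_x}_\infty\le\delta'$, and on the block attached to $W(x,n)$ and index $m$ is chosen so that $f(x)+u_x$ at the block points is a continuous $\delta'$-quantization of $f(x)$ recording the coordinates of $\iota(\pi(T^{n+m}x))$, via the digitization technique of \cite{Gutman--Tsukamoto minimal}, so that those coordinates can be read off (up to prescribed accuracy, for all but a controlled few) from the values alone. Then $\Psi(p_x,u_x)\in\mathcal{B}(a_1,\dots,a_k)$ with $\norm{\Psi(p_x,u_x)}_{L^\infty}\le2K_0\delta'<\delta/3$ (by \ref{eq: fixing tau} and Lemma \ref{lemma: basic properties of Psi}(2)), so $g_1(x)\in\mathcal{B}(a_1,\dots,a_k)$ and $\norm{g_1(x)-f(x)}_{L^\infty}<\delta/3$; by Lemma \ref{lemma: basic properties of Psi}(1), $g_1(x)(\lambda)=f(x)(\lambda)+u_x(\lambda)$ whenever $p_x(\lambda)=1$. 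Equivariance and continuity of $g_1$ come from Lemma \ref{lemma: basic properties of Psi}(3), Proposition \ref{prop: continuity of Psi}, and the continuity and equivariance of $W$, $w$, $\Phi$. Finally $g=g_1+g_2$: the two spectra are disjoint with union inside $\prod_j[-(a_j+\delta)/2,(a_j+\delta)/2]$, so $g(x)\in\mathcal{B}_1(a_1+\delta,\dots,a_k+\delta)$ (choosing $\delta'$, $c$ small, and clamping near $\pm1$ if necessary, so that the sup norm stays $\le1$ — consistent with the smallness of $\delta$ used in the proof of Main Theorem \ref{main theorem: continuous signal}) and $\norm{g(x)-f(x)}_{L^\infty}<\delta$.

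It remains to see that $g$ is a $\delta$-embedding. If $g(x)=g(y)$, spectral disjointness forces $g_1(x)=g_1(y)$ and $g_2(x)=g_2(y)$, hence (step two) $\Phi(x)=\Phi(y)$, $W(x,\cdot)=W(y,\cdot)$, $w(x,\cdot)=w(y,\cdot)$, $p_x=p_y$. Evaluating $g_1(x)=g_1(y)$ at the block points with $p_x(\lambda)=1$ gives $f(x)(\lambda)+u_x(\lambda)=f(y)(\lambda)+u_y(\lambda)$ there; since a block has far more than $2\dim P$ points and the quantization recovers all but a controlled few coordinates, Lemma \ref{lemma: embedding lemma} lets us reconstruct $\iota(\pi(T^{n+m}x))$ and $\iota(\pi(T^{n+m}y))$ closely enough to force $d_Q(T^{n+m}x,T^{n+m}y)<\varepsilon_0$ for every full-weight block. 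Now fix $p\in\mathbb{Z}^k$: either $d(p,\partial(x))\le L_0$ and Proposition \ref{prop: construction of weight functions}(4) yields $n\in B_{R_0}(p)$ with $w_{p-n}(x,n)=1$, so $T^px$ is encoded; or $p$ lies in the deep interior of a large tile $W(x,n)$ whose own block encodes $T^nx$ with $|n-p|$ bounded by Lemma \ref{lemma: basic properties of W(x,n)}(1). Either way there is $q$ with $|q-p|$ bounded by a constant $D_0=D_0(A,L_0,L)$ for which $d_Q(T^qx,T^qy)<\varepsilon_0$; since $Q$ was chosen to contain $p-q$, this gives $d(T^px,T^py)<\varepsilon_0<\delta$, and with $p=0$ we conclude $d(x,y)<\delta$. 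I expect the third step to be the main obstacle: designing $g_1$ so that the orbit record is simultaneously $\delta'$-small, continuous and equivariant in $x$, and yet fully recoverable from $g_1(x)$ once $W(x,\cdot)$ and $w(x,\cdot)$ are known; this is precisely where the almost-regular interpolation of Section \ref{section: Interpolating functions}, the exact (not merely approximate) dependence of $W$ and $w$ on $\Phi$ in Proposition \ref{prop: construction of weight functions}(2), the coordinate-restriction form of Lemma \ref{lemma: embedding lemma}, and the continuity Proposition \ref{prop: continuity of Psi} all have to be combined.
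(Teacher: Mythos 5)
Your overall architecture coincides with the paper's: $g=g_1+g_2$ with $g_2$ a small multiple of $\mathrm{Re}\sum_i\Phi(x)_i$ determining the tiles and weights through spectral separation, $g_1=f(x)+\Psi(p_x,u_x)$ an interpolation-based perturbation, and the final interior/boundary dichotomy. The genuine gap is in the third step, and it is a circular choice of parameters rather than a repairable detail. You use a single $\varepsilon_0$-embedding $\pi\colon (X,d_Q)\to P$ for a large box $Q\subset\mathbb{Z}^k$ and store a full copy of $\iota\pi(T^{n+m}x)$ on one block of diameter $\lesssim L_0$ inside $\mathrm{Int}_{L_0}W(x,n)$. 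An admissible set confined to a region of volume $L_0^k$ has only about $\rho_1\dots\rho_k L_0^k$ points, whereas recovering a point of $P$ from $\iota$ requires more than $2\dim P$ coordinates (Lemma \ref{lemma: embedding lemma} (2) is sharp in this respect), and in the critical regime $\dim P=\widim_{\varepsilon_0}(X,d_Q)$ is of order $\tfrac12\rho_1\dots\rho_k|Q|$; hence you need $L_0^k\gtrsim |Q|$. But your last step (``since $Q$ was chosen to contain $p-q$'') forces $Q$ to contain a ball whose radius is the tile-diameter bound of Lemma \ref{lemma: basic properties of W(x,n)} (1), which is at least $L/2$ because the generic tiles are cubes $m+[-L/2,L/2]^k$, while Condition \ref{condition: L is much larger than L_0} forces $L\gg L_0$. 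So you need $L_0\gtrsim |Q|^{1/k}\gtrsim L\gg L_0$ simultaneously, which is impossible. Concretely, for $p$ in the deep interior of a good tile $W(x,n)=n+[-L/2,L/2]^k$ with $|p-n|\sim L/2$, equality of the block data only yields $d_Q(T^nx,T^ny)<\varepsilon_0$ with $p-n\notin Q$, and says nothing about $d(T^px,T^py)$.

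The paper escapes this tension by splitting the orbit record into two streams with different windows, neither of which is a per-point copy of a large-window embedding. The interior stream uses $\Pi\colon (X,d_{[N]})\to P$ with the window size $N$ fixed by the mean dimension inequality \emph{before} $L_0$ and $L$ are chosen, and is spread over \emph{all} of $\Gamma\cap W(x,n)$ via $\Pi_{-n+W(x,n)}$ (one cone factor for each $N$-box meeting the tile, weighted by overlap volume); recovering $\Pi_{-n+W(x,n)}(T^nx)=\Pi_{-n+W(y,n)}(T^ny)$ then controls $d(T^px,T^py)$ for every $p\in\mathrm{Int}\,W(x,n)$, since $p$ lies in some $N$-box of the tile. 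The boundary-rescue stream uses a second, static $\varepsilon$-embedding $\pi\colon (X,d)\to Q$ into a simplicial complex, so each rescued point costs only $\dim CQ$, absorbed by taking $A$ large; both streams are decoded exactly (no approximate ``quantization'' is needed) because a single generic simplicial map $\mathbb{G}$ restricted to $\Gamma\cap\mathrm{Int}_{r_0+N\sqrt{k}}W(x,n)$ embeds $\boldsymbol{\mathcal{P}}(-n+W(x,n))\times\boldsymbol{\mathcal{Q}}(\#(x,n))$ by the dimension count of Lemma \ref{lemma: how many nonzero w_m}. Two smaller corrections: adjacent tiles are not ``far apart,'' so the $r_0$-separation needed for admissibility of $p_x$ must come from cutting $p_x$ off near $\partial(x)$ (the role of $\beta_1$), not from the geometry of distinct tiles; and ``clamping near $\pm1$'' would destroy band-limitedness --- the correct move is to replace $f$ by $(1-\delta)f$ at the outset.
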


We can assume $\norm{f(x)}_{L^\infty(\mathbb{R}^k)} \leq 1-\delta$ for all $x\in X$ by replacing $f$ with $(1-\delta)f$ if necessary.
We fix $0<\delta'<\delta$ satisfying
\begin{equation} \label{eq: delta'}
     4K_0 \delta' < \delta
\end{equation}
where $K_0$ is the positive number introduced in Notation \ref{notation: choice of r_0} (1).
We choose $0<\varepsilon < \delta$ such that for any $x,y\in X$
\begin{equation} \label{eq: varepsilon}
  d(x,y) < \varepsilon \Longrightarrow \norm{f(x)-f(y)}_{L^\infty \left([0,1]^k\right)} < \delta'. 
\end{equation}

From $\mdim(X) < \rho_1\dots\rho_k/2$ we can find $c_0>1$ and a natural number $N$ such that 
\begin{itemize}
    \item $\rho_i N$ are integers for all $1\leq i\leq k$. (Recall $\rho_i\in \mathbb{Q}$.)
    \item \[ c_0\cdot  \frac{\widim_\varepsilon\left(X,d_{[N]}\right) + 1}{N^k} < \frac{\rho_1\dots \rho_k}{2}. \] 
\end{itemize}
From the second condition, we can find a simplicial complex $P$ and an $\varepsilon$-embedding 
$\Pi: \left(X,d_{[N]}\right) \to P$ satisfying 
\begin{equation}  \label{eq: dimension of P}
     c_0 \frac{\dim P + 1}{N^k}  < \frac{\rho_1\dots\rho_k}{2}. 
\end{equation}     
We take a simplicial complex $Q$ and an $\varepsilon$-embedding 
$\pi:(X,d) \to Q$.

We define the \textbf{cone} $CP$ as $[0,1]\times P/\sim$, where $(0,p)\sim (0,q)$ for all $p,q\in P$.
We denote by $tp$ the equivalence class of $(t,p)\in [0,1]\times P$.
We set $* = 0p$ and call it the \textbf{vertex} of the cone.
We define $CQ$ in the same way.
It follows from (\ref{eq: dimension of P}) that
\begin{equation} \label{eq: dimension of CP}
   c_0 \frac{\dim CP}{N^k}  < \frac{\rho_1\dots \rho_k}{2}.
\end{equation}
We set 
\begin{equation}  \label{eq: c_1}
   c_1 = \frac{\rho_1\dots\rho_k}{2} - c_0\frac{\dim CP}{N^k} >0.
\end{equation}
Recall that we introduced the positive number $r_0$ in Notation \ref{notation: choice of r_0} (2).
We set 
\begin{equation} \label{eq: definition of r_2}
   r_2 = r_0 + \sqrt{\sum_{i=1}^k (1/\rho_i)^2}. 
\end{equation}

\begin{notation}[Fixing the parameters $A,L_0,L$]  \label{notation: fixing A,L_0,L}
We take positive numbers $A, L_0$ and $L$ satisfying the following conditions.
  \begin{enumerate}
    \item $c_1 A > 2\dim CQ$.
    \item $L_0$ is sufficiently larger than $r_2 + N\sqrt{k}$
            such that if a bounded closed convex set $W\subset \mathbb{R}^k$ satisfies $\mathrm{Int}_{L_0} W\neq \emptyset$ then 
            \begin{equation} \label{eq: dim CQ}
               2\dim CQ < c_1 \left|\mathrm{Int}_{r_2+N\sqrt{k}} W \right|, 
            \end{equation}
            \begin{equation} \label{eq: condition on L_0}
                 \left|W\cup\partial_{N\sqrt{k}} W\right| < c_0 \left|\mathrm{Int}_{r_2+N\sqrt{k}} W\right|.    
            \end{equation}     
            The second condition is satisfied for sufficiently large $L_0$ by Lemma \ref{lemma: comparing boundary and interior of convex set}.
    \item $L>  4/\delta$ (see Lemma \ref{lemma: basic properties of Phi(x)} (2)) and Condition \ref{condition: L is much larger than L_0} holds.    
  \end{enumerate}
For these $A,L_0$ and $L$ we construct the tiles $W(x,n) = W^{A,L_0,L}(x,n)$ and the weight functions 
$w(x,n) = w^{A,L_0,L}(x,n)$ as in 
Subsection \ref{subsection: weight functions from tiling-like band-limited functions}.
\end{notation}

\begin{lemma}   \label{lemma: how many nonzero w_m}
Let $x\in X$ and $n\in \mathbb{Z}^k$. We define 
\[ \#(x,n) = \#\{m\in \mathbb{Z}^k|\, w_m(x,n) >0\}. \]
If $\mathrm{Int}_{L_0}W(x,n) \neq \emptyset$ then 
\begin{equation}  \label{eq: how many nonzero w_m}
   \begin{split}
    \frac{\left| W(x,n)\cup \partial_{N\sqrt{k}}W(x,n)\right|}{N^k}\dim CP   &+ \#(x,n) \cdot \dim CQ \\
    &< \frac{\rho_1\dots \rho_k}{2} \left|\mathrm{Int}_{r_2+N\sqrt{k}} W(x,n)\right|.  
   \end{split} 
\end{equation}    
\end{lemma}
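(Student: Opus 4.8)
The plan is to verify inequality (\ref{eq: how many nonzero w_m}) by bounding the two terms on the left-hand side separately and comparing their sum against the right-hand side, using the constants $c_0$, $c_1$ chosen in (\ref{eq: c_1}), Notation \ref{notation: fixing A,L_0,L}, and the volume comparisons (\ref{eq: dim CQ}), (\ref{eq: condition on L_0}) available whenever $\mathrm{Int}_{L_0} W(x,n)\neq\emptyset$. So fix $x\in X$ and $n\in\mathbb{Z}^k$ with $\mathrm{Int}_{L_0}W(x,n)\neq\emptyset$, and abbreviate $W=W(x,n)$.

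First I would handle the second term, $\#(x,n)\cdot\dim CQ$. By Proposition \ref{prop: construction of weight functions} (3) we have $\#(x,n) < 1 + \frac{1}{A}\left|\mathrm{Int}_{L_0} W\right|$. Since $L_0$ was chosen (Notation \ref{notation: fixing A,L_0,L} (2)) much larger than $r_2 + N\sqrt{k}$, we have $\mathrm{Int}_{L_0} W \subset \mathrm{Int}_{r_2+N\sqrt{k}} W$, hence $\left|\mathrm{Int}_{L_0}W\right|\le\left|\mathrm{Int}_{r_2+N\sqrt{k}} W\right|$. Therefore
\[
  \#(x,n)\cdot\dim CQ < \dim CQ + \frac{\dim CQ}{A}\left|\mathrm{Int}_{r_2+N\sqrt{k}} W\right|.
\]
Now I would invoke (\ref{eq: dim CQ}), which gives $\dim CQ < \frac{c_1}{2}\left|\mathrm{Int}_{r_2+N\sqrt{k}} W\right|$, and Notation \ref{notation: fixing A,L_0,L} (1), namely $c_1 A > 2\dim CQ$, i.e.\ $\frac{\dim CQ}{A} < \frac{c_1}{2}$. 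Combining these two,
\[
  \#(x,n)\cdot\dim CQ < \frac{c_1}{2}\left|\mathrm{Int}_{r_2+N\sqrt{k}} W\right| + \frac{c_1}{2}\left|\mathrm{Int}_{r_2+N\sqrt{k}} W\right| = c_1\left|\mathrm{Int}_{r_2+N\sqrt{k}} W\right|.
\]

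Next I would handle the first term. Using (\ref{eq: condition on L_0}), $\left|W\cup\partial_{N\sqrt{k}} W\right| < c_0 \left|\mathrm{Int}_{r_2+N\sqrt{k}} W\right|$, so
\[
  \frac{\left|W\cup\partial_{N\sqrt{k}} W\right|}{N^k}\dim CP < \frac{c_0\,\dim CP}{N^k}\left|\mathrm{Int}_{r_2+N\sqrt{k}} W\right|.
\]
Adding the two estimates and recalling from (\ref{eq: c_1}) that $c_0\frac{\dim CP}{N^k} + c_1 = \frac{\rho_1\dots\rho_k}{2}$, I get exactly
\[
  \frac{\left|W\cup\partial_{N\sqrt{k}} W\right|}{N^k}\dim CP + \#(x,n)\cdot\dim CQ < \left(\frac{c_0\,\dim CP}{N^k} + c_1\right)\left|\mathrm{Int}_{r_2+N\sqrt{k}} W\right| = \frac{\rho_1\dots\rho_k}{2}\left|\mathrm{Int}_{r_2+N\sqrt{k}} W\right|,
\]
which is (\ref{eq: how many nonzero w_m}). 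The argument is essentially bookkeeping: the only real content is that the parameters $A$, $L_0$, $N$, $c_0$, $c_1$ were set up in Notation \ref{notation: fixing A,L_0,L} and (\ref{eq: c_1}) precisely so that these inequalities chain together. The mildly delicate point — the one I would be most careful about — is making sure the strict inequalities propagate correctly and that the volume comparison (\ref{eq: condition on L_0}) is applied with the boundary thickness $N\sqrt{k}$ matching the term $\left|W\cup\partial_{N\sqrt{k}}W\right|$ appearing in the statement, and that $\mathrm{Int}_{L_0}W\neq\emptyset$ is exactly the hypothesis under which (\ref{eq: dim CQ}) and (\ref{eq: condition on L_0}) were asserted.
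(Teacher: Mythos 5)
Your proof is correct and follows essentially the same route as the paper's: bound $\#(x,n)$ by Proposition \ref{prop: construction of weight functions} (3), split the resulting $\dim CQ$ contribution into two pieces controlled by (\ref{eq: dim CQ}) and by $c_1A>2\dim CQ$ respectively (each giving $\tfrac{c_1}{2}\left|\mathrm{Int}_{r_2+N\sqrt{k}}W\right|$ after using $L_0>r_2+N\sqrt{k}$), bound the first term via (\ref{eq: condition on L_0}), and conclude from the definition of $c_1$ in (\ref{eq: c_1}). No gaps.
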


\begin{proof}
By (\ref{eq: condition on L_0}) and Proposition \ref{prop: construction of weight functions} (3), the left-hand side of 
(\ref{eq: how many nonzero w_m}) is smaller than 
\begin{equation}  \label{eq: how many nonzero w_m, step 1}
   c_0 \frac{\left|\mathrm{Int}_{r_2+N\sqrt{k}} W(x,n)\right|}{N^k} \dim CP + 
   \left(1+\frac{\left|\mathrm{Int}_{L_0} W(x,n)\right|}{A}\right) \dim CQ. 
\end{equation}   
By Notation \ref{notation: fixing A,L_0,L} (1) and (2), the second term is smaller than 
\begin{equation*}
    \begin{split}
        \frac{1}{2}c_1 \left|\mathrm{Int}_{r_2+N\sqrt{k}} W(x,n)\right|  &+ \frac{1}{2}c_1 \left|\mathrm{Int}_{L_0} W(x,n)\right| \\
        & \leq  c_1 \left|\mathrm{Int}_{r_2+N\sqrt{k}} W(x,n)\right| \quad (\text{by $L_0>r_2+N\sqrt{k}$}). 
    \end{split}
\end{equation*}       
Then it follows from the definition of $c_1$ in (\ref{eq: c_1}) that the above (\ref{eq: how many nonzero w_m, step 1}) is smaller than 
\[  \left(c_0 \frac{\dim CP}{N^k} + c_1\right)  \left|\mathrm{Int}_{r_2+N\sqrt{k}} W(x,n)\right|
     = \frac{\rho_1\dots \rho_k}{2}  \left|\mathrm{Int}_{r_2+N\sqrt{k}} W(x,n)\right|. \]
\end{proof}

\begin{notation}[Choice of $R$]  \label{notation: choice of R}
  We take a positive number $R$ such that 
   \begin{itemize}
      \item $R\in N\mathbb{Z}$.
      \item $R>R_0(A,L_0,L)$ where $R_0(A,L_0,L)$ is the positive number introduced in Lemma \ref{lemma: basic properties of W(x,n)} (2).
      \item For all $x\in X$ and $n\in \mathbb{R}^k$, the tile $W(x,n)$ satisfies $-n+W(x,n) \subset [-R,R)^k$, where 
              $-n+W(x,n) = \{-n+t|\, t\in W(x,n)\}$. This condition is satisfied for sufficiently large $R$ by Lemma
              \ref{lemma: basic properties of W(x,n)} (1).
   \end{itemize}   
\end{notation}

  We set 
  \[  \boldsymbol{\mathcal{P}} = \left(CP\right)^{[-R,R)^k \cap N\mathbb{Z}^k}, \quad 
      \boldsymbol{\mathcal{Q}} = \left(CQ\right)^{[-R,R)^k \cap \mathbb{Z}^k}.    \]
  For $i\geq 0$ we define $\boldsymbol{\mathcal{Q}}(i) \subset \boldsymbol{\mathcal{Q}}$ as the set of 
  $(q_n)_{n\in [-R,R)^k\cap \mathbb{Z}^k}$ satisfying $q_n = *$ except for at most $i$ entries.
  In particular $\boldsymbol{\mathcal{Q}}(0) = \{(*,\dots,*)\}$ and 
  $\dim \boldsymbol{\mathcal{Q}}(i) \leq  i \dim CQ$.
   Let $W\subset [-R,R)^k$ be a convex set.
   We define $\boldsymbol{\mathcal{P}}(W) \subset \boldsymbol{\mathcal{P}}$ as the set of $(p_n)_{n\in [-R,R)^k\cap N\mathbb{Z}^k}$
   such that if $W\cap \left(n+[0,N)^k\right) = \emptyset$ then $p_n=*$.
   Its dimension is estimated by 
   \[  \dim \boldsymbol{\mathcal{P}}(W)  \leq \frac{\left|W\cup \partial_{N\sqrt{k}}W\right|}{N^k} \dim CP. \]
   We define a map $\Pi_W: X\to \boldsymbol{\mathcal{P}}(W)$ by 
   \[ \Pi_W(x) = \left(\frac{\left|W\cap \left(n+[0,N)^k\right)\right|}{N^k}\cdot \Pi(T^n x)\right)_{n\in [-R,R)^k\cap N\mathbb{Z}^k}. \]   
   $\Pi_W$ is an $\varepsilon$-embedding with respect to the distance $d_{\mathrm{Int} W \cap \mathbb{Z}^k}$, where 
   $\mathrm{Int} W$ is the interior of $W$.  
   It follows from Lemma \ref{lemma: how many nonzero w_m} that for any $x\in X$ and $n\in \mathbb{Z}^k$
   with $\mathrm{Int}_{L_0} W(x,n) \neq \emptyset$ 
\begin{equation}  \label{eq: dimension of P(W) Q(i)}
   \dim \left( \boldsymbol{\mathcal{P}}\left(-n+W(x,n)\right) \times \boldsymbol{\mathcal{Q}}\left(\#(x,n)\right) \right)
   < \frac{\rho_1\dots \rho_k}{2} \left|\mathrm{Int}_{r_2+N\sqrt{k}}W(x,n)\right|. 
\end{equation}

Recall that we introduced lattices $\Gamma$ and $\Gamma_1$ of $\mathbb{R}^k$ in Subsection \ref{subsection: fixing some notations}.
Applying Lemma \ref{lemma: approximation lemma} to the $\varepsilon$-embedding $\Pi:(X,d_{[N]})\to P$ and
a map 
\[ X\to \mathbb{R}^{\Gamma\cap [0,N)^k}, \quad x\mapsto \left(f(x)(\lambda)\right)_{\lambda\in \Gamma \cap [0,N)^k}, \]
we can find a simplicial map $F:P\to \mathbb{R}^{\Gamma\cap [0,N)^k}$ satisfying 
\[  \forall x\in X,\lambda\in \Gamma\cap [0,N)^k:  \quad 
     \left|F\left(\Pi(x)\right)(\lambda) - f(x)(\lambda)\right| < \delta'. \]
We extend $F$ over $CP$ by $F(tp) = tF(p)$.
We define a simplicial map $G:\boldsymbol{\mathcal{P}}\times \boldsymbol{\mathcal{Q}} \to \mathbb{R}^{\Gamma\cap [-R,R)^k}$
as follows: For $p=(p_n)_{n\in [-R,R)^k\cap N\mathbb{Z}^k}\in \boldsymbol{\mathcal{P}}$, $q\in \boldsymbol{\mathcal{Q}}$, 
$n\in [-R,R)^k\cap N\mathbb{Z}^k$ and $\lambda\in \Gamma\cap [0,N)^k$
\[   G(p, q)(n+\lambda) = F(p_n)(\lambda). \]  
Using Lemma \ref{lemma: embedding lemma} (2), we perturb $G$ and construct a simplicial map 
$\mathbb{G}: \boldsymbol{\mathcal{P}}\times \boldsymbol{\mathcal{Q}}\to \mathbb{R}^{\Gamma\cap [-R,R)^k}$ satisfying the 
following.

\begin{property}[Properties on $\mathbb{G}$] \label{condition: perturbation of G}
    \begin{enumerate}
    \item  $\mathbb{G}$ is sufficiently close to $G$ in the sense that for all 
             $(p,q)\in \boldsymbol{\mathcal{P}}\times \boldsymbol{\mathcal{Q}}$ and $\lambda\in \Gamma\cap [-R,R)^k$
            \[   \left|\mathbb{G}(p,q)(\lambda) - G(p,q)(\lambda)\right| 
                 < \delta' - \sup_{x\in X, \lambda\in \Gamma\cap [0,N)^k} \left|F\left(\Pi(x)\right)(\lambda) - f(x)(\lambda)\right|. \]
            This implies that for all convex sets $W\subset [-R,R)^k$, $x\in X$ and $q\in \boldsymbol{\mathcal{Q}}$
            \[ \forall \lambda\in \Gamma\cap \mathrm{Int}_{N\sqrt{k}} W: \quad 
                \left|\mathbb{G}\left(\Pi_W(x), q\right)(\lambda) - f(x)(\lambda) \right| < \delta'. \]
    \item If a convex set $W\subset [-R,R)^k$, $i\geq 0$ and a subset $\Lambda\subset \Gamma\cap [-R,R)^k$ satisfy
            \[  \dim\left(\boldsymbol{\mathcal{P}}(W)\times \boldsymbol{\mathcal{Q}}(i)\right) < \frac{|\Lambda|}{2}, \]
            then the map 
            \[ \boldsymbol{\mathcal{P}}(W)\times \boldsymbol{\mathcal{Q}}(i) \to \mathbb{R}^\Lambda, \quad 
                (p,q) \mapsto \mathbb{G}(p,q)|_{\Lambda} \]
            is an embedding.    
    \end{enumerate}
\end{property}

Now we are ready to prove the main proposition.

\begin{proof}[Proof of Proposition \ref{prop: main proposition}]
We choose continuous functions $\beta_1,\beta_2:\mathbb{R}\to [0,1]$ satisfying 
\[ \beta_1(t)=0 \quad (t\leq r_0), \quad \beta_1(t) = 1 \quad (t\geq r_0+1), \]
\[ \beta_2(t) = 0 \quad \left(t\leq N\sqrt{k}\right), \quad \beta_2(t) = 1 \quad \left(t\geq r_0+ N\sqrt{k}\right). \]
Let $x\in X$.
Recall that we introduced $\partial(x) = \bigcup_{n\in \mathbb{Z}^k} \partial W(x,n)$.
We define $p_x:\Gamma_1\to [0,1]$ and 
$u_x = \left(u_x(\lambda)\right)_{\lambda\in \Gamma_1}\in \ell^\infty(\Gamma_1)$ as follows: For $\lambda\in \Gamma_1$
\begin{itemize}
    \item If there exists $n\in \mathbb{Z}^k$ satisfying $\lambda\in (n+\Gamma)\cap W(x,n)$ then 
            \begin{equation*}
                \begin{split}
                 p_x(\lambda) = & \beta_1\left(d(\lambda, \partial(x))\right),     \\
                 u_x(\lambda) = &\beta_2\left(d(\lambda, \partial(x))\right) \{- f(x)(\lambda) \\
                & +\mathbb{G}\left(\Pi_{-n+W(x,n)}(T^n x), \left(w_m(x,n)\cdot \pi(T^{n+m}x)\right)_{m\in [-R,R)^k\cap \mathbb{Z}^k}\right)
                (\lambda-n) \}.
               \end{split}
            \end{equation*}    
    \item Otherwise we set $p_x(\lambda)=0$ and $u_x(\lambda)=0$.
\end{itemize}
$p_x$ is an \textit{admissible} function in the sense of Definition \ref{def: admissible set and function} (2).
It follows from Property \ref{condition: perturbation of G} (1) with $f(x)(\lambda) = f(T^n x)(\lambda-n)$ that
\begin{equation} \label{eq: norm of u_x}
    \norm{u_x}_\infty  < \delta'. 
\end{equation}    
Since the tiles $W(x,n)$ and the weight functions $w(x,n)$ depend continuously on $x\in X$, the numbers $p_x(\lambda)$ and 
$u_x(\lambda)$ depend continuously on $x$ for each fixed $\lambda\in \Gamma_1$.
We set 
\[   g_1(x) = f(x) + \Psi(p_x, u_x) \in \mathcal{B}(a_1,\dots,a_k) \]
where $\Psi$ is the interpolating function introduced in Section
\ref{section: Interpolating functions}.
By the continuity of $\Psi$ (Proposition \ref{prop: continuity of Psi}), $g_1(x)$ depends continuously on $x\in X$.
Since the tiles $W(x,n)$ and the weight functions $w(x,n)$ satisfy the natural $\mathbb{Z}^k$-equivariance, 
$p_x(\lambda)$ and $u_x(\lambda)$ also satisfy the $\mathbb{Z}^k$-equivariance 
(i.e. $p_{T^n x}(\lambda) = p_x(\lambda+n)$ and $u_{T^n x}(\lambda) = u_x(\lambda+n)$).
The interpolating function $\Psi$ also satisfies it (Lemma \ref{lemma: basic properties of Psi} (3)).
Therefore the map $X\ni x\mapsto g_1(x)\in \mathcal{B}_1(a_1,\dots,a_k)$ is $\mathbb{Z}^k$-equivariant.
By Lemma \ref{lemma: basic properties of Psi} (2) and (\ref{eq: norm of u_x})
\begin{equation} \label{eq: difference between g_1 and f}
  \begin{split}
       \norm{g_1(x)-f(x)}_{L^\infty(\mathbb{R}^k)} & = \norm{\Psi(p_x,u_x)}_{L^\infty(\mathbb{R}^k)} \\
       & < 2K_0 \delta' < \frac{\delta}{2} \quad 
       \left(\text{we assumed $4K_0\delta' < \delta$ in (\ref{eq: delta'})}\right).
  \end{split} 
\end{equation}

Let $\Phi_{A,L_0,L}(x) = (\Phi(x)_1,\dots, \Phi(x)_k):\mathbb{C}^k\to \mathbb{C}^k$ be the tiling-like band-limited map
introduced in Subsection \ref{subsection: weight functions from tiling-like band-limited functions} for the parameters $A, L_0, L$ in 
Notation \ref{notation: fixing A,L_0,L}.
We define $g_2(x):\mathbb{R}^k\to \mathbb{R}$ by 
\[  g_2(x) = \frac{\delta}{2kK_1}\mathrm{Re}\left(\sum_{i=1}^k \Phi(x)_i\right). \]
It follows from $\norm{\Phi(x)}_{L^\infty(\mathbb{R}^k)} \leq \sqrt{k}K_1$ (Lemma \ref{lemma: basic properties of Phi(x)} (1)) that
\begin{equation}  \label{eq: norm of g_2}
   \norm{g_2(x)}_{L^\infty(\mathbb{R}^k)} \leq \frac{\delta}{2}.
\end{equation}
By Lemma \ref{lemma: basic properties of Phi(x)} (2) with $L>4/\delta$,
the Fourier transform of each $\Phi(x)_i|_{\mathbb{R}^k}$ is supported in 
 \[   \Delta_i \overset{\mathrm{def}}{=}  \left(-\frac{\delta}{4},\frac{\delta}{4}\right)^{i-1} \times 
                                   \left(\frac{a_i}{2}, \frac{a_i}{2}+\frac{\delta}{2}\right)\times \left(-\frac{\delta}{4},\frac{\delta}{4}\right)^{k-i}, 
     \quad   (1\leq i\leq k).                               \]
Then the Fourier transform of $g_2(x)$ is supported in 
\begin{equation} \label{eq: spectrum of g_2}
     \bigcup_{i=1}^k \Delta_i \cup  (-\Delta_i), \quad (-\Delta_i = \{-\xi|\, \xi\in \Delta_i\}). 
\end{equation}     
This is contained in $\prod_{i=1}^k [-(a_i+\delta)/2, (a_i+\delta)/2]$.
So $g_2(x)\in \mathcal{B}(a_1+\delta, \dots, a_k+\delta)$.
The map $X\ni x\mapsto g_2(x)\in \mathcal{B}(a_1+\delta,\dots, a_k+\delta)$ is continuous and $\mathbb{Z}^k$-equivariant by 
the corresponding properties of $\Phi(x)$ in Lemma \ref{lemma: basic properties of Phi(x)} (3) and (4).

Finally we set $g(x) = g_1(x)+g_2(x)$.
By (\ref{eq: difference between g_1 and f}) and (\ref{eq: norm of g_2})
\[  \norm{f(x)-g(x)}_{L^\infty(\mathbb{R}^k)} < \delta. \]
In particular $\norm{g(x)}_{L^\infty(\mathbb{R}^k)} < 1$ by the assumption $\norm{f(x)}_{L^\infty(\mathbb{R}^k)} \leq 1-\delta$
in the beginning of the section.
The map 
\[  X\ni x\mapsto g(x) \in \mathcal{B}_1(a_1+\delta,\dots, a_k+\delta) \]
is continuous and $\mathbb{Z}^k$-equivariant.

We need to prove that $g:X\to \mathcal{B}_1(a_1+\delta,\dots,a_k+\delta)$ is a $\delta$-embedding with respect to $d$.
Suppose $g(x)=g(y)$ for some $x,y\in X$.
We would like to show $d(x,y)<\delta$.
It follows from $g(x)=g(y)$ that $g_1(x)=g_1(y)$ and $g_2(x)=g_2(y)$
because the Fourier transforms of $g_1(x)$ and $g_1(y)$ are supported in $\prod_{i=1}^k [-a_i/2,a_i/2]$, 
which is disjoint with (\ref{eq: spectrum of g_2}).
The equation $g_2(x)=g_2(y)$ implies $\Phi(x)_i = \Phi(y)_i$ for all $1\leq i\leq k$ because 
$2k$ sets $\Delta_i$ and $-\Delta_i$ are disjoint with each other.
So $\Phi(x)=\Phi(y)$.
Then it follows that
$W(x,n)=W(y,n)$ and $w(x,n)=w(y,n)$ for all $n\in \mathbb{Z}^k$
because the tiles $W(x,n)$ and the weight functions $w(x,n)$ are constructed from the tiling-like band-limited map $\Phi(x)$.

\textbf{Case 1}: Suppose $d\left(0,\partial(x)\right) > L_0$.
Then there exists $n\in \mathbb{Z}^k$ satisfying $0\in \mathrm{Int}_{L_0} W(x,n)$.
For all $\lambda\in (n+\Gamma) \cap \mathrm{Int}_{r_0+N\sqrt{k}} W(x,n)$
\begin{equation*}
   \begin{split}
   & p_x(\lambda) = 1, \\
   &  u_x(\lambda) = - f(x)(\lambda) 
                 +\mathbb{G}\left(\Pi_{-n+W(x,n)}(T^n x), \left(w_m(x,n)\cdot \pi(T^{n+m}x)\right)_{m\in [-R,R)^k\cap \mathbb{Z}^k}\right)
                (\lambda-n).
   \end{split}
\end{equation*}   
It follows from the interpolation property of $\Psi$ in Lemma \ref{lemma: basic properties of Psi} (1) that
for  all $\lambda\in (n+\Gamma) \cap \mathrm{Int}_{r_0+N\sqrt{k}} W(x,n)$
\begin{equation} \label{eq: formula of g_1 in a good region}
  \begin{split}
    &g_1(x)(\lambda) 
    =\mathbb{G}\left(\Pi_{-n+W(x,n)}(T^n x), \left(w_m(x,n)\cdot \pi(T^{n+m}x)\right)_{m\in [-R,R)^k\cap \mathbb{Z}^k}\right)
                (\lambda-n), \\
    &g_1(y)(\lambda) 
    =\mathbb{G}\left(\Pi_{-n+W(y,n)}(T^n y), \left(w_m(y,n)\cdot \pi(T^{n+m}y)\right)_{m\in [-R,R)^k\cap \mathbb{Z}^k}\right)
                (\lambda-n).     
   \end{split}
\end{equation}                
From $\mathrm{Int}_{L_0} W(x,n)\neq \emptyset$ (because $0\in \mathrm{Int}_{L_0}W(x,n)$) 
and (\ref{eq: dimension of P(W) Q(i)}) 
\begin{equation*}
     \begin{split}
       \dim \left( \boldsymbol{\mathcal{P}}\left(-n+W(x,n)\right) \times \boldsymbol{\mathcal{Q}}\left(\#(x,n)\right) \right)
        <&  \frac{\rho_1\dots \rho_k}{2} \left|\mathrm{Int}_{r_2+N\sqrt{k}}W(x,n)\right| \\
        \leq &     \frac{1}{2}\#\left(\Gamma \cap \left(-n+\mathrm{Int}_{r_0+N\sqrt{k}} W(x,n)\right)\right),  \\
        &  \left(\text{by $r_2 = r_0+\sqrt{\sum_{i} (1/\rho_i)^2}$ in (\ref{eq: definition of r_2})}\right).
     \end{split}
\end{equation*}     
Therefore by Property \ref{condition: perturbation of G} (2) the map 
\begin{equation} \label{eq: embedding map in case 1}
   \begin{split}
    \boldsymbol{\mathcal{P}}\left(-n+ W(x,n)\right)&  \times  \boldsymbol{\mathcal{Q}}\left(\#(x,n)\right)
    \to \mathbb{R}^{\Gamma \cap \left(-n+\mathrm{Int}_{r_0+N\sqrt{k}} W(x,n)\right)} \\
    &(p,q) \mapsto \mathbb{G}(p,q)|_{ \Gamma \cap \left(-n+\mathrm{Int}_{r_0+N\sqrt{k}} W(x,n)\right)}
   \end{split} 
\end{equation}    
is an embedding.
Thus $g_1(x)=g_1(y)$ implies that $\Pi_{-n+W(x,n)}(T^n x) = \Pi_{-n+W(y,n)}(T^n y)$.
Recall that the map $\Pi_{-n+W(x,n)}$ is an $\varepsilon$-embedding with respect to the distance 
$d_{-n+ \mathrm{Int} W(x,n)}$.
It follows from $0\in \mathrm{Int}_{L_0}W(x,n)$ that $-n \in -n + \mathrm{Int}W(x,n)$.
Thus we get 
\[    d(x,y) \leq d_{-n+ \mathrm{Int} W(x,n)}(T^n x, T^n y) <\varepsilon < \delta.   \]

\textbf{Case 2}: Suppose $d\left(0,\partial(x)\right) \leq L_0$.
It follows from Proposition \ref{prop: construction of weight functions} (3) and (4) that there exists $n\in B_{R_0}\cap \mathbb{Z}^k$
satisfying $\mathrm{Int}_{L_0} W(x,n)\neq \emptyset$ and $w_{-n}(x,n)= w_{-n}(y,n) = 1$.
As in Case 1, we have the same formula (\ref{eq: formula of g_1 in a good region}) for 
all $\lambda\in (n+\Gamma) \cap \mathrm{Int}_{r_0+N\sqrt{k}} W(x,n)$.
The map (\ref{eq: embedding map in case 1}) is also an embedding.
Then the equation $g_1(x)=g_1(y)$ implies that 
\[  \left(w_m(x,n)\cdot \pi(T^{n+m}x)\right)_{m\in [-R,R)^k\cap \mathbb{Z}^k} 
    = \left(w_m(y,n)\cdot \pi(T^{n+m}y)\right)_{m\in [-R,R)^k\cap \mathbb{Z}^k}. \]
Since $w_{-n}(x,n) = w_{-n}(y,n)=1$, we get $\pi(x)=\pi(y)$.
Thus $d(x,y)<\varepsilon <\delta$ because $\pi:X\to Q$ is an $\varepsilon$-embedding with respect to $d$.
\end{proof}

Now we have completed the proof of Main Theorem \ref{main theorem: continuous signal} 
(and hence the proof of Main Theorem \ref{main theorem: discrete signal}).

\section{Open problems}   \label{section: open problems}
Here we explain major open problems in the direction of the paper.

\begin{enumerate}
  \item From Main Theorem \ref{main theorem: discrete signal}, we have a good understanding of when 
  dynamical systems can be embedded in $\left([0,1]^D \right)^{\mathbb{Z}^k}$ under the assumption of the marker property.
  So the next step is how to remove the assumption.
  We would like to propose

  \begin{conjecture}  \label{conjecture: main embedding conjecture}
   Let $(X,\mathbb{Z}^k,T)$ be a dynamical system. For a subgroup $A \subset \mathbb{Z}^k$ we define $X_A\subset X$ as
   the space of $x\in X$ satisfying $T^a x=x$ for all $a\in A$. 
   The quotient group $\mathbb{Z}^k/A$ naturally acts on $X_A$.
   Then $X$ can be embedded in the shift on $\left([0,1]^D \right)^{\mathbb{Z}^k}$
  if for every subgroup $A\subset \mathbb{Z}^k$ the mean dimension of $X_A$ with respect to the $(\mathbb{Z}^k/A)$-action 
  is smaller than $D/2$.
  \end{conjecture}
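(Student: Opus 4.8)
The plan is to run the Baire-category scheme that the excerpt uses to deduce Main Theorem~\ref{main theorem: continuous signal} from Proposition~\ref{prop: main proposition}. The space $C$ of $\mathbb{Z}^k$-equivariant continuous maps $X\to([0,1]^D)^{\mathbb{Z}^k}$ is a complete metric space, and the embeddings form the countable intersection $\bigcap_{m\ge1}\{f\in C:f\text{ is a }(1/m)\text{-embedding}\}$ of open sets; hence it is enough to show that every $f\in C$ is a uniform limit of equivariant $\varepsilon$-embeddings for each $\varepsilon>0$. I would build such an approximation by an induction over the lattice of subgroups $A\subset\mathbb{Z}^k$, treating larger subgroups $A$ (i.e.\ smaller, more periodic sets $X_A$) first. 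The base case is $A=\mathbb{Z}^k$: here $X_{\mathbb{Z}^k}$ is the global fixed-point set, the hypothesis reads $\dim X_{\mathbb{Z}^k}<D/2$, and by the classical Hurewicz--Wallman theorem quoted after Theorem~\ref{theorem: Gutman--Tsukamoto} we get a topological embedding $X_{\mathbb{Z}^k}\hookrightarrow[0,1]^D$, which is exactly the fixed-point set of the shift.

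At a general step, given a subgroup $A$ together with an equivariant $\varepsilon$-embedding already constructed on the invariant union $Y_A=\bigcup_{A'\supsetneq A}X_{A'}\subset X_A$ and landing in the $A$-periodic points of the shift, namely $([0,1]^D)^{\mathbb{Z}^k/A}$, one must extend it over all of $X_A$. Writing $\mathbb{Z}^k/A\cong\mathbb{Z}^{j}\times G$ with $G$ finite, one has $\mdim(X_A)=|G|^{-1}\mdim(X_A,\mathbb{Z}^{j})$ for the natural $\mathbb{Z}^j$-action, so the hypothesis $\mdim(X_A)<D/2$ becomes exactly what is needed to invoke a \emph{relative} form of Main Theorem~\ref{main theorem: continuous signal}: embed the $\mathbb{Z}^j$-system $X_A$ of mean dimension $<D|G|/2$ into $([0,1]^{D|G|})^{\mathbb{Z}^j}\cong([0,1]^D)^{\mathbb{Z}^k/A}$, extending a prescribed equivariant embedding on the closed invariant subset. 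For $j=0$ this is a $G$-equivariant Hurewicz--Wallman statement, which should follow by a $G$-general-position argument built on Lemmas~\ref{lemma: approximation lemma} and~\ref{lemma: embedding lemma} --- and the hypothesis applied to the overgroups $A'\supsetneq A$ is precisely what controls the fixed-point-set dimensions needed to make such general position go through. For $j\ge1$ one must re-run the perturbation machinery of Sections~\ref{section: Interpolating functions}--\ref{section: proof of Main Proposition} relative to the already-fixed map on the periodic stratum, performing the tiling and perturbation only on tiles disjoint from a neighborhood of that stratum.

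The main obstacle is this last, genuinely aperiodic, step. The argument of the excerpt rests on the marker property of $X$, whereas here one only assumes mean-dimension bounds: over the aperiodic region $X\setminus X_{\mathrm{per}}$ one must manufacture markers --- or some Rokhlin-type tower that avoids a prescribed neighborhood of $X_{\mathrm{per}}$ --- from aperiodicity alone, and it is itself open whether every aperiodic $\mathbb{Z}^k$-action has the marker property. Compounding this, topological $\mathbb{Z}^k$-actions admit no slice theorem, so near a periodic orbit the local picture is only approximately a product of a finite orbit with a transversal; keeping the $\varepsilon$-embedding property under control across the interface between a tile meeting $X_{\mathrm{per}}$ and one disjoint from it will require a quantitative refinement of the cone construction and of the good/bad decomposition of Section~\ref{section: dynamical construction of tiling-like band-limited functions}, all carried out relative to a fixed map on a subcomplex. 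A realistic first target is $k=1$, where the subgroup lattice is $\{n\mathbb{Z}\}_{n\ge0}$, each periodic stratum is a finite union of periodic orbits so the slice issue evaporates, and only the relative marker problem and the relative perturbation remain --- already this would simultaneously extend Jaworski's Theorem~\ref{theorem: Jaworski}, Theorem~\ref{theorem: Gutman--Tsukamoto}, and Main Theorem~\ref{main theorem: discrete signal}.
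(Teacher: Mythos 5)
The statement you are proving is Conjecture \ref{conjecture: main embedding conjecture}, which appears in Section \ref{section: open problems} precisely as an \emph{open problem}: the paper proposes it but does not prove it, and (as the authors note) already the case $k=1$ is equivalent to an open conjecture of Lindenstrauss--Tsukamoto. Your text is therefore not comparable to a proof in the paper; it is a research outline, and by your own account it stops short of a proof at exactly the points where the real difficulty lies. Two gaps are fatal as written. First, every inductive step beyond the base case invokes a \emph{relative} version of Main Theorem \ref{main theorem: continuous signal} (extend a prescribed equivariant embedding from the closed invariant union $Y_A$ of the more-periodic strata over all of $X_A$). No such extension theorem is established anywhere in the paper, and it is not a routine corollary of Proposition \ref{prop: main proposition}: the entire construction of $g$ there (the tiling $W(x,n)$, the weight functions, the interpolation $\Psi(p_x,u_x)$) is global and is not designed to agree with a given map on a subsystem. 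Proving such a relative statement, even for $j=0$ (the finite-group, $G$-equivariant Hurewicz--Wallman case with prescribed boundary values on the deeper strata), is itself a nontrivial open task that your appeal to ``a $G$-general-position argument'' does not discharge.

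Second, and more seriously, the aperiodic step of your induction requires marker-type structures (Rokhlin towers avoiding a neighborhood of the periodic stratum) produced from aperiodicity and the mean-dimension hypotheses alone. The paper's proof uses the marker property as a standing assumption, states explicitly that it is unknown whether every aperiodic $\mathbb{Z}^k$-action satisfies it, and never constructs markers relative to a periodic stratum. You acknowledge this (``it is itself open whether every aperiodic $\mathbb{Z}^k$-action has the marker property''), which is to say the central ingredient of your argument is conceded to be unavailable. A secondary unproved claim is the identity $\mdim(X_A)=|G|^{-1}\mdim(X_A,\mathbb{Z}^j)$ relating the mean dimension of the $(\mathbb{Z}^j\times G)$-action to that of the restricted $\mathbb{Z}^j$-action; this is plausible but needs a proof if the induction is to be quantitative. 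In short: the skeleton (Baire category plus induction over the subgroup lattice, with Hurewicz--Wallman at the fully periodic stratum) is a reasonable plan of attack, but the statement remains a conjecture both in the paper and after your proposal.
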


  If $\mathbb{Z}^k/A$ is a finite group, then the mean dimension of $X_A$ is just 
  \[ \frac{\dim X_A}{\#(\mathbb{Z}^k/A)}. \]
  When $k=1$, Conjecture \ref{conjecture: main embedding conjecture} is equivalent to \cite[Conjecture 1.2]{Lindenstrauss--Tsukamoto}. 
  Notice that if we set $Y=\left([0,1]^D \right)^{\mathbb{Z}^k}$ then for any subgroup $A\subset \mathbb{Z}^k$ the 
  system $Y_A$ is naturally identified with $\left([0,1]^D \right)^{\mathbb{Z}^k/A}$, whose mean dimension is $D$.
  So Conjecture \ref{conjecture: main embedding conjecture} can be rephrased; if $(X,\mathbb{Z}^k,T)$ satisfies
  \[ \forall \text{ subgroup } A\subset \mathbb{Z}^k: \quad 
     \mdim(X_A) < \frac{\mdim(Y_A)}{2},  \]
then it can be embedded in $Y=\left([0,1]^D \right)^{\mathbb{Z}^k}$.

  \item   Let $\Omega\subset \mathbb{R}^k$ be a compact subset.
  We define $\mathcal{B}_1(\Omega)$ as the space of continuous functions $f:\mathbb{R}^k\to \mathbb{C}$ satisfying 
   $\supp \hat{f}\subset \Omega$ and $\norm{f}_{L^\infty(\mathbb{R}^k)}\leq 1$.
  The mean dimension of the natural $\mathbb{Z}^k$-action on $\mathcal{B}_1(\Omega)$ is equal to $2|\Omega|$.
  (Here notice that $f$ are complex-valued functions. The factor $2$ comes from $\dim \mathbb{C}=2$.)
  We would like to ask when a dynamical system $(X,\mathbb{Z}^k,T)$ can be embedded in the shift on $\mathcal{B}_1(\Omega)$.
  By almost the same argument as the proof of Main Theorem \ref{main theorem: continuous signal}, we can prove

  \begin{theorem}  \label{thm: embedding in continuous signals}
  Suppose $\Omega$ is a rectangle (i.e. congruent to a set $[a_1,b_1]\times \dots\times [a_k,b_k]$).
  If a dynamical system $(X,\mathbb{Z}^k,T)$ satisfies the marker property and 
  \[ \mdim(X) < |\Omega| = (b_1-a_1)\times \dots \times (b_k-a_k), \]
  then we can embed it in the shift on $\mathcal{B}_1(\Omega)$.
  \end{theorem}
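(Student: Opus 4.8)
The plan is to transcribe the whole apparatus of Sections~\ref{section: Interpolating functions}--\ref{section: proof of Main Proposition} with the scalar field $\mathbb{R}$ replaced by $\mathbb{C}$, and to deduce Theorem~\ref{thm: embedding in continuous signals} from a complex-valued analogue of Proposition~\ref{prop: main proposition}. After permuting and reflecting coordinates we may assume $\Omega = c + \prod_{i=1}^{k}[-w_i/2, w_i/2]$ with $c=(c_1,\dots,c_k)\in\mathbb{R}^k$ and $w_i = b_i-a_i>0$; since $\mdim(X) < |\Omega| = w_1\cdots w_k$ we may then fix \emph{rational} $\rho_i < w_i$ with $\mdim(X) < \rho_1\cdots\rho_k$, and a small $\tau>0$ with $\rho_i+\tau<w_i$. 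The complex Main Proposition to be proved is: for any $\mathbb{Z}^k$-equivariant continuous $f:X\to\mathcal{B}_1(\Omega)$ and any $\delta>0$ there is a $\mathbb{Z}^k$-equivariant continuous $g:X\to\mathcal{B}_1(\Omega')$, where $\Omega'=c+\prod_{i=1}^{k}[-(w_i+\delta)/2,(w_i+\delta)/2]$, with $\norm{f(x)-g(x)}_{L^\infty(\mathbb{R}^k)}<\delta$ and $g$ a $\delta$-embedding for any fixed metric on $X$. Granting this, Theorem~\ref{thm: embedding in continuous signals} follows by the same Baire-category iteration that produced Main Theorem~\ref{main theorem: continuous signal} from Proposition~\ref{prop: main proposition}, the continuous parameters $w_i$ again making the successive enlargements $\Omega\subset\Omega'$ harmless.

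For the complex Main Proposition I would keep the decomposition $g(x)=g_1(x)+g_2(x)$ verbatim and make three adjustments. First, the interpolating functions of Section~\ref{section: Interpolating functions} are built with complex coefficients: in the building block one replaces $\prod_i\sinc(\rho_i(t_i-\lambda_i))$ by $\prod_i e^{2\pi\sqrt{-1}\,c_i(t_i-\lambda_i)}\sinc(\rho_i(t_i-\lambda_i))$, which still vanishes on $(\lambda+\Gamma)\setminus\{\lambda\}$, still equals $1$ at $\lambda$, still depends only on $t-\lambda$ (so all equivariance statements are untouched), and has Fourier transform supported in $c+\prod_i[-\rho_i/2,\rho_i/2]$; convolving with $\chi_0$ keeps $g_1(x)=f(x)+\Psi(p_x,u_x)$ inside $\mathcal{B}_1(\Omega)$ because $\rho_i+\tau<w_i$. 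The operator $S_\Lambda$, the bound $\norm{S_\Lambda-\id}\le 1/2$, and the continuity of $\psi_\Lambda$ and $\Psi$ all survive word for word over $\ell^\infty(\Lambda,\mathbb{C})$. Second, the tiling side needs almost nothing: the dynamical Voronoi constructions of Subsections~\ref{subsection: dynamical Voronoi diagram} and \ref{subsection: weight functions from tiling-like band-limited functions} never see the target space, and $\Theta_L$, $\Phi_L(\mathcal{W})$ and the persistence-of-zeros analysis of Subsections~\ref{subsection: quantitative persistence of zero points}--\ref{subsection: tiling-like band-limited functions} are already statements about holomorphic maps $\mathbb{C}^k\to\mathbb{C}^k$; one only re-chooses the modulation exponents $b_i$ so that the $i$-th entry of $\Phi(x)$ has Fourier transform supported in a small set $\Delta_i\subset\Omega'\setminus\Omega$ with $\Delta_1,\dots,\Delta_k$ pairwise disjoint and disjoint from $\Omega$ (for a box these fit easily into the $\delta$-collar), and, crucially, since $g_2(x)=\text{const}\cdot\sum_{i=1}^{k}\Phi(x)_i$ is now genuinely complex-valued, the sets $-\Delta_i$ no longer enter, so the frequency bookkeeping is strictly simpler than in the real case. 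Third, in Section~\ref{section: proof of Main Proposition} the simplicial embedding lemma (Lemma~\ref{lemma: embedding lemma}(2)) is applied with the number of \emph{real} coordinates equal to $2\,|\Lambda|$, since each sample $g_1(x)(\lambda)\in\mathbb{C}$ contributes two real coordinates; hence the dimension count that previously needed $\dim CP/N^k<\rho_1\cdots\rho_k/2$ now only needs $\dim CP/N^k<\rho_1\cdots\rho_k$, exactly what our choice of the $\rho_i$ supplies. This is precisely where the factor $2$ in $\mdim\mathcal{B}_1(\Omega)=2|\Omega|$ is cashed in; the constants $c_0,c_1,A,L_0,L,R$ and the two-case endgame ($d(0,\partial(x))>L_0$ versus $\le L_0$) are then re-chosen by the same inequalities with ``$/2$'' removed.

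Everything else carries over unchanged, because the only places the target ring entered the original proof were the coordinate count in the embedding lemma and the Fourier-support disjointness of $g_1(x)$ and $g_2(x)$, both of which I have arranged. The main obstacle I anticipate is organizational rather than conceptual: one must re-verify that \emph{every} estimate in Sections~\ref{section: Interpolating functions} and \ref{section: proof of Main Proposition} survives the passage to complex scalars (that none secretly used an order relation or the real structure), and that the repositioned frequency collars $\Delta_i\subset\Omega'\setminus\Omega$ can be chosen so that the construction remains continuous in $x\in X$ while the $\Delta_i$ stay disjoint from one another and from $\Omega$; neither point is a genuine difficulty, which is why the conclusion can be obtained ``by almost the same argument.''
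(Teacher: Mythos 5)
Your proposal is correct and is essentially the argument the paper itself has in mind: the paper offers no written proof of this theorem beyond the remark that it follows ``by almost the same argument'' as Main Theorem \ref{main theorem: continuous signal}, and you have correctly located the only three places where the argument must be adjusted --- modulating the interpolation building blocks and $\Theta_L$ so that all frequency supports sit in $\Omega$ (respectively in the collar $\Omega'\setminus\Omega$), dropping the $\mathrm{Re}$ and the reflected sets $-\Delta_i$ because the target is now complex-valued, and doubling the real coordinate count in Lemma \ref{lemma: embedding lemma}~(2) so that the dimension inequality needs only $\mdim(X)<\rho_1\cdots\rho_k$. The one point to execute carefully (which you flag but compress) is that since the center $c$ of $\Omega$ need not be the origin, each entry of $\Theta_L$ must carry the full modulation $e^{2\pi\sqrt{-1}\,c\cdot z}$ in all $k$ variables, not just a re-chosen exponent in $z_i$; this common nonvanishing factor leaves the zero set and the Voronoi/weight constructions untouched, which is exactly why the change is ``notationally messy'' rather than substantive.
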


  The proof of this theorem is notationally more messy 
  than the proof of Main Theorem \ref{main theorem: continuous signal}.
  So we concentrate on Main Theorem \ref{main theorem: continuous signal}
  in this paper.
  
  If $\Omega$ is not a rectangle, then the method of this paper does not work directly.
  Nevertheless, it seems reasonable to conjecture that if $\Omega$ is a ``nice'' set 
  (e.g. a semi-algebraic set) and if a dynamical system $(X,\mathbb{Z}^k,T)$ satisfies the marker property and  
  \[  \mdim(X) < |\Omega| \]
  then $X$ can be embedded in the shift on $\mathcal{B}_1(\Omega)$.
  We hope to return to this problem in a future.

  \item An ambitious question is how to generalize Main Theorems \ref{main theorem: discrete signal} and \ref{main theorem: continuous signal}
  to the actions of noncommutative groups.
  In particular it is very interesting to see what kind of signal analysis should be involved.
  We don't have an answer even for nilpotent groups.
\end{enumerate}

\vspace{0.5cm}

\address{Yonatan Gutman \endgraf
Institute of Mathematics, Polish Academy of Sciences,
ul. \'{S}niadeckich~8, 00-656 Warszawa, Poland}

\textit{E-mail address}: \texttt{y.gutman@impan.pl}

\vspace{0.5cm}

\address{Yixiao Qiao \endgraf
Institute of Mathematics, Polish Academy of Sciences, ul. \'{S}niadeckich 8, 00-656 Warszawa, Poland
-- \and --
Department of Mathematics, University of Science and Technology of China, Hefei, Anhui 230026, P.R. China}

\textit{E-mail address}: \texttt{yxqiao@mail.ustc.edu.cn}

\vspace{0.5cm}

\address{ Masaki Tsukamoto \endgraf
Department of Mathematics, Kyoto University, Kyoto 606-8502, Japan}

\textit{E-mail address}: \texttt{tukamoto@math.kyoto-u.ac.jp}

\end{document}